\documentclass[11pt]{article}

\usepackage[a4paper,margin=1in]{geometry}
\usepackage{amsmath,amssymb,amsthm,mathtools}
\usepackage{mathrsfs}
\usepackage{hyperref}
\usepackage{enumitem}
\usepackage{microtype}

\usepackage{graphicx}
\usepackage{subfigure}
\usepackage{float}

\usepackage{multicol}
\usepackage{fancybox}

\usepackage{tabularx}
\usepackage{makecell}

\usepackage{oubraces}

\newcommand\citeS[1]{\cite{#1}}
\makeatletter
\newcommand{\citet}[2][]{%
  \if\relax\detokenize{#1}\relax
    \cite{#2}%
  \else
    \cite[#1]{#2}%
  \fi
}
\makeatother

\newcommand{\beq}{\begin{equation}}
  \newcommand{\eeq}{\end{equation}}

\hypersetup{colorlinks=true, linkcolor=blue, citecolor=blue, urlcolor=blue}

\newcommand{\R}{\mathcal{R}}
\newcommand{\C}{\mathbb{C}}
\newcommand{\Rp}{\mathbb{R}_+}

\def\RR{\mathbb{R}}
\def\N{\mathbb{N}}
\def\Q{\mathbb{Q}}
\def\Z{\mathbb{Z}}

\newcommand{\thetadeux}{y}

\newcommand{\G}{\mathcal{D}_\mathcal{R}}

\newtheorem{thm}{Theorem}[section]
\newtheorem{theorem}[thm]{Theorem}

\newtheorem{prop}[thm]{Proposition}
\newtheorem{corfr}[thm]{Corollary}

\newtheorem{lem}[thm]{Lemma}
\newtheorem{lemm}[thm]{Lemma}
\newtheorem{lemme}[thm]{Lemma}
\theoremstyle{definition}

\newtheorem{deffr}[thm]{Definition}
\newtheorem{rem}[thm]{Remark}
\newtheorem{exe}[thm]{Example}
\newtheorem{remarque}[thm]{Remark}
\newtheorem*{remarque*}{Remark}

\renewcommand{\geq}{\geqslant}
\renewcommand{\leq}{\leqslant}

\title{How to Study Reflected Brownian Motion in a Quadrant via Kernel Functional Equations?\\
\medskip
\large A short survey}
\author{Sandro Franceschi\thanks{SAMOVAR, Télécom SudParis, Institut Polytechnique de Paris, Palaiseau, France}}

\date{}

\begin{document}
\maketitle

\begin{abstract}
We survey a line of works studying semimartingale reflected Brownian motion (SRBM) in a quadrant (or, more generally, a wedge), covering both the non-degenerate setting (full-rank covariance) and degenerate setting (rank-one covariance).
Two main situations are emphasized: the recurrent case, where an invariant (stationary) measure exists, and the transient case, where the central objects are Green’s functions (potential measures). These measures typically arise from Kolmogorov forward (Fokker–Planck-type) equations. 
Beyond these quantities, for transient or killed models one is also interested in the Martin boundary of the process and, consequently, in all positive harmonic functions, which satisfy Kolmogorov backward equations. Depending on the geometry and parameters of the model, these harmonic functions often admit probabilistic interpretations in terms of absorption, escape, or drift to infinity (for instance, along an axis). 

All these measures and functions are studied through \emph{kernel functional equations} satisfied by their Laplace transforms. 
Several ways of solving these equations are reviewed, each leading to different types of results. Following the analytic approach developed for quarter-plane random walks by Fayolle, Iasnogorodski and Malyshev, a key preliminary step is the analytic continuation of the relevant Laplace transforms onto the complex algebraic curve defined by the zero set of the kernel. Carleman boundary value problem (BVP) techniques then yield explicit contour-integral representations for the Laplace transforms. In special parameter regimes, Tutte’s invariant method provides integral-free formulas and a sharp classification of the transforms according to their algebraic/differential complexity (rational, algebraic, D-finite, or differentially algebraic). Complex-analytic singularity analysis combined with saddle-point methods carried out on the kernel’s associated algebraic curve, produces precise two-dimensional asymptotics for the densities and functions under consideration. Finally, in the degenerate setting, the compensation approach provides an alternative constructive method, allowing one to build the densities as an infinite series through explicit iterative corrections adapted to the parabolic kernel geometry.
\end{abstract}

\tableofcontents


%
%
%

\newpage
\section{Introduction}

Over the past years, significant progress has been made in the explicit resolution of quantities associated with reflected Brownian motion in cones and orthants. While the stochastic analysis of such processes is inherently complex, a powerful and unifying perspective has emerged: many key objects of interest (invariant measures, Green functions, harmonic functions) can be characterized through \emph{kernel functional equations}. These equations reveal an underlying integrable structure, which enables the use of a broad spectrum of techniques ranging from complex analysis to algebraic and combinatorial methods.

This approach has been developed in detail in the author’s habilitation thesis \cite{HDR}, where a systematic framework is proposed to analyze these functional equations and their solutions. More broadly, the present work is part of a growing body of research exploring the deep connections between stochastic processes and integrable functional equations.

A major source of inspiration comes from the extensive literature on random walks in the quarter plane, where kernel methods have reached a high level of maturity. We refer in particular to the monograph \cite{fayolle_random_2017}, which provides a comprehensive account of these techniques. In parallel, complex-analytic approaches originating in queueing theory and stochastic networks \cite{foddy_analysis_1984,baccelli_analysis_1987} have played a fundamental role in the development of explicit formulas and boundary value problem methods.

Beyond providing a concise survey of existing results in the literature, one of the contributions of this paper is the derivation of a new functional equation satisfied by the transition probabilities of reflected Brownian motion. This equation can be seen as a unifying generalization of the classical functional equations associated with the invariant measure (in the recurrent case) and the Green function (in the transient case). 

This new equation can be analyzed and solved using the same integrable techniques as in the stationary and transient settings. To the best of our knowledge, such an approach has not previously been carried out in the literature. This opens the door to a unified treatment of a wide range of quantities associated with reflected Brownian motion.

\medskip

The paper is organized as follows. 
In Section~\ref{sec:SRBM}, we introduce the model of semimartingale reflected Brownian motion in the quadrant, together with its geometric interpretation in cones, and we recall the main recurrence and transience criteria. 
Section~\ref{sec:absesc} is devoted to absorption and escape probabilities, including hitting of the vertex and escape to infinity or along the axes, which provide fundamental examples of harmonic functions. 

In Section~\ref{sec:BAR}, we present the Basic Adjoint Relationships (BAR) for the transition density, the invariant measure, and the Green function, which serve as weak formulations of the underlying partial differential equations. 
These PDE aspects are further developed in Section~\ref{s1.3}, where we recall the associated potential theory, including oblique Neumann boundary conditions and Martin boundary theory.

In Section~\ref{sec:funceq}, we derive the kernel functional equations satisfied by the Laplace transforms of the transition density, invariant measure, Green functions, and harmonic functions, which form the starting point of the analytic approach. 
Section~\ref{sec:contRiem} is then devoted to the analytic continuation of these transforms on the Riemann surface associated with the kernel, leading to difference equations on the universal covering.

In Section~\ref{sec:intrep}, we solve these equations using Carleman boundary value problem techniques, leading to explicit integral representations. 
Section~\ref{sec:explifor} presents an alternative approach based on Tutte’s invariants, yielding explicit integral-free formulas in special parameter regimes. 
These results are complemented in Section~\ref{sec:diffgalois} by a classification of the Laplace transforms according to their algebraic and differential nature, using difference Galois theory.

Finally, Section~\ref{sec:asymptmartin} is devoted to asymptotic analysis via saddle-point methods and singularity analysis, while Section~\ref{sec:compa} presents the compensation approach in the degenerate setting, leading to explicit series representations of the invariant measure.

\medskip

Our results highlight the robustness of the kernel method and its versatility in stochastic analysis, reinforcing the deep connections between probability theory, complex analysis, and algebraic combinatorics.

\paragraph{A brief historical overview of reflected Brownian motion.}
The theory of reflected Brownian motion (RBM) lies at the crossroads of probability theory, partial differential equations, and applications such as queueing theory. Its development in queueing theory began in 1961 with Kingman’s seminal work, which introduced RBM as a powerful approximation for queueing systems operating under heavy traffic conditions \cite{Kingman1961}. In the 1980s, this line of research expanded considerably 
\cite{harrison_diffusion_1978,
harrison_brownian_1987,reiman_84_open}, with major contributions by Harrison, Reiman, Varadhan, and Williams, who developed multidimensional RBM models, particularly in orthants 
\cite{HaRe-81,
HaRe-81b,harrison_multidimensional_1987,reiman_boundary_1988}. Special attention was also given to two-dimensional systems, especially in wedges 
\cite{Williams-85,
williams_recurrence_1985,varadhan_brownian_1985}. 

From a probabilistic perspective, the origins of reflected diffusions trace back to the work of Skorokhod \cite{Skorokhod1961} on stochastic differential equations with reflection. This was followed by contributions of Watanabe \cite{watanabe1971} and the foundational work of Stroock and Varadhan \cite{Stroock1971}, who introduced the submartingale problem as a general framework for reflected diffusion processes. However, RBM in orthants falls outside the scope of these classical theories, due to the lack of boundary regularity. This leads to significant difficulties, including the treatment of nonsmooth boundaries and oblique reflection at corners, where standard analytical tools no longer directly apply.

From an analytical viewpoint, the well-posedness of reflected diffusions in smooth domains was established by \cite{LionsSznitman1984}, and later extended to oblique reflection and nonsmooth settings by \cite{DupuisIshii1993}. 
Approaches based on submartingale problems, such as \cite{Ramanan2006}, provide a robust framework for dealing with irregular domains. Recent works, including \cite{Burdzy2017}, focus on obliquely reflected Brownian motion in nonsmooth planar domains, combining probabilistic techniques with tools from complex analysis (such as conformal mappings) and excursion theory. 

Since the 1980s to the present day, RBM in cones and polyhedral domains has been extensively studied from multiple perspectives. A large body of work has been devoted to questions of existence, uniqueness and semimartingale properties 
\cite{varadhan_brownian_1985,
Williams-85, williams_semimartingale_1995, Burdzy2017}, as well as to phenomena such as non-pathwise uniqueness \cite{bass2024}. Recurrence and transience properties have also been investigated in depth in the 1980s \cite{williams_recurrence_1985} and 1990s
\cite{hobson_recurrence_1993, taylor_existence_1993, dupuis_lyapunov_1994, Ch-96} until quite recently in the 2010s
\cite{Br-11, BrDaHa-10, Kang2014}. Pathwise differentiability of reflected diffusions is also an important subject of study \cite{Lipshutz19}. 

Further developments concern the structure of invariant measures, including product-form stationary distributions such as the early papers
\cite{harrison_multidimensional_1987, williams_reflected_1987} and more recently
\cite{dieker_reflected_2009, OCOr-14}, explicit expressions \cite{franceschi_raschel_integral_2019,franceschi_bousquet_melou_price_hardouin_raschel_stationary_2023}, and asymptotic behavior
\cite{harrison_reflected_2009, dai_reflecting_2011, franceschi_kourkova_asymptotic_2017}. 
Lyapunov techniques have also played a central role in stability analysis 
\cite{dupuis_lyapunov_1994, sarantsev_reflected_2017}. In parallel, connections have been established with other stochastic processes 
\cite{biane_94, dubedat_reflected_2004, lega-87, Lep}, moving boundary problems \cite{Burdzy2009}. 
More recently, links with interacting particle systems, such as the Atlas model and competing particle systems, have further broadened the scope of RBM 
\cite{BanerjeeBudhiraja,ichiba_karatzas_degenerate_22}.

\section{SRBM in the quadrant}
\label{sec:SRBM}

\subsection{Definition, existence and uniqueness}

A semimartingale reflected Brownian motion (SRBM) in a quadrant is a continuous Markov process that evolves inside the quadrant, behaves as a Brownian motion with drift in its interior, is instantaneously reflected on each boundary edge along constant reflection directions, and spends zero Lebesgue time at the vertex.
\begin{deffr}[SRBM]
\label{intro:def:MBsemimartingale}
The process $Z_t=(Z_t^{(1)},Z_t^{(2)})^\top$ is called a semimartingale reflected Brownian motion in $\Rp^2$, driven by a Brownian motion $B_t$ with covariance matrix $\Sigma=(\sigma_{ij})_{1 \le i,j \le 2}\in\mathbb{R}^{2\times 2}$, drift $\mu=(\mu_i)_{i=1,2}\in\mathbb{R}^2$, and obliquely reflected on the axes according to a reflection matrix $R=(R_1,R_2)=(r_{ij})_{1 \le i,j \le 2}\in\mathbb{R}^{2\times 2}$ (where $R_j$ is its $j$-th column), if it admits the decomposition
\begin{equation}\label{eq:srbm}
Z_t = Z_0 + B_t + \mu t + R L_t, \qquad Z_t \in \Rp^2,
\end{equation}
where $L_t=(L_t^{(1)},L_t^{(2)})^\top$ is a continuous process with nondecreasing components, and each component $L_t^{(i)}$ increases only when the corresponding coordinate $Z_t^{(i)}$ is equal to $0$. The pair $(Z_t,L_t)$ is said to be a solution to the \textit{Skorokhod problem} in the quadrant for $B_t+\mu t$ with respect to the reflection matrix $R$. 
\end{deffr}

\paragraph{Normalization of parameters}
Without loss of generality, we will assume, by a simple rescaling argument, see Appendix~\ref{rescaling}, that 
\[ r_{11} = r_{22} = 1, \quad \sigma_{11}=\sigma_{22}=1 \quad \text{and} \quad \|\mu\|_2=1.\]
In this case \( L_t^{(1)} \) and \( L_t^{(2)} \) coincide with the boundary local times on the two edges of the quadrant.
For the sake of simplicity, we therefore adopt the following notation
\[
\Sigma :=
\begin{pmatrix}
1 & \rho\\
\rho & 1
\end{pmatrix} \text{ where } \rho\in(-1,1),
\qquad
R :=
\begin{pmatrix}
1 & r_2 \\
r_1 & 1
\end{pmatrix},
\qquad
\mu := \begin{pmatrix}
\mu_1 \\
\mu_2
\end{pmatrix} \text{ where } \mu_1^2+\mu_2^2=1.
\]
The existence and uniqueness conditions are now well understood \cite{reiman_boundary_1988,taylor_existence_1993}.
\begin{prop}[Existence and uniqueness]
SRBM in the quadrant exists for any initial distribution of $Z_0$ if and only if
\begin{equation}
\label{intro:existence}
(
1 - r_1 r_2 > 0 ) 
\quad \text{or} \quad 
(
r_1>0,\; r_2>0).
\end{equation}
In this case, the solution is unique in distribution for any given initial distribution.
\end{prop}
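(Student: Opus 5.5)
The condition \eqref{intro:existence} says exactly that $R$ is \emph{completely-$\mathcal{S}$}: some positive combination of the columns $R_1,R_2$ points into the open quadrant, and each principal submatrix has this property too. With the normalization $r_{11}=r_{22}=1$, the $1\times 1$ principal submatrices are automatic. So the content is the existence of $\lambda\in\Rp^2$ with $R\lambda>0$ componentwise. If $r_1,r_2>0$ we take $\lambda=(1,1)$. If $1-r_1r_2>0$, we look for $\lambda_1+r_2\lambda_2>0$ and $r_1\lambda_1+\lambda_2>0$ with $\lambda_1,\lambda_2>0$; this is possible unless both columns leave the open quadrant on opposite sides in a way forced by $r_1r_2\ge 1$ with $r_1,r_2<0$. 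So I would begin with this elementary linear-algebra step: in dimension two, \eqref{intro:existence} is equivalent to $R$ being completely-$\mathcal{S}$. I will then prove that this is necessary and sufficient, following Reiman--Williams and Taylor--Williams.

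\textbf{Necessity.} Suppose \eqref{intro:existence} fails, i.e.\ $r_1,r_2\le 0$ (not both $>0$) with $r_1r_2\ge1$, so both are negative and $r_1r_2\ge 1$. Start $Z_0=0$ and apply a linear functional. Choose $\lambda\ge 0$, $\lambda\ne0$, with $\lambda^\top R\le 0$; it exists because $R$ is not $\mathcal{S}$ and Ville's alternative applies. Then $\lambda^\top Z_t=\lambda^\top(B_t+\mu t)+\lambda^\top R L_t\le \lambda^\top(B_t+\mu t)$. The left side is $\ge0$, while the right side, a one-dimensional Brownian motion with drift started at $0$, becomes negative at arbitrarily small times almost surely. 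This contradicts $Z_t\in\Rp^2$. So no solution exists from the vertex, which proves the ``only if'' direction.

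\textbf{Sufficiency and uniqueness.} Here I would invoke the general theorem of Taylor and Williams \cite{taylor_existence_1993}: for nondegenerate $\Sigma$ and completely-$\mathcal{S}$ $R$, there is a weak solution unique in law, forming a Feller strong Markov process. The proof I would follow goes in four steps.
\begin{enumerate}[label=(\roman*)]
\item Away from the vertex only one face is active. There the Skorokhod map for one-dimensional oblique reflection with $r_{ii}=1$ is explicit and Lipschitz, giving pathwise existence and uniqueness up to the first hitting of the origin.
\item Establish a priori tightness bounds on $L$ using the vector $\lambda$ with $R\lambda>0$. The test functional $\langle R^{-\top}\cdot,\cdot\rangle$ (or a Lyapunov-type linear function) controls $\|L_t\|$ by the oscillation of $B$.
\item Construct a solution by approximation: smooth the corner, or patch the solutions from (i) and restart. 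Tightness from (ii) yields a weak limit, and the limit satisfies \eqref{eq:srbm}.
\item Prove uniqueness in law through the associated submartingale problem.
\end{enumerate}
When $1-r_1r_2>0$ we are in the $\mathcal{P}$-matrix case. There the Harrison--Reiman Skorokhod map is Lipschitz, which gives strong existence and uniqueness directly (\cite{reiman_boundary_1988}).

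\textbf{Main obstacle.} The hard step is uniqueness in law when $r_1,r_2>0$ with $r_1r_2\ge1$. In that case the Skorokhod map is not unique pathwise, so one must show that the process spends zero time at the vertex and that all weak solutions agree. I would follow Taylor--Williams, who use the Varadhan--Williams wedge theory after a linear map to a wedge, together with the submartingale problem and local uniqueness near the corner.
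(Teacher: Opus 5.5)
Your route is the paper's route. The paper gives no proof of its own: it simply cites Reiman--Williams \cite{reiman_boundary_1988} for the necessity of the completely-$\mathcal{S}$ condition and Taylor--Williams \cite{taylor_existence_1993} for weak existence and uniqueness in law. Your reduction of \eqref{intro:existence} to completely-$\mathcal{S}$ and your Ville-alternative argument from the vertex are a correct two-dimensional specialization of the first reference. For instance, $\lambda=(-r_1,1)$ gives $\lambda^\top R=(0,1-r_1r_2)\le 0$, and $\lambda^\top\Sigma\lambda>0$ because $|\rho|<1$.

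One aside is wrong, although nothing depends on it.
\begin{itemize}
\item The Harrison--Reiman contraction argument for the Skorokhod map needs the spectral radius of $|I-R|$, which is $\sqrt{|r_1r_2|}$, to be $<1$.
\item That is strictly stronger than the $\mathcal{P}$-matrix condition $1-r_1r_2>0$; it fails for example at $r_1=2$, $r_2=-2$.
\item So the case $r_1r_2\le -1$ does not follow ``directly'' from that argument, and it too must rest on Taylor--Williams, as your main argument already does.
\item The citation for that aside should be Harrison--Reiman, not \cite{reiman_boundary_1988}.
\item Likewise, $R^{-\top}$ in your step (ii) does not exist when $r_1r_2=1$.
\end{itemize}
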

This condition is equivalent to the existence of a convex combination of the reflection vectors that points into the interior of the quadrant. 
Intuitively, it ensures that the corner is not too attractive, so that the process does not become trapped there.

\paragraph{Degenerate case}
It is worth noting that, as shown in \cite{ichiba_karatzas_degenerate_22}, the process
still exists when the covariance matrix is degenerate, that is,
when $\det \Sigma = 0$, i.e. $\rho=\pm 1$. In this situation, the driving Brownian motion
reduces to a one-dimensional Brownian motion, but the reflected process is a two-dimensional process.

\subsection{From the quadrant to convex cones}
\label{intro:subsec:cone}

Studying reflected Brownian motion in the quadrant is equivalent to studying it in cones. 
It is therefore possible to generalize to cones all results obtained in the quadrant. 
Indeed, a suitable linear transformation 
$$
T=
\begin{pmatrix}
\frac{1}{\sqrt{1-\rho^2}} & -\frac{\rho}{\sqrt{1-\rho^2}}\\
0 & 1
\end{pmatrix}
$$
maps a Brownian motion in the quadrant 
(with arbitrary non-degenerate covariance matrix $\Sigma$) to a Brownian motion in a cone of angle
\begin{equation}
\beta = 
\arccos\left( 
- \rho
\right)
\in (0,\pi)
,
\label{eq:beta}
\end{equation}
with identity covariance matrix and drift $\widetilde \mu:=T\mu$. The reflection angles $\epsilon$ and $\delta$, and the 
drift angle $\vartheta$, are then given by
\[
\tan \delta=\frac{\sin \beta}{r_{2}+\cos \beta}, 
\qquad 
\tan \epsilon=\frac{\sin \beta}{r_{1}+\cos \beta}, 
\qquad 
\tan \vartheta=\frac{\sin \beta}{\mu_1 / \mu_2+\cos \beta},
\]
see Figure~\ref{fig:linear_transformation1}.

\begin{figure}[hbtp]
 \vspace{-5mm}
 \centering
\includegraphics[scale=0.65]{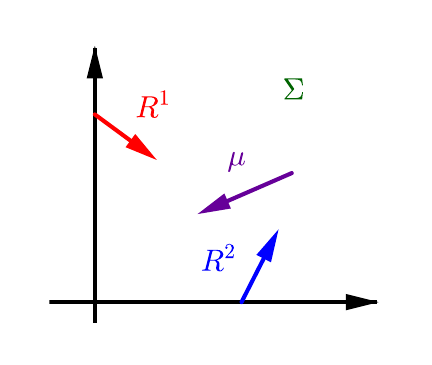} \hskip 10mm
\includegraphics[scale=0.65]{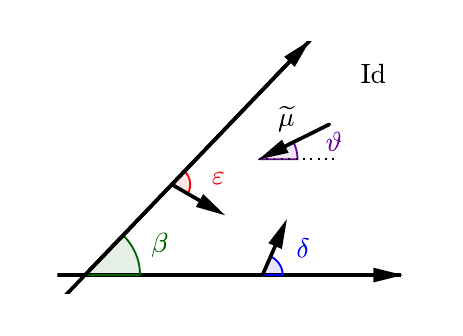}
 \vspace{-5mm}
 \caption{A linear transformation $T$ maps the quadrant to a cone of angle $\beta$; the new reflection angles are denoted $\epsilon$ and $\delta$, and the drift angle $\vartheta$}
 \label{fig:linear_transformation1}
\end{figure}

\paragraph{A key parameter}
We set
\begin{equation}
\label{intro:def:alpha}
\alpha=\frac{\delta+\epsilon-\pi}{\beta}.
\end{equation}
The value of this ratio $\alpha$ appears in many criteria and properties satisfied by the process.
For example, we find that the existence condition \eqref{intro:existence} of reflected Brownian motion as a semimartingale is equivalent to $\alpha<1$ \cite{Williams-85}.  
Also, $\alpha=0$ is the so-called \textit{Skew-symmetric case}, and the fact that $\alpha$ is a negative integer appears in the criterion of \citet{dieker_reflected_2009} for the stationary distribution to be a sum of exponentials.  
In the article \citeS{franceschi_bousquet_melou_price_hardouin_raschel_stationary_2023}, we show that the Laplace transform of the stationary distribution is differentially algebraic (see Section~\ref{sec:algdiffhier} for the definition) if $\alpha \in \mathbb{Z}+\frac{\pi}{\beta}\mathbb{Z}$.
Moreover, one can also show that, without drift, the process hits the origin almost surely if and only if $\alpha>0$.

Contrary to our current approach, the process could be defined not as a semimartingale (see Definition \ref{intro:def:MBsemimartingale}) but via a submartingale problem.  
See the article by \citet{varadhan_brownian_1985} for a rigorous study of such a process \textit{without drift} defined this way.  
If a solution to this problem exists, it is unique and satisfies the strong Markov property.  
A process satisfying the submartingale problem exists if and only if $\alpha< 2$.
When $\alpha \geqslant 2$, it is still possible to define a process absorbed or killed at $0$.


\subsection{Recurrence and transience conditions}
\label{intro:subsec:recurrence}

A Markov process exhibits two possible long-term behaviors: it is either
\emph{transient}, meaning that it escapes to infinity, or \emph{recurrent},
meaning that it returns infinitely often to every neighborhood of its starting
point. For a Feller strong Markov process $X$ on a locally compact state space $E$,
each state is either recurrent or transient, and communicating states share
the same classification. 
Transience is characterized by the fact that the process tends to infinity
almost surely, or equivalently that the Green measure of every compact set $K$ is
finite:
\[
G(x,K)=\mathbb{E}_x\!\left[\int_0^\infty \mathbf{1}_K(X_t)\,dt\right]<\infty.
\]
This quantity admits a natural interpretation: $G(x,K)$ is the expected total amount of time that the process, started from $x$, spends in the set $K$ over its entire lifetime.
Otherwise, when the process is not transient, the process is recurrent. In that case, there exists an invariant
measure (finite or $\sigma$-finite). A measure $\pi$ on $E$ is said to be
invariant if for all bounded measurable functions $f$ and all $t \ge 0$,
\[
\int_E \mathbb{E}_x[f(X_t)]\,\pi(dx)
=
\int_E f(x)\,\pi(dx).
\] 
This can be written in operator form as $\pi P_t=\pi$, where $P_t f(x)=\mathbb{E}_x[f(X_t)]$.
When the process is ergodic and $\pi$ is a probability measure, it admits the following interpretation: for any measurable set $A\subset E$,
\[
\pi(A) = \lim_{T\to\infty}\frac{1}{T}\int_0^T \mathbf{1}_{\{X_t\in A\}}\,dt
\quad \text{a.s.}
\]
In other words, $\pi(A)$ represents the 
proportion of time that the process spends in the set $A$.
The recurrent process is called \emph{positive recurrent} if the invariant
measure is finite, in which case it can be normalized into a stationary
probability distribution. Equivalently, expected return times to measurable
sets of positive measure are finite. 
It is called \emph{null recurrent} if the invariant measure is infinite
(but $\sigma$-finite), equivalently if expected return times to such sets
are infinite.

For SRBM in a wedge, the state space forms a single communicating class, so
the process is either entirely recurrent or entirely transient.
The following recurrence condition is due to Hobson and Rogers~\cite{hobson_recurrence_1993}, see also \cite{harrison_brownian_1987,
harrison_reflected_2009,franceschi_green_2021}. 

\begin{prop}[Recurrence and transience criterion]
Assume that the existence condition~\eqref{intro:existence} is satisfied and that the drift $\mu$ is non-zero.
Denote by $\mu_1^{-}$ and $\mu_2^{-}$ the negative parts of the drift components (defined by $\mu_i^-=\max(-\mu_i,0)$). 
The process $Z$ is transient if and only if
\begin{equation}
\mu_1 + r_{2}\mu_2^{-} > 0 
\quad \text{or} \quad 
\mu_2 + r_{1}\mu_1^{-} > 0,
\label{u11}
\end{equation}
and recurrent if and only if
\begin{equation}
\mu_1 + r_{2}\mu_2^{-} \le 0 
\quad \text{and} \quad 
\mu_2 + r_{1}\mu_1^{-} \le 0.
\label{v11}
\end{equation}
In the latter case, the process is positive recurrent and admits a unique stationary
distribution if and only if
\begin{equation}
\mu_1 + r_{2}\mu_2^{-} < 0 
\quad \text{and} \quad 
\mu_2 + r_{1}\mu_1^{-} < 0,
\label{v11pos}
\end{equation}
and is null recurrent if and only if
\[
\mu_1 + r_{2}\mu_2^{-} = 0 
\quad \text{or} \quad 
\mu_2 + r_{1}\mu_1^{-} = 0.
\]
In this case, the invariant measure is absolutely continuous with respect to the Lebesgue measure and we denote by $\pi$ its density.
\end{prop}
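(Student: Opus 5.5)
The plan is to argue by a fluid-limit / Lyapunov-function strategy in the spirit of Dupuis–Williams, combined with explicit test functions for the boundary cases. We read the two quantities $\mu_1 + r_2\mu_2^-$ and $\mu_2 + r_1\mu_1^-$ as the effective drifts felt by the process along the horizontal axis (where $Z^{(2)}$ is repeatedly pushed back up by $R_2=(r_2,1)^\top$) and along the vertical axis. When $\mu_2<0$, the process spends most of its time near the horizontal axis. There the local time $L^{(2)}$ grows at rate $\mu_2^-$, which compensates the drift $\mu_2$, so the first coordinate moves with net speed $\mu_1 + r_2\mu_2^-$. When $\mu_2\ge 0$ this reduces to $\mu_1$, and the process leaves the axis anyway. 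The first step is to make this rigorous. We consider the fluid paths $z(t)=\lim_n Z_{nt}/n$ (for $|Z_0|=n$) and solve the deterministic Skorokhod problem for the path $\mu t$ with reflection matrix $R$. Under the existence condition~\eqref{intro:existence} this problem is well posed: $R$ is completely-$\mathcal S$, with uniqueness in the $P$-matrix case $1-r_1r_2>0$. The fluid paths are then piecewise linear. In the interior they move with velocity $\mu$, and on the horizontal axis with velocity $(\mu_1+r_2\mu_2^-,0)$, and symmetrically on the vertical axis.

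Next comes the positive recurrence direction. Assume~\eqref{v11pos}. We will show that every fluid path reaches $0$ in time at most $C|z(0)|$ and stays there. Then Dai's fluid-limit theorem (or a direct Lyapunov function given by the fluid draining time, as in Dupuis–Williams) yields positive recurrence, and uniqueness of the stationary distribution follows from irreducibility. For transience, assume for instance $\mu_1+r_2\mu_2^->0$. Then a fluid path started on the horizontal axis escapes linearly along it. We will build a bounded superharmonic-type function, or use a large-deviation/martingale estimate on $Z^{(1)}_t-(\mu_1+r_2\mu_2^-)t$ over excursions away from the axis. This shows that with positive probability $Z^{(1)}_t\to\infty$, hence $|Z_t|\to\infty$, and therefore the process is transient by the dichotomy recalled above. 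Alternatively, when $\mu_1>0$ and $\mu_2>0$, transience is immediate since each coordinate dominates a Brownian motion with positive drift.

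The critical cases, where one of the quantities vanishes and the other is $\le0$, are where we expect the main obstacle. Fluid limits only show that the drift is zero along that axis and do not decide between null recurrence and transience. Here we would use the one-dimensional reduction: near the relevant axis, the tangential coordinate behaves like a driftless one-dimensional reflected diffusion, after time-changing by the time spent near the axis. We would construct a Lyapunov function of the form $f(z)=\log|z|$, suitably corrected near the boundary so that its oblique derivative $R_j\cdot\nabla f$ has the correct sign. This gives recurrence. It also gives non-integrability of return times, and hence an infinite invariant measure, by comparison with recurrent one-dimensional Brownian motion. Finally, absolute continuity of the invariant measure will follow from the fact that the process spends zero Lebesgue time on the boundary and has a transition density in the interior (hypoellipticity of the nondegenerate generator), so $\pi=\int\pi(dx)P_t(x,\cdot)$ inherits a density. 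Since the four regimes partition the parameter space, the stated ``if and only if'' characterizations follow once each direction has been established.
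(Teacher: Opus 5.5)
The paper does not prove this proposition; it defers to Hobson--Rogers, who argue with explicit Lyapunov functions. So I compare your plan with what such a proof requires.

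Your handling of the non-critical regimes is sound. Under \eqref{v11pos}, $R$ is necessarily a $P$-matrix with $R^{-1}\mu<0$, fluid paths are unique and drain to $0$, and Dupuis--Williams applies. Under \eqref{u11}, the escape estimate works, because before $Z$ reaches $\{z_1=0\}$ the coordinate $Z^{(2)}$ is a one-dimensional reflected Brownian motion with $L^{(2)}_t/t\to\mu_2^-$. One slip: when $\mu_1,\mu_2>0$, a coordinate need not dominate a drifted Brownian motion if $r_1<0$ or $r_2<0$. Use instead $v^\top Z$ with $v>0$ and $v^\top R\ge 0$ (such $v$ exists under \eqref{intro:existence}), or simply your axis-escape argument.

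\textbf{The gap: the critical case, recurrence.} Take $\mu_1+r_2\mu_2^-=0$. This forces $\mu_2<0$ and $\mu_1=r_2\mu_2$, i.e.\ $\mu=-\mu_2^-R_2$: the drift is exactly opposite to the reflection vector. Near $\{z_2=0\}$ the interior drift and the boundary push therefore cancel only \emph{on average}, at the leading order $1/|z|$.

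As a result, $\log|z|$ cannot be repaired by a boundary-layer correction when $r_2\neq 0$. A correction $\psi(z_2)/z_1$ would need $\tfrac12\psi''+\mu_2\psi'+\mu_1\le 0$ and $\psi'(0)\le -r_2$. Gr\"onwall then gives $\psi'+r_2\le(\psi'(0)+r_2)e^{2\mu_2^- z_2}\le 0$, with either $\psi'\equiv -r_2$ or $\psi'\to-\infty$. In both cases $\psi$ is unbounded, so the correction is not a boundary correction at all.

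An admissible function must instead be built from the linear functional $\ell(z)=z_1-r_2z_2$, which is annihilated by both $\mu\cdot\nabla$ and $R_2\cdot\nabla$. Then $\mathcal{G}\log\ell=-\sigma^2/(2\ell^2)$ with $\sigma^2=1-2\rho r_2+r_2^2$, so the sign is decided at second order. Moreover, $\log\ell$ still has to be glued to a different angular profile near $\{z_1=0\}$, using the strict inequality on the other axis. There $\ell<0$ if $r_2>0$, and $R_1\cdot\nabla\log\ell=(1-r_1r_2)/\ell$ has the wrong sign if $r_2<0$. None of this is in your sketch.

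\textbf{The gap: the critical case, non-integrability.} A supermartingale Lyapunov function only bounds return times from above, so it cannot give their non-integrability. What you need is the exact identity
\[
\ell(Z_t)=\ell(z_0)+\bigl(B^{(1)}_t-r_2B^{(2)}_t\bigr)+(1-r_1r_2)L^{(1)}_t ,
\]
in which the $t$-terms and $L^{(2)}$-terms cancel. Combine it with $1-r_1r_2\ge 0$, which holds in every critical recurrent case (the only non-$P$ one has $r_1r_2=1$). This gives $\ell(Z_t)\ge \ell(z_0)+W_t$ with $W$ a driftless Brownian motion. Hence, from points with large $\ell$, the hitting time of any compact set dominates a Brownian level-crossing time and has infinite mean.

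This identity is the rigorous form of your ``driftless one-dimensional'' heuristic. Without it, your sketch establishes neither half of the null-recurrent claim.
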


The conditions reflect a competition between the drift and the reflection vectors.
The quantities
\[
\mu_1 + r_{2}\mu_2^- 
\quad \text{and} \quad 
\mu_2 + r_{1}\mu_1^-
\]
admit a natural probabilistic interpretation. The term
$
\mu_1 + r_{2}\mu_2^-
$
represents the \emph{effective drift in the horizontal direction}, obtained by combining the interior drift $\mu_1$ with the average horizontal push generated by reflections on the boundary $\{x_2=0\}$. Indeed, when $\mu_2<0$, the process hits this boundary at a linear rate proportional to $\mu_2^-$, and each reflection contributes a horizontal component $r_{12}$. A completely symmetric interpretation holds for
$
\mu_2 + r_{1}\mu_1^-,
$
which corresponds to the \emph{effective vertical drift}.

The recurrence or transience of the process is therefore governed by the signs of these effective drifts: the process is transient as soon as one of them is strictly positive (providing an average escape direction), while it is recurrent when both are non-positive. Positive recurrence occurs when both are strictly negative, ensuring an overall inward drift, whereas null recurrence corresponds to the critical case where at least one effective drift vanishes.
These recurrence conditions are similar to those for reflected random walks in the quadrant \cite{fayolle_random_2017}.


Note that the recurrence conditions \eqref{u11} and \eqref{v11} do not involve the covariance matrix $\Sigma$. This is because we are in the non-zero drift case, and the drift dominates at large times and distances in the global behavior of the diffusion. In the zero-drift case, the recurrence condition can be expressed thanks to $\alpha$ defined in equation \eqref{intro:def:alpha} which depends on the covariance and the reflection vectors and the stationary distribution is explicit \citet{williams_recurrence_1985}.
\begin{prop}[Invariant measure, driftless case]
When $\mu=0$, the process is recurrent and admits an invariant measure if and only if $$0 \leqslant \alpha<2.$$
If we denote $p(r,\theta)$ the density of the invariant measure in polar coordinates of the corresponding reflected Brownian motion in a cone of angle $\beta$ represented in Figure~\ref{fig:linear_transformation1}, then 
we have $$p(r,\theta)=r^{-\alpha}\sin (\delta-\alpha\theta).$$
\end{prop}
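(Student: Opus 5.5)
The plan is to work in the cone picture of Section~\ref{intro:subsec:cone}. There the process is a standard planar Brownian motion (identity covariance, zero drift) in the wedge $\{0<\theta<\beta\}$. On the edge $\theta=0$ it is reflected along a direction making angle $\epsilon$ with that edge, and on the edge $\theta=\beta$ along a direction making angle $\delta$.

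A density $p$ of an invariant measure should satisfy the stationary Fokker--Planck (adjoint) problem: $\Delta p=0$ in the interior, together with oblique boundary conditions on each edge. These conditions come from the basic adjoint relationship (to be derived in Section~\ref{sec:BAR}). In weak form,
\[
\int_{\text{cone}} \tfrac12\Delta f\, p + \sum_{i}\int_{\text{edge}_i} (\partial_{v_i} f)\, p \,d\sigma_i=0
\quad\text{for all } f\in C_c^2,
\]
where $v_i$ is the reflection vector on edge $i$. Integrating by parts (Green's formula) turns this into a condition on each edge of the form $\partial_n p - 2\partial_{\tau}(c_i p)=0$. Here $n$ is the inward normal, $\tau$ is the tangent, and $c_i=\cot\epsilon$ or $\cot\delta$ is the tangential component of the reflection vector, with the sign fixed by the orientation. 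This is the adjoint oblique condition: the tangential reflection component appears with the opposite sign.

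The first step is to check that $p(r,\theta)=r^{-\alpha}\sin(\delta-\alpha\theta)$ satisfies this problem. Harmonicity is immediate, since $\operatorname{Im}\bigl(e^{i\delta} z^{-\alpha}\bigr)=r^{-\alpha}\sin(\delta-\alpha\theta)$ is the imaginary part of an analytic function on the cone. In polar coordinates the boundary condition at $\theta=0$ reads $\frac1r\partial_\theta p \propto \partial_r p$ with a coefficient $\cot\epsilon$ (up to sign). Substituting gives $-\alpha\cos\delta$ against $-\alpha\sin\delta\cot(\cdot)$, which matches once we use the definition $\alpha=(\delta+\epsilon-\pi)/\beta$. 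At $\theta=\beta$ we have $\delta-\alpha\beta=\pi-\epsilon$, so $\sin(\delta-\alpha\beta)=\sin\epsilon$ and $\cos(\delta-\alpha\beta)=-\cos\epsilon$, and the condition there reduces to the analogous identity with $\epsilon$.

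Positivity of $p$ on $[0,\beta]$ is easy to see. The argument $\delta-\alpha\theta$ moves linearly from $\delta$ to $\pi-\epsilon$, and both endpoints lie in $(0,\pi)$. Local integrability near the vertex requires $\int_0 r^{1-\alpha}dr<\infty$, i.e. $\alpha<2$.

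The second step is to justify that this weak stationary solution really is invariant, i.e. that $\pi P_t=\pi$. I would verify the BAR directly for $f\in C^2_c$ by Green's formula on the cone with a small disc around the vertex removed, of radius $\eta$, and then let $\eta\to0$. The boundary term on the small arc is of order $\eta\cdot\eta^{-\alpha-1}\cdot\sup|\nabla f|$, plus a term coming from $f(0)$. This term vanishes or is controlled exactly in the regime $0\le\alpha<2$. For the converse direction I would invoke uniqueness of the invariant measure of the (submartingale-problem) process, due to Varadhan--Williams \cite{varadhan_brownian_1985,williams_recurrence_1985}. This uniqueness is needed to pass from ``a BAR solution'' to ``the invariant measure''. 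I expect this vertex analysis to be the main obstacle, because the vertex term involves $f(0)$ times a flux constant proportional to $\alpha$.

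For the ``only if'' part, the same computation decides the boundary cases. When $\alpha<0$, the vertex flux has the wrong sign: the process started near the vertex escapes, which is transience, as in the radial Bessel-type comparison $R_t$ of dimension $2-\alpha>2$. When $\alpha\ge2$, the process hits the vertex and cannot be continued as a submartingale solution, and moreover $p$ is not locally integrable there. For $0\le\alpha<2$, the radial part behaves like a Bessel process of dimension $2-\alpha\in(0,2]$, which is recurrent. I would make this radial comparison precise using Itô's formula applied to $\phi(Z_t)=r^{\alpha}\Phi(\theta)$, with $\Phi$ chosen so that $\phi$ is harmonic with zero oblique derivative. This is the standard Varadhan--Williams harmonic-function argument.
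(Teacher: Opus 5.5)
The paper gives no argument of its own here and simply quotes Williams \cite{williams_recurrence_1985}. Your last paragraph is the right tool for the recurrence dichotomy: $\phi=r^{\alpha}\sin(\delta-\alpha\theta)$ is positive on the wedge, harmonic, and has zero oblique derivative on both edges (at $\alpha=0$ use $\log r-\theta\cot\delta$ instead). The genuine gap is in Step~2, the identification of $p$ as \emph{the} invariant measure. The vertex analysis you single out there is set up incorrectly.

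Two repairs to Step~1 come first. $T$ sends $\{z_2=0\}$ to $\theta=0$, so the reflection angle there is $\delta$, and $\epsilon$ sits on $\theta=\beta$. With your assignment the check at $\theta=0$ fails in general; with the correct one it holds identically, and the definition of $\alpha$ is used only at $\theta=\beta$. Also, the edge measures have density $\tfrac12 p$, not $p$; otherwise the $\partial_n f$ terms do not cancel. Hence the adjoint condition is $\partial_n p-c_i\partial_\tau p=0$, without the factor $2$.

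With these repairs, Green's formula on $\{r>\eta\}$ shows that the $f(0)$-terms cancel \emph{exactly for every} $\alpha$. The edge integrations by parts give $-\tfrac12\eta^{-\alpha}(\cos\delta+\cos\epsilon)f(0)$, and the arc gives $-\tfrac{\alpha}{2}\eta^{-\alpha}f(0)\int_0^\beta\sin(\delta-\alpha\theta)\,d\theta=+\tfrac12\eta^{-\alpha}(\cos\delta+\cos\epsilon)f(0)$. So there is no flux constant proportional to $\alpha$, and the remainder is $O(\eta^{1-\alpha})$.

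Consequently the vertex computation cannot do what you ask of it.

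For $\alpha<0$, the same $p$ is positive, locally integrable and satisfies the full BAR. Nothing at the vertex excludes $\alpha<0$: the ``only if'' is transience, and it must come from $\phi$.

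The real obstruction is the range $1\le\alpha<2$. There the remainder does not vanish, and $\int_0 r^{-\alpha}\,dr=\infty$, so the edge terms of the BAR diverge for $f\in C^2_c$. Moreover $Z$ is not a semimartingale there ($\alpha<1$ is exactly the SRBM existence condition), so no BAR of the form you wrote is available. With Varadhan--Williams test functions (constant near the vertex) the identity does hold. But by the exact cancellation it holds for \emph{every} $\alpha$, including $\alpha\ge2$, where $p$ is not even locally integrable. So it cannot imply invariance on its own; you need an additional argument that uses local integrability and the zero time spent at the vertex.

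Finally, even for $0\le\alpha<1$, uniqueness of the invariant measure does not turn a BAR solution into an invariant measure. The characterization recalled after Proposition~\ref{prop:bar} is for integrable densities, whereas here $\pi$ is infinite. You need one of two things:
(a) an Echeverr\'ia-type theorem for $\sigma$-finite measures; or
(b) the reverse route: show that the invariant measure, unique by recurrence, satisfies the BAR, then prove uniqueness of positive solutions of the adjoint problem.

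Note that the constant density solves the interior equation and both edge conditions. It is ruled out only by its vertex term $-\tfrac12(\cot\delta+\cot\epsilon)f(0)$, which is nonzero for $\alpha\neq0$. That, not the sign of $\alpha$, is where the vertex flux matters.
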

Also, an explicit general expression for the stationary distribution of obliquely reflected Brownian motion \emph{without drift} in a planar domain was obtained in terms of a conformal map in \cite{harrison_landau_shepp_1985}.
When the drift is not zero, determining this invariant measure is, as we shall see, much more complicated.

\section{Absorption and escape probabilities}
\label{sec:absesc}

\subsection{Escape to infinity and absorption at the vertex}

We define the stopping time
$$
T:=\inf \{t>0: Z_t=0\} .
$$
Even when the existence condition~\eqref{intro:existence} is not satisfied, i.e. when $\alpha \geqslant 1$, the process $Z_t$ is well-defined until it hits the vertex at time $T$, which amounts to considering the process $\left({Z}_t\right)_{0 \leqslant t \leqslant T}$, see~\cite{taylor_existence_1993,lakner2023,franceschi_ernst_huang_escape_2021}. 

The probability of hitting the vertex,
$
\mathbb{P}(T<\infty)
$,
is sometimes called the ruin probability or the absorption probability when the process is killed at the vertex. The probability of not hitting the vertex, $\mathbb{P}(T=\infty)$, is called the survival or escape probability, depending on the context. In the context of a three-particle system, the gap process is a Brownian motion reflected within a quadrant, and the fact that $T=\infty$ means that there are no triple collisions. 

The driftless case $(\mu=0)$ was treated by Varadhan and Williams \cite{varadhan_brownian_1985}.  
In this case, the absorption probability does not depend on the starting point.
\begin{prop}[Probability of hitting the origin, driftless case]
When the drift is zero, i.e., $\mu=0$, we have
$$
\mathbb{P}[T<\infty]= \begin{cases}1 & \text { if } \alpha>0, \\ 0 & \text { if } \alpha \leqslant 0. \end{cases}
$$
If $\alpha \leqslant 0$, the vertex is not reached.  
If $0<\alpha<2$, the vertex is reached, but the amount of time spent at the vertex has zero Lebesgue measure.  
If $\alpha \geqslant 2$, the process hits the vertex and stays there.
\end{prop}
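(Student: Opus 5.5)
We work in the cone picture of Section~\ref{intro:subsec:cone}: after applying $T$, the process is a standard planar Brownian motion in the wedge $\{0\le\theta\le\beta\}$, with reflection angles $\delta$ and $\epsilon$ on the two edges. The plan is the classical Varadhan--Williams Lyapunov-function argument.

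\textbf{Step 1: a harmonic function in polar coordinates.} Consider
\[
\Phi(r,\theta)=r^{\alpha}\cos(\alpha\theta-\theta_1)\quad (\alpha\neq 0),\qquad \Phi=\log r+\text{angular correction}\quad(\alpha=0),
\]
with $\theta_1$ chosen so that the oblique derivative of $\Phi$ along each reflection direction vanishes on the corresponding edge. Since $\Phi$ is the real part of $e^{-i\theta_1}z^{\alpha}$ (respectively of $\log z$), it is harmonic in the interior.

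On the edge $\theta=0$ the reflection vector makes angle $\delta$ with the boundary. The condition $\partial_v\Phi=0$ reduces to
\[
\cos\delta\,\partial_r\Phi+\sin\delta\, r^{-1}\partial_\theta\Phi=0,
\]
which fixes $\theta_1$ in terms of $\delta$. The analogous condition on $\theta=\beta$ is compatible with the same $\theta_1$ precisely when $\alpha\beta=\delta+\epsilon-\pi$. This identity is exactly the definition \eqref{intro:def:alpha} of $\alpha$, and this compatibility is where that definition comes from. With the correct sign choice, the angular factor $\cos(\alpha\theta-\theta_1)$ is positive on the whole closed wedge; this is essentially the $\sin(\delta-\alpha\theta)$ factor already seen in the invariant density. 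Consequently $\Phi(Z_t)$ is a local martingale up to time $T$: the interior drift vanishes because $\Phi$ is harmonic, and the boundary terms vanish because the oblique derivatives are zero.

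\textbf{Step 2: hitting the vertex.} Let $\tau_\varepsilon$ and $\tau_M$ be the exit times through $\{r=\varepsilon\}$ and $\{r=M\}$, and apply optional stopping to the bounded martingale $\Phi(Z_{t\wedge\tau_\varepsilon\wedge\tau_M})$. The exit is almost surely finite, since on the annulus the radial part behaves like a Bessel-type process and cannot stay trapped.

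Taking $0<\varepsilon<r_0<M$ and bounding the angular factor above and below by positive constants gives
\[
\mathbb{P}(\tau_\varepsilon<\tau_M)\asymp \frac{M^{\alpha}-r_0^{\alpha}}{M^{\alpha}-\varepsilon^{\alpha}}.
\]
The cases then separate as follows.
\begin{itemize}[label=$\circ$]
\item If $\alpha>0$, let $\varepsilon\to0$: the probability of reaching $0$ before $r=M$ is at least $1-C(r_0/M)^{\alpha}$. Letting $M\to\infty$ and using that the radius is recurrent, so that $r=M$ is reached and the argument restarts, we get $\mathbb{P}(T<\infty)=1$.
\item If $\alpha<0$, letting $\varepsilon\to0$ shows that $\mathbb{P}(\tau_\varepsilon<\tau_M)\to0$, hence $T=\infty$ almost surely.
\item If $\alpha=0$, the same computation with $\log r$ gives $\mathbb{P}(\tau_\varepsilon<\tau_M)=\log(M/r_0)/\log(M/\varepsilon)\to0$.
\end{itemize}
Since these bounds depend only on the ratio of radii, the answer does not depend on the starting point.

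\textbf{Step 3: behaviour at the vertex.} For $\alpha\le0$ nothing more is needed. For $\alpha>0$ we must describe what happens after $T$, via the submartingale-problem construction.

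When $\alpha<2$ (which includes the semimartingale range $\alpha<1$), one checks that $\mathbb{E}\int_0^\infty \mathbf 1_{\{Z_t=0\}}dt=0$. The idea is to apply It\^o's formula to $|Z|^2$: its generator equals $2$ times the time spent away from the vertex. An exit-from-zero estimate then shows that the process leaves the vertex instantly when $\alpha<2$.

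When $\alpha\ge2$, the function $r^{2-\alpha}$ (or $\log r$ at $\alpha=2$), combined with the appropriate angular factor, shows that starting near $0$ the process cannot escape. Varadhan--Williams show that the only solution of the submartingale problem is then absorbed at $0$.

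\textbf{Main obstacle.} This last dichotomy at $\alpha=2$ is the delicate part: the vertex is a non-smooth point, and the construction of the process from the vertex, together with uniqueness, must be handled carefully. I would invoke the submartingale-problem results of \cite{varadhan_brownian_1985} for this step rather than reprove them.
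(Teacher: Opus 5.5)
The paper gives no proof of this proposition; it only attributes it to \cite{varadhan_brownian_1985}. Your Steps 1--2 are exactly the Varadhan--Williams argument. In the paper's notation your function is $\Phi=r^{\alpha}\sin(\delta-\alpha\theta)$, which is your formula with $\theta_1=\delta-\pi/2$. Since $\delta-\alpha\theta$ runs from $\delta$ to $\pi-\epsilon$ on $[0,\beta]$, the angular factor does stay in $[\min(\sin\delta,\sin\epsilon),1]$, and optional stopping in annuli gives the hitting dichotomy.

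Two small repairs are needed:
\begin{enumerate}
\item For $\alpha=0$ the function is $\log r-\theta\cot\delta$. Your ratio $\log(M/r_0)/\log(M/\varepsilon)$ is therefore correct only up to an $O(1)$ term, which is harmless in the limit.
\item For $\alpha>0$, knowing $\tau_\varepsilon<\tau_M$ for all $\varepsilon$ does not by itself show that $\lim_{\varepsilon\to0}\tau_\varepsilon$ is finite.
\end{enumerate}
Both points are settled at once by noting that $|\nabla\Phi|^2=\alpha^2r^{2\alpha-2}$. By It\^o's formula, $\Phi(Z_t)^{1/\alpha}$ (resp.\ $e^{\Phi(Z_t)}$ if $\alpha=0$) is then, on $[0,T)$ and after a time change with rate bounded above and below, a Bessel process of dimension $2-\alpha$. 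Step 2 reduces to the fact that such a process hits $0$ in finite time if and only if $2-\alpha<2$.

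Step 3 is where your sketch goes wrong, even though you, like the paper, ultimately defer to \cite{varadhan_brownian_1985}. The function $r^{2-\alpha}$ plays no role. The function $\log r$ belongs to $\alpha=0$, not $\alpha=2$; at $\alpha=2$, $\Phi=r^2\sin(\delta-2\theta)$ is a quadratic form.

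For $\alpha\ge2$, the mechanism is that $\Phi$ itself can be used as a test function through the vertex. Take $\chi_n$ smooth, with $\chi_n(s)=s$ for $s\ge2/n$, $\chi_n$ constant on $[0,1/n]$, and $|\chi_n''|\le Cn$. Then $\chi_n(\Phi)$ is constant near $0$ and has zero oblique derivatives. Moreover, $\Delta\chi_n(\Phi)=\chi_n''(\Phi)|\nabla\Phi|^2=O(n^{2/\alpha-1})$, supported in $\{1/n\le\Phi\le2/n\}$. This is uniformly bounded exactly when $\alpha\ge2$. Inserting $\pm\chi_n(\Phi)$ into the submartingale problem and letting $n\to\infty$ shows that $\Phi(Z_t)$ is a local martingale even across visits to the vertex. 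Since $\Phi\ge0$ vanishes only at $0$, the process cannot leave the vertex. For $\alpha<2$ the bound blows up, and this is the source of the threshold. It is consistent with the Bessel picture: $0$ is instantaneously reflecting in dimension $2-\alpha\in(0,2)$ and absorbing in dimension $\le0$.

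Your $|Z|^2$ argument for $0<\alpha<2$ does not work. Where It\^o's formula applies, it gives $2t$ plus boundary terms $2\langle Z_t,R_i\rangle\,dL^i_t$ of indefinite sign; for example $2r_1Z^{(2)}_t\,dL^1_t$ on $F_1$ is negative if $r_1<0$. For $1\le\alpha<2$ the process is not a semimartingale at all.

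The correct picture for zero time at the vertex is as follows. For $\alpha<1$ it is immediate from the occupation-time formula for $Z^{(1)}$, whose quadratic variation is $t$. For $1\le\alpha<2$ it is part of the Varadhan--Williams definition of the submartingale problem. What has to be proved, and cited, is the existence and uniqueness of a solution started from the vertex.
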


The case of non-zero drift poses a new challenge, as it is possible to have 
$$0<\mathbb{P}_{z}[T<\infty]<1$$
where this probability depends on the starting point $z$, see for example \citeS{franceschi_ernst_huang_escape_2021,franceschi_raschel_dual_2022,franceschi_flin_sumofexponential_2024}.
Define the hitting and non-hitting probabilities by
\[
h^0(z):=\mathbb{P}_z(T<\infty),
\qquad
h^\infty(z):=\mathbb{P}_z(T=\infty).
\]
and so
$$
h^0(z)+h^\infty(z)=1.
$$
When $\alpha \geq 1$, there is an almost sure equivalence between infinite lifetime and escape to infinity, see \citeS{franceschi_ernst_huang_escape_2021}. We write
$
Z_t \to \infty
$
to mean that
$
\|Z_t\|_2 \to \infty
$
as $t\to\infty$.
\begin{prop}[Hitting versus escape to infinity and boundary behavior at $0$ and at infinity]
Assume that $\alpha \geq 1$ and $\mu\neq 0$.
The events $\{T=\infty\}$ and $\{Z_t  \to \infty\}$ coincide almost surely, i.e.,
\[
\mathbb{P}\big(T=\infty,\; Z_t \not\to \infty\big)=0
\quad \text{and} \quad
\mathbb{P}\big(T<\infty,\; Z_t \to \infty\big)=0.
\]
Moreover,
\[
h^0(0)=1-h^\infty(0)=1,
\qquad
\lim_{z\to\infty} h^0(z)=1-\lim_{z\to\infty}h^\infty(z)=0.
\]
\end{prop}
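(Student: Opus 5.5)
The plan has three parts: a dichotomy on $\{T=\infty\}$, absorption started from the vertex, and escape from far away. Throughout we work with the process stopped at $T$, which is well defined up to $T$ even when $\alpha\ge 1$.

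\textbf{Step 1: the dichotomy on $\{T=\infty\}$.} On $\{T=\infty\}$ the process is a genuine SRBM-type diffusion that never touches the vertex. We show that $Z_t$ either converges to infinity or visits some compact set $K=\{\|z\|\le r\}$ infinitely often at arbitrarily large times. In the second case we use a Lévy/Borel--Cantelli (conditional) argument. We need a uniform lower bound
\[
\inf_{z\in K}\mathbb{P}_z(T<1)\ge c>0 .
\]
Given this bound, the strong Markov property at successive return times to $K$, separated by time at least $1$, gives $T<\infty$ a.s. on that event. This proves $\mathbb{P}(T=\infty, Z_t\not\to\infty)=0$.

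\textbf{Main obstacle: the uniform lower bound.} We must show the vertex is hit with positive probability, uniformly from a compact set, when $\alpha\ge1$. We would use a Lyapunov function near the corner in the cone coordinates of Section~\ref{intro:subsec:cone}. In polar coordinates, $\phi(r,\theta)=r^{\alpha}\cos(\alpha\theta-\ldots)$, the Varadhan--Williams harmonic function adapted to the reflection angles $\epsilon,\delta$, satisfies the oblique boundary conditions. Its analogue with $\alpha$ replaced by a slightly smaller exponent is a local supermartingale for the driftless process. The drift only adds a lower-order term $O(r^{\alpha-1})$, compared with the $r^{\alpha-2}$ scale of the Laplacian near $0$. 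This shows that from a small ball around $0$ the process hits $0$ before leaving a larger ball with probability bounded below. This is the driftless fact ``$\alpha>0\Rightarrow$ hit'' of the Varadhan--Williams proposition, localized. Support-theorem (Stroock--Varadhan) arguments then let us reach that small ball from any point of $K$ with uniformly positive probability. Combining the two gives $c>0$.

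\textbf{Step 2: $h^0(0)=1$.} The same local argument shows that when $\alpha\ge1$ the vertex is ``absorbing from nearby'': $h^0(z)\to1$ as $z\to0$, since the escape probability from the small ball tends to $0$ as the starting point approaches the vertex. By convention $T=0$ (or $0^+$) from $z=0$, so $h^0(0)=1$.

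\textbf{Step 3: the second null event and the limit at infinity.} We show $\mathbb{P}(T<\infty, Z_t\to\infty)=0$. On $\{T<\infty\}$ the process is stopped at $0$, so $Z_t\to0$; hence the event is trivially null. For $\lim_{z\to\infty}h^0(z)=0$ we split according to whether the process is transient or recurrent in the sense of the criterion \eqref{u11}--\eqref{v11}:

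1. \emph{Interior escape direction.} When there is an escape direction with $\mu$ pointing into the interior, a large-deviation/law-of-large-numbers argument for $B_t+\mu t$ shows that, started far away, the path stays at distance $\gtrsim\|z\|$ from the vertex with probability $\to1$.
2. \emph{Escape along an axis.} When escape occurs along an axis via the effective drift $\mu_1+r_2\mu_2^-$, we compare with a one-dimensional reflected Brownian motion with that drift.

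If no escape direction exists, the dichotomy would force $h^0\equiv1$. We then need to check that this is consistent with the stated limit, which presumably holds only in the transient regime. We would verify this case carefully, expecting to restrict to \eqref{u11}.
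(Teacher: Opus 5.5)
The survey gives no proof of this proposition; it is quoted from \cite{franceschi_ernst_huang_escape_2021}, so your plan has to stand on its own. Your architecture for the dichotomy is the right one: a uniform hitting bound on compacts, then the strong Markov property at successive returns. The claims $h^0(0)=1$ and $\mathbb{P}(T<\infty,\,Z_t\to\infty)=0$ are indeed immediate from absorption at the vertex. There are, however, two genuine gaps.

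\textbf{The limit at infinity.} Your Step~3 does not prove $\lim_{z\to\infty}h^0(z)=0$. Your suspicion is justified: under the printed hypothesis $\mu\neq0$ alone the limit is false. Note that $\alpha\ge1$ forces $r_1,r_2<0$ and $r_1r_2\ge1$. Suppose $\mu_1,\mu_2<0$. The terms $r_2L^{(2)}$ and $r_1L^{(1)}$ are nonincreasing, so monotonicity of the one-dimensional Skorokhod map shows that, up to $T$, each $Z^{(i)}$ is dominated by the one-dimensional RBM driven by $z_i+B^{(i)}_t+\mu_it$. These two dominating processes form a positive recurrent SRBM with orthogonal reflection. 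Hence $Z_t\not\to\infty$ on $\{T=\infty\}$, and your dichotomy gives $h^0\equiv1$. The cited source works with drift having positive coordinates, $\mu_1,\mu_2>0$.

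Even under that hypothesis, a law-of-large-numbers argument for the free path $z+B_t+\mu t$ is not enough. Started on or near an axis, the free path leaves the quadrant at once, and the reflection, whose tangential components point towards the vertex, pushes $Z$ back. You need to control the total local time. For $z=(u,v)$, let $Y$ be the one-dimensional RBM driven by $v+B^{(2)}_t+\mu_2t$, with regulator $\ell$. Then $\ell_\infty\le M_2:=\sup_t(-B^{(2)}_t-\mu_2t)\sim\mathrm{Exp}(2\mu_2)$. Consider the event $E_u:=\{u+\inf_t(B^{(1)}_t+\mu_1t)>|r_2|M_2\}$. Any solution has $L^{(1)}=0$ before the first zero of its first coordinate. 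Before that zero, its second coordinate is therefore $Y$ and its first coordinate is $u+B^{(1)}_t+\mu_1t+r_2\ell_t$, which is positive on $E_u$. So that zero never occurs, and $Z$ never reaches the vertex on $E_u$. Hence $h^0(u,v)\le e^{-\mu_1u}+e^{-\mu_2u/|r_2|}$ uniformly in $v$. Symmetrically, $h^0(u,v)\le e^{-\mu_2v}+e^{-\mu_1v/|r_1|}$, and $\|z\|\to\infty$ forces $\max(u,v)\to\infty$.

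\textbf{The Lyapunov function.} Use cone polar coordinates $(r,\varphi)$ with $0\le\varphi\le\beta$. Set $\kappa_1=\delta-\frac{\pi}{2}$ and $\kappa_2=\epsilon-\frac{\pi}{2}$, so that $\kappa_1+\kappa_2=\alpha\beta$ and the Varadhan--Williams function is $r^\alpha\cos(\alpha\varphi-\kappa_1)$.

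\emph{Lowering $\alpha$ to $a$ in both places.} The function stays exactly harmonic. The $r^{a-2}$ term you want to compare with is then zero, and nothing absorbs the unsigned drift contribution $\widetilde\mu\cdot\nabla\Phi=O(r^{a-1})$.

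\emph{Lowering only the radial exponent.} The interior becomes strictly superharmonic. But the oblique derivatives on the two faces are then positive multiples of $(\alpha-a)\sin\kappa_i\,r^{a-1}$. Since $\kappa_1+\kappa_2>0$, at least one of them is positive, which is the wrong sign.

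\emph{A working choice.} Take $r^a\cos(b\varphi-c)$ with $a<b<\alpha$, $a/b$ close to $1$, and $c=\arctan(\tfrac{a}{b}\tan\kappa_1)$. This is possible because $\arctan(\lambda\tan\kappa_1)+\arctan(\lambda\tan\kappa_2)\to\alpha\beta$ as $\lambda\to1$. It gives $\frac12\Delta\Phi\le-Cr^{a-2}$ and nonpositive oblique derivatives on both faces.

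This still covers only a neighbourhood of the vertex. The support-theorem step for the rest of $K$ (oblique reflection, a corner, a non-completely-$\mathcal S$ matrix) would also need its own justification.

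\emph{A cleaner route avoiding both issues.} Let $\tau_R$ be the exit time of the ball of radius $R$. For the driftless process, the exact Varadhan--Williams function gives $\mathbb{P}^0_z(\tau_R<T)\le C(r/R)^\alpha$ uniformly in $\|z\|\le r$. You also need a bound on $\mathbb{E}^0_z[T\wedge\tau_R]$. It follows from Dynkin's formula with $e^{\langle w,z\rangle}$ for some $w\neq0$ with $R_i\cdot w\ge0$, which exists since $r_1r_2\ge1$. Together these yield $\inf_{K}\mathbb{P}^0_z(T<t_0)>0$. Girsanov on $[0,t_0]$, combined with Cauchy--Schwarz, then transfers this bound to drift $\mu$.
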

In other words, the previous equation shows that the probability of hitting $0$ when starting from $0$ is equal to $1$, whereas this probability tends to $0$ as the starting point goes to infinity.

Let $(P_t^\dagger)_{t\ge 0}$ be the killed semigroup,
\[
P_t^\dagger f(z):=\mathbb{E}_z\!\big[f(X_t)\,\mathbf{1}_{\{t<T\}}\big],
\]
defined for bounded measurable $f$. This semigroup is \emph{sub-Markov} since
$P_t^\dagger \mathbf{1}(z)=\mathbb{P}_z(t<T)\le 1$.
The absorption and escape probabilities are (super)harmonic for this semigroup.

\begin{prop}[Harmonicity for the killed semigroup]
For all $t\ge0$,
\[
h^\infty = P_t^\dagger h^\infty,
\qquad
h^0 \ge P_t^\dagger h^0,
\]
i.e. $h^\infty$ is harmonic and $h^0$ is superharmonic for the killed
(sub-Markov) semigroup. 
\end{prop}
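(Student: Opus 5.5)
The plan is to use the strong Markov property, in its simple-Markov form at the deterministic time $t$, together with the shift operator $\theta_t$ on path space. Two facts drive the argument. First, on $\{t<T\}$ one has $T = t + T\circ\theta_t$. Second, the events $\{T=\infty\}$ and $\{T<\infty\}$ transform accordingly under the shift. Here $X$ denotes the process $Z$ (possibly stopped or killed at the vertex, as in the preceding discussion). We work with the canonical Markov family $(\mathbb{P}_z)_z$, which exists by the uniqueness in distribution stated in the existence proposition; when $\alpha\geq 1$ we use the process defined up to $T$.

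\textbf{Harmonicity of $h^\infty$.} Since $\{T=\infty\}\subset\{t<T\}$ for every finite $t$, and on $\{t<T\}$ we have $\{T=\infty\}=\theta_t^{-1}\{T=\infty\}$, we can write
\[
h^\infty(z)=\mathbb{P}_z\big(t<T,\ T\circ\theta_t=\infty\big)=\mathbb{E}_z\big[\mathbf{1}_{\{t<T\}}\,\mathbb{P}_z(T\circ\theta_t=\infty\mid\mathcal{F}_t)\big].
\]
The indicator $\mathbf{1}_{\{t<T\}}$ is $\mathcal{F}_t$-measurable, because $T$ is a stopping time. The Markov property then gives $\mathbb{P}_z(T\circ\theta_t=\infty\mid\mathcal F_t)=h^\infty(X_t)$. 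Hence $h^\infty(z)=\mathbb{E}_z[h^\infty(X_t)\mathbf 1_{\{t<T\}}]=P_t^\dagger h^\infty(z)$.

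\textbf{Superharmonicity of $h^0$.} We split according to $\{T\le t\}$ and $\{t<T\}$:
\[
h^0(z)=\mathbb{P}_z(T\le t)+\mathbb{P}_z(t<T,\ T\circ\theta_t<\infty)=\mathbb{P}_z(T\le t)+P_t^\dagger h^0(z).
\]
The second term is handled by the same Markov-property computation as above, and the first term is nonnegative. This gives $h^0\ge P_t^\dagger h^0$, with a defect equal to exactly $\mathbb{P}_z(T\le t)$. Alternatively, one can note that $P_t^\dagger\mathbf 1(z)=\mathbb{P}_z(t<T)$ and use $h^0=1-h^\infty$:
\[
P_t^\dagger h^0=P_t^\dagger\mathbf 1-P_t^\dagger h^\infty=\mathbb{P}_z(t<T)-h^\infty\le 1-h^\infty=h^0 .
\]

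\textbf{Main obstacle.} The only delicate point is justifying the Markov property and the measurability of $h^0$ and $h^\infty$ in $z$. This requires that the family $(\mathbb{P}_z)$ be a genuine (Feller or strong) Markov family, even when $\alpha\ge1$ and the process is only defined up to $T$. I would invoke uniqueness in distribution, which gives Borel measurability of $z\mapsto\mathbb{P}_z$ and the Markov property by the standard argument for well-posed martingale (or Skorokhod) problems. For the case $\alpha\geq 1$, I would extend this to the process stopped at $T$ by localization, stopping at the hitting time of small balls around the vertex. The identity $T=t+T\circ\theta_t$ on $\{t<T\}$ is then immediate from the definition $T=\inf\{s>0:Z_s=0\}$.
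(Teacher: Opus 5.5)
Your proof is correct and follows essentially the same argument as the paper. You condition on $\mathcal{F}_t$ via the Markov property at time $t$, made precise by $T=t+T\circ\theta_t$ on $\{t<T\}$, and you split $h^0$ along $\{T\le t\}$ and $\{t<T\}$, with the alternative via $h^0=1-h^\infty$ and the measurability remarks as valid details the paper leaves implicit.
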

\begin{proof}
By the strong Markov property at time $t$,
\[
h^\infty(z)=\mathbb{P}_z(T=\infty)
=\mathbb{E}_z\!\Big[\mathbf{1}_{\{t<T\}}\,
\mathbb{E}_z\!\big[\mathbf{1}_{\{T=\infty\}}\mid \mathcal{F}_t\big]\Big]
=
\mathbb{E}_z\!\big[h^\infty(Z_t)\,\mathbf{1}_{\{t<T\}}\big]=P_t^\dagger h^\infty(z).
\]
Using the strong Markov property and $\mathbb{P}_z(T\le t)\ge 0$,
we obtain
$$h^0(z)=\mathbb{P}_z(T\le t)+\mathbb{E}_z[h^0(Z_t)\mathbf{1}_{\{t<T\}}]\;\ge\;\mathbb{E}_z\!\big[h^0(Z_t)\,\mathbf{1}_{\{t<T\}}\big]
= P_t^\dagger h^0(z).$$
\end{proof}


\subsection{Escape along an axis}
\label{subsec:levyfuite}

We now assume that
$$
\mu_1<0,
\quad
\mu_2<0,
$$
and
$$
r_{22}\mu_1 - r_{12}\mu_2>0, \quad
r_{11}\mu_2-r_{21} \mu_1>0.
$$
The effective drifts along the axes are positive, see Figure~\ref{fig:domination}. We are thus in a transient case and the process escapes to infinity. More precisely, it escapes to infinity along one of the two axes. We then define the probability of escape along the horizontal and vertical axes as
$$
h^\rightarrow (z):= \mathbb{P}_{z} (Z_1(t) \to\infty)
\quad \text{and} \quad
h^\uparrow(z):= \mathbb{P}_{z} (Z_2(t) \to\infty) .
$$
These probabilities depend on the starting point $z$ and satisfy the following properties, see \citeS{franceschi_fomichov_ivanovs_2022}. 
\begin{figure}[b]
\centering
\includegraphics[trim= 0mm 5mm 0mm 8mm,clip,width=0.4\textwidth]{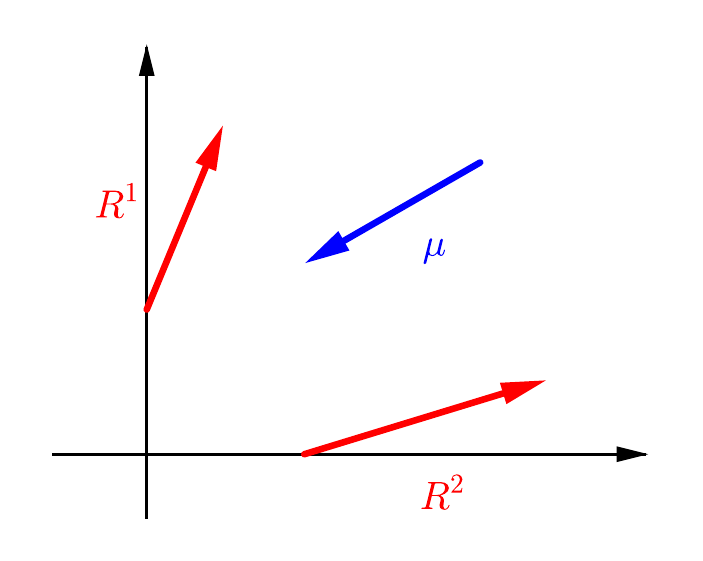}
\includegraphics[trim= 0mm 15mm 0mm 20mm,clip,width=0.5\textwidth]{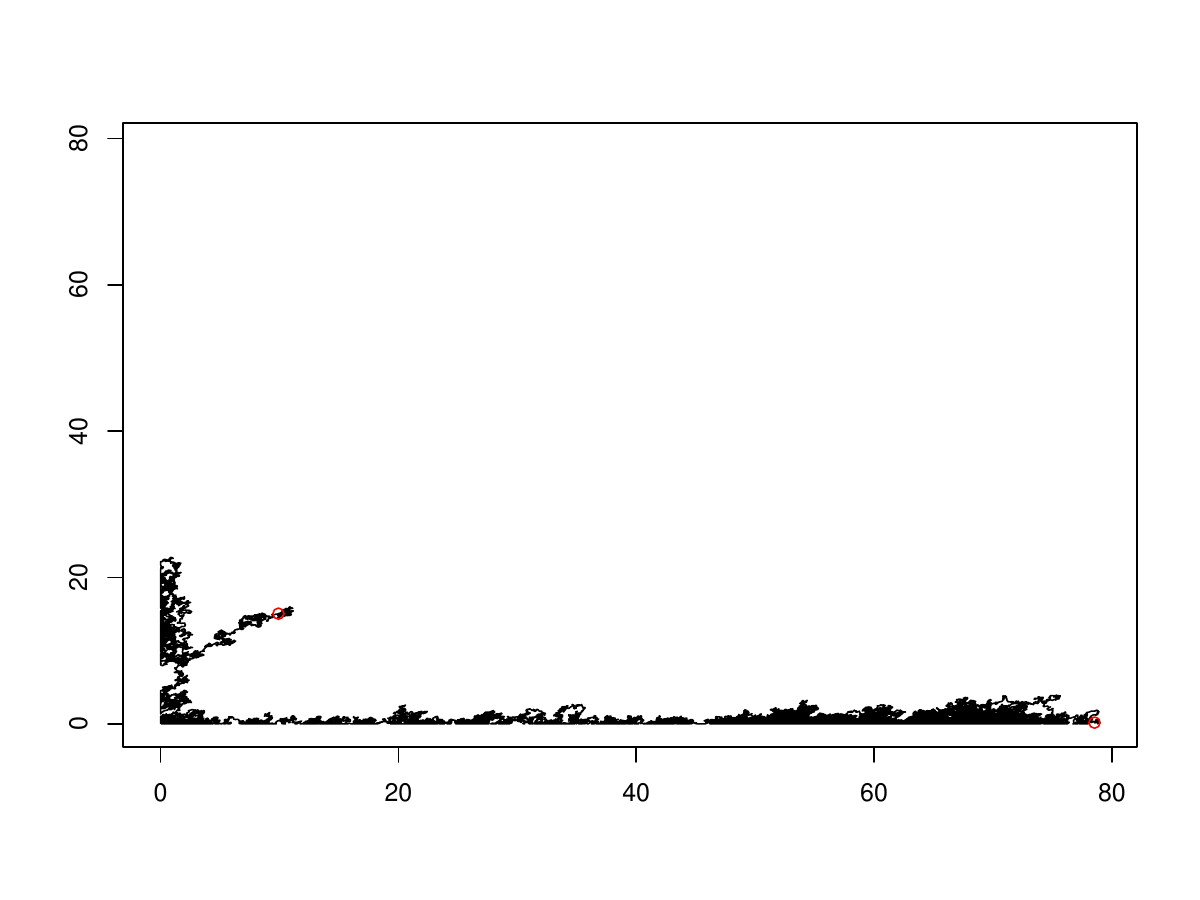}
\caption{Left: drift and reflection vectors; right: sample path of a reflected Brownian motion escaping along the horizontal axis.}
\label{fig:domination}
\end{figure}
\begin{prop}[Dichotomy of escape directions and boundary behavior]
For any starting point $z\in\mathbb{R}_+^2$, the events $\{Z_1(t)\to\infty\}$ and $\{Z_2(t)\to\infty\}$ form a partition up to a null set, i.e.,
\[
\mathbb{P}_z\big(Z_1(t)\to\infty,\; Z_2(t)\to\infty\big)=0,
\qquad
\mathbb{P}_z\big(Z_1(t)\not\to\infty,\; Z_2(t)\not\to\infty\big)=0.
\]
In particular, we have $h^\rightarrow(z)$ and $h^\uparrow(z)\in (0,1)$ and
$$h^\rightarrow(z)+h^\uparrow(z) =1.
$$
Moreover, for $z=(u,v)\in\mathbb{R}_+^2$,
 \begin{equation}
 \lim_{u\to\infty} h^\rightarrow(u,v)=1,
 \quad
 \lim_{v\to\infty} h^\rightarrow(u,v)=0.
\label{eq:limitvaluepra}
 \end{equation}
\end{prop}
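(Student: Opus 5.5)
Our approach is to compare $Z$ with one-dimensional reflected Brownian motions. The standing assumptions have two consequences for the effective drifts: $\mu_1 - r_{12}\mu_2 > 0$ is the effective horizontal drift when $Z_2$ is pinned at $0$, and $\mu_2 - r_{21}\mu_1>0$ is the effective vertical drift when $Z_1$ is pinned at $0$. Note that $\mu_1,\mu_2<0$ together with $r_{22}\mu_1-r_{12}\mu_2>0$ forces $r_{12}>0$, and similarly $r_{21}>0$; here $r_{12},r_{21}$ are the $r_2,r_1$ of the normalization.

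\textbf{Step 1: the process eventually lives near one axis.} Since $\mu_1,\mu_2<0$, in the interior both coordinates decrease linearly. Each excursion away from the boundary therefore ends a.s. in finite time, and $Z$ keeps returning to $\partial\mathbb{R}_+^2$. The process is transient by the recurrence criterion above, since \eqref{u11} holds. We first show that a.s.\ $Z$ cannot visit both axes at arbitrarily large times while $\|Z_t\|\to\infty$. Suppose $Z_1$ is large. Then reaching $\{z_1=0\}$ requires an excursion from $\{z_2=0\}$ during which $Z_2$ stays positive for a time of order $Z_1/|\mu_1|$. But $Z_2$ has negative drift there, so such an excursion has probability exponentially small in $Z_1$. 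We will formalize this with a Borel--Cantelli argument along the successive boundary hitting times: the position along an axis grows linearly, so the probabilities of switching axes are summable. Consequently exactly one of the following holds a.s.: eventually $Z_2$ returns to $0$ infinitely often while $Z_1\to\infty$, or the symmetric situation. This gives both null-set statements.

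\textbf{Step 2: $h^\rightarrow, h^\uparrow\in(0,1)$.} From any $z$, with positive probability the driving Brownian motion stays within a tube around a path that goes to the horizontal axis far from the origin. Beyond that point the Step~1 estimate shows that the probability of ever switching axes is less than $1$. Hence $h^\rightarrow(z)>0$, and symmetrically $h^\uparrow(z)>0$. The sum rule $h^\rightarrow+h^\uparrow=1$ then follows from the partition obtained in Step~1.

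\textbf{Step 3: the limits \eqref{eq:limitvaluepra}.} Start at $(u,v)$ with $u\to\infty$ and $v$ fixed. With probability tending to $1$, $Z$ first hits $\{z_2=0\}$ before $\{z_1=0\}$, because the time to reach $z_1=0$ is of order $u$ while the time to reach $z_2=0$ is of order $v$. From that hitting point, whose first coordinate is of order $u$, the exponential estimate of Step~1 shows that the probability of ever reaching the vertical axis tends to $0$. Hence $h^\rightarrow(u,v)\to1$. The case $v\to\infty$ is symmetric and gives $h^\rightarrow\to0$, using $h^\rightarrow+h^\uparrow=1$.

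\textbf{Main obstacle.} The delicate part is the quantitative estimate in Step~1: while $Z_2$ is reflecting on $\{z_2=0\}$, $Z_1$ grows at rate $\mu_1-r_{12}\mu_2>0$, and the probability of a long $Z_2$-excursion is exponentially small. We would first try a Lyapunov/supermartingale function of the form $e^{-\theta z_1+\eta z_2}$ near the horizontal axis. Choosing $\theta,\eta>0$ small, we aim to make it a supermartingale: the generator gives $\tfrac12(\theta^2+\eta^2-2\rho\theta\eta)-\theta\mu_1+\eta\mu_2<0$, and the boundary term $-\theta r_{12}+\eta\le0$. An optional stopping argument with this function should then bound the probability of reaching the vertical axis from $(u,v)$ by $Ce^{-\theta u+\eta v}$. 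That bound simultaneously yields Steps~1 and~3.
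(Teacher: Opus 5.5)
The survey does not prove this proposition; it only refers to \cite{franceschi_fomichov_ivanovs_2022}. There is therefore no argument in the paper to compare with, and your proof has to stand on its own. Its core is sound.

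\textbf{The key estimate.} Take $\theta>0$ small with $\eta/\theta\in(|\mu_1|/|\mu_2|,\,r_{12}]$. This window is nonempty exactly because $\mu_1-r_{12}\mu_2>0$. Note that ``small'' alone would not do: for $\eta\ll\theta$ one gets $\mathcal G f>0$. With this choice, $f(z)=e^{-\theta z_1+\eta z_2}$ satisfies $\mathcal G f<0$ and $R_2\cdot\nabla f\le 0$. Set $\tau_1:=\inf\{t\ge 0: Z_1(t)=0\}$. Three facts then combine:
\begin{itemize}
\item Itô's formula;
\item $L^1$ does not increase before $\tau_1$;
\item $f\ge 1$ on $\{z_1=0\}$.
\end{itemize}
Together they give $\mathbb{P}_{(u,v)}(\tau_1<\infty)\le e^{-\theta u+\eta v}$ on the whole quadrant, not just near the horizontal axis. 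Stopping at $\tau_1$ is essential: under these constraints $R_1\cdot\nabla f=(\eta r_{21}-\theta)f>0$, so $f$ is not a supermartingale once the vertical face is active. Combine this with transience and the symmetric bound $\mathbb{P}_{(u,v)}(\tau_2<\infty)\le e^{\eta' u-\theta' v}$. The result is a self-contained proof. Once the dichotomy is known, it also yields the quantitative estimate $h^\uparrow(u,v)\le e^{-\theta u+\eta v}$, which is stronger than the stated limits.

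\textbf{Three steps need repair.}

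(a) Visits to an axis accumulate, so there are no ``successive boundary hitting times'' along which to run Borel--Cantelli. Use instead the strong Markov property at $\sigma_n$, the hitting time of $\{(x,0):x\ge n\}$. On $\{\|Z_t\|\to\infty\}$, visiting both axes at arbitrarily large times forces a visit to $\{z_1=0\}$ after $\sigma_n$ for every $n$. That event has probability at most $e^{-\theta n}$.

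(b) ``Eventually only the horizontal axis is visited'' together with $\|Z_t\|\to\infty$ does not by itself give $Z_1\to\infty$. You must also exclude returns to $\{z_2=0\}$ from the sets $\{z_1\le M,\ z_2\ge n\}$; the symmetric bound does this for each fixed $M$. Only then do you obtain the dichotomy. Using the returns to the boundary, you then get the inclusion $\{\tau_1=\infty\}\subset\{Z_1\to\infty\}$ a.s., which is what Steps 2 and 3 actually use.

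(c) In Step 2, $1-r_{12}r_{21}<0$, so $R$ is not a $P$-matrix, and you should not rely on continuity of the two-sided Skorokhod map near the corner. A simpler route is available. From an interior point, the free Brownian motion coincides with $Z$ up to the first boundary hit, and it exits through $\{(x,0):x>0\}$ with positive probability. There, $h^\rightarrow(x,0)\ge 1-e^{-\theta x}>0$. Boundary starting points then follow from the Markov property at a small time. With the exponential bound in hand, the heuristic comparison of hitting times in Step 3 is unnecessary.
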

In other words, the previous equation shows that when the starting point goes to infinity along the horizontal direction, the process escapes to infinity along this direction with probability tending to $1$, whereas if it goes to infinity along the vertical direction, this probability tends to $0$.

For a Markov process $(Z_t)_{t\ge 0}$, we denote by $\mathcal{T}$ the tail (asymptotic) sigma-field:
   $$\mathcal{T} := \bigcap_{t\ge 0} \sigma(Z_s : s\ge t).$$
Note that $A_1:=\{Z_1(t)\to\infty\}$ and $A_2:=\{Z_2(t)\to\infty\}\in \mathcal{T} $.
Let $(P_t)_{t\ge0}$ denote the Markov semigroup defined, for every bounded measurable function $f$, by
\[
P_t f(z):=\mathbb{E}_z[f(Z_t)].
\]
\begin{prop}[Tail events yield harmonic functions]
Let $A\in\mathcal{T}$ and define $h(z):=\mathbb{P}_z(A)$. Then, \[
{\quad h = P_t h \ \text{for all } t\ge 0, \quad}
\]
i.e.\ $h$ is harmonic for the Markov semigroup. In particular, $h^\rightarrow$ and $h^\uparrow$ are harmonic for the semigroup of $Z$.
\end{prop}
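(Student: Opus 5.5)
The plan is to use the Markov property at a deterministic time $t$ together with shift invariance of tail events. Let $\theta_t$ denote the shift operator on path space, so that $(Z\circ\theta_t)_s=Z_{t+s}$.

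\textbf{Step 1: a tail event is a shift of an invariant-type event.} Fix $A\in\mathcal{T}$. Since $A\in\sigma(Z_s:s\ge t)$, there is a measurable set $B_t$ in the canonical path space such that
\[
A=\{Z\circ\theta_t\in B_t\}=\theta_t^{-1}(A_t), \qquad A_t:=\{Z\in B_t\}.
\]
This is the standard Doob--Dynkin factorisation of $\sigma(Z_s:s\ge t)$-measurable indicators.

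\textbf{Step 2: relate $\mathbb{P}_z(A_t)$ to $\mathbb{P}_z(A)$.} The main obstacle is that $A_t$ need not equal $A$, so one must check that $\mathbb{P}_y(A_t)=h(y)$. For the concrete events $A_1=\{Z_1(t)\to\infty\}$ and $A_2=\{Z_2(t)\to\infty\}$ this is immediate. These events are genuinely shift invariant, $\theta_t^{-1}(A_i)=A_i$, because convergence to infinity does not depend on any initial segment of the path. Hence we may take $B_t=A_i$.

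For a general tail event the statement should be read with $A$ shift invariant, so that $\mathbf{1}_A\circ\theta_t=\mathbf{1}_A$. Alternatively, one proves that $h_t(y):=\mathbb{P}_y(Z\in B_t)$ satisfies $h(z)=P_th_t(z)$, and then identifies $h_t=h$ for the invariant events of interest. I would state this caveat explicitly and concentrate on the invariant case, which covers $h^\rightarrow$ and $h^\uparrow$.

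\textbf{Step 3: apply the Markov property.} Conditioning on $\mathcal{F}_t$ gives
\[
h(z)=\mathbb{E}_z\bigl[\mathbf{1}_A\circ\theta_t\bigr]=\mathbb{E}_z\bigl[\mathbb{E}_z[\mathbf{1}_A\circ\theta_t\mid\mathcal{F}_t]\bigr]=\mathbb{E}_z\bigl[\mathbb{P}_{Z_t}(A)\bigr]=\mathbb{E}_z[h(Z_t)]=P_th(z).
\]
This is the same computation as in the proof of the harmonicity proposition for the killed semigroup, but without the killing indicator, since $A$ carries no restriction on $T$.

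\textbf{Step 4: measurability.} One checks that $h$ is measurable, which holds because $z\mapsto\mathbb{P}_z(\Gamma)$ is measurable for every path event $\Gamma$ of a Markov family. With Steps 2 and 3 applied to $A_1$ and $A_2$, this shows that $h^\rightarrow$ and $h^\uparrow$ are harmonic for $(P_t)$.
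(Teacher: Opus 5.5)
Your Step 3 is the paper's own argument: apply the Markov property at the deterministic time $t$, then take expectations. For $\{Z_1(t)\to\infty\}$ and $\{Z_2(t)\to\infty\}$ your proof is complete.

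Where you differ is in stating the hypothesis $\mathbf{1}_A\circ\theta_t=\mathbf{1}_A$ explicitly, and your caveat is justified. The paper passes directly from $A\in\sigma(Z_s:s\ge t)$ to $\mathbb{P}_z(A\mid\mathcal{F}_t)=\mathbb{P}_{Z_t}(A)$. The Markov property only gives $\mathbb{P}_z(A\mid\mathcal{F}_t)=\mathbb{P}_{Z_t}(B_t)$ with $A=\theta_t^{-1}(B_t)$, i.e.\ the space--time relation $h=P_th_t$ that you wrote.

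This is not a harmless abbreviation. Take the deterministic Markov process $Z_t=Z_0+t$ on $\mathbb{R}$ and the event $A=\{\limsup_{t\to\infty}(Z_t-t)\le 0\}$, which belongs to $\mathcal{T}$. It gives $h=\mathbf{1}_{(-\infty,0]}$, while $P_th=h(\cdot+t)=\mathbf{1}_{(-\infty,-t]}\neq h$.

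The paper's proof uses nothing specific to SRBM, so it does not establish the claim for arbitrary $A\in\mathcal{T}$. Getting the statement as written would need an extra fact: for SRBM, every tail event agrees $\mathbb{P}_z$-a.s.\ with a shift-invariant one (an aperiodicity-type property). Neither your argument nor the paper's supplies this. Your restriction to shift-invariant events is the correct formulation, and it suffices for $h^\rightarrow$ and $h^\uparrow$.
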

\begin{proof}
Let $(\mathcal{F}_t)_{t\ge 0}$ be the natural filtration, i.e. $\mathcal{F}_t=\sigma(Z_s:\, s\le t)$.
Since $A\in\mathcal{T}$, we have $A\in \sigma(Z_s:\, s\ge t)$, hence by the Markov property,
\[
\mathbb{P}_z(A\mid \mathcal{F}_t)=\mathbb{P}_{Z_t}(A)=h(Z_t)\qquad\text{a.s.}
\]
Taking expectations gives
\[
h(z)=\mathbb{P}_z(A)=\mathbb{E}_z\!\big[\mathbf{1}_A\big]
=\mathbb{E}_z\!\Big[\mathbb{P}_z(A\mid \mathcal{F}_t)\Big]
=\mathbb{E}_z\!\big[h(Z_t)\big]
= (P_t h)(z).
\]
\end{proof}
The study of positive harmonic functions using Martin’s boundary thus enables us to describe the asymptotic sigma-field $\mathcal{T}$.

\section{Basic Adjoint Relationships}
\label{sec:BAR}

In this section, we introduce the so-called \emph{Basic Adjoint Relationships} (BAR), which are integral identities characterizing the main quantities under study. These include the transition probabilities, the invariant measure in the recurrent case, and the Green functions in the transient case. The BAR can be viewed as weak formulations of the partial differential equations satisfied by these objects, which will be presented in the next section. These PDEs are adjoint to those satisfied by the harmonic functions introduced earlier.

The \emph{Basic Adjoint Relationships} lie at the heart of the derivation of the functional equation that will serve as the starting point for the analytical, algebraic and combinatorial methods presented in this article.
 
\subsection{Transition probability}

Let $Z$ be a Brownian motion with drift reflected at the boundary of the quadrant $\mathbb{R}_+^2$ associated with $(\Sigma, \mu, R)$ such that
$$ Z_t=z_0 + W_t + \mu t + R L_t \ \in \mathbb{R}_+^2.$$ 
We denote the transition probability of the process by $\mathbb{P}_{z_0}(Z_t\in dz)=P(t,z_0,dz)=p(t,z_0,z) dz$.
For any bounded measurable function $f$,
\[
\mathbb{E}_{z_0}\!\left[\int_0^t f(s,Z_s)\,ds\right]
=\int_0^t\!\int_{\mathbb{R}_+^2} f(s,z)\,p(s,z_0,z)\,ds\,dz.
\]
We draw inspiration from this formula to define two new measures.
One may introduce boundary measures $p_1^{z_0}$ and $p_2^{z_0}$ such that for any bounded measurable function $g$, 
\[
\mathbb{E}_{z_0}\!\left[\int_0^t f(s,Z_s)\,dL_s^1\right]
=\int_0^t\!\int_{\mathbb{R}_+^2} f(s,z)\,p_1^{z_0}(s,z)\,ds\,dz,
\]
\[
\mathbb{E}_{z_0}\!\left[\int_0^t f( s,Z_s)\,dL_s^2\right]
=\int_0^t\!\int_{\mathbb{R}_+^2} f(s,z)\,p_2^{z_0}(s,z)\,ds\,dz.
\]
Since $dL_t^1$ and $dL_t^2$ charge only the boundary faces $
F_1=\{(0,z_2):z_2\ge0\}$ and
$F_2=\{(z_1,0):z_1\ge0\}$, $p_1^{z_0}$ has its support on $\mathbb{R}\times F_1$ and $p_2^{z_0}$ on $\mathbb{R}\times F_2$.

In fact, it can be shown that, up to a multiplicative constant, the boundary measures coincide with the boundary values of the (continuous) transition density. More precisely, for $z=(z_1,z_2)$,
\[
2 p_1^{z_0}(t,z)\propto p\bigl(t,z_0,(0,z_2)\bigr) \delta_0(z_1),
\qquad
2 p_2^{z_0}(t,z)\propto p\bigl(t,z_0,(z_1,0)\bigr) \delta_0(z_2) 
\]
where $\delta_0$ is the Dirac delta function. Without the normalization of the parameters, the factor $2$ must be replaced by $2r_{11}/\sigma_{11}$.

Let $\mathcal{G}$ be the infinitesimal generator of $(W_t +\mu t)$, which is also the generator of $Z$ in the interior of $\mathbb{R}_+^2$. We have
\begin{equation}
\label{eq:generateur}
\mathcal{G} f(z)
=
\frac{1}{2}\left(
\frac{\partial^2 f}{\partial z_1^2}(z)
+ 2\rho \frac{\partial^2 f}{\partial z_1 \partial z_2}(z)
+ \frac{\partial^2 f}{\partial z_2^2}(z)
\right)
+ \mu_1 \frac{\partial f}{\partial z_1}(z)
+ \mu_2 \frac{\partial f}{\partial z_2}(z).
\end{equation}

\begin{prop}[Basic Adjoint Relationship for the transition density]\label{barpt}
For any $f\in C^{1,2}([0,t]\times\mathbb{R}_+^2)$, such that $f$ is non-negative, bounded, as are its first and second-order derivatives, and $f(t,z)\to 0$ when $t\to \infty$ uniformly in $z$,
\begin{equation}
\begin{aligned}\label{eq:barpt}
0=f(0,z_0)
+\int_0^\infty\!\int_{\mathbb{R}_+^2}(\partial_t f+\mathcal G f)(t,z)\,p(t,z_0,z)\,dz\,dt+
\sum_{i=1}^2 \int_0^\infty\!\int_{\mathbb{R}_+^2} (R^i\!\cdot\nabla f)(t,z)\,p_i^{z_0}(t,z) \,dz\,dt.
\end{aligned}
\end{equation}
\end{prop}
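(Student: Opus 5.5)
The natural route is Itô's formula for the time-dependent function $f(t,Z_t)$ along the semimartingale decomposition \eqref{eq:srbm}, followed by taking expectations and letting the time horizon go to infinity. Fix $T>0$. Since $Z$ is a continuous semimartingale whose martingale part is $B$ (covariance $\Sigma$) and whose finite-variation part is $\mu t + RL_t$, Itô's formula gives
\[
f(T,Z_T)=f(0,z_0)+\int_0^T(\partial_t f+\mathcal G f)(s,Z_s)\,ds+\int_0^T\nabla f(s,Z_s)\cdot dB_s+\sum_{i=1}^2\int_0^T (R^i\cdot\nabla f)(s,Z_s)\,dL^i_s .
\]
Here the second-order terms assemble into the diffusive part of \eqref{eq:generateur} because $d\langle B^i,B^j\rangle_s=\sigma_{ij}\,ds$. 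The drift term $\mu\cdot\nabla f\,ds$ supplies the first-order part of $\mathcal G$, and $R\,dL_s$ contributes $\sum_i R^i\cdot\nabla f\,dL^i_s$. There is no quadratic covariation involving $L$, since $L$ has finite variation.

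Next I will take $\mathbb{E}_{z_0}$. The stochastic integral is a true martingale with zero mean because $\nabla f$ is bounded, so $\mathbb{E}\int_0^T|\nabla f|^2\,ds<\infty$. The $ds$-integral is rewritten through the transition density $p(s,z_0,z)$ by Fubini, which applies because $\partial_t f+\mathcal G f$ is bounded on $[0,T]$. The $dL^i$-integrals are rewritten, by the very definition of the boundary measures $p_i^{z_0}$ given just before the statement, as $\int_0^T\!\int (R^i\cdot\nabla f)\,p_i^{z_0}\,dz\,ds$. This yields the finite-horizon identity
\[
\mathbb{E}_{z_0}[f(T,Z_T)]=f(0,z_0)+\int_0^T\!\!\int(\partial_t f+\mathcal G f)\,p\,dz\,ds+\sum_i\int_0^T\!\!\int (R^i\cdot\nabla f)\,p_i^{z_0}\,dz\,ds .
\]

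Finally I will let $T\to\infty$. The left-hand side tends to $0$ because $f(T,\cdot)\to0$ uniformly, which is where the hypothesis on $f$ enters. The main obstacle is justifying convergence of the two integrals on the right to the integrals over $[0,\infty)$ appearing in \eqref{eq:barpt}. Boundedness of $f$ and its derivatives alone does not give integrability in time, since $\mathbb{E}[L^i_T]$ typically grows linearly and $\int_0^T\!\int p\,dz\,ds=T$. My first attempt would be to read the identity as the statement that the sum of the two improper integrals $\lim_{T\to\infty}\int_0^T$ exists and equals $-f(0,z_0)$, which the finite-horizon identity delivers immediately. If absolute convergence is wanted, I would additionally assume, as is implicit in later uses with $f(t,z)=e^{-\lambda t}e^{x\cdot z}$ in the Laplace-transform setting, that $f$ and its derivatives decay integrably in $t$, for instance $|\partial_t f|+|\nabla f|+|\nabla^2 f|\le g(t)$ with $g\in L^1$. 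Dominated convergence then applies, using $\mathbb{E}_{z_0}\int_0^\infty g(s)\,dL^i_s<\infty$. That expectation would itself follow from applying Itô's formula to a linear test function, which bounds $\mathbb{E}[L_T]$ by $C(1+T)$, combined with an integration by parts in time.
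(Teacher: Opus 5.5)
Your proposal follows the paper's proof (It\^o's formula for $f(t,Z_t)$, expectation killing the martingale term, rewriting through $p$ and $p_i^{z_0}$, then letting the horizon tend to infinity), and your caution about the last step is justified. The paper simply invokes dominated convergence, but boundedness and uniform decay of $f$ give no time-integrability: for $f=\phi(z)/(1+t)$ with $\phi(z)=1-e^{-z_1}$ in the positive recurrent case, $\mathcal{G}\phi=(\mu_1-\tfrac12)e^{-z_1}$, so the interior and boundary terms diverge separately like $\pm a\log T$ with $a=(\mu_1-\tfrac12)\int e^{-z_1}\pi(dz)\neq 0$ unless $\mu_1=\tfrac12$. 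Hence only the limit of the combined finite-horizon sum is guaranteed, and an integrable decay hypothesis like yours, which covers the Laplace test functions $e^{xz_1+yz_2+ut}$ with $\Re u<0$, is what makes the identity hold as written.

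One small correction in that optional route: It\^o's formula with a linear test function only relates $R\,\mathbb{E}[L_T]$ to $\mathbb{E}[Z_T]-z_0-\mu T$. The bound $\mathbb{E}[L_T]\le C(1+T)$ therefore requires an independent growth estimate (e.g.\ an oscillation inequality for the Skorokhod problem). Your integration by parts in time also needs $g$ nonincreasing, or dominated by a nonincreasing integrable function.
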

\begin{proof}
Applying Itô's formula to $f(t,Z_t)$ yields
\[
\begin{aligned}
f(t,Z_t)=f(0,z_0)
+\int_0^t (\partial_s+\mathcal G)f(s,Z_s)\,ds
+\int_0^t \nabla f(s,Z_s)\cdot dW_s 
+\sum_{i=1}^2\int_0^t (R^i\cdot\nabla f)(s,Z_s)\,dL_s^i .
\end{aligned}
\]
Taking expectations eliminates the martingale term and gives
\[
\begin{aligned}
\mathbb{E}_{z_0}[f(t,Z_t)]
=f(0,z_0)
+\int_0^t \mathbb{E}_{z_0}[(\partial_s+\mathcal G)f(s,Z_s)]\,ds
+\sum_{i=1}^2
\mathbb{E}_{z_0}\!\left[\int_0^t (R^i\!\cdot\nabla f)(s,Z_s)\,dL_s^i\right].
\end{aligned}
\]
Using the transition density $p_t(z_0,z)$ we write
\[
\mathbb{E}_{z_0}[(\partial_s+\mathcal G)f(s,Z_s)]
=\int_{\mathbb{R}_+^2}(\partial_s+\mathcal G)f(s,z)p_s(z_0,z)\,dz .
\]
For the boundary terms, since $dL_s^i$ charges only $F_i$, we obtain
\[
\mathbb{E}_{z_0}\!\left[\int_0^t (R^i\!\cdot\nabla f)(s,Z_s)\,dL_s^i\right]
=\int_0^t\!\int_{\mathbb{R}_+^2} (R^i\!\cdot\nabla f)(s,z)\,q_i(ds,dz).
\]
Putting everything together yields, for any $t>0$,
\[
\mathbb{E}_{z_0}[f(t,Z_t)]
=f(0,z_0)
+\int_0^t\!\int_{\mathbb{R}_+^2}(\partial_s+\mathcal G)f(s,z)p_s(z_0,z)\,dz\,ds 
+\sum_{i=1}^2 \int_0^t\!\int_{\mathbb{R}_+^2} (R^i\!\cdot\nabla f)(s,z)\,q_i(ds,dz).
\]
We now let $t\to\infty$. By assumption, $f(t,\cdot)\to 0$ uniformly and $f$ is bounded, hence
\[
\mathbb{E}_{z_0}[f(t,Z_t)] \longrightarrow 0.
\]
Moreover, by dominated convergence,
\[
\int_0^t \!\int_{\mathbb{R}_+^2}(\partial_s+\mathcal G)f(s,z)p_s(z_0,z)\,dz\,ds
\longrightarrow
\int_0^\infty \!\int_{\mathbb{R}_+^2}(\partial_s+\mathcal G)f(s,z)p_s(z_0,z)\,dz\,ds,
\]
and similarly for the boundary terms. Passing to the limit gives exactly \eqref{eq:barpt}.
\end{proof}
 
\subsection{Invariant measure}
\label{intro:subsec:mesinvgreen}

We assume that conditions \eqref{v11pos} are satisfied so that the process is positive recurrent and admits a unique stationary distribution (or invariant measure) $\pi$. This measure satisfies
$$
\pi(A)=\mathbb{E}_{\pi} \left[  \int_0^1 1_{\{Z_s \in A\}} \mathrm{d}s \right]= \mathbb{E}_{\pi} \left[ \frac{1}{t} \int_0^t 1_{\{Z_s \in A\}} \mathrm{d}s \right]
\text{ for all } t>0.
$$
We define $\nu_1$ and $\nu_2$ as measures on the boundaries,
such that for any $A\subset\mathbb{R}_+^2$ and for $i=1,2$,
\begin{equation}
\label{intro:eq:def:mesinvbords}
\pi_i (A) := \mathbb{E}_{\pi} \left[ \int_0^1 1_{\{Z_s \in A\}} \mathrm{d}L_s^i \right]
= \mathbb{E}_{\pi} \left[ \frac{1}{t} \int_0^t 1_{\{Z_s \in A\}} \mathrm{d}L_s^i \right]
\text{ for all } t>0
.
\end{equation}
The support of $\pi_1$ lies on the face $F_1$ and that of $\pi_2$ on the face $F_2$. All these measures admit densities with respect to the Lebesgue measure which we also denote by $\pi$, $\pi_1$ and $\pi_2$, with a slight abuse of notation.
We may thus say that $\pi_i (A)$ measures the \textit{proportion of local time} that the process spends on the boundary at $A$. These are known as boundary invariant measures.
By the definition of $\pi_i$, one can show that
\begin{equation}
\label{intro:eq:invbordformule}
\mathbb{E}_{\pi} \left[ \frac{1}{t} \int_0^t f(Z_s) \mathrm{d}L_s^i  \right]= \int_{\mathbb{R}_+^2} f(x) \pi_i (\mathrm{d} x), \quad \text{for } f\geqslant 0.
\end{equation}
As was the case for the transition density, the boundary invariant measures are equal to the invariant density on the boundary, up to a multiplicative constant. More precisely, for $z=(z_1,z_2)$,
\[
2 \pi_1(z_1,z_2)\propto \pi\bigl(0,z_2) \delta_0(z_1),
\qquad
2 \pi_2(z_1,z_2)\propto \pi(z_1,0) \delta_0(z_2) ,
\]
see \cite[Section 2.2]{franceschi_bousquet_melou_price_hardouin_raschel_stationary_2023}.
The following proposition is proven in a similar way to the BAR of Proposition~\ref{barpt}, see \cite{harrison_brownian_1987,dai_reflected_1992}.
\begin{prop}[Basic Adjoint Relationship for the stationary distribution]
\label{prop:bar}
For all $f \in \mathcal{C}^2_b (\mathbb{R}_+^2)$ ($f$ is $\mathcal{C}^2$ and its first and
second-order derivatives are bounded functions), we have
\begin{equation} \label{d} \int_{\mathbb{R}_+^2} \mathcal{G} f(z) \pi (\mathrm{d} z) + \sum_{i=1,2} \int_{\mathbb{R}_+^2} R_i \cdot \nabla f(z) \pi_i (\mathrm{d} z) =0 .
\end{equation}
\end{prop}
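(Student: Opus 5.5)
The plan is to mimic the proof of Proposition~\ref{barpt}, with the process started from its stationary distribution $\pi$ rather than from a fixed point $z_0$. Fix $f\in\mathcal C^2_b(\mathbb{R}_+^2)$, a time $t>0$, and let $Z_0\sim\pi$. Applying Itô's formula to $f(Z_t)$, using the decomposition \eqref{eq:srbm} and the generator \eqref{eq:generateur}, gives
\[
f(Z_t)=f(Z_0)+\int_0^t \mathcal G f(Z_s)\,ds+\int_0^t \nabla f(Z_s)\cdot dB_s+\sum_{i=1}^2\int_0^t R_i\cdot\nabla f(Z_s)\,dL^i_s .
\]
Here we use that $L^i$ has finite variation, so the reflection terms contribute no quadratic variation. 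Since $\nabla f$ is bounded, the stochastic integral is a true martingale and has zero expectation.

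Next we take $\mathbb{E}_\pi$. By stationarity, $\mathbb{E}_\pi[f(Z_t)]=\mathbb{E}_\pi[f(Z_0)]=\int f\,d\pi$, so these two terms cancel. This requires $f$ bounded, which holds, together with the integrability of $f(Z_0)$. For the drift term, Fubini and stationarity give $\mathbb{E}_\pi\int_0^t\mathcal G f(Z_s)\,ds=t\int\mathcal Gf\,d\pi$, and this step uses that $\mathcal G f$ is bounded. For the boundary terms we use the definition \eqref{intro:eq:def:mesinvbords}, via \eqref{intro:eq:invbordformule}. Writing $R_i\cdot\nabla f=g^+-g^-$ with $g^\pm\ge0$ bounded, we obtain $\mathbb{E}_\pi\int_0^t R_i\cdot\nabla f(Z_s)\,dL^i_s=t\int R_i\cdot\nabla f\,d\pi_i$. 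Dividing by $t$ then yields \eqref{d}.

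The main obstacle is justifying these expectations, and above all the finiteness of $\mathbb{E}_\pi[L^i_t]$. This finiteness is what makes $\pi_i$ a finite measure and allows $\int R_i\cdot\nabla f\,d\pi_i$ to be split as a difference of finite quantities. I would first try to obtain it from the Skorokhod decomposition itself. Since $R$ is a completely-$\mathcal S$ (indeed here $\mathcal P$-type) matrix under \eqref{intro:existence}, $L_t$ is controlled by the running supremum of $|Z_0+B_t+\mu t|$ plus $|Z_t|$. An alternative route is to apply the identity above to $f(z)=z_1,z_2$ after truncation: the linear identity $\mathbb{E}_\pi[Z_t-Z_0]=\mu t+R\,\mathbb{E}_\pi[L_t]$ gives $\mathbb{E}_\pi[L_t]=-tR^{-1}\mu$, whose components are positive exactly under \eqref{v11pos}.

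To make this rigorous without assuming $\pi$ has a first moment, I would localize with the stopping times $\tau_n=\inf\{s:|Z_s|\ge n\}$. The local-time bound then follows from monotone convergence and the boundedness of $f$, and the localization can be removed by dominated convergence, exactly as in the limit step of Proposition~\ref{barpt}. Alternatively one can cite \cite{harrison_brownian_1987,dai_reflected_1992} for the finiteness of the boundary measures.
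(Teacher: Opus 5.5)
Your proposal is correct and follows the paper's route, which proves the stationary BAR the same way as Proposition~\ref{barpt}: Itô's formula for $f(Z_t)$, expectation under $\mathbb{P}_\pi$ with stationarity cancelling $\mathbb{E}_\pi f(Z_t)-\mathbb{E}_\pi f(Z_0)$, and \eqref{intro:eq:invbordformule} for the local-time terms, with the finiteness of $\pi_1,\pi_2$ taken from \cite{harrison_brownian_1987,dai_reflected_1992}. One small correction to your side remark: $R$ is a $\mathcal P$-matrix only once \eqref{v11pos} is imposed (which is your setting), not under \eqref{intro:existence} alone, as $r_1=r_2=2$ shows.
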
 
This relation characterizes the stationary distribution: if $\pi$, $\pi_1$ and $\pi_2$ have positive, integrable densities satisfying \eqref{d}, then they are the invariant measures of the process.

\subsection{Green measure}
\label{subsec:green125}

We now assume that conditions \eqref{u11} are satisfied so that the process is transient. Until recently, the transient case had received relatively little attention in the literature. We can define Green functions (or measures) as follows.
We recall that for a starting point $z_0\in\mathbb{R}^2$ and a measurable set $A \subset \mathbb{R}_+^{2}$ the Green function is defined by
$$
G(z_0,A)= \int_0^{\infty} P(t,z_0,A)\mathrm{d} t = \mathbb{E}_{z_0} \left[ \int_{0}^{\infty} \mathbf{1}_{\{Z_t \in A \}} \mathrm{d} t \right] 
$$
where $P(t,{z_0},A)=\mathbb{P}_{z_0} [Z_t \in A ]=\mathbb{E}_{z_0} [\mathbf{1}_{Z_t \in A}]$. The density of $G(z_0, \cdot)$ is given by
$$g({z_0},z) =\int_0^{\infty} p(t,z_0,z)\mathrm{d} t,$$
where $p(t,z_0,z)$ is the transition density of the process.
The Green measure $G(z_0,A)$ thus measures the \textit{total expected time} spent in $A$ starting from $z_0$.
We define the Green operator classically as follows:
\begin{equation}
\label{eq:operateurG}
G f(z_0):=\mathbb{E}_{z_0}\left[\int_0^{\infty} f(Z_t) \mathrm{d} t\right]=\int_{\mathbb{R}_{+}^2} f(z) g(z_0, z) \mathrm{d} z .
\end{equation}
As in the recurrent case for invariant measures, we define $g_1^{z_0}$ and $g_2^{z_0}$ as the Green functions (or measures) on the boundary.
For $i=1,2$, we have
$$
g_i^{z_0} (A):= \mathbb{E}_{z_0} \left[ \int_0^\infty 1_{\{Z(s) \in A\}} \mathrm{d}L_s^i \right] <\infty .
$$
The support of $g_1^{z_0}$ is on $F_1$ and that of $g_2^{z_0}$ on $F_2$. There is also a relationship linking Green's measures on the edges to those in the interior
\[
2 g_1^{z_0}(z)\propto g\bigl(z_0,(0,z_2)\bigr) \delta_0(z_1),
\qquad
2 g_2^{z_0}(z)\propto g\bigl(z_0,(z_1,0)\bigr) \delta_0(z_2) .
\]
Thus, $g_i^{z_0} (A)$ measures the \textit{total expected local time} spent on a boundary in $A$ starting from $z_0$.
We obtain the analog of formula \eqref{intro:eq:invbordformule}, yielding the operator
\begin{equation}
\label{intro:eq:greenbordformule}
G_i f(z_0):= \mathbb{E}^{z_0} \left[\int_0^\infty f(Z_t) \mathrm{d}L^i_t \right] = \int_{\mathbb{R}_+} f(x) g_i^{z_0} (x) \mathrm{d}x.
\end{equation}
We then have a \emph{transient BAR} analogous to the recurrent case, which characterizes the Green's measure and follows easily from Itô's formula, see \citeS{franceschi_green_2021}.
\begin{prop}[Basic adjoint relationship, transient case]
\label{prop:bartransient} Assume that the process is transient.
For all $f \in \mathcal{C}^2_0 (\mathbb{R}_+^2)$ (i.e., $\mathcal{C}^2$ functions on $\mathbb{R}_+^2$ tending to $0$ at infinity), we have
\begin{equation}
0=f\left(z_0\right)+\int_{\mathbb{R}_{+}^2}\left(\frac{1}{2} \nabla \cdot \Sigma \nabla+\mu \cdot \nabla\right) f(z) g^{z_0}(z) \mathrm{d} z+\sum_{i=1}^2 \int_{\mathbb{R}_{+}} R_i \cdot \nabla f(z) g_i^{z_0}(z) \mathrm{d} z .
\label{eq:bartransient}
\end{equation}
\end{prop}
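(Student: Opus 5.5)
The plan is to mimic the proof of Proposition~\ref{barpt}, replacing the time-dependent test function by a time-independent $f\in\mathcal{C}^2_0(\mathbb{R}_+^2)$ and using transience in place of the decay of $f(t,\cdot)$ in time. First apply Itô's formula to $f(Z_t)$, using the semimartingale decomposition \eqref{eq:srbm}:
\[
f(Z_t)=f(z_0)+\int_0^t \mathcal G f(Z_s)\,ds+\int_0^t \nabla f(Z_s)\cdot dW_s+\sum_{i=1}^2\int_0^t R_i\cdot\nabla f(Z_s)\,dL^i_s .
\]
Here $\mathcal G=\frac12\nabla\cdot\Sigma\nabla+\mu\cdot\nabla$ is the generator \eqref{eq:generateur}. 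Since $\nabla f$ is bounded, the stochastic integral is a true martingale, so taking expectations gives
\[
\mathbb{E}_{z_0}[f(Z_t)]=f(z_0)+\mathbb{E}_{z_0}\!\left[\int_0^t \mathcal G f(Z_s)\,ds\right]+\sum_i \mathbb{E}_{z_0}\!\left[\int_0^t R_i\cdot\nabla f(Z_s)\,dL^i_s\right].
\]

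Next let $t\to\infty$. Transience means $\|Z_t\|\to\infty$ almost surely, and $f$ tends to $0$ at infinity and is bounded, so dominated convergence gives $\mathbb{E}_{z_0}[f(Z_t)]\to0$. For the remaining terms, the plan is to pass to the limit by dominated convergence, with $\int_0^\infty|\mathcal Gf(Z_s)|\,ds$ and $\int_0^\infty |R_i\cdot\nabla f(Z_s)|\,dL^i_s$ as dominating variables. Their expectations are $G|\mathcal Gf|(z_0)$ and $G_i|R_i\cdot\nabla f|(z_0)$ by \eqref{eq:operateurG} and \eqref{intro:eq:greenbordformule}. Once these are finite, the limits equal $\int \mathcal G f\, g^{z_0}$ and $\int R_i\cdot\nabla f\, g_i^{z_0}$, which is exactly \eqref{eq:bartransient}.

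The main obstacle is this integrability. In the transient case $G(z_0,K)<\infty$ and $g_i^{z_0}(K)<\infty$ only for compact $K$, while the total Green mass is infinite. So for a general $f\in\mathcal{C}^2_0$, whose derivatives need not be integrable against $g^{z_0}$, the argument is not automatic. My first approach is to prove the identity for compactly supported $f\in\mathcal{C}^2_c$. Then $\mathcal Gf$ and $\nabla f$ are bounded with compact support, so both dominating quantities are bounded by a constant times $G(z_0,K)$ or $g_i^{z_0}(K)$, which are finite.

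To extend to $\mathcal{C}^2_0$, I would either interpret the class in the statement as functions for which these integrals converge, or approximate. For the approximation, take cutoffs $f\chi_n$ with $\chi_n(z)=\chi(z/n)$, so that the gradient and Hessian of $\chi_n$ are of order $1/n$ and $1/n^2$ on the annulus $\{n\le\|z\|\le 2n\}$. Then show that the error terms vanish using the linear growth of the Green mass of balls, i.e. $G(z_0,B(0,2n))=O(n)$ along the escape direction. That estimate is the delicate point. I would state the result rigorously for $\mathcal{C}^2_c$, or for $f$ whose derivatives decay fast enough, which is all that is needed later, since exponential test functions $e^{\theta\cdot z}$ with $\theta$ in the convergence domain of the Laplace transforms will be used.
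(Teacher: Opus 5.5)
Your proposal follows the paper's route: the paper says only that the transient BAR ``follows easily from It\^o's formula'', meaning the proof of Proposition~\ref{barpt} with the time decay of $f$ replaced by $Z_t\to\infty$ a.s., which is exactly your It\^o, expectation and dominated-convergence scheme. Your integrability caveat is well founded and goes beyond the paper. Passing to the limit requires $\mathcal{G}f\in L^1(g^{z_0}(z)\,\mathrm{d}z)$ and $R_i\cdot\nabla f\in L^1(g_i^{z_0})$, plus bounded $\nabla f$ for the martingale term. These hold for $\mathcal{C}^2_c$, and for the exponential test functions once the Laplace transforms converge, but can fail for a general $f\in\mathcal{C}^2_0$, so restricting the class as you propose is the right rigorous reading of the statement.
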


\section{Potential theory}
\label{s1.3}

This section briefly presents some classical results from potential theory and its probabilistic counterpart (see the reference book \cite{Doob2001}), applied to reflected stochastic processes in a quadrant. We first focus in Section~\ref{sec:neumann} on partial differential equations with Neumann boundary conditions, which arise when considering reflected processes. Then, in Section~\ref{sec:martharm}, we present Martin boundary theory, illustrating it using the processes of interest.

\subsection{Oblique Neumann boundary condition in a quadrant}
\label{sec:neumann}

Green functions and invariant measures of Markov processes play a central role in potential theory and ergodic theorems for additive functionals. In particular, they provide a probabilistic interpretation of the solutions of certain partial differential equations. Our case is particularly challenging, as we consider a non-smooth and unbounded domain, with oblique reflection at the boundary.

The generator of the SRBM in the interior of the quadrant $\mathcal{G}$ and its adjoint generator $\mathcal{G}^*$ are given by
$$
\mathcal{G} f=\frac{1}{2} \nabla \cdot \Sigma \nabla+\mu \cdot \nabla \quad \text { and } \quad \mathcal{G}^* f=\frac{1}{2} \nabla \cdot \Sigma \nabla-\mu \cdot \nabla .
$$
For simplicity, in this section only, we will replace the notation for the final point, $z$, with $y=(y_1,y_2)$, and the notation for the starting point, $z_0$, with $x=(x_1,x_2)$. Let $\mathcal{G}_x$ denote the operator associated with the variable $x$ and $\mathcal{G}^*_y$ the operator associated with the variable $y$. Harrison and Reiman \cite[(8.2) and (8.3)]{HaRe-81} informally derive the Kolmogorov backward and forward equations (with boundary and initial conditions) for $p(t,x, y)$, the transition density of the process.

\begin{prop}[Kolmogorov equations]
The backward equation can be written as
$$
\left\{\begin{array}{l}
\mathcal{G}_x p_t(x, y)=\partial_t p_t(x, y), \\
R_i \cdot \nabla_x p_t(x, y)=0 \text { if } x_i=0, \\
p_0(x, y)=\delta_0 (y-x) .
\end{array}\right.
$$
%
The forward equation (or Fokker–Planck equation) can be written as
$$ \hspace{2.1cm}
\left\{\begin{array}{l}
\mathcal{G}_y^* p_t(x, y)=\partial_t p_t(x, y), \\
R_i^* \cdot \nabla_y p_t(x, y)-2 \mu_i p_t(x, y)=0 \text { if } y_i=0, \\
p_0(x, y)=\delta_0 (y-x) ,
\end{array}\right.
$$
where
$$
R^*=2 \Sigma-R=
\begin{pmatrix}
1 & 2\rho-r_2
\\
2\rho-r_1 & 1
\end{pmatrix}
$$
and $R_j^*$ is its $j$-th column. 
\end{prop}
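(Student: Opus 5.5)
The backward equation comes from It\^o's formula and the Markov property. The forward equation is obtained by taking the Basic Adjoint Relationship of Proposition~\ref{barpt}, which is a weak form, and integrating by parts to extract the strong form. The derivation is informal, in the spirit of Harrison and Reiman: we assume $p_t(x,y)$ is smooth enough up to the boundary for the integrations by parts to be legitimate.

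\textbf{Backward equation.} Set $u(t,x)=P_{t}f(x)=\mathbb{E}_x[f(Z_t)]$ for a smooth $f$ and fix $T>0$. Apply It\^o's formula to $s\mapsto u(T-s,Z_s)$, exactly as in the proof of Proposition~\ref{barpt}. By the Markov property this process is a martingale, so both the $ds$ term and the $dL^i_s$ terms must vanish:
\begin{itemize}
\item the $ds$ term forces $\mathcal{G}u=\partial_t u$ in the interior;
\item the $dL^i_s$ terms force $R_i\cdot\nabla u=0$ on $F_i$, because $L^i$ charges $F_i$ with positive measure near every boundary point.
\end{itemize}
Writing $u(t,x)=\int p_t(x,y)f(y)\,dy$ and letting $f$ range over a dense class gives the stated equations in the variable $x$. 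The initial condition $p_0(x,\cdot)=\delta_x$ holds because $Z$ is continuous.

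\textbf{Forward equation.} Start from \eqref{eq:barpt}, with test functions $f(t,y)$ compactly supported in $(0,\infty)\times\mathbb{R}_+^2$ but not vanishing on the faces. Replace the boundary measures by $p_i^{x}(t,y)=\tfrac12 p(t,x,y)\,\delta_0(y_i)$; this is the normalized version of the proportionality stated before Proposition~\ref{barpt}. Then integrate by parts.
\begin{itemize}
\item In time, this produces $-\partial_t p$; taking instead $f$ supported up to $t=0$ gives the initial condition from the term $f(0,x)$.
\item In space, integrate by parts twice in $\tfrac12\nabla\cdot\Sigma\nabla f$ and once in $\mu\cdot\nabla f$. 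The interior term is $\int f\,\mathcal{G}^*_y p$.
\item On the face $F_1=\{y_1=0\}$, whose outward normal is $-e_1$, the boundary terms are
\[
-\tfrac12(\Sigma\nabla f)_1\,p+\tfrac12 f\,(\Sigma\nabla p)_1-\mu_1 f p .
\]
\end{itemize}
Choosing $f$ supported in the interior first gives $\partial_t p=\mathcal{G}^*_y p$. What remains is a boundary identity on each face.

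\textbf{Boundary condition on $F_1$ (main obstacle).} This is the only delicate step: the bookkeeping of tangential derivatives on the face. The boundary terms on $F_1$ are
\[
\int_{F_1}\Big[-\tfrac12(\partial_1 f+\rho\,\partial_2 f)p+\tfrac12 f(\partial_1p+\rho\,\partial_2p)-\mu_1 fp+\tfrac12(\partial_1 f+r_1\partial_2 f)p\Big].
\]
The normal derivatives $\partial_1 f$ cancel exactly because $r_{11}=\sigma_{11}=1$; this cancellation is what fixes the factor $2$. The leftover tangential term $\tfrac12(r_1-\rho)\,p\,\partial_2 f$ is integrated by parts along the face; the corner term vanishes by choosing $f$ supported away from the vertex. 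It becomes $-\tfrac12(r_1-\rho)f\,\partial_2 p$. Collecting terms and using the arbitrariness of $f|_{F_1}$ gives
\[
\partial_1 p+(2\rho-r_1)\,\partial_2 p-2\mu_1p=0,
\]
which is exactly $R_1^*\cdot\nabla_y p-2\mu_1 p=0$ with $R^*=2\Sigma-R$. Face $F_2$ is symmetric. A full rigorous justification would require regularity of $p$ near the corner, which I would not attempt here.
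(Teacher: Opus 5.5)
Your proposal is correct and follows essentially the route the paper indicates. The paper gives no detailed proof: it defers to the informal derivation of Harrison and Reiman and remarks that \eqref{eq:barpt} is exactly the weak form of the forward equation, which is what your argument makes explicit (backward equation via It\^o's formula and the martingale property, forward equation by integrating the BAR by parts with $p_i^{x}=\tfrac12 p\,\delta_0(y_i)$). Your boundary computation on $F_1$ checks out, including the cancellation of the normal derivatives and the tangential integration by parts, which yields $\partial_1 p+(2\rho-r_1)\partial_2 p-2\mu_1 p=0$, i.e.\ $R_1^*\cdot\nabla_y p-2\mu_1 p=0$.
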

The weak form of the forward equation is exactly the Basic Adjoint Relationship for the transition density
\eqref{eq:barpt}.
Letting $t\to\infty$ in the forward equation, Harrison and Reiman conclude that, in the positive recurrent case, the density $\pi$ of the stationary distribution satisfies the steady-state equation \cite[(8.5)]{HaRe-81}
$$
\left\{\begin{array}{l}
\mathcal{G}^*_y \pi(y)=0, \\
R_i^* \cdot \nabla_y \pi(y)-2 \mu_i \pi=0 \text { if } y_i=0.
\end{array}\right.
$$
The weak form of the previous PDE is exactly the Basic Adjoint Relationship in the recurrent case \eqref{d}.
In the transient case, integrating the forward equation in time from $0$ to $\infty$ suggests that the Green function $g$ satisfies the following partial differential equation with
\emph{Robin boundary conditions} (i.e., prescribing a linear combination of the function and its derivative on the boundary):
$$
\left\{\begin{array}{l}
\mathcal{G}_y^* g(x, y)=-\delta_x(y), \\
R_i^* \cdot \nabla_y g(x, y)-2 \mu_i g(x, y)=0 \text { if } y_i=0 .
\end{array}\right.
$$
The weak form of the previous PDE is exactly the Basic Adjoint Relationship in the transient case
\eqref{eq:bartransient}.

\begin{rem}[Neumann boundary problem]
\label{rem:neumannprob}
The Green function $g$ of obliquely reflected Brownian motion in the quadrant is thus a fundamental solution of the adjoint operator $\mathcal{G}^*$. Along with the boundary Green functions $g_i$, it allows us to find solutions to the following \emph{oblique Neumann boundary problem}
$$
\begin{cases}\mathcal{G}_x u(x)=-f(x) & \text { in } \mathbb{R}_{+}^2, \\ R_i \cdot \nabla_x u(x)=\varphi_i(x) & \text { if } x_i=0. \end{cases}
$$
Recall that the definitions of the operators $G$ and $G_i$ are given in~\eqref{eq:operateurG} and~\eqref{intro:eq:greenbordformule}. If a solution exists, then
$$
u=G f+G_1 \varphi_1+G_2 \varphi_2 
$$
should satisfy the oblique Neumann boundary problem. Since the domain is unbounded, uniqueness is not guaranteed unless additional conditions such as $u(x)\to 0$ as $|x|\to \infty$ are imposed. In fact, the corresponding harmonic functions $h$ (i.e., functions solving the boundary problem with $f=0$ and $\varphi_i=0$) are not always unique. This occurs, for instance, when the Martin boundary is not reduced to a single point, as we shall see in the next section. Therefore, if $u$ is a solution of the Neumann boundary problem and $h$ is harmonic, then $u+h$ is also a solution.
\end{rem}

\subsection{Harmonic functions}
\label{sec:martharm}

The goal of this section is to recall the notion of harmonic function for a Markov process, as well as the main results from Martin boundary theory, and to illustrate them using a reflected process in the quadrant.

Let $X_t$ be a transient Markov process (e.g., reflected Brownian motion) on a state space $M$ (e.g., the quadrant). 
A function $h$ is harmonic in $M$ for the process $X$ 
if the mean value property
$$
\mathbb{E}_x\left[h\left(X_{\tau_K}\right)\right]=h(x)
$$
holds for every compact $K \subset M$, where $\tau_K$ is the first exit time from $K$.
The function $h$ is superharmonic for $X$ if $$\mathbb{E}_x\left[f\left(X_{\tau_K}\right)\right] \leq f(x)$$ for every compact $K$.
A nonnegative harmonic function $h$ is minimal if, for every harmonic function $g$ such that $0 \leq g \leq h$, we have $g = c h$ for some constant $c$.
The following result states that a function is harmonic if it satisfies the Neumann problem of Remark~\ref{rem:neumannprob} with $f=0$ and $\varphi_i=0$.
\begin{prop}[Harmonic function and PDE]
$\mathcal{C}^2$ harmonic functions for the reflected Brownian motion in the quadrant are those that vanish under the generator and the boundary generator, i.e., functions $h \in \mathcal{C}^2\left( \mathbb{R}_{+}^2\right)$ such that
\begin{equation}
\begin{cases}\mathcal{G} h(x)=0 & \text { in } \mathbb{R}_{+}^2, \\ R_i \cdot \nabla_x h(x)=0 & \text { if } x_i=0.\end{cases}
\label{eq:pdeharmneuman}
\end{equation}
\end{prop}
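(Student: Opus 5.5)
The plan is to prove both implications with Itô's formula applied to $h(Z_t)$ up to the exit time $\tau_K$ of a compact set $K$, much as in the proof of Proposition~\ref{barpt}. For $h\in\mathcal{C}^2(\mathbb{R}_+^2)$ and the decomposition \eqref{eq:srbm}, Itô's formula gives
\[
h(Z_{t\wedge\tau_K}) = h(x) + \int_0^{t\wedge\tau_K} \mathcal{G}h(Z_s)\,ds + \int_0^{t\wedge\tau_K}\nabla h(Z_s)\cdot dW_s + \sum_{i=1}^2 \int_0^{t\wedge\tau_K} (R_i\cdot\nabla h)(Z_s)\,dL^i_s .
\]
On $K$ the function $h$ and its derivatives are bounded, so the stochastic integral is a true martingale and has zero expectation. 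The exit time $\tau_K$ is integrable: the coordinate $Z^{(1)}$ (or a suitable linear combination) behaves like a nondegenerate Brownian motion away from the boundary, and a standard Lyapunov or occupation argument bounds $\mathbb{E}_x[\tau_K]$. Boundary local times are also controlled, since $\mathbb{E}_x[L^i_{\tau_K}]<\infty$ follows from the same Itô identity applied to a test function such as $f(z)=z_1+z_2$ adjusted so that $R_i\cdot\nabla f>0$ (possible because of \eqref{intro:existence}). These bounds let us let $t\to\infty$ by dominated convergence.

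\emph{Sufficiency.} Suppose $h$ satisfies \eqref{eq:pdeharmneuman}. The $ds$ term vanishes because $\mathcal{G}h=0$. Each $dL^i$ term vanishes because $L^i$ increases only when $Z^{(i)}=0$, and there $R_i\cdot\nabla h=0$. Taking expectations and letting $t\to\infty$ gives $\mathbb{E}_x[h(Z_{\tau_K})]=h(x)$, which is the mean value property.

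\emph{Necessity.} Suppose $h\in\mathcal{C}^2$ is harmonic. For an interior point $x$, take $K=\overline{B(x,\varepsilon)}$ inside the open quadrant. Before $\tau_K$ no local time accumulates, so the identity reads
\[
0=\mathbb{E}_x\!\int_0^{\tau_K}\mathcal{G}h(Z_s)\,ds .
\]
Dividing by $\mathbb{E}_x[\tau_K]>0$ and letting $\varepsilon\to0$, continuity of $\mathcal{G}h$ gives $\mathcal{G}h(x)=0$. By continuity this extends to the whole closed quadrant.

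The boundary condition is the main obstacle. Fix $x=(0,x_2)$ with $x_2>0$ and take $K$ a small closed half-ball around $x$, away from the other face and the vertex. Since $\mathcal{G}h=0$, we get
\[
0=\mathbb{E}_x\!\int_0^{\tau_K}(R_1\cdot\nabla h)(Z_s)\,dL^1_s .
\]
Write $R_1\cdot\nabla h(Z_s)=R_1\cdot\nabla h(x)+O(\varepsilon)$ on $K$. It then remains to show that $\mathbb{E}_x[L^1_{\tau_K}]>0$ and that the error term is $o(\mathbb{E}_x[L^1_{\tau_K}])$; the second point holds because the error is bounded by $\sup_K|R_1\cdot\nabla h - R_1\cdot\nabla h(x)|\cdot\mathbb{E}_x[L^1_{\tau_K}]$. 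For positivity, the process starts on $F_1$ and, locally near $x$, is a one-dimensional reflected Brownian motion in the first coordinate, so $L^1$ charges every time interval immediately after $0$. Dividing by $\mathbb{E}_x[L^1_{\tau_K}]$ and letting $\varepsilon\to0$ yields $R_1\cdot\nabla h(x)=0$; the case of $F_2$ is symmetric.

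Boundary points at the vertex are not needed, since the Neumann condition is stated only on the faces and extends to the vertex by continuity. Some care is needed so that $\tau_K$ is taken as the exit time relative to the quadrant topology, meaning the process may touch the face $F_1$ inside $K$.
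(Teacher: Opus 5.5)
Your argument is correct. For sufficiency and for the interior equation it matches the paper's sketch: Dynkin's formula, and comparison with unconstrained Brownian motion on balls inside the open quadrant.

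The genuine difference is the Neumann condition. The paper does not prove it directly; it deduces it from Corollary~3.3 of \cite{andres_2009}, an external result on obliquely reflected SDEs. You instead localize on a half-ball $K$ around a face point. There you use $\mathcal{G}h\equiv 0$, already established and extended by continuity to the closed quadrant, to reduce the mean-value identity to $\mathbb{E}_x\int_0^{\tau_K}(R_1\cdot\nabla h)(Z_s)\,dL^1_s=0$. You then conclude from $\mathbb{E}_x[L^1_{\tau_K}]>0$ and the continuity of $\nabla h$.

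What each route buys:
\begin{itemize}
\item Your route is elementary and self-contained. It uses only It\^o's formula, the Skorokhod decomposition and the assumed $\mathcal{C}^2$ regularity up to the boundary, under the existence condition alone. It also works directly with the exit-time definition of harmonicity from Section~\ref{sec:martharm}.
\item The paper's citation is shorter and places the result in the framework of derivatives of reflected flows. However, it relies on an external theorem whose hypotheses the sketch does not verify.
\end{itemize}

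Two steps deserve one more line each.
\begin{itemize}
\item \emph{Positivity of the local time.} On $[0,\tau_K]$ you have $L^2\equiv 0$, so $Z^{(1)}_t=B^{(1)}_t+\mu_1t+L^1_t$ there. The one-dimensional Skorokhod map then gives $L^1_t=\max_{s\le t}(-B^{(1)}_s-\mu_1 s)$, which is $>0$ for every $t>0$ a.s. Since $\tau_K>0$ a.s., this yields $L^1_{\tau_K}>0$ a.s.
\item \emph{Integrability.} Because $K$ may touch the faces, you need a Lyapunov function compatible with the reflection. Take $f(z)=e^{\lambda v\cdot z}$ with $R_1\cdot v>0$ and $R_2\cdot v>0$; such $v$ exists under \eqref{intro:existence}, since $R_2$ cannot be a negative multiple of $R_1$. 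For $\lambda$ large this gives $\mathcal{G}f\ge c>0$ on $K$ and $R_i\cdot\nabla f>0$. It\^o's formula then yields both $\mathbb{E}_x[\tau_K]<\infty$ and $\mathbb{E}_x[L^i_{\tau_K}]<\infty$ at once.
\end{itemize}
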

\begin{proof}[Sketch of proof]
Sufficient condition can be shown directly using Dynkin's formula. For the necessary condition, the interior equation follows directly, since it coincides with that of an unconstrained Brownian motion. The Neumann boundary condition can then be deduced from \cite[Corollary~3.3]{andres_2009}. 
\end{proof}

\paragraph{Escape and absorption probabilities}
It follows from Section~\ref{sec:absesc} that the escape probabilities along the axes, $h^\rightarrow(z)$ and $h^\uparrow(z)$, are harmonic for the reflected Brownian motion. Moreover, the escape probability to infinity, $h^\infty$, is harmonic for the reflected Brownian motion killed at the origin. All of these functions satisfy the PDE~\eqref{eq:pdeharmneuman}.

\subsection{Martin boundary}
We recall a few key results from Martin boundary theory. For more details, see these classic references \cite{chung2005,kunita1963,martin1941,Pinsky1995}. Recall that the Green function is denoted by $g(x,y)$.
For a fixed reference state $x_0$, the \emph{Martin kernel} is defined by
$$
k(x,y):=\frac{g(x,y)}{g(x_0,y)}.
$$
The \emph{Martin compactification} $\overline{M}$ is the smallest compactification of $M$ such that $y \mapsto k(x,y)$ extends continuously. The \emph{Martin boundary} is defined as
$$
\partial M:=\overline{M} \backslash M .
$$
The function $x \mapsto k(x,y)$ is superharmonic for every $y \in \overline{M}$. The \emph{minimal Martin boundary} is
$$
\partial_m M:=\left\{y \in \partial M: x \mapsto k(x,y) \text { is minimal harmonic}\right\} .
$$
Finally, we state one of the main results of Martin theory, which gives an integral representation of positive harmonic functions \cite[Theorem 4]{kunita1963}.
\begin{thm}[Martin representation theorem]
For any positive harmonic function $h$, there exists a unique finite measure $m$ on the minimal Martin boundary such that for all $x \in M$,
$$
h(x)=\int_{\partial_m M} k(x,y) m(\mathrm{d} y).
$$
\end{thm}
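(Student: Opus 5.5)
Our plan is to follow the classical Choquet-theoretic route used in \cite{kunita1963,Pinsky1995}. Positive harmonic functions are obtained as limits of potentials. The Martin compactification is then exactly what is needed to pass to the limit in the representation of such potentials.

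\textbf{Step 1: approximation by potentials.} Fix a positive harmonic $h$ and an exhausting sequence of compacts $K_n\uparrow M$ with $x_0\in K_1$. Let $\tau_n$ be the exit time of $K_n$. Consider the balayage (reduced function)
\[
h_n(x):=\mathbb{E}_x\big[h(X_{\tau_n})\big]\quad\text{on } K_n,
\]
extended by $h$ outside $K_n$; equivalently $h_n=R^{K_n^c}h$. By the strong Markov property and transience, $h_n$ is a potential, i.e.\ $h_n=G\lambda_n$ for a positive measure $\lambda_n$ carried by $\overline{K_n^c}\cap \overline{K_n}$ (the boundary of $K_n$). Hence
\[
h_n(x)=\int g(x,y)\,\lambda_n(\mathrm{d}y)=\int k(x,y)\,m_n(\mathrm{d}y),\qquad m_n(\mathrm{d}y):=g(x_0,y)\lambda_n(\mathrm{d}y).
\]
Since $h_n=h$ on $K_n^c$, and this set eventually contains neighbourhoods of infinity, we have $h_n\uparrow h$ pointwise. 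Moreover $m_n(\overline M)=h_n(x_0)\le h(x_0)$, so the masses are uniformly bounded.

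\textbf{Step 2: compactness and passage to the limit.} View $m_n$ as measures on the compact metrizable space $\overline M$. Extract a weakly convergent subsequence $m_n\to m$. Because $y\mapsto k(x,y)$ is continuous on $\overline M$ by the very definition of the Martin compactification, weak convergence gives $h(x)=\int_{\overline M}k(x,y)\,m(\mathrm{d}y)$. As $\lambda_n$ is supported on $\partial K_n$, which escapes every compact, $m$ charges only $\partial M$.

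\textbf{Step 3: restriction to the minimal boundary and uniqueness.} This is the main obstacle. First one shows that $\partial_m M$ is Borel and that every $k(\cdot,y)$ with $y\in\partial M$ is either minimal or itself representable by a measure on $\partial_m M$. The approach is through Choquet's theorem applied to the convex compact set $\{h\ge 0 \text{ harmonic},\ h(x_0)=1\}$, whose extreme points are the normalized minimal harmonic functions. A function $k(\cdot,y)$ that is not extremal gets decomposed, and one then re-integrates over the minimal boundary. Uniqueness follows from the key identity $R^{\{k(\cdot,y)\}}$-balayage: for a minimal $y$, the $h$-transform process (Doob conditioning by $k(\cdot,y)$) converges a.s.\ to $y$ in $\overline M$. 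Consequently
\[
m(B)=\mathbb{P}^h_{x_0}\big(\lim_t X_t\in B\big)\,h(x_0),
\]
which determines $m$ uniquely. Proving this Martin convergence theorem, via martingale convergence of $k(X_t,y)/h(X_t)$ along the $h$-process, is the delicate part.

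For the SRBM in the quadrant, the only model-specific inputs are transience (condition \eqref{u11}) and continuity and positivity of $g(x,y)$ off the diagonal. Together these guarantee that the Martin kernel is well defined and that the abstract hypotheses of \cite[Theorem 4]{kunita1963} hold.
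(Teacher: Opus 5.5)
\textbf{The paper does not prove this statement.} It is quoted from \cite[Theorem 4]{kunita1963}, so your outline has to stand on its own, and as written it does not.

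\textbf{Step~1 fails as stated.} You take $\tau_n$ to be the \emph{exit} time of $K_n$. By the paper's own definition of harmonicity, $\mathbb{E}_x[h(X_{\tau_n})]=h(x)$ on $K_n$. Since you also set $h_n=h$ off $K_n$, you get $h_n\equiv h$. A nonzero harmonic function is never a potential: its greatest harmonic minorant is itself, whereas that of a potential is $0$. So the claim $h_n=G\lambda_n$ with $\lambda_n$ carried by $\partial K_n$ is false. Your justification of $h_n\uparrow h$ also has the sets reversed: it uses ``$h_n=h$ on $K_n^c$'', but $K_n^c$ decreases to $\emptyset$.

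The correct approximants are the reductions \emph{onto} the compacts, $h_n(x):=\mathbb{E}_x\bigl[h(X_{T_{K_n}})\mathbf{1}_{\{T_{K_n}<\infty\}}\bigr]$, where $T_{K_n}$ is the \emph{hitting} time of $K_n$. These have the required properties:
\begin{itemize}
\item they equal $h$ on $K_n$, hence $h_n\uparrow h$;
\item they are harmonic off $K_n$;
\item by transience, $\mathbb{E}_x[h_n(X_{\tau_N})]\le(\max_{K_n}h)\,\mathbb{P}_x(X\text{ visits }K_n\text{ after }\tau_N)\to0$, so they are potentials charged on $\partial K_n$.
\end{itemize}
With this change, your mass bound $m_n(\overline M)=h_n(x_0)\le h(x_0)$ and Step~2 go through. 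The supports of $m_n$ eventually avoid the singularity of $k(x,\cdot)$ at $y=x$. This yields a representing measure on $\partial M$, which is the easy half of the theorem.

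\textbf{Step~3, the actual content of the statement, is not supplied.} The measure from Step~2 can charge non-minimal points and need not be unique.
\begin{itemize}
\item Your Choquet sketch presupposes the key identification: that the extreme points of $\{h\ge0\text{ harmonic}:h(x_0)=1\}$ are exactly the $k(\cdot,y)$ with $y\in\partial_m M$. Proving this is a genuine step. For instance, if $h$ is minimal and represented by $m$ as in Step~2, then for each Borel $B$ the functions $\int_B k(\cdot,y)\,m(\mathrm{d}y)$ and $\int_{B^c}k(\cdot,y)\,m(\mathrm{d}y)$ are superharmonic with harmonic sum. Hence each is harmonic, hence a multiple of $h$, which forces $k(\cdot,y)=h/h(x_0)$ for $m$-a.e.\ $y$.
\item ``Decompose each non-minimal $k(\cdot,y)$ and re-integrate'' needs a measurable choice $y\mapsto\mu_y$ of decompositions, which you do not provide.
\item Uniqueness via Choquet needs the simplex (lattice) property of the cone of positive harmonic functions, which you do not address.
\item The Doob--Hunt route you invoke would give existence, support in $\partial_m M$ and uniqueness at once, via $m(B):=h(x_0)\,\mathbb{P}^h_{x_0}(X_\infty\in B)$. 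But it requires the Martin convergence theorem for $h$-processes: convergence in $\overline M$, exit law $k(x,y)\,m(\mathrm{d}y)/h(x)$ from $x$, and limit a.s.\ in $\partial_m M$. That is exactly the part you defer.
\end{itemize}

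\textbf{Your closing claim about hypotheses is unsupported.} The hypotheses of \cite[Theorem 4]{kunita1963} are duality hypotheses: a dual process relative to a reference measure, with $g$ excessive in each variable. They are not just transience plus continuity and positivity of $g$. The paper does not verify them either, but they do not follow from the two facts you cite.
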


Martin’s boundary and the associated harmonic functions can thus be determined by obtaining the exact asymptotics of the Green’s functions in all directions. The method for obtaining this asymptotic result is explained in Section~\ref{sec:asymptmartin}.
The Martin boundaries and harmonic functions for reflected Brownian motion in the quadrant and the half-plane are determined in \citeS{franceschi_kourkova_petit_asymptotics_2024,franceschi_ernst_asymptotic_2021}, in the case of a degenerate process in~\cite{petit2024} and for space-time Brownian motion killed in a cone in~\citeS{franceschi_spacetime_2024}. 

\section{Functional equations}
\label{sec:funceq}

Functional equations involving Laplace transforms are the continuous analogue of the fundamental functional equation for random walks in the quarter plane, which involves generating functions. They derive from the BAR or from the PDEs and are the starting point of several analytic, combinatorial and algebraic approaches.

\subsection{General case: transition density}

Recall that we denote by $p(t,z_0,z)$ the transition density of the degenerate reflected Brownian motion in the quarter plane, with $z=(z_1,z_2)$. We define its Laplace transform in time and space by
\begin{equation}
\label{eq:Lap_main}
 \widehat p (u,x,y)= \mathbb{E}_{z_0} \!\left[ \int_0^\infty e^{x Z_t^{(1)}+y Z_t^{(2)} + ut} \, dt \right]
 = \int_{\mathbb R_+^2}\!\int_{0}^\infty e^{x z_1+yz_2+ut}p(t,z_0,z)\, dt\, dz.
\end{equation}
For the sake of brevity, we omit the dependence on $z_0$, even though it is clearly present.
We also consider the joint Laplace transform in space and time of the boundary measures:
\begin{equation}
\label{eq:Lap1_main}
    \widehat p_1(u,y) = \mathbb{E}_{z_0} \!\left[ \int_0^\infty e^{y Z_t^{(2)} + ut} \, dL_t^1 \right]
    = \int_{\mathbb R_+}\!\int_{0}^\infty e^{yz_2+ut} \, p_1^{z_0}(dt, dz_2),
\end{equation}
and similarly for $\widehat p_2(u,x)$.
These quantities can be interpreted as the resolvent applied to exponential test functions. 
To ensure convergence, we first assume that $\Re x<0$, $\Re y<0$, and $\Re u<0$. In fact, we will later show that these Laplace transforms admit analytic continuations to the complex plane cut along a half-line.
The following proposition states a functional equation that characterizes the transition density.

\begin{prop}[Functional equation for the transition density]
For $x,y,u\in\mathbb{C}$ such that $\Re x<0$, $\Re y<0$, and $\Re u<0$, we have
\begin{equation}
    \label{eq:main_eq}
    -\bigl(K(x,y)+u\bigr)\widehat{p}(u,x,y)
    = k_1(x,y) \widehat{p}_1(u,y)
    + k_2(x,y) \widehat{p}_2(u,x)
    + e^{\langle (x,y),z_0\rangle},
\end{equation}
where $K,k_1,k_2$ are defined by
\begin{equation}
\label{eq:def_parameters_Maxence}
    \left\{
    \begin{array}{rcl}
    K(x,y) & = & \frac{1}{2}(x^2+y^2+2\rho xy)+\mu_1x+\mu_2y,\\
    k_1(x,y) & = & x+r_1y,\\
    k_2(x,y) & = & r_2x+y.
    \end{array}
    \right.
\end{equation}
\end{prop}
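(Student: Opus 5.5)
The plan is to apply the Basic Adjoint Relationship for the transition density, Proposition~\ref{barpt}, with the space-time exponential test function
\[
f(t,z) = e^{ut + x z_1 + y z_2}, \qquad \Re x<0,\ \Re y<0,\ \Re u<0 .
\]
This is not real, non-negative, or bounded in the required sense. So I would first note that the proof of Proposition~\ref{barpt} (Itô's formula, then taking expectations, then letting $t\to\infty$) works verbatim for complex-valued $f$, by splitting into real and imaginary parts. The needed properties do hold here: $|f(t,z)|\le e^{t\Re u}$ on $\mathbb{R}_+^2$, the first and second derivatives of $f$ are bounded by constants times $e^{t\Re u}$, and $f(t,\cdot)\to0$ uniformly as $t\to\infty$.

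Next I compute each term.

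\textbf{Interior term.} We have $\partial_t f = u f$ and, from \eqref{eq:generateur},
\[
\mathcal{G} f = \Bigl(\tfrac12(x^2+2\rho xy+y^2)+\mu_1x+\mu_2y\Bigr) f = K(x,y)\, f .
\]
So the interior integral equals $(K(x,y)+u)\,\widehat p(u,x,y)$, by the definition \eqref{eq:Lap_main}.

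\textbf{Boundary terms.} With the normalized reflection matrix, $R^1=(1,r_1)^\top$ and $R^2=(r_2,1)^\top$. Hence $R^1\cdot\nabla f = (x+r_1y) f = k_1(x,y)\, f$ and $R^2\cdot\nabla f = k_2(x,y)\, f$. The measure $p_1^{z_0}$ is supported on $\{z_1=0\}$, where $f = e^{ut+yz_2}$, so the first boundary integral is $k_1(x,y)\,\widehat p_1(u,y)$ by \eqref{eq:Lap1_main}. Symmetrically, the second is $k_2(x,y)\,\widehat p_2(u,x)$.

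\textbf{Initial term.} $f(0,z_0)=e^{\langle (x,y),z_0\rangle}$.

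Substituting these into \eqref{eq:barpt} gives
\[
0=e^{\langle (x,y),z_0\rangle}+(K+u)\widehat p+k_1\widehat p_1+k_2\widehat p_2,
\]
which is \eqref{eq:main_eq}.

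The main obstacle is the justification of the limit $t\to\infty$ and of the finiteness of all the transforms, specifically for the boundary ones. For $\widehat p$ this is immediate: $|e^{xZ^{(1)}_t+yZ^{(2)}_t+ut}|\le e^{t\Re u}$, which is integrable in $t$. For $\widehat p_i$ one needs
\[
\mathbb{E}_{z_0}\!\left[\int_0^\infty e^{t\Re u}\,dL^i_t\right]<\infty .
\]
I would bound this by integration by parts, which gives $|\Re u|\int_0^\infty e^{t\Re u}\,\mathbb{E}[L^i_t]\,dt$, combined with the linear growth bound $\mathbb{E}[L^i_t]\le C(1+t)$. That bound follows from the decomposition \eqref{eq:srbm}: invert $R$ (possible in the existence regime, or at least via a positive combination of the reflection vectors pointing inward) and use $\mathbb{E}|Z_t|\le C(1+t)$. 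With this domination, dominated convergence applies to both the interior and boundary terms, exactly as in the proof of Proposition~\ref{barpt}, and the martingale term has zero expectation since its integrand is bounded by $C e^{s\Re u}$.
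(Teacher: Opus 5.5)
This is exactly the paper's proof: apply the BAR of Proposition~\ref{barpt} to $f(t,z)=e^{ut+xz_1+yz_2}$ and read off the interior, boundary and initial terms, and your term-by-term identification is correct. Your extra care about complex-valued $f$ and integrability of the boundary terms goes beyond the paper's one-line proof. The one loose point is the bound $\mathbb{E}[L^i_t]\le C(1+t)$: as written it is circular, since controlling $\mathbb{E}|Z_t|$ through \eqref{eq:srbm} again requires controlling $L_t$. It does follow from the oscillation inequality for the Skorokhod problem with a completely-$\mathcal{S}$ reflection matrix (equivalent here to \eqref{intro:existence}), which gives $L^i_t\le C\sup_{s\le t}|B_s+\mu s|$.
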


\begin{proof}
This follows by applying the Basic Adjoint Relationship (BAR) for the transition density (see Proposition~\ref{barpt}) to the test function $f(t,z)=e^{xz_1+yz_2+ut}$, which satisfies the required assumptions for $t>0$, $z\in\mathbb{R}_+^2$, and $\Re x<0$, $\Re y<0$, and $\Re u<0$.
\end{proof}

The functional equation~\eqref{eq:main_eq} generalizes two equations that have been extensively studied: the functional equation for the invariant measure in the recurrent case (see~\eqref{eq:EJP-25} below) and the functional equation for the Green functions in the transient case (see~\eqref{eq:Petit-24} below). 
We explain below how to recover these two equations.
Equation~\eqref{eq:main_eq} not only encompasses these two important situations, but also provides a unified framework to address one of the fundamental questions for SRBM, namely the explicit computation of its distribution, or at least of its Laplace transform.

\subsection{Recurrent case: invariant measure}

Recall that we denote by $\pi(z)$ the density of the invariant (stationary) distribution, with $z=(z_1,z_2)$.
We introduce its Laplace transform in space by
\begin{equation}
\label{eq:Lap_pi}
\widehat \pi (x,y)
=
\int_{\mathbb R_+^2} e^{x z_1+yz_2}\pi(z)\,dz.
\end{equation}
We also define the Laplace transforms of the boundary stationary measures:
\begin{equation}
\label{eq:Lap_pi1}
\widehat \pi_1(y)
=\mathbb{E}_{\pi} \int_0^1 e^{y Z_t^{(2)}} dL_t^1=
\int_{\mathbb R_+} e^{y z_2}\,\nu_1(dz_2),
\end{equation}
and similarly for $\widehat \pi_2(x)$.
To ensure convergence, we assume that $\Re x<0$ and $\Re y<0$. As before, these transforms admit analytic continuations to suitable domains of the complex plane.
The following proposition states the functional equation satisfied by these transforms.

\begin{prop}[Functional equation for the invariant measure]
Assume that conditions \eqref{v11pos} are satisfied so that the process is positive recurrent. For $x,y\in\mathbb{C}$ such that $\Re x<0$, $\Re y<0$, we have
\begin{equation}
\label{eq:EJP-25}
- K(x,y)\widehat{\pi}(x,y)
=
k_1(x,y)\widehat{\pi}_1(y)
+
k_2(x,y)\widehat{\pi}_2(x),
\end{equation}
where $K,k_1,k_2$ are defined in~\eqref{eq:def_parameters_Maxence}.
\end{prop}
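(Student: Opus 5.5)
The plan is to apply the Basic Adjoint Relationship for the stationary distribution, Proposition~\ref{prop:bar}, to the exponential test function $f(z)=e^{xz_1+yz_2}$ with $\Re x<0$, $\Re y<0$. This parallels the derivation of \eqref{eq:main_eq} from Proposition~\ref{barpt}.

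The first step is to check admissibility. On $\mathbb{R}_+^2$ we have $|f(z)|=e^{\Re x\, z_1+\Re y\, z_2}\le 1$. All first and second derivatives are $f$ multiplied by monomials of degree at most two in $x,y$, so they are bounded as well. Hence $f\in\mathcal{C}^2_b(\mathbb{R}_+^2)$, with $f$ complex-valued. Since \eqref{d} is linear in $f$, we may apply it separately to $\Re f$ and $\Im f$, both of which belong to $\mathcal{C}^2_b$, and then recombine. Integrability against $\pi$, $\pi_1$ and $\pi_2$ holds because these are finite measures: $\pi$ is a probability measure, and $\pi_i$ is finite since $\mathbb{E}_\pi[L^i_1]<\infty$ in the positive recurrent regime \eqref{v11pos}. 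This finiteness is standard and will be quoted (e.g.\ \cite{dai_reflected_1992}).

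The second step is the computation. From \eqref{eq:generateur} with the normalized parameters, $\mathcal{G}f(z)=\bigl(\tfrac12(x^2+2\rho xy+y^2)+\mu_1x+\mu_2y\bigr)f(z)=K(x,y)f(z)$. For the boundary terms, $\nabla f=(x,y)^\top f$. Using the columns $R_1=(1,r_1)^\top$ and $R_2=(r_2,1)^\top$, we get $R_1\cdot\nabla f=(x+r_1y)f=k_1(x,y)f$ and $R_2\cdot\nabla f=k_2(x,y)f$. Because $\pi_1$ is supported on $F_1=\{z_1=0\}$, we have $\int f\,d\pi_1=\int e^{yz_2}\pi_1(dz_2)=\widehat\pi_1(y)$, and similarly $\int f\,d\pi_2=\widehat\pi_2(x)$. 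Substituting these into \eqref{d} gives $K\widehat\pi+k_1\widehat\pi_1+k_2\widehat\pi_2=0$, which is exactly \eqref{eq:EJP-25}.

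The only point needing care is the justification of the hypotheses of Proposition~\ref{prop:bar}: its extension to complex-valued test functions, which linearity handles, and the finiteness of the boundary measures. The algebraic identification of $K$, $k_1$ and $k_2$ is routine.
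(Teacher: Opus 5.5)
Your proposal is correct and follows the paper's own argument. The paper applies the stationary BAR of Proposition~\ref{prop:bar} to $f(z)=e^{xz_1+yz_2}$, with $\mathcal{G}f=K(x,y)f$, $R_i\cdot\nabla f=k_i(x,y)f$, and the supports of $\pi_1,\pi_2$ on the faces; your remarks on complex-valued test functions and on the finiteness of $\pi_1,\pi_2$ just spell out what the paper summarizes as ``satisfies the required assumptions.''
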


\begin{proof}
This follows by applying the stationary Basic Adjoint Relationship (BAR) (see Proposition~\ref{prop:bar}) to the test function $f(z)=e^{x z_1+y z_2}$, which satisfies the required assumptions for $z\in\mathbb{R}_+^2$ and $\Re x<0$, $\Re y<0$.
\end{proof}
\begin{rem}[From the functional equation for the transition density to the one for the invariant measure]
Without going into full details, we briefly explain how, at a formal level, the functional equation~\eqref{eq:EJP-25} can be recovered from the general equation~\eqref{eq:main_eq} by letting $u\to 0$. Under standard ergodicity assumptions, the transition density converges to the invariant density, i.e.,
\[
\pi(z)=\lim_{t\to\infty} p(t,z_0,z),
\]
and, with the final value theorem, one has
\[
\lim_{s\to 0} u \int_0^\infty e^{-ut} p(t,z_0,z)\,dt = \pi(z).
\]
This shows that $\widehat \pi (x,y)$ arises as the limit of $u\,\widehat p(u,x,y)$ as $u\to 0$. 
Similarly, one obtains the analogue of the above limit for the boundary measures.  
Ergodic properties suggests that, starting from $z_0$, the boundary occupation measure satisfies the convergence
\[
\frac{1}{t}\,\mathbb{E}_{z_0}\!\left[\int_0^t \mathbf{1}_{\{Z(s)\in A\}}\, dL_s^1 \right]
\;\longrightarrow\;
\pi_1(A),
\qquad t\to\infty,
\]
at least at a formal level.
We obtain the first equality below by applying the final value theorem and the last equality by applying the ergodic properties:
\begin{align*}
\label{eq:final_value_boundary}
\lim_{s\to 0}
u\,\widehat p_1(u,y)
&= \lim_{t\to\infty}\int_{\mathbb R_+^2} e^{y z_2 }p_1(t,z_2) d z_2
=\lim_{t\to\infty} \frac{1}{t} \int_0^t \int_{\mathbb{R}_+^2} e^{y z_2 }p_1(t',z_2) d z_2 dt'
\\ &=\lim_{t\to\infty} \frac{1}{t}
\mathbb{E}_{z_0} \!\left[ \int_0^t e^{y Z_t^{(2)}}\, dL_t^1 \right]
=\mathbb{E}_{\pi} \!\left[ \int_0^1 e^{y Z_t^{(2)}}\, dL_t^1 \right]=
\widehat \pi_1(y).
\end{align*}
An analogous relation holds for $\widehat p_2$ and $\widehat \pi_2$.
In other words, the boundary Laplace transforms $\widehat \pi_1$ and $\widehat \pi_2$ arise as the limits of $u\,\widehat p_1(u,y)$ and $u\,\widehat p_2(u,x)$ as $u\to 0$.
\end{rem}

This functional equation characterizing the invariant measure has been studied extensively. It was first derived in~\cite{dai_reflecting_2011}, has been used in \citeS{franceschi_kourkova_asymptotic_2017} to obtain asymptotic expansions, in \cite{franceschi_raschel_integral_2019} to derive explicit integral formulas, and in \cite{franceschi_bousquet_melou_price_hardouin_raschel_stationary_2023} to analyze differential properties.
Several variants of this equation have also been investigated. This includes the case of non-convex cones \citeS{franceschi_fayolle_raschel_fourier_transform_indagationes_2022,franceschi_fayolle_raschel_non_convex_mprf_2022}, as well as degenerate cases arising in interacting particle systems \citeS{franceschi_ichiba_karatzas_raschel_degenerate_2024,
franceschi_degenerate_asym_2025}.

\subsection{Transient case: Green's functions}

Recall that we denote by
$g(z_0,z)$ the Green's function, with $z=(z_1,z_2)$.
We introduce its Laplace transform in space by
\begin{equation}
\label{eq:Lap_green}
\widehat g (x,y)
= \mathbb{E}_{z_0} \!\left[ \int_0^\infty e^{x Z_t^{(1)}+y Z_t^{(2)} } \, dt \right]=
\int_{\mathbb R_+^2} e^{x z_1+yz_2} g(z_0,z)\,dz .
\end{equation}
For the sake of brevity, we omit the dependence on $z_0$, even though it is clearly present.
We also define the Laplace transforms of the boundary Green measures:
\begin{equation}
\label{eq:Lap_green1}
\widehat g_1(y)
=
\mathbb{E}_{z_0}\!\left[\int_0^\infty e^{y Z_t^{(2)}}\, dL_t^1\right]=
\int_{\mathbb R_+^2} e^{yz_2} g_1^{z_0}(z_0,z)\,dz ,
\end{equation}
and similarly for $\widehat g_2(x)$.
Note that the functions $\widehat g$, $\widehat g_1$, and $\widehat g_2$ coincide respectively with $\widehat p(0,x,y)$, $\widehat p_1(0,y)$, and $\widehat p_2(0,x)$.
To ensure convergence, we assume that $\Re x<0$ and $\Re y<0$. As before, we will see that these transforms admit analytic continuations to suitable domains of the complex plane.
The following proposition states the functional equation satisfied by these transforms.

\begin{prop}[Functional equation for the Green's functions] Assume that conditions \eqref{u11} are satisfied so that the process is transient.
For $x,y\in\mathbb{C}$ such that $\Re x<0$, $\Re y<0$, we have
\begin{equation}
\label{eq:Petit-24}
- K(x,y)\widehat g(x,y)
=
k_1(x,y)\widehat g_1(y)
+
k_2(x,y)\widehat g_2(x)
+
e^{\langle (x,y),z_0\rangle},
\end{equation}
where $K,k_1,k_2$ are defined in~\eqref{eq:def_parameters_Maxence}.
\end{prop}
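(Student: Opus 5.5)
The plan is to apply the transient Basic Adjoint Relationship of Proposition~\ref{prop:bartransient} to the exponential test function $f(z)=e^{xz_1+yz_2}$, exactly as was done for the stationary equation~\eqref{eq:EJP-25}. For $\Re x<0$ and $\Re y<0$ this $f$ is $\mathcal{C}^2$ on $\mathbb{R}_+^2$, bounded together with all its derivatives, and tends to $0$ at infinity. It is complex-valued, but one can apply \eqref{eq:bartransient} separately to its real and imaginary parts, since both belong to $\mathcal{C}^2_0(\mathbb{R}_+^2)$. One small caveat is that $f$ does not decay along the axes directions where only one coordinate is large; since $\Re x,\Re y<0$, however, $|f(z)|=e^{\Re x\, z_1+\Re y\, z_2}\to 0$ as $\|z\|\to\infty$ in $\mathbb{R}_+^2$, so $f$ is admissible.

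Next we compute each term. We have $\nabla f=(x,y)^\top f$, and with the normalization $\sigma_{11}=\sigma_{22}=1$, $\sigma_{12}=\rho$,
\[
\Bigl(\tfrac12\nabla\cdot\Sigma\nabla+\mu\cdot\nabla\Bigr)f(z)=\Bigl(\tfrac12(x^2+2\rho xy+y^2)+\mu_1x+\mu_2y\Bigr)f(z)=K(x,y)f(z).
\]
For the reflection columns $R_1=(1,r_1)^\top$ and $R_2=(r_2,1)^\top$ we get $R_1\cdot\nabla f=k_1(x,y)f$ and $R_2\cdot\nabla f=k_2(x,y)f$. On the support $F_1=\{z_1=0\}$ of $g_1^{z_0}$ we have $f=e^{yz_2}$, and on $F_2$ we have $f=e^{xz_1}$. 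Integrating against $g$, $g_1^{z_0}$ and $g_2^{z_0}$ then produces $K(x,y)\widehat g(x,y)$, $k_1(x,y)\widehat g_1(y)$ and $k_2(x,y)\widehat g_2(x)$, by the definitions \eqref{eq:Lap_green} and \eqref{eq:Lap_green1}. Finally $f(z_0)=e^{\langle (x,y),z_0\rangle}$. Substituting into \eqref{eq:bartransient} and moving the interior term to the left gives \eqref{eq:Petit-24}.

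The only delicate point is ensuring that all integrals converge, i.e.\ that $\widehat g$, $\widehat g_1$ and $\widehat g_2$ are finite for $\Re x,\Re y<0$. Transience only gives $G(z_0,K)<\infty$ for compact $K$, whereas we need integrability of $e^{\Re x\,z_1+\Re y\,z_2}$ against the Green measures over the whole quadrant. I would handle this by approximation. Take truncated test functions $f_n=f\chi_n$ with smooth cutoffs, or alternatively argue directly from Itô's formula up to time $t$ as in the proof of Proposition~\ref{barpt}, and let $t\to\infty$. Under transience, $f(Z_t)\to0$ because $Z_t\to\infty$. Finiteness of the Laplace transforms can be obtained from monotone convergence once one knows the expected total local time is finite with exponential moments decaying in the escape direction. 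If needed, I would instead specialize the already established transition-density equation \eqref{eq:main_eq} to $u\to0^-$, using monotone convergence, since $\widehat g=\widehat p(0,\cdot)$ as noted above. This passage to the limit is the main obstacle; the algebraic computation itself is routine.
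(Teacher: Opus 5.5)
Your proposal is correct and is exactly the paper's argument: apply the transient BAR of Proposition~\ref{prop:bartransient} to $f(z)=e^{xz_1+yz_2}$, then identify $\mathcal{G}f=Kf$ and $R_i\cdot\nabla f=k_if$, use the boundary restrictions $e^{yz_2}$ on $F_1$ and $e^{xz_1}$ on $F_2$, and note $f(z_0)=e^{\langle (x,y),z_0\rangle}$. The paper takes finiteness of $\widehat g,\widehat g_1,\widehat g_2$ for $\Re x,\Re y<0$ for granted, as part of the BAR and the convergence assumption stated before the proposition, so your closing integrability discussion is extra caution rather than a missing step; also, your ``caveat'' about the axes is unnecessary, since $|f|$ decays along both axes when $\Re x,\Re y<0$.
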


\begin{proof}
This follows by applying the transient Basic Adjoint Relationship (BAR) (see Proposition~\ref{prop:bartransient}) to the test function $f(t,z)=e^{x z_1+y z_2}$, which satisfies the required assumptions for $z\in\mathbb{R}_+^2$, and $\Re x<0$, $\Re y<0$.
\end{proof}
\begin{rem}[From the functional equation for the transition density to the one for the Green's functions] Formally, the functional equation~\eqref{eq:Petit-24} is a particular case of the general equation~\eqref{eq:main_eq}, obtained by setting $u=0$.
\end{rem}
This functional equation characterizing the Green functions was derived in~\citeS{franceschi_green_2021} to obtain explicit integral formulas, and was further studied in~\cite{franceschi_kourkova_petit_asymptotics_2024} to derive asymptotic expansions, leading to the identification of the Martin boundary and the analysis of the associated harmonic functions.
Variants of this equation in related settings have also been investigated, notably in the case of a half-plane in~\citeS{franceschi_ernst_asymptotic_2021}, and for a killed space-time Brownian motion in~\citeS{franceschi_spacetime_2024}.

\subsection{Harmonic functions}

The PDEs satisfied by harmonic functions naturally lead, after Laplace transform, to functional equations which can be viewed as adjoint (or dual) to those obtained for the transition density, Green functions, and the invariant measure.

\paragraph{Escape along an axis}

Recall that the escape probability $h^\rightarrow(z)$ is harmonic for the reflected Brownian motion and satisfies~\eqref{eq:pdeharmneuman}. We introduce its Laplace transform in space:
\begin{equation}
\label{eq:Lap_h}
\widehat h^\rightarrow(x,y)
=
\int_{\mathbb R_+^2} e^{x z_1 + y z_2} h^\rightarrow(z)\,dz.
\end{equation}

We also define the boundary Laplace transforms:
\begin{equation}
\label{eq:Lap_h1}
\widehat h^\rightarrow_1(y)
=
\int_{\mathbb R_+} e^{y z_2} h^\rightarrow(0,z_2)\,dz_2,
\qquad
\widehat h^\rightarrow_2(x)
=
\int_{\mathbb R_+} e^{x z_1} h^\rightarrow(z_1,0)\,dz_1.
\end{equation}

Assuming $\Re x>0$ and $\Re y>0$ to ensure convergence, we obtain the following functional equation.

\begin{prop}[Functional equation for $h^\rightarrow$]
For $(x,y)\in\mathbb C^2$ such that $\Re x>0$ and $\Re y>0$, we have
\begin{equation}
\label{eq:FE_h_right}
K^*(x,y)\,\widehat h^\rightarrow(x,y)
=
k_1^*(x,y)\,\widehat h^\rightarrow_1(y)
+
k_2^*(x,y)\,\widehat h^\rightarrow_2(x)
+
c\,h^\rightarrow(0,0),
\end{equation}
where
\begin{equation}
\label{eq:def_parameters_adjoint}
    \left\{
    \begin{array}{rcl}
    K^*(x,y) & = & \frac{1}{2}(x^2+y^2+2\rho xy)-\mu_1x-\mu_2y,\\
    k_1^*(x,y) & = & \frac{1}{2} (x-r_1 y)+\rho y +\mu_1,\\
    k_2^*(x,y) & = & \frac{1}{2} (y-r_2 x)+\rho x +\mu_2,\\
    c & = & \frac{1}{2}(r_1+r_2)-\rho.
    \end{array}
    \right.
\end{equation}
\end{prop}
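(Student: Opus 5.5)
The plan is to derive \eqref{eq:FE_h_right} directly from the PDE \eqref{eq:pdeharmneuman} satisfied by $h:=h^\rightarrow$. I multiply $\mathcal{G}h=0$ by an exponential test function and integrate over the quadrant. I then integrate by parts twice, so that all derivatives move onto the exponential, which turns the operator into $K^*(x,y)$. The boundary terms are rewritten with the oblique Neumann conditions $R_i\cdot\nabla h=0$.

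One point has to be settled first. Since $0\le h\le 1$, the region $\Re x>0$, $\Re y>0$ only gives convergence if the transform is taken with kernel $e^{-xz_1-yz_2}$. The sign change $(x,y)\mapsto(-x,-y)$ in $K$ turns $\mu\cdot$ into $-\mu\cdot$, which is consistent with $K^*$ carrying $-\mu_1x-\mu_2y$. I will therefore work with the kernel $e^{-\langle(x,y),z\rangle}$ and check signs against \eqref{eq:def_parameters_adjoint} at the end.

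\textbf{Step 1 (interior integration by parts).} I write $E(z)=e^{-xz_1-yz_2}$ and compute $\int_{\mathbb R_+^2}E\,\mathcal{G}h\,dz$ term by term. In $z_1$ I use
\[
\int_0^\infty E\,\partial_{11}h\,dz_1=-\partial_1h(0,z_2)e^{-yz_2}-x\,h(0,z_2)e^{-yz_2}+x^2\!\int_0^\infty Eh\,dz_1 ,
\]
and the analogous identities for $\partial_{22}$, $\partial_1$ and $\partial_2$. For the mixed term $\rho\,\partial_{12}h$, I integrate once in $z_1$ and once in $z_2$. This step produces, beyond the boundary terms, a corner contribution $h(0,0)$.

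The interior terms combine to $K^*(x,y)\widehat h(x,y)$ up to the chosen sign normalization. The remaining terms are one-dimensional integrals on each face involving $h(0,\cdot)$, $\partial_1h(0,\cdot)$ and $\partial_2h(0,\cdot)$, together with their counterparts on the other face.

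\textbf{Step 2 (boundary conditions).} On $F_1$ the condition $R_1\cdot\nabla h=0$ reads $\partial_1h=-r_1\partial_2h$. This replaces the normal derivative by a tangential one, and similarly on $F_2$ with $r_2$. Every tangential derivative is then integrated by parts along its axis, for instance
\[
\int_0^\infty e^{-yz_2}\partial_2h(0,z_2)\,dz_2=-h(0,0)+y\,\widehat h_1(y).
\]
After this, each face contributes a linear polynomial in $(x,y)$ times $\widehat h_1(y)$ or $\widehat h_2(x)$; these should be $k_1^*$ and $k_2^*$. The $\mu_i$ in $k_i^*$ come from the first-order terms, the $\rho$ from the mixed derivative, and the $r_i$ from the Neumann substitution. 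The corner values coming from the two faces and from the mixed derivative add up to a multiple of $h(0,0)$, namely $\tfrac12(r_1+r_2)-\rho=c$. I will do this bookkeeping carefully, since the factor $\tfrac12$ and the sign of the $\rho$ contribution are exactly what determine $k_i^*$ and $c$.

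\textbf{Main obstacle: justification.} The formal computation is routine; the real work is justifying it. Three points need care:
\begin{enumerate}[label=(\roman*)]
\item the terms at infinity must vanish, which holds because $h$ is bounded and $\Re x,\Re y>0$, provided $\nabla h$ has at most polynomial growth;
\item $h$ must be regular enough up to the boundary for the traces $\partial_ih(0,\cdot)$ to make sense;
\item the corner must contribute only the value $h(0,0)$.
\end{enumerate}
I would first truncate to $[\varepsilon,N]^2$ and pass to the limits $N\to\infty$, $\varepsilon\to0$. For this I would use interior elliptic gradient estimates for the bounded solution $h$ and continuity of $h$ at the vertex. If corner regularity fails, the fallback is probabilistic. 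I would apply Itô's formula to $e^{-\langle(x,y),Z_t\rangle}$ under $\mathbb{P}_z$ and integrate against $h(z)\,dz$. Harmonicity $P_th=h$, in the form of the Harmonicity proposition for tail events, then allows a duality argument that avoids differentiating $h$ at the corner.
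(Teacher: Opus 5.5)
Your route is the paper's own: its proof is exactly this integration by parts against an exponential, and your one-dimensional identities are correct. The problem is the sign check you defer, which does not come out the way you predict. Your heuristic counts only one of two sign reversals. Integrating by parts already moves the adjoint $\mathcal{G}^*$ (drift $-\mu$) onto the test function, and
\[
\mathcal{G}^*e^{-\langle(x,y),z\rangle}=K^*(-x,-y)\,e^{-\langle(x,y),z\rangle}=K(x,y)\,e^{-\langle(x,y),z\rangle}.
\]
So with your kernel the interior terms give $K(x,y)\,\widehat h$, carrying $+\mu_1x+\mu_2y$, not $K^*(x,y)\,\widehat h$. Carrying out your Steps 1--2 exactly gives corner terms $-\tfrac{r_1}{2}h(0,0)$ and $-\tfrac{r_2}{2}h(0,0)$ from the faces and $+\rho\,h(0,0)$ from the mixed derivative. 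The result is
\[
K(x,y)\,\widehat h(x,y)=k_1^*(x,y)\,\widehat h_1(y)+k_2^*(x,y)\,\widehat h_2(x)+c\,h(0,0),\qquad \Re x,\Re y>0,
\]
with $k_1^*,k_2^*,c$ exactly as in \eqref{eq:def_parameters_adjoint}. No sign normalization turns this into \eqref{eq:FE_h_right}. The substitution $(x,y)\mapsto(-x,-y)$, i.e.\ the paper's kernel $e^{xz_1+yz_2}$ on $\Re x,\Re y<0$, does produce $K^*(x,y)$. But it simultaneously replaces $k_i^*(x,y)$ by $k_i^*(-x,-y)$.

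In fact \eqref{eq:FE_h_right} as printed is false, so no argument can reach it. The constant $h\equiv1$ satisfies \eqref{eq:pdeharmneuman}, and with $\widehat h=1/(xy)$, $\widehat h_1=1/y$, $\widehat h_2=1/x$ one gets $K^*(x,y)/(xy)-k_1^*(x,y)/y-k_2^*(x,y)/x-c=-2(\mu_1/y+\mu_2/x)\neq0$. The same computation shows that \eqref{eq:fec} in Remark~\ref{rem:partsol} holds with $K$ in place of $K^*$. What your method actually proves is the displayed identity with $K$. Equivalently, for the paper's transform on $\Re x,\Re y<0$, it proves $K^*(x,y)\widehat h=k_1^*(-x,-y)\widehat h_1(y)+k_2^*(-x,-y)\widehat h_2(x)+c\,h(0,0)$. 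You should conclude with that and flag the discrepancy, rather than assert agreement with the stated formula.

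Your analytic points (decay at infinity, boundary traces, corner regularity) are legitimate and go beyond the paper's sketch. The It\^o fallback, however, does not work as described. Integrating $P_tf$ against $h\,dz$ brings in the $L^2(dz)$-adjoint of $P_t$, not $P_t$ itself: that adjoint is the forward problem with drift $-\mu$, reflection $R^*$ and Robin terms, and $P_th=h$ says nothing about it. The truncation argument is therefore the one to carry out.
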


\begin{proof}
This follows by applying the adjoint generator $\mathcal{G}^*$ to the PDE~\eqref{eq:pdeharmneuman} satisfied by $h^\rightarrow$, and integrating against the exponential test function $e^{x z_1+y z_2}$. Integration by parts yields the boundary contributions, which produce the terms involving $\widehat h^\rightarrow_1$, $\widehat h^\rightarrow_2$, and the vertex term $h^\rightarrow(0,0)$.
\end{proof}
This functional equation was first derived and studied in~\cite{franceschi_fomichov_ivanovs_2022}.
The operator $K^*$ is the symbol of the adjoint generator, obtained from $K$ by reversing the drift. In that sense, this functional equation (and the one below) is adjoint to the functional equations obtained above.

\begin{rem}[A particular solution to the functional equation]
\label{rem:partsol}
Note that it is straightforward to verify that
\begin{equation}
\label{eq:fec}
K^*(x,y)\,\frac{1}{xy}
=
k_1^*(x,y)\,\frac{1}{y}
+
k_2^*(x,y)\,\frac{1}{x}
+
c.
\end{equation}
This is consistent with the fact that $h^\uparrow$ satisfies the same functional equation
\[
K^*(x,y)\,\widehat h^\uparrow(x,y)
=
k_1^*(x,y)\,\widehat h^\uparrow_1(y)
+
k_2^*(x,y)\,\widehat h^\uparrow_2(x)
+
c\,h^\uparrow(0,0),
\]
and that $h^\rightarrow + h^\uparrow = 1$. Summing the corresponding functional equations yields~\eqref{eq:fec} (we see the bivariate and univariate Laplace transforms of $1$).
Therefore, we have identified a particular solution to this functional equation. Subtracting $h^\uparrow(0,0)$ times~\eqref{eq:fec} from~\eqref{eq:FE_h_right} removes the constant term, thereby eliminating the inhomogeneity and simplifying the analysis. 
To characterize the solutions uniquely, one must further impose the boundary conditions, in particular the limiting values given in~\eqref{eq:limitvaluepra}.
\end{rem}

\paragraph{Escape to infinity and absorption at the vertex}

We now consider the escape probability to infinity $h^\infty(z)$, which is harmonic for the reflected Brownian motion killed at the origin. Its Laplace transform is defined by
\begin{equation}
\label{eq:Lap_h_inf}
\widehat h^\infty(x,y)
=
\int_{\mathbb R_+^2} e^{x z_1 + y z_2} h^\infty(z)\,dz,
\end{equation}
together with the boundary transforms
\begin{equation}
\label{eq:Lap_h_inf1}
\widehat h^\infty_1(y)
=
\int_{\mathbb R_+} e^{y z_2} h^\infty(0,z_2)\,dz_2,
\qquad
\widehat h^\infty_2(x)
=
\int_{\mathbb R_+} e^{x z_1} h^\infty(z_1,0)\,dz_1.
\end{equation}

Assuming $\Re x>0$ and $\Re y>0$, we obtain the following functional equation.

\begin{prop}[Functional equation for $h^\infty$]
For $(x,y)\in\mathbb C^2$ such that $\Re x>0$ and $\Re y>0$, we have
\begin{equation}
\label{eq:FE_h_inf}
K^*(x,y)\,\widehat h^\infty(x,y)
=
k_1^*(x,y)\,\widehat h^\infty_1(y)
+
k_2^*(x,y)\,\widehat h^\infty_2(x),
\end{equation}
where $K^*,k_1^*,k_2^*$ are defined in~\eqref{eq:def_parameters_adjoint}.
\end{prop}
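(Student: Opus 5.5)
The plan is to repeat the derivation of \eqref{eq:FE_h_right} with $h^\rightarrow$ replaced by $h^\infty$, and to check that the vertex term $c\,h^\infty(0,0)$ vanishes. I will use the transform convention under which Remark~\ref{rem:partsol} holds, namely the kernel $e^{-xz_1-yz_2}$ with $\Re x,\Re y>0$. Indeed, with this kernel the transform of $1$ is $1/(xy)$. Since $0\le h^\infty\le 1$, all transforms $\widehat h^\infty$, $\widehat h^\infty_1$, $\widehat h^\infty_2$ converge absolutely on this domain.

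\textbf{Step 1 (PDE).} By the harmonicity of $h^\infty$ for the killed semigroup and the characterisation \eqref{eq:pdeharmneuman}, $h^\infty$ satisfies $\mathcal G h^\infty=0$ in the open quadrant. On each face it satisfies the oblique Neumann conditions $R_i\cdot\nabla h^\infty=0$, which read $\partial_1 h^\infty+r_1\partial_2 h^\infty=0$ on $\{z_1=0\}$ and $r_2\partial_1 h^\infty+\partial_2 h^\infty=0$ on $\{z_2=0\}$. Killing at the origin only affects the vertex, so the interior equation and the face conditions are unchanged. I will assume that $h^\infty$ is $C^2$ up to the faces away from the vertex and continuous at the vertex.

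\textbf{Step 2 (integration by parts).} I multiply $\mathcal G h^\infty=0$ by $e^{-xz_1-yz_2}$ and integrate over $\mathbb R_+^2$, then integrate by parts each of the five terms.
\begin{itemize}
\item The terms $\partial_{11}$ and $\partial_1$ produce $x^2\widehat h^\infty$ and $x\widehat h^\infty$. They also produce boundary contributions: the transform of $h^\infty(0,\cdot)$, which is $\widehat h^\infty_1(y)$, and the transform of $\partial_1 h^\infty(0,\cdot)$.
\item The terms $\partial_{22}$ and $\partial_2$ are treated symmetrically.
\item The mixed term $\rho\,\partial_{12}$ produces $xy\widehat h^\infty$, the terms $\widehat h^\infty_1(y)$ and $\widehat h^\infty_2(x)$, and a vertex value $h^\infty(0,0)$.
\end{itemize}
Boundary terms at infinity vanish because $h^\infty$ is bounded and, I expect, its gradient is at most polynomially growing, while $|e^{-xz_1-yz_2}|$ decays exponentially.

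\textbf{Step 3 (eliminating normal derivatives).} On $\{z_1=0\}$ I substitute $\partial_1 h^\infty=-r_1\partial_2 h^\infty$, and similarly on $\{z_2=0\}$. A further one-dimensional integration by parts along each face turns the transform of the tangential derivative $\partial_2 h^\infty(0,\cdot)$ into $y\,\widehat h^\infty_1(y)-h^\infty(0,0)$, and likewise on the other face. Collecting the coefficients should give exactly the kernels of \eqref{eq:def_parameters_adjoint}:
\begin{itemize}
\item $K^*$ in front of $\widehat h^\infty$;
\item $k_1^*$ in front of $\widehat h^\infty_1$;
\item $k_2^*$ in front of $\widehat h^\infty_2$;
\item a constant $c\,h^\infty(0,0)$, with the same $c$ as in \eqref{eq:FE_h_right}.
\end{itemize}
As a consistency check on signs and constants, the resulting identity applied to $h\equiv 1$ must reproduce \eqref{eq:fec}.

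\textbf{Step 4 (vertex term).} By the proposition on hitting versus escape, $h^0(0)=1$, hence $h^\infty(0,0)=1-h^0(0)=0$. The inhomogeneous term therefore drops out, and the homogeneous equation \eqref{eq:FE_h_inf} follows.

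\textbf{Main obstacle.} The delicate point is the justification of the integrations by parts near the non-smooth vertex. There $h^\infty$ need not be $C^2$, and its gradient may blow up because of the corner singularity, which depends on $\alpha$. I would first integrate over $\mathbb R_+^2\setminus B(0,\varepsilon)$ and then let $\varepsilon\to0$. The extra arc terms are bounded by $\varepsilon\sup_{|z|=\varepsilon}|\nabla h^\infty|$. To make them vanish I would use a local bound of the form $|\nabla h^\infty(z)|=o(|z|^{-1})$, which I would obtain from a local barrier or polar-coordinate analysis of the harmonic function near the corner. The continuity of $h^\infty$ at $0$ is what makes the vertex value equal to $h^\infty(0,0)=0$.
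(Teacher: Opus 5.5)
Your strategy is the same as the paper's: Green's identity against an exponential test function, the oblique Neumann conditions to trade normal for tangential derivatives, and $h^\infty(0)=0$ to remove the vertex term. You were right to switch to the kernel $\phi=e^{-xz_1-yz_2}$: with \eqref{eq:Lap_h_inf} as written, the transform of $h^\infty$ diverges on $\{\Re x>0,\Re y>0\}$ because $h^\infty\to1$ at infinity. Your excision of $B(0,\varepsilon)$ also handles a point the paper's one-line proof skips.

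\textbf{The gap is in Step~3}, where you claim the coefficient of $\widehat h^\infty$ is $K^*$. The interior term is $\int_{\mathbb{R}_+^2} h^\infty\,\mathcal G^*\phi\,dz$. Since $\partial_i\phi$ brings down $-x$, $-y$ while $\mathcal G^*$ carries $-\mu\cdot\nabla$, one gets $\mathcal G^*\phi=K(x,y)\phi$, with the \emph{original} kernel $K$ of \eqref{eq:def_parameters_Maxence}, i.e.\ $K^*(-x,-y)$. The face and vertex contributions do come out as $k_1^*$, $k_2^*$ and $c$. So your Steps~2--4, carried out, give
\[
K(x,y)\,\widehat h^\infty(x,y)=k_1^*(x,y)\,\widehat h^\infty_1(y)+k_2^*(x,y)\,\widehat h^\infty_2(x),\qquad \Re x>0,\ \Re y>0.
\]
The sanity check you propose would have exposed this. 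For $h\equiv1$ one finds $x\,k_1^*(x,y)+y\,k_2^*(x,y)+c\,xy=K(x,y)=K^*(x,y)+2(\mu_1x+\mu_2y)$. Hence \eqref{eq:fec} holds with $K$ and fails with $K^*$ (recall $\mu\neq0$).

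Consequently your argument proves a corrected statement, not the displayed one, and the displayed one cannot hold in your convention. Together with the identity above it would force $2(\mu_1x+\mu_2y)\,\widehat h^\infty(x,y)=0$ on $\{\Re x>0,\Re y>0\}$, hence $h^\infty\equiv0$, contradicting $h^\infty(z)\to1$.

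The paper's sketch produces $K^*$ by applying $\mathcal G^*$ to $e^{xz_1+yz_2}$. But that transform converges only for $\Re x<0$, $\Re y<0$, and in that convention the same integration by parts yields different face coefficients: $k_1^*(-x,-y)=\mu_1-\tfrac12(x-r_1y)-\rho y$ and $k_2^*(-x,-y)=\mu_2-\tfrac12(y-r_2x)-\rho x$, instead of $k_1^*,k_2^*$. The statement, like \eqref{eq:FE_h_right} and \eqref{eq:fec}, mixes the two conventions. You should state which consistent version you prove, rather than assert that the coefficients match \eqref{eq:def_parameters_adjoint}.

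Note also that Step~4 relies on the hypothesis $\alpha\ge1$ of the hitting proposition. It also relies on the continuity of $h^\infty$ at the vertex, which you already flag as an assumption.
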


\begin{proof}
This follows by applying the adjoint generator $\mathcal{G}^*$ to the PDE satisfied by $h^\infty$, together with the killing condition at the origin, and integrating against the exponential test function $e^{x z_1+y z_2}$. Integration by parts yields the boundary terms, while no contribution arises from the vertex due to the fact that $h^\infty(0)=0$.
\end{proof}
This functional equation was first derived in~\cite{franceschi_ernst_huang_escape_2021}, and was also studied in
\cite{franceschi_flin_sumofexponential_2024}.
Note that the functional equation satisfied by $h^0$ is
\[
K^*(x,y)\,\widehat h^0(x,y)
=
k_1^*(x,y)\,\widehat h^0_1(y)
+
k_2^*(x,y)\,\widehat h^0_2(x)
+
c.
\]
This is consistent with the fact that~\eqref{eq:fec} still holds and that $h^\infty + h^0 = 1$.

\section{Continuation on the Riemann surface and difference equation}
\label{sec:contRiem}

\subsection{The shifted kernel polynomial}
\label{sec:shiftk}

A recurring object is the \emph{shifted kernel} (a quadratic polynomial in two variables)
\[
K(x,y)+u=\tfrac12(x^2+y^2+2\rho xy)+\mu_1 x+\mu_2 y + u.
\]
This corresponds to the Laplace exponent of the driving Brownian motion with drift, shifted by the parameter $u$. The geometry of this kernel plays a central role in the analytical approach.

Solving the equation 
$$K(x,y)+u=0$$ 
for $x$ (resp.\ $y$) yields two algebraic branches, $X^\pm(y;u)$ (resp.\ $Y^\pm(x;u)$), which satisfy $$K(X^\pm(y;u),y)+u=0$$ (resp.\ $K(x,Y^\pm(x;u))+u=0$). The branch points of these branches are denoted by $y^\pm(u)$ (resp.\ $x^\pm(u)$).
We obtain the two branches
\[
X^\pm(y;u)=-(\rho y+\mu_1)\pm \sqrt{(\rho^2-1)y^2+2(\rho\mu_1-\mu_2)y+\mu_1^2 - 2u}.
\]
The branch points are the zeros of the discriminant, i.e.\ the polynomial under the square root, hence
\[
y^\pm(u)=\frac{-(\rho\mu_1-\mu_2)\pm\sqrt{\mu_1^2 -2\rho\mu_1\mu_2+\mu_2^2 + 2(\rho^2-1)u}}{\rho^2-1}.
\]
In the same way, we obtain
\[
Y^\pm(x;u)=-(\rho x+\mu_2)\pm \sqrt{(\rho^2-1)x^2 + 2(\rho\mu_2-\mu_1)x + \mu_2^2 - 2u},
\]
\[
x^\pm(u)=\frac{-(\rho\mu_2-\mu_1)\pm\sqrt{\mu_1^2 - 2\rho\mu_1\mu_2 + \mu_2^2 + 2(\rho^2-1)u}}{\rho^2-1}.
\]

\subsection{The kernel Riemann surface and its Galois automorphisms}

We now introduce the Riemann surface associated with the algebraic functions
$X(y;u)$ and $Y(x;u)$. Since $K(x,y)+u$ has degree two in each variable, the Riemann surface defined below 
has genus~$0$. Let us define
\[
\mathcal{S}_u=\{(x,y)\in\mathbb{C}^2:\;K(x,y)+u=0\}.
\]
The surface $\mathcal{S}_u$ is naturally endowed with two canonical involutive automorphisms $\eta$ and $\zeta$. 
These automorphisms are defined by exchanging the two roots of the kernel equation while keeping one coordinate fixed.
Using Vieta's formulas, for fixed $x$ the two roots $Y^\pm(x;u)$ satisfy
\[
Y^+(x;u)+Y^-(x;u)=-2(\rho x+\mu_2).
\]
Hence, if $(x,y)\in\mathcal{S}_u$, the second root $y'$ such that $K(x,y')+u=0$ is given by
\[
y'=-2(\rho x+\mu_2)-y.
\]
This defines the involution
\[
\zeta(x,y)
=\left(x,-2(\rho x+\mu_2)-y\right).
\]
Similarly, for fixed $y$ the two roots $X^\pm(y;u)$ satisfy
\[
X^+(y;u)+X^-(y;u)=-2(\rho y+\mu_1),
\]
which leads to the involution
\[
\eta(x,y)=\left(-2(\rho y+\mu_1)-x,\,y\right).
\]
By construction, if $(x,y)\in\mathcal{S}_u$, then
\[
K(\zeta(x,y))+u=K(\eta(x,y))+u=0,
\]
so that $\zeta$ and $\eta$ are automorphisms of $\mathcal{S}_u$.
Moreover, they are involutions:
\[
\zeta^2=\eta^2=\mathrm{Id}.
\]
The group generated by these two automorphisms,
\[
\langle \zeta,\eta \rangle,
\]
is thus a dihedral group, which plays a central role in the study of the model.
Remark that the involutions $\zeta$ and $\eta$ do not depend on the parameter $u$. 

\subsection{Uniformization}

The following uniformization holds:
\begin{equation}
\mathcal{S}_u=\{(x(s), y(s)): s \in \mathbb{C}^* \},
\label{eq:SC*}
\end{equation}
where, for $\beta$ defined in \eqref{eq:beta}, we have
$$
\left\{\begin{array}{l}
x(s)=\dfrac{x^{+}(u)+x^{-}(u)}{2}+\dfrac{x^{+}(u)-x^{-}(u)}{4}\left(s+\dfrac{1}{s}\right), \\[1.2ex]
y(s)=\dfrac{y^{+}(u)+y^{-}(u)}{2}+\dfrac{y^{+}(u)-y^{-}(u)}{4}\left(\dfrac{s}{e^{i \beta}}+\dfrac{e^{i \beta}}{s}\right).
\end{array}\right .
$$
(The above uniformization is established for $\mathcal{S}_0$ in~\cite{franceschi_kourkova_asymptotic_2017}; the proof is similar for $\mathcal{S}_u$.)

In particular, we see from~\eqref{eq:SC*} that $\mathcal{S}_u$ is conformally equivalent to $\mathbb{C}^*$. 
The parametrization $s \mapsto (x(s),y(s))$ provides a uniformization of $\mathcal S_u$, through which the automorphisms $\zeta$ and $\eta$ admit simple lifted representations.
More precisely, there exist transformations $\widehat\zeta$ and $\widehat\eta$ acting on $\mathbb{C}^*$ such that, for all $s \in \mathbb{C}^*$,
\[
\zeta\bigl(x(s),y(s)\bigr) = \bigl(x(\widehat\zeta(s)),y(\widehat\zeta(s))\bigr),
\qquad
\eta\bigl(x(s),y(s)\bigr) = \bigl(x(\widehat\eta(s)),y(\widehat\eta(s))\bigr),
\]
where
\[
\widehat\zeta(s)=\frac{1}{s},
\qquad
\widehat\eta(s)=\frac{e^{2i\beta}}{s}.
\]
Indeed, one checks that
\[
x\!\left(\tfrac{1}{s}\right)=x(s),
\qquad
y\!\left(\tfrac{1}{s}\right) = -2(\rho x(s)+\mu_2)-y(s),
\]
and that
\[
y\!\left(\tfrac{e^{2i\beta}}{s}\right)=y(s),
\qquad
x\!\left(\tfrac{e^{2i\beta}}{s}\right) = -2(\rho y(u)+\mu_1)-x(s).
\]
Thus, the Galois automorphisms of $\mathcal S_u$ correspond, via the uniformization, to the M\"obius transformations
$s \mapsto \frac{1}{s}$,
$s \mapsto \frac{e^{2i\beta}}{s}$,
acting on $\mathbb{C}^*$.

\begin{rem}[Riemann surface in the complex projective plane]
In fact, two points are missing in the definition of $\mathcal{S}_u$. To capture the entire Riemann surface, we should include points at infinity. This can be done by defining the surface in the complex projective plane as
\[
\mathcal{S}_u=\{K(x,y)+u=0\}\subset \mathbb{P}^2(\mathbb{C}),
\]
that is, the points $[x:y:z]\in \mathbb{P}^2(\mathbb{C})$ such that
\[
z^2\!\left(K\!\left(\tfrac{x}{z},\tfrac{y}{z}\right)+u\right)
=\tfrac12(x^2+y^2+2\rho xy)+\mu_1 xz+\mu_2 yz+u z^2=0.
\]
Setting $z=0$, and recalling that $\cos\beta=-\rho$, the kernel equation reduces to
\[
x^2-2\cos(\beta)\, xy+y^2=0,
\]
which is independent of $u$. Hence, the two points at infinity are
\[
P^\infty_\pm := [e^{\pm i\beta} : 1 : 0].
\]
Note that
\[
\lim_{s\to\infty} \frac{x(s)}{y(s)}=e^{i\beta},
\qquad
\lim_{s\to0} \frac{x(s)}{y(s)}=e^{-i\beta}.
\]
With this definition,
\[
\mathcal{S}_u
=\{[x(s): y(s):1]: s \in \mathbb{C}^*\}\cup\{P^\infty_+,P^\infty_-\}.
\]
Hence, $\mathcal{S}_u$ is conformally equivalent to the complex projective line
\[
\mathcal{S}_u \simeq \mathbb{P}^1 (\mathbb{C})=\mathbb{C}\cup\{\infty\}.
\]
It is also worth noting that the automorphisms swap the points at infinity:
$\eta(P^\infty_+)=P^\infty_-$ and $\zeta(P^\infty_+)=P^\infty_-$.
\end{rem}

\subsection{Universal covering and lifted automorphisms}

In view of the uniformization $\mathcal S \simeq \mathbb{C}^*$, it is natural to introduce the universal covering of $\mathcal S $.
More precisely, we define the map
\[
\begin{aligned}
\lambda:\widetilde{\mathcal S}\equiv\mathbb{C}&\longrightarrow \mathbb{C}^*\\
\omega&\longmapsto \lambda(\omega):=e^{i\omega}.
\end{aligned}
\]
This map is a $2\pi$-periodic, non-branching covering from $\mathbb{C}$ onto $\mathbb{C}^*$.
In particular, any horizontal segment of the form
\[
[a+ib,\,a+2\pi+ib],\qquad a,b\in\mathbb{R},
\]
is mapped onto a closed curve in $\mathbb{C}^*$ winding once around the origin.

Given $s\in\mathbb{C}^*$ (or more generally a subset $S\subset\mathbb{C}^*$), we denote by $\widetilde{s}$ (resp.\ $\widetilde{S}$) a chosen preimage under $\lambda$ in a prescribed vertical strip of width $2\pi$, typically $\{0\leq \Re\omega<2\pi\}$.
Every conformal automorphism $\chi$ of $\mathbb{C}^*$ admits a lifted automorphism
\[
\widetilde{\chi}=\lambda^{-1}\circ\chi\circ\lambda
\]
acting on $\mathbb{C}$.
Since $\lambda^{-1}$ is multivalued, this lift is uniquely determined once the image of a single point is fixed.
We now describe the lifts of the Galois automorphisms introduced above.
Recall that the lifted maps on $\mathbb{C}^*$ are
\[
\widehat\zeta(s)=\frac{1}{s},
\qquad
\widehat\eta(s)=\frac{e^{2i\beta}}{s}
\]
where the fixed points are given by 
$
s_1^-=e^{i\pi}$ and $s_2^-=e^{i(\pi+\beta)}$.
We define their lifts $\widetilde\zeta$ and $\widetilde\eta$ on $\mathbb{C}$ by requiring that
$\widetilde\zeta(\pi)=\pi$ and
$\widetilde\eta(\pi+\beta)=\pi+\beta$.
A direct computation then gives
\[
\widetilde\zeta(\omega)=-\omega+2\pi,
\qquad
\widetilde\eta(\omega)=-\omega+2(\pi+\beta).
\]
These transformations are central symmetries of respective centers $\widetilde{s}_1^-=\pi$ and $\widetilde{s}_2^-=\pi+\beta$.

As a consequence, their compositions are translations:
\[
\widetilde\eta\circ\widetilde\zeta(\omega)=\omega+2\beta,
\qquad
\widetilde\zeta\circ\widetilde\eta(\omega)=\omega-2\beta.
\]
Hence, the group generated by $\widetilde\zeta$ and $\widetilde\eta$ acts on $\mathbb{C}$ by affine transformations generated by reflections and translations, which provides a convenient framework for describing the analytic continuation on $\mathcal S$.


\subsection{Analytic continuation and difference equation}

We now describe how the functional equations allow us to extend analytically the unknown functions on the universal covering $\widetilde{\mathcal S}\simeq\mathbb{C}$.

\paragraph{Invariant measure}
Full details can be found in Appendix D of \cite{franceschi_bousquet_melou_price_hardouin_raschel_stationary_2023}.
We start from the functional equation
\[
-K(x,y)\widehat{\pi}(x,y)
=
k_1(x,y)\widehat{\pi}_1(y)
+
k_2(x,y)\widehat{\pi}_2(x).
\]
By construction, the functions $\widehat{\pi}_1$ and $\widehat{\pi}_2$ are initially defined in the half-planes
\[
\Delta_y=\{y\in\mathbb{C}:\ \Re y\leqslant 0\},
\qquad
\Delta_x=\{x\in\mathbb{C}:\ \Re x\leqslant0\},
\]
respectively. Using the uniformization $s=e^{i\omega}$, we lift these domains to the universal covering by defining
\[
\widetilde{\Delta}_y=\{\omega\in\mathbb{C}:\beta\leqslant \Re\omega<2\pi+\beta\text{ and }\ y(e^{i\omega})\in\Delta_y\},
\quad
\widetilde{\Delta}_x=\{\omega\in\mathbb{C}:0\leqslant \Re \omega<2\pi\text{ and }\ x(e^{i\omega})\in\Delta_x\}.
\]
On these domains, the compositions
\[
\widetilde{\pi}_1(\omega):=\widehat{\pi}_1\bigl(y(e^{i\omega})\bigr),
\qquad
\widetilde{\pi}_2(\omega):=\widehat{\pi}_2\bigl(x(e^{i\omega})\bigr)
\]
are well defined and analytic. 
Since $K\bigl(x(e^{i\omega}),y(e^{i\omega})\bigr)=0$, the functional equation then provides the following relation on $ \widetilde{\mathcal S}$, namely for $\omega\in\widetilde{\Delta}_x\cap\widetilde{\Delta}_y $, 
\begin{equation}
0
=
k_1(\omega)\,
\widetilde{\pi}_1(\omega)
+
k_2(\omega)\,
\widetilde{\pi}_2(\omega),
\label{fe:revetement}
\end{equation}
where we use the shorthand notations $\widetilde k_1(\omega)=k_1\bigl(x(e^{i\omega}),y(e^{i\omega})\bigr)$ and $\widetilde k_2(\omega)=k_2\bigl(x(e^{i\omega}),y(e^{i\omega})\bigr)$. This identity allows us to obtain an analytic continuation of $\widetilde{\pi}_1$ and $\widetilde{\pi}_2$ to the union
\[
\widetilde{\Delta}_x \cup \widetilde{\Delta}_y.
\]
Indeed, on the intersection of these domains, both $\widetilde{\pi}_1$ and $\widetilde{\pi}_2$ are well defined, and the functional equation provides a compatibility relation which allows analytic continuation across their common boundary.
We now use the action of the lifted automorphisms to extend further these functions.
We remark that, in the proper domains, invariance properties hold:
\begin{equation}
\widetilde{\pi}_1(\widetilde{\eta}(\omega))=\widetilde{\pi}_1(\omega)
\quad\text{and}\quad
\widetilde{\pi}_2(\widetilde{\zeta}(\omega))=\widetilde{\pi}_2(\omega)
.
\label{inv:relevement}
\end{equation}
We now derive directly on the universal covering $\widetilde{\mathcal S}\simeq\mathbb{C}$ the analytic continuation formula induced by the functional equation.
\begin{prop}[Meromorphic continuation and difference equation] The function $\widetilde\pi_1$ can be meromorphically continued to $\mathbb{C}$, through the difference equation
\begin{equation}
\widetilde\pi_1(\omega+2\beta)=\widetilde G(\omega) \widetilde\pi_1(\omega)
\label{eq:diffrel}
\end{equation}
where
\begin{equation}
\widetilde G(\omega):=\frac{\widetilde k_1(\omega) \widetilde k_2(\zeta (\omega))}{\widetilde k_2(\omega) \widetilde k_1(\zeta (\omega))} .
\label{tildeG}
\end{equation}
The functional equation \eqref{fe:revetement} and the invariance properties \eqref{inv:relevement} remain valid on the whole complex plane $\mathbb{C}$.
\end{prop}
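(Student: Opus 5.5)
The plan is to derive the difference equation first on a domain where everything is defined, and then use it as a definition to propagate $\widetilde\pi_1$ by steps of $2\beta$.

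\textbf{Step 1: the relation on the overlap.} Start from the continuation of $\widetilde\pi_1,\widetilde\pi_2$ to $\widetilde\Delta_x\cup\widetilde\Delta_y$ obtained from \eqref{fe:revetement}. On the strip where both are defined, \eqref{fe:revetement} gives
\[
\widetilde\pi_2=-\frac{\widetilde k_1}{\widetilde k_2}\,\widetilde\pi_1 .
\]
Take $\omega$ such that $\omega$ and $\widetilde\zeta(\omega)$ both lie in the domain of continuation. Apply \eqref{fe:revetement} at $\widetilde\zeta(\omega)$ and use $\widetilde\pi_2(\widetilde\zeta(\omega))=\widetilde\pi_2(\omega)$. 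This yields
\[
\widetilde\pi_1(\widetilde\zeta\omega)=\frac{\widetilde k_1(\omega)\,\widetilde k_2(\widetilde\zeta\omega)}{\widetilde k_2(\omega)\,\widetilde k_1(\widetilde\zeta\omega)}\,\widetilde\pi_1(\omega).
\]
Then use the $\widetilde\eta$-invariance $\widetilde\pi_1(\widetilde\eta\circ\widetilde\zeta\,\omega)=\widetilde\pi_1(\widetilde\zeta\omega)$ together with $\widetilde\eta\circ\widetilde\zeta(\omega)=\omega+2\beta$. This gives \eqref{eq:diffrel} with $\widetilde G$ as in \eqref{tildeG} on a nonempty open set.

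\textbf{Step 2: verify the invariance relations \eqref{inv:relevement} where the functions are originally defined.} The function $\widehat\pi_1(y)$ depends only on $y$, and $\widetilde\eta$ is the lift of an automorphism fixing $y$, so $y(e^{i\widetilde\eta\omega})=y(e^{i\omega})$. The only subtlety is to check that $\widetilde\Delta_y$ is stable under $\widetilde\eta$. This holds because $\widetilde\eta$ is the central symmetry about $\pi+\beta$, which is the midpoint of the strip $\beta\le\Re\omega<2\pi+\beta$. The same argument applies to $\widetilde\zeta$ and $\widetilde\Delta_x$, whose strip $0\le\Re\omega<2\pi$ is symmetric about $\pi$.

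\textbf{Step 3: extension.} The domain $\widetilde\Delta_x\cup\widetilde\Delta_y$ contains a vertical strip of width at least $2\beta$. To see this, note that in the $\omega$-variable the conditions $\Re x\le0$ and $\Re y\le 0$ each carve out a region bounded by a curve invariant under the corresponding symmetry, and these regions overlap. Set
\[
\widetilde\pi_1(\omega+2\beta):=\widetilde G(\omega)\,\widetilde\pi_1(\omega)
\]
to the right and
\[
\widetilde\pi_1(\omega-2\beta):=\widetilde\pi_1(\omega)/\widetilde G(\omega-2\beta)
\]
to the left. $\widetilde G$ is meromorphic on $\mathbb C$, being a rational function of $e^{i\omega}$, so this defines a meromorphic function on $\mathbb C$. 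It agrees with the original function on overlaps by Step 1, and it is unique by the identity theorem. Define $\widetilde\pi_2:=-\widetilde k_1\widetilde\pi_1/\widetilde k_2$ on all of $\mathbb C$.

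The relations \eqref{fe:revetement} and \eqref{inv:relevement} then hold on a nonempty open set. Both sides of each relation are meromorphic on $\mathbb C$, so the relations extend to all of $\mathbb C$ by the identity theorem.

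\textbf{Main obstacle.} The delicate point is Step 3: showing that the initial domain really contains a full period strip of width $2\beta$, with enough overlap for Step 1 to apply. I would establish this by computing $\Re x(e^{i\omega})$ and $\Re y(e^{i\omega})$ on vertical lines. Through the uniformization these are explicit combinations of $\cosh(\Im\omega)\cos(\Re\omega)$, so the boundaries of $\widetilde\Delta_x$ and $\widetilde\Delta_y$ are explicit curves. From there I would show they cover the strip $\beta\le\Re\omega\le\pi+\beta$ and its image under the symmetries, following Appendix D of \cite{franceschi_bousquet_melou_price_hardouin_raschel_stationary_2023}.
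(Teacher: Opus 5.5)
Your Steps 1--3 follow the paper's proof. You eliminate $\widetilde\pi_2$ between \eqref{fe:revetement} at $\omega$ and at $\widetilde\zeta(\omega)$, use \eqref{inv:relevement} together with $\widetilde\eta\circ\widetilde\zeta(\omega)=\omega+2\beta$, and then propagate by $\pm2\beta$ because $\widetilde\Delta_x\cup\widetilde\Delta_y$ contains a period strip. Your identity-theorem argument extending \eqref{fe:revetement} and \eqref{inv:relevement} to all of $\mathbb{C}$ usefully spells out what the paper only asserts.

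The covering step, which the paper also only asserts, is where your plan needs correcting.

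\textbf{The proposed strip is not covered.} Knowing that the two regions overlap does not by itself give a full vertical strip. Moreover, the strip $\beta\le\Re\omega\le\pi+\beta$ you propose to cover is in general not contained in the union. Take $\rho=0$ (so $\beta=\pi/2$) and $\mu_1=\mu_2=-1/\sqrt2$. Use the ordering $x^-<x^+$, $y^-<y^+$, which is the one under which $\widetilde\Delta_x$ and $\widetilde\Delta_y$ are centred at $\pi$ and $\pi+\beta$. The uniformization then reads $x(e^{i\omega})=\tfrac1{\sqrt2}+\cos\omega$ and $y(e^{i\omega})=\tfrac1{\sqrt2}+\sin\omega$. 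On the whole line $\Re\omega=\pi/2$ this gives $\Re x=\tfrac1{\sqrt2}>0$ and $y=\tfrac1{\sqrt2}+\cosh(\Im\omega)>0$, so this line lies in neither domain. More generally, for $\beta<\pi/2$ the same failure occurs on $\Re\omega=\beta$ for large $|\Im\omega|$, where $y\to+\infty$ and $\Re x\to+\infty$.

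\textbf{The correct strip has width exactly $2\beta$.} In the same example, the real traces of $\widetilde\Delta_x$ and $\widetilde\Delta_y$ are $[3\pi/4,5\pi/4]$ and $[5\pi/4,7\pi/4]$. So the union contains the closed strip $3\pi/4\le\Re\omega\le 7\pi/4$, of width exactly $2\beta$, and no wider strip. This is the width-$2\beta$ strip the paper refers to.

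\textbf{Gluing is needed in Step 3.} Near the real axis, the translates of the union by $2n\beta$ only touch along vertical lines rather than overlapping in open sets. Your Step 3 therefore needs an argument for gluing across those lines, not just agreement on open overlaps. Two options are continuity of $\widehat\pi_1,\widehat\pi_2$ up to $\Re y=0$ and $\Re x=0$ combined with a Morera/Painlev\'e argument, or convergence of the transforms on slightly larger half-planes.
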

\begin{proof}[Sketch of proof]
Recall that for all $\omega\in\widetilde{\Delta}_x \cup \widetilde{\Delta}_y$
\begin{equation}
\widetilde k_1(\omega)\,\widetilde{\pi}_1(\omega)
+
\widetilde k_2(\omega)\,\widetilde{\pi}_2(\omega)
=0.
\label{eq:FE_lifted}
\end{equation}
Furthermore, by~\eqref{inv:relevement}, in the proper domains, we have 
\begin{equation}
\widetilde k_1(\widetilde{\zeta}(\omega))
\,
\underbrace{\widetilde{\pi}_1(\widetilde{\zeta}(\omega))}_{\widetilde{\pi}_1(\widetilde{\eta}\circ\widetilde{\zeta}(\omega))}
+
\widetilde k_2(\widetilde{\zeta}(\omega))\,\underbrace{\widetilde{\pi}_2(\widetilde{\zeta}(\omega))}_{\widetilde{\pi}_2(\omega)}
=0.
\label{eq:FE_lifted_eta}
\end{equation}
Using \eqref{eq:FE_lifted} and \eqref{eq:FE_lifted_eta}, we eliminate $\widetilde{\pi}_2(\omega)$ and we get \eqref{eq:diffrel}
using
\[
\widetilde{\eta}\circ\widetilde{\zeta}(\omega)=\omega+2\beta.
\]
This relation provides a direct analytic continuation of $\widetilde{\pi}_2$ on the universal covering with the translation $\omega\mapsto\omega+2\beta$.
It just remains to check that
$$
\mathbb{C}=
\bigcup_{n\in\mathbb{Z}} \left(
\widetilde{\Delta}_x \cup \widetilde{\Delta}_y+2n\beta \right)
$$
which follows from the fact that $\widetilde{\Delta}_x \cup \widetilde{\Delta}_y$ contains the fundamental domain which is a strip of the form
\[
\{\omega\in\mathbb{C}:\ \alpha<\Re\omega<\alpha+2\beta\}.
\]
\end{proof}
A completely symmetric argument, yields an analogous relation for $\widetilde{\pi}_2$ which can be meromorphically continued to $\mathbb{C}$.

\paragraph{Green's function}
In the same way, we define
\[
\widetilde{g}_1(\omega):=\widehat{g}_1\bigl(y(e^{i\omega})\bigr),
\qquad
\widetilde{g}_2(\omega):=\widehat{g}_2\bigl(x(e^{i\omega})\bigr)
\]
and we have the functional equation and the invariance properties
\begin{equation}
0
=
k_1(\omega)\,
\widetilde{g}_1(\omega)
+
k_2(\omega)\,
\widetilde{g}_2(\omega)
+e^{x(e^{i\omega})a+y(e^{i\omega})b},
\label{feg:revetement}
\end{equation}
\begin{equation}
\widetilde{g}_1(\widetilde{\eta}(\omega))=\widetilde{g}_1(\omega)
\quad\text{and}\quad
\widetilde{g}_2(\widetilde{\zeta}(\omega))=\widetilde{g}_2(\omega)
.
\label{invg:relevement}
\end{equation}
In the same way, we can then prove the following statement.
\begin{prop}[Meromorphic continuation and difference equation for the Green's function]
The function $\widetilde g_1$ can be meromorphically continued to $\mathbb{C}$ through the difference equation
\begin{equation}
\widetilde g_1(\omega+2\beta)
=
\widetilde G(\omega)\,\widetilde g_1(\omega)
+
\widetilde H(\omega),
\label{eq:diffrel_green}
\end{equation}
where
\[
\widetilde G(\omega)
:=
\frac{k_1(\omega)\,k_2(\widetilde\zeta(\omega))}
{k_2(\omega)\,k_1(\widetilde\zeta(\omega))}
\]
and
\begin{equation}
\widetilde H(\omega)
:=
\frac{k_2(\widetilde\zeta(\omega))}{k_1(\widetilde\zeta(\omega))
} \left(
\frac{
\,e^{x(e^{i\omega})a+y(e^{i\omega})b}}{k_2(\omega)}
-
\frac{\,e^{x(e^{i\widetilde\zeta(\omega)})a+
y(e^{i\widetilde\zeta(\omega)})b}
}{
\,k_2(\widetilde\zeta(\omega))} \right) .
\label{tildeH}
\end{equation}
The functional equation \eqref{feg:revetement} and the invariance properties \eqref{invg:relevement} remain valid on the whole complex plane $\mathbb{C}$.
\end{prop}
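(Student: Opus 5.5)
The plan mirrors the sketch given for the invariant measure, with the exponential source term carried along. We work on the universal covering with $\widetilde g_1,\widetilde g_2$ defined on $\widetilde\Delta_y$ and $\widetilde\Delta_x$ respectively, where they are analytic by convergence of the Laplace transforms $\widehat g_1,\widehat g_2$.

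\textbf{Step 1 (lifting the equation).} Substituting $(x,y)=(x(e^{i\omega}),y(e^{i\omega}))$ into the transient functional equation \eqref{eq:Petit-24} kills the left-hand side, since $K$ vanishes on $\mathcal S_0$. This yields \eqref{feg:revetement} on $\widetilde\Delta_x\cap\widetilde\Delta_y$. We then use \eqref{feg:revetement} to define $\widetilde g_2$ on $\widetilde\Delta_y$, namely
\[
\widetilde g_2=-\bigl(k_1\widetilde g_1+e^{x a+y b}\bigr)/k_2 ,
\]
and symmetrically $\widetilde g_1$ on $\widetilde\Delta_x$. This gives meromorphic continuations to $\widetilde\Delta_x\cup\widetilde\Delta_y$, which is connected because the intersection is nonempty (it contains a neighbourhood of the lifted points where both $\Re x,\Re y<0$).

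\textbf{Step 2 (invariances).} The invariances \eqref{invg:relevement} hold near the fixed points of $\widetilde\eta$ and $\widetilde\zeta$. Indeed, $\widehat g_1$ depends only on $y$, and $y\circ\lambda\circ\widetilde\eta=y\circ\lambda$; likewise $x$ is $\widetilde\zeta$-invariant. Since $\widetilde\Delta_y$ is symmetric about $\pi+\beta$ and $\widetilde\Delta_x$ about $\pi$, these relations propagate to the whole continued domains by the identity principle.

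\textbf{Step 3 (deriving the difference equation).} Write \eqref{feg:revetement} at $\widetilde\zeta(\omega)$. Use $\widetilde g_2(\widetilde\zeta\omega)=\widetilde g_2(\omega)$ and $\widetilde g_1(\widetilde\zeta\omega)=\widetilde g_1(\widetilde\eta\widetilde\zeta\omega)=\widetilde g_1(\omega+2\beta)$. Then eliminate $\widetilde g_2(\omega)$ between this and the equation at $\omega$:
\[
\widetilde g_2(\omega)=-\frac{k_1(\omega)\widetilde g_1(\omega)+E(\omega)}{k_2(\omega)},\qquad E(\omega):=e^{x(e^{i\omega})a+y(e^{i\omega})b}.
\]
Substituting into
\[
k_1(\widetilde\zeta\omega)\widetilde g_1(\omega+2\beta)=-k_2(\widetilde\zeta\omega)\widetilde g_2(\omega)-E(\widetilde\zeta\omega)
\]
gives exactly \eqref{eq:diffrel_green}, with $\widetilde G$ as stated and $\widetilde H$ as in \eqref{tildeH}. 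This is a routine algebraic check, sign conventions included. Note that $\widetilde G$ and $\widetilde H$ are meromorphic on all of $\mathbb C$: the $k_i$ are polynomials in $x,y$, which are entire in $\omega$, and $E$ is entire.

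\textbf{Step 4 (continuation to $\mathbb C$).} Use \eqref{eq:diffrel_green}, and its inverse form
\[
\widetilde g_1(\omega)=\bigl(\widetilde g_1(\omega+2\beta)-\widetilde H(\omega)\bigr)/\widetilde G(\omega),
\]
to extend $\widetilde g_1$ from a strip of width $2\beta$ to all translates, hence to $\mathbb C$. Consistency on overlaps holds because the relation is already valid where everything is defined. The main obstacle is verifying that $\widetilde\Delta_x\cup\widetilde\Delta_y$ contains a full vertical strip $\{c<\Re\omega<c+2\beta\}$. The first attempt is to describe $\widetilde\Delta_x$ and $\widetilde\Delta_y$ explicitly via the uniformization. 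Since $x(e^{i\omega})$ is affine in $\cos\omega$, the boundary $\{\Re x=0\}$ is a hyperbola-like curve symmetric about $\Re\omega=\pi$; similarly for $y$ about $\pi+\beta$. In the transient regime these curves are shifted relative to the recurrent case, because the signs of $x^\pm,y^\pm$ differ. One checks that the two regions overlap and together cover vertical lines $\Re\omega\in[\pi,\pi+\beta]$ together with their reflections, giving a strip of width at least $2\beta$. Finally, $\widetilde g_2$ is continued via \eqref{feg:revetement}, and both the functional equation and the invariances persist on $\mathbb C$ by analytic continuation.
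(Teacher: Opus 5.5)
Your proposal is correct and follows the paper's route; the paper's proof is only the remark that it goes ``in the same way'' as for the invariant measure. You lift the transient functional equation to the covering, eliminate $\widetilde g_2$ using the two invariances and $\widetilde\eta\circ\widetilde\zeta(\omega)=\omega+2\beta$ (your algebra reproduces $\widetilde G$ and $\widetilde H$ exactly), and then propagate by translates of a fundamental strip of width $2\beta$.

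Two details are worth tightening. First, the covering step: $\{\pi\le\Re\omega\le\pi+\beta\}\subset\widetilde\Delta_x\cup\widetilde\Delta_y$ does hold. However, its $\widetilde\eta$-reflection generally lies in $\widetilde\eta(\widetilde\Delta_x\cup\widetilde\Delta_y)$ rather than in the union itself. You should therefore say explicitly that $\widetilde g_1$ is defined there by $\widetilde g_1\circ\widetilde\eta$, which is legitimate because the invariance already holds on $\widetilde\Delta_y$, a region symmetric about and containing the line $\Re\omega=\pi+\beta$. Second, your aside that the signs of the branch points change in the transient regime is inaccurate. The real ellipse $\{K=0\}\cap\mathbb R^2$ always passes through the origin, so $x^\pm$ and $y^\pm$ always lie on either side of $0$. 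What changes between regimes is where the origin sits on that ellipse.
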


\paragraph{Harmonic functions}
The harmonic functions $\widehat h^\infty_1(y)$ and $\widehat h^\rightarrow_1(y)$ can also be lifted to the universal covering and satisfy similar difference equations, replacing $K$ by $K^*$ and the $k_i$ by $k_i^*$. We obtain an equation of the form
$$
\widetilde h^\infty_1(\omega+2\beta)=\widetilde G^*(\omega) \widetilde h^\infty_1(\omega).
$$
Note that, due to Remark~\ref{rem:partsol}, considering $\widehat h^\rightarrow_1(y)-c\,\widehat h^\rightarrow_1(0)$ yields a similar equation without inhomogeneous term.

\paragraph{Transition density}

Similarly, it can be shown that the Laplace transform of the transition density satisfies a non-homogeneous difference equation similar to that of Green's functions. It suffices to replace the kernel $K(x,y)$ by the shifted kernel $K(x,y)+u$. This equation has not yet been studied in the literature.

\subsection{Degenerate covariance}


When $\det\Sigma=0$, i.e. $\rho=\pm 1$ (degenerate case), the situation simplifies. The kernel $K$ becomes \emph{parabolic}, and the associated curve degenerates. 
The geometry becomes simpler: there is no need to pass to a universal covering since the Riemann surface is conformally equivalent to $\mathbb{C}$ (rather than $\mathbb{C}^*$) and is then simply connected. The analytic continuation can be performed directly on the curve itself.
More precisely, the Riemann surface is uniformized by $\mathcal{S}=\{(x(s), y(s)): s \in \mathbb{C} \}$ where
$$
\left\{\begin{array}{l}
x(s)=2 s\left(s+\mu_2\right), \\[1.2ex]
y(s)=2 s\left(s-\mu_1\right).
\end{array}\right .
$$
The automorphisms $\widehat \zeta$ and $\widehat \eta$ of the surface $\mathcal{S}$ (through the uniformization) are defined by
$$
\widehat\zeta s:=-s-\mu_2, \quad\quad \widehat\eta s:=-s+\mu_1 \text { and then } \widehat\eta\zeta s = s +\mu_1+\mu_2.
$$
We now state the difference equation obtained in \cite{franceschi_degenerate_asym_2025}. Its iteration provides a meromorphic continuation of $\widehat \pi_1 (y(s))$ to all of $\mathbb{C}$.
\begin{prop}[Meromorphic continuation and difference equation]
The function $\widehat \pi_1$ admits a meromorphic continuation to the whole complex plane $\mathbb{C}$ and satisfies the difference equation
\[
\widehat \pi_1(y(s+(\mu_1+\mu_2)))=\widehat G(s)\,\widehat \pi_1(y(s)),
\]
where
\[
\widehat G(s):=\frac{k_1(s)\,k_2(\zeta s)}{k_2(s)\,k_1(\zeta s)}
=\frac{(s-s_1)(s+s_2+\mu_2)}{(s-s_2)(s+s_1+\mu_2)}.
\]
Moreover, the functional equation and the invariance properties by $\widehat\eta$ and $\widehat\zeta$ extend to the whole complex plane $\mathbb{C}$.
\end{prop}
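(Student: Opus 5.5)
The proof mirrors the non-degenerate argument, with the universal covering replaced by the simply connected parametrization $s\in\mathbb{C}$.

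\textbf{Step 1: the equation on the curve.} For $s$ in the set where $\Re x(s)<0$ and $\Re y(s)<0$ (nonempty, since $x(s),y(s)$ are quadratic in $s$), we have $K(x(s),y(s))=0$. The functional equation \eqref{eq:EJP-25} therefore gives
\[
k_1(s)\,\widehat\pi_1(y(s))+k_2(s)\,\widehat\pi_2(x(s))=0,
\]
where $k_i(s)$ is shorthand for $k_i(x(s),y(s))$.

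\textbf{Step 2: continuation to $\Omega_x\cup\Omega_y$.} Set
\[
\Omega_y=\{s:\Re y(s)\le 0\},\qquad \Omega_x=\{s:\Re x(s)\le 0\}.
\]
The function $s\mapsto\widehat\pi_1(y(s))$ is analytic on $\Omega_y$, and $s\mapsto\widehat\pi_2(x(s))$ is analytic on $\Omega_x$. On $\Omega_x\setminus\Omega_y$ we define
\[
\widehat\pi_1(y(s)):=-\frac{k_2(s)\,\widehat\pi_2(x(s))}{k_1(s)},
\]
and symmetrically for $\widehat\pi_2$ on $\Omega_y\setminus\Omega_x$. This yields meromorphic continuations of both functions to $\Omega_x\cup\Omega_y$. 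Compatibility on the overlap is exactly the equation of Step 1.

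\textbf{Step 3: invariances.} Because $y(\widehat\eta s)=y(s)$ and $x(\widehat\zeta s)=x(s)$, we have
\[
\widehat\pi_1(y(\widehat\eta s))=\widehat\pi_1(y(s)),\qquad \widehat\pi_2(x(\widehat\zeta s))=\widehat\pi_2(x(s)).
\]
Both hold trivially wherever the right-hand sides are defined. The identities $y(\widehat\eta s)=y(s)$ and $x(\widehat\zeta s)=x(s)$ are checked directly from the parametrization.

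\textbf{Step 4: deriving the difference equation.} Write the Step 1 equation at $s$ and at $\widehat\zeta s$. Using $\widehat\pi_2(x(\widehat\zeta s))=\widehat\pi_2(x(s))$, eliminate the $\widehat\pi_2$ term between the two equations. Then use $\widehat\pi_1(y(\widehat\zeta s))=\widehat\pi_1(y(\widehat\eta\widehat\zeta s))=\widehat\pi_1(y(s+\mu_1+\mu_2))$. This gives
\[
\widehat\pi_1\bigl(y(s+\mu_1+\mu_2)\bigr)=\frac{k_1(s)\,k_2(\widehat\zeta s)}{k_2(s)\,k_1(\widehat\zeta s)}\,\widehat\pi_1(y(s)).
\]
For the explicit form of $\widehat G$, note that $k_1(s)=x(s)+r_1y(s)$ and $k_2(s)=r_2x(s)+y(s)$ are quadratics in $s$ that vanish at $s=0$. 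Write
\[
k_1(s)=2(1+r_1)\,s\,(s-s_1),\qquad k_2(s)=2(1+r_2)\,s\,(s-s_2),
\]
with $s_1,s_2$ given by explicit formulas in $\mu,r$. Since $\widehat\zeta s=-s-\mu_2$, the factors become $-(s+\mu_2)(-s-\mu_2-s_i)$. In the quotient the constants, the factors $s$ and the factors $(s+\mu_2)$ cancel. What remains is
\[
\widehat G(s)=\frac{(s-s_1)(s+s_2+\mu_2)}{(s-s_2)(s+s_1+\mu_2)}.
\]

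\textbf{Step 5: continuation to all of $\mathbb{C}$.} The difference equation holds on an open set, namely where $s$ and $\widehat\zeta s$ both lie in $\Omega_x\cup\Omega_y$, and it is then iterated. Forward iteration extends $\widehat\pi_1$ to $D+n(\mu_1+\mu_2)$, and backward iteration (dividing by the rational function $\widehat G$) extends it to $D-n(\mu_1+\mu_2)$. Since $\widehat G$ is rational, the result is meromorphic.

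It remains to show that the translates of $\Omega_x\cup\Omega_y$ cover $\mathbb{C}$. This is the step I expect to be the main obstacle. Write $s=a+ib$. Then
\[
\Re x(s)=2(a^2-b^2+\mu_2 a),\qquad \Re y(s)=2(a^2-b^2-\mu_1 a).
\]
Each region $\Omega_x$, $\Omega_y$ contains the part $|b|\ge|a|+C$ and is bounded by a hyperbola. I would show that $\Omega_x\cup\Omega_y$ contains a full vertical strip of width $|\mu_1+\mu_2|$. Concretely, I would check that for $a$ between $-\mu_2$ and $\mu_1$ (or the appropriate interval depending on signs), one of $a^2+\mu_2a$ and $a^2-\mu_1a$ is $\le 0\le b^2$. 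This should follow because the intervals $[-\mu_2,0]$ and $[0,\mu_1]$ (suitably oriented) together cover an interval of length $|\mu_1+\mu_2|$.

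A subtlety arises if $\mu_1+\mu_2=0$, since then the shift is trivial. The positive-recurrence sign conditions \eqref{v11pos} should exclude this case or allow it to be handled separately. Finally, the identity theorem propagates the functional equation and the invariances from the initial open set to all of $\mathbb{C}$.
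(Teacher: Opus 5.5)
Your proposal is correct and follows essentially the same route as the paper, which adapts its non-degenerate sketch. The steps are: restrict the functional equation to the curve, continue to $\Omega_x\cup\Omega_y$, eliminate $\widehat\pi_2$ using $\widehat\zeta$-invariance and $\widehat\eta\widehat\zeta s=s+\mu_1+\mu_2$, then cover $\mathbb{C}$ by translates of a vertical strip and propagate the identities by the identity theorem. The covering step you flag does close: the sets $\{a:\ a(a+\mu_2)\le 0\}$ and $\{a:\ a(a-\mu_1)\le 0\}$ both contain $0$, so their union is the convex hull of $\{0,-\mu_2,\mu_1\}$, whose length is at least $|\mu_1+\mu_2|$; and $\mu_1+\mu_2\neq 0$ is implicit in the setup, since otherwise $x(s)\equiv y(s)$ and the parametrization degenerates.
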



%

\section{Integral representations via Carleman boundary value problems}
\label{sec:intrep}

\subsection{From the functional equation to a Carleman BVP}

To establish the Carleman boundary value problem, we need to introduce the curve
\begin{equation}
\label{eq:curve_definition1}
     \R =\{y\in\mathbb C: K(x,y)=0 \text{ and } x\in(-\infty,x^-)\}=Y^\pm ((-\infty,x^-))
\end{equation}
where $x^-=x^-(0)$ is defined in Section~\ref{sec:shiftk}. 
The curve $\R$ is a branch of a hyperbola symmetric with respect to the real axis.
We denote by $\G$ the open domain in $\mathbb{C}$ bounded by $\R$ and containing $0$, see Figure~\ref{BVPtheta1}.

\begin{figure}[hbtp]
\centering
\includegraphics[trim = 0cm 0.3cm 0cm 0.3cm, scale=0.75]{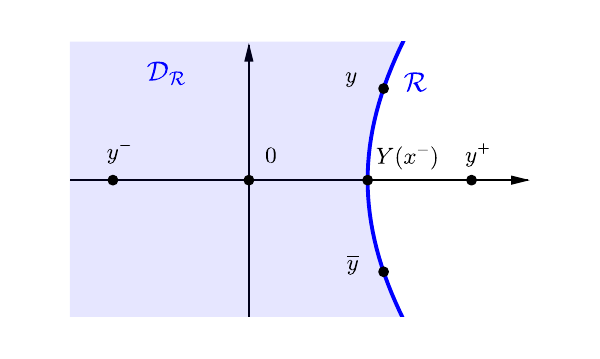}
\caption{
The curve $\R$ in~\eqref{eq:curve_definition1} is symmetric with respect to the horizontal axis, and $\G$ is the blue domain}
\label{BVPtheta1}
\end{figure}

We now state the boundary value problem that characterizes the stationary distribution of reflected Brownian motion in a quadrant. This result is established in \citeS{franceschi_raschel_integral_2019}.
Define, for $y\in \R$,
\begin{equation}
\label{intro:eq:G}
     G(y)=\frac{k_1}{k_2}(X^-(y),y)\frac{k_2}{k_1}(X^-(y),\overline{y}).
\end{equation}     
One can observe that the lift of $G$ to the universal covering of the Riemann surface is $\widetilde G$, defined in \eqref{tildeG}. 
\begin{lemm}[Carleman Boundary Value Problem for the invariant measure]
\label{prop:BVP_general0}
The Laplace transform $\widehat \pi_1$ satisfies the following boundary value problem:
\begin{enumerate}[label={\rm (\roman{*})},ref={\rm (\roman{*})}]
     \item\label{item:1_BVP0_general} $\widehat \pi_1$ is meromorphic on $\G$ with at most a simple pole $p$ and is bounded at infinity;   
     \item\label{item:2_BVP0_general} $\widehat \pi_1$ is continuous on $\overline{\G}\setminus \{p\}$ and
\begin{equation}
\label{eq:boundary_condition_general1}
     \widehat \pi_1(\overline{y})=G(y)\widehat \pi_1({y}), \qquad \forall y\in \R.
\end{equation}
\end{enumerate}
\end{lemm}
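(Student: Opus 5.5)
The plan is to derive both properties from the kernel functional equation~\eqref{eq:EJP-25}, after continuing $\widehat\pi_1$ meromorphically beyond its initial half-plane of convergence. First, $\widehat\pi_1$ is analytic on $\{\Re y<0\}$ and continuous up to $\Re y=0$. For $y$ with $\Re X^-(y)<0$ and $\Re y<0$, evaluate \eqref{eq:EJP-25} at $(X^-(y),y)$. Since $K(X^-(y),y)=0$, and $\widehat\pi(X^-(y),y)$ is finite there, we get
\[
\widehat\pi_1(y)=-\frac{k_2}{k_1}\bigl(X^-(y),y\bigr)\,\widehat\pi_2\bigl(X^-(y)\bigr).
\]
We take this right-hand side as the definition of $\widehat\pi_1$ on the part of $\G$ where $\Re X^-(y)\le 0$. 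Two properties of $X^-$ on $\G$ make this work: $X^-$ is analytic on $\G$ minus the cut $[y^+,\infty)$, and $\Re X^-(y)\le x^-<0$ there. The second property is exactly how $\G$ is defined, as the domain bounded by the preimage $\R$ of $(-\infty,x^-)$. Both facts come from the explicit square-root formula in Section~\ref{sec:shiftk}. Combining the two definitions covers $\G$, because $\G\subset\{\Re y<0\}\cup\{y:\Re X^-(y)<0\}$; this is checked directly from the hyperbola geometry. Equivalently, the whole step can be read off from the meromorphic continuation on the universal covering, namely \eqref{eq:diffrel} together with the invariance $\widetilde\pi_1\circ\widetilde\eta=\widetilde\pi_1$, which says $\widehat\pi_1$ is single-valued in $y$.

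The only singularities in $\G$ come from zeros of $k_1(X^-(y),y)=X^-(y)+r_1y$. Because $\widehat\pi_2$ is analytic on the region used, at most one pole $p$ can occur, at a zero of $k_1$ on the branch $X^-$ with $\Re p>0$. One shows this pole is simple and unique by solving the linear-in-$x$ system $K=0$, $x+r_1y=0$, a quadratic in $y$ one of whose roots is $0$. For boundedness at infinity, note that on $\{\Re y<0\}$ the transform $\widehat\pi_1$ is bounded by $\pi_1$'s total mass. In the continued region, $k_2/k_1(X^-(y),y)$ tends to a finite limit because $X^-(y)\sim c\,y$, and $\widehat\pi_2(X^-(y))$ stays bounded since $\Re X^-\le x^-<0$. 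This gives (i).

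For the boundary condition (ii), take $y\in\R$. Then $X^-(y)=X^-(\overline y)=x\in(-\infty,x^-)$ is real: $\R$ is the image of real $x$, and $\overline y$ is the other root $Y^\mp(x)$. Applying the continued relation at $y$ and at $\overline y$ with the same $x$ gives
\[
\widehat\pi_1(y)=-\tfrac{k_2}{k_1}(x,y)\,\widehat\pi_2(x),\qquad \widehat\pi_1(\overline y)=-\tfrac{k_2}{k_1}(x,\overline y)\,\widehat\pi_2(x).
\]
Dividing eliminates $\widehat\pi_2(x)$ and yields \eqref{eq:boundary_condition_general1} with $G$ as in \eqref{intro:eq:G}. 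This step needs $k_1(x,y)\ne0$, which holds on $\R$ except at finitely many points handled by continuity. Continuity on $\overline\G\setminus\{p\}$ follows because $X^-$ is continuous up to $\R$ and $\widehat\pi_2$ is continuous on $(-\infty,x^-]$.

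The main obstacle is the geometric verification: that $\G$ is covered by the two regions $\{\Re y<0\}$ and $\{\Re X^-(y)<0\}$, that $X^-$ has no branch cut inside $\G$, and that $k_1(X^-(y),y)$ has at most one simple zero in $\G$. My first attempt would be to parametrize $\R$ and $\G$ through the uniformization $s=e^{i\omega}$. There $\G$ corresponds to a strip in $\widetilde{\Delta}_x\cup\widetilde{\Delta}_y$ shifted by $\widetilde\eta$, and the pole count reduces to counting zeros of the affine function $\widetilde k_1$ in that strip.
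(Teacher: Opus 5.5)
Your strategy is the paper's, and also that of the cited source. You continue $\widehat\pi_1$ through the kernel equation. You then get (ii) by writing $k_1\widehat\pi_1+k_2\widehat\pi_2=0$ at $(x,y)$ and at $\zeta(x,y)=(x,\overline y)$, with $x=X^-(y)\in(-\infty,x^-)$, and eliminating $\widehat\pi_2(x)$. That step and the pole count via $K(-r_1y,y)=0$ are fine.

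What is wrong is the description of $X^-$ on $\G$ on which (i) rests.

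\emph{First claim: $\Re X^-(y)\le x^-$ on $\G$.} This fails, and it is not how $\G$ is defined: only the boundary $\R$ is mapped into $(-\infty,x^-)$. Let $y^-<0<y^+$ be the zeros of the discriminant of $K(\cdot,y)$ (the ordering for which $w$ is analytic on $\G$). Let $y_0=-(\rho x^-+\mu_2)\in(y^-,y^+)$ be the vertex of $\R$. Then $\G\cap\mathbb{R}=(-\infty,y_0)$. On $(y^-,y_0)$ the branch $X^-$ is real and decreases to $x^-$, so it exceeds $x^-$ there. For example, $X^-(0)=-\mu_1-|\mu_1|=0$ when $\mu_1<0$. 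In fact $X^-$ maps $\G\setminus(-\infty,y^-]$ onto the domain bounded by the other hyperbola $X^\pm((-\infty,y^-])$, slit along $(-\infty,x^-]$.

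\emph{Second claim: $X^-$ has no branch cut inside $\G$.} The branch point $y^-$ lies in $\G$, so $X^-$ unavoidably has a cut inside $\G$, namely $(-\infty,y^-]$. By contrast, $[y^+,\infty)$ does not meet $\overline{\G}$. So this item on your list is false and cannot be verified.

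The repair is to use each expression only where it is valid.
\begin{itemize}
\item The cut $(-\infty,y^-]$ lies in $\{\Re y<0\}$, where you use the Laplace transform itself. So the $X^-$-formula is needed only on $\G\cap\{\Re y\ge 0\}$, where $X^-$ is analytic.
\item What must be proved there is $\Re X^-(y)<0$. This is exactly your covering statement, and it is the true key lemma.
\item On the universal cover it reads as follows. $\G\setminus(-\infty,y^-]$ is the injective image under $\omega\mapsto y(e^{i\omega})$ of the strip $\{\pi<\Re\omega<\pi+\beta\}$. The line $\Re\omega=\pi$ lies over $\R$, where $x\le x^-$; the line $\Re\omega=\pi+\beta$ lies over the cut, where $y\le y^-$. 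One must show this strip lies in $\widetilde{\Delta}_x\cup\widetilde{\Delta}_y$, which is the containment the paper invokes for \eqref{eq:diffrel}.
\item Boundedness at infinity then follows from $|\widehat\pi_2(X^-(y))|\le\widehat\pi_2(0)$. This holds because $\Re X^-(y)\le 0$ wherever the formula is used. Add $X^-(y)/y\to e^{\pm i\beta}$, which gives $k_2/k_1$ a finite limit. No bound $\Re X^-\le x^-$ is needed.
\end{itemize}

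Two smaller corrections.
\begin{itemize}
\item With strict inequalities, your union misses $y=0$ when $\mu_1<0$, since there $\Re y=\Re X^-(y)=0$. Treat it as a removable singularity, since the continued function stays bounded near $0$.
\item $\widetilde k_1$ is not affine in $\omega$. Rather, $s\,\widetilde k_1$ is a quadratic polynomial in $s=e^{i\omega}$. Its two roots lie over $(0,0)$ and over the second point of $\{k_1=0\}$ on the kernel curve.
\end{itemize}
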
 
\begin{rem}[BVP on the universal covering]
The proof is thus similar to that of the difference equation \eqref{eq:diffrel}. The difference equation can be viewed as the lift of this boundary value problem to the universal covering of the Riemann surface. It is also possible to consider a fundamental domain and to formulate the boundary value problem directly on the universal covering, as Malyshev does in the discrete setting in his seminal article \cite{malysev_analytic_1972}. We do not adopt this approach here, as it was not the one followed in \citeS{franceschi_raschel_integral_2019}, although it would have been equally relevant and elegant.
\end{rem}

The boundary value problem satisfied by the Green function is established in \citeS{franceschi_green_2021} in the case where the process is transient. 
Recall that the functional equation in the transient case
differs from the recurrent case by an additional term $e^{(x,y) \cdot z_0}$, where $z_0\in\mathbb{R}_+^2$ is the starting point of the process.
This equation leads to a non-homogeneous boundary problem, which is more complex to solve than in the recurrent case. 
Recall that $G$ is defined in~\eqref{intro:eq:G}
and define $H$ such that
$$
H(y)=
\frac{k_1}{k_2} {(X^-(y),\overline{y})}
\left(
\frac{e^{(X^-(y),y) \cdot z_0}}{k_2(X^-(y),y)}
-
\frac{e^{(X^-(y),\overline{y})\cdot z_0}}{k_2(X^-(y),\overline{y})}
\right) .
$$
One can observe that the lift of $H$ to the universal covering of the Riemann surface is $\widetilde H$, defined in \eqref{tildeH}.
\begin{theorem}[Non-homogeneous Carleman boundary problem for the Green function]
The Laplace transform $\widehat g_1^{z_0}$ of the Green function satisfies the following boundary problem:
\begin{enumerate}[label={\rm (\roman{*})},ref={\rm (\roman{*})}]
\item $\widehat g_1^{z_0}$ is meromorphic on $\G$ and continuous on $\overline{\G}$;
     \item $\widehat g_1^{z_0}$ satisfies the boundary condition
\begin{equation*}
     \widehat g_1^{z_0}(\overline{y})=G(y)\widehat g_1^{z_0}({y}) + g(y), \qquad \forall y\in \mathcal{R}.
\end{equation*}
\end{enumerate}
\end{theorem}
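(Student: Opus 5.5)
We follow the same route as for the homogeneous problem of Lemma~\ref{prop:BVP_general0}, now keeping track of the inhomogeneous term $e^{\langle (x,y),z_0\rangle}$ coming from the transient functional equation \eqref{eq:Petit-24}. The term written $g(y)$ in the boundary condition is the function $H(y)$ defined just before the statement; we prove the identity with $H$.

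\textbf{Step 1: analytic continuation to $\G$.} By definition, $\widehat g_1^{z_0}$ is analytic on $\{\Re y<0\}$ and $\widehat g_2^{z_0}$ on $\{\Re x<0\}$. On the curve $K(x,y)=0$, equation \eqref{eq:Petit-24} gives, wherever both sides are defined,
\[
k_1(X^\pm(y),y)\,\widehat g_1(y)+k_2(X^\pm(y),y)\,\widehat g_2(X^\pm(y))+e^{\langle (X^\pm(y),y),z_0\rangle}=0 .
\]
We take the branch $X^-$. For $y$ in the region where $\Re X^-(y)<0$, we set
\[
\widehat g_1(y):=-\frac{k_2(X^-(y),y)\,\widehat g_2(X^-(y))+e^{\langle (X^-(y),y),z_0\rangle}}{k_1(X^-(y),y)} .
\]
This extends $\widehat g_1$ meromorphically to a neighbourhood of $\overline{\G}$, with possible poles only at zeros of $k_1(X^-(y),y)$. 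Equivalently, the same continuation is given by the lifted difference equation \eqref{eq:diffrel_green}, restricted to a fundamental domain. For continuity up to $\R$, we use that $X^-(y)\in(-\infty,x^-]$ for $y\in\R$, together with the continuity of $\widehat g_2$ on $(-\infty,x^-]\subset\{\Re x\le 0\}$. This inclusion needs $x^-\le 0$ in the transient regime \eqref{u11}, which we check from the explicit formula for $x^\pm$. In the main case, transience also guarantees that the relevant zero of $k_1$ lies outside $\overline{\G}$, or is cancelled; this gives continuity on all of $\overline{\G}$, as claimed.

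\textbf{Step 2: the boundary relation.} For $y\in\R$, the points $y$ and $\overline{y}=Y^\mp(x)$ share the same value $x=X^-(y)=X^-(\overline y)$, which is real and $\le x^-$. We write the continued functional equation at $(x,y)$ and at $(x,\overline y)$:
\[
k_1(x,y)\widehat g_1(y)+k_2(x,y)\widehat g_2(x)+e^{\langle (x,y),z_0\rangle}=0,
\]
\[
k_1(x,\overline y)\widehat g_1(\overline y)+k_2(x,\overline y)\widehat g_2(x)+e^{\langle (x,\overline y),z_0\rangle}=0 .
\]
We eliminate $\widehat g_2(x)$ by dividing each equation by its $k_2$ coefficient and subtracting. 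Solving for $\widehat g_1(\overline y)$ gives exactly
\[
\widehat g_1(\overline y)=G(y)\widehat g_1(y)+H(y),
\]
with $G$ as in \eqref{intro:eq:G} and $H$ as displayed before the theorem. This is the planar shadow of the $\widetilde\zeta$-invariance $\widetilde g_2\circ\widetilde\zeta=\widetilde g_2$ used for \eqref{eq:diffrel_green}.

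\textbf{Main obstacle.} The algebra of Step 2 is routine. The delicate part is Step 1: justifying that $\widehat g_2$ is finite and continuous on all of $(-\infty,x^-]$, including at $x^-$ and at infinity, with the exponential term controlled. This also means ruling out poles of the continuation on $\R$. We would derive the needed bounds from the Laplace-transform definition, since $\widehat g_2$ is a Laplace transform of a positive measure that converges for real $x<0$. Near $x^-$ we would use transience (finiteness of the Green measure) to get convergence at the endpoint. Any zero of $k_1(X^-(y),y)$ inside $\G$ would then be handled by locating it explicitly.
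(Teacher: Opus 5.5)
Your Step~2 is the mechanism the paper relies on: it is the planar form of the $\widetilde\zeta$-invariance that produces \eqref{eq:diffrel_green}, and it is correct. One small point: the elimination gives the prefactor $\frac{k_2}{k_1}(X^-(y),\overline y)$, as in $\widetilde H$, whereas the displayed $H$ has this ratio inverted, so your ``exactly'' holds only after correcting that typo.

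The gap is in Step~1, which carries all of part (i). You assert it rather than prove it, and the assertion fails in two places. First, transience does not remove the zeros of $k_1(X^-(y),y)$. The inhomogeneous term is precisely what prevents cancellation: at the origin of the kernel curve $k_1=k_2=0$, but $e^{\langle (0,0),z_0\rangle}=1$. If $\mu_1<0$, then $X^-(0)=0$, so $k_1(X^-(y),y)$ vanishes at $y=0\in\G$ and nothing in the numerator compensates it. In the recurrent case the numerator $k_2\widehat\pi_2$ vanishes there as well, which is why that point is harmless in Lemma~\ref{prop:BVP_general0}. Concretely, take $\mu_1<0<\mu_2$ with $\mu_2+r_1|\mu_1|>0$. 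The process is then transient and escapes along the vertical axis, so $L^1_t\to\infty$. By monotone convergence, $\widehat g_1(y)\to\mathbb{E}_{z_0}[L^1_\infty]=\infty$ as $y\uparrow 0$, so $\widehat g_1$ cannot be continuous on $\overline{\G}$.

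Second, you claim that a neighbourhood of $\overline{\G}$ lies in $\{\Re y<0\}\cup\{\Re X^-(y)<0\}$, with $X^-$ single-valued there. This is exactly the geometric lemma that has to be proved, and it is not automatic. $X^\pm$ has two branch points, and in the transient regime the side of $\R$ containing $0$ can contain the upper one. Take $\rho=0$, $r_1=r_2=0$, $\mu_1,\mu_2>0$:
\begin{itemize}
\item One gets $\R=\{\Re y=-\mu_2\}$, so $\G=\{\Re y>-\mu_2\}$, which contains the branch point $1-\mu_2$ of $X^\pm$.
\item The coordinates are independent one-dimensional reflected Brownian motions, and the density of the first one at $0$ decays like $t^{-3/2}e^{-\mu_1^2 t/2}$.
\item One checks that $1-\mu_2$ is the abscissa of convergence of $\widehat g_1$ and that $\widehat g_1$ stays bounded there.
\item Hence, by Pringsheim--Landau, $1-\mu_2$ is a singular point of $\widehat g_1$ that is not a pole.
\end{itemize}

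So part (i) cannot follow from \eqref{u11} alone. It needs extra drift assumptions, for example $\mu_1>0$, so that $X^-(0)=-2\mu_1\neq0$. It also needs a domain correctly positioned with respect to the cut of $X^\pm$ and the convergence half-plane of $\widehat g_1$, together with a proof that your formula covers that domain. That continuation lemma is the missing ingredient.

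A minor correction: one branch point of $Y^\pm$ is $\le 0$ for every drift, since the discriminant equals $\mu_2^2\ge 0$ at $x=0$. Transience is not involved there.
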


\subsection{Conformal gluing functions}

To construct a particular solution to the boundary value problems, we look for a conformal function $w$ that glues together the upper and lower parts of the hyperbola $\R$.
We define $w$ in terms of the generalized Chebyshev polynomials
\begin{equation*}
T_a(x)  =\cos (a\arccos (x))=\frac{1}{2} \Big\{\big(x+\sqrt{x^2-1}\big)^a+\big(x-\sqrt{x^2-1}\big)^a\Big\}
\end{equation*} as follows:
\begin{equation}
\label{eq:expression_CGF_BM1}
     {w} (y)=T_{\frac{\pi}{\beta}}\bigg(-\frac{2y-(y^++y^-)}{y^+-y^-}\bigg),
\end{equation}
where we recall that
$\beta$
is the cone angle defined in \eqref{eq:beta}.

\begin{lemm}[Conformal gluing function]
The function $w$ satisfies the following properties.
\begin{enumerate}[label={\rm (\roman{*})},ref={\rm (\roman{*})}]
     \item\label{item:conformal_10} $w$ is analytic on $\G$, continuous on $\overline{\G}$, and unbounded at infinity,
     \item\label{item:conformal_20} $w$ is injective on $\G$, 
     \item\label{item:conformal_30} $w(y)=w(\overline{y})$ for all $y\in\mathcal{R}$.
\end{enumerate}
\end{lemm}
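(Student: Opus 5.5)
The plan is to parametrize the hyperbola $\R$ explicitly and reduce all three properties to elementary facts about $\cos(a\theta)$ on a strip. Set $t(y)=-\frac{2y-(y^++y^-)}{y^+-y^-}$, an affine map sending $y^-\mapsto 1$ and $y^+\mapsto -1$. Write $t=\cos\theta$ with $\theta=\arccos t$, taking the principal branch (cut along $(-\infty,-1]\cup[1,\infty)$ in the $t$-plane). Then $w(y)=\cos\!\big(\tfrac{\pi}{\beta}\theta\big)$.

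First I identify the image of $\R$ under $t$. For $x<x^-$, the kernel equation gives $Y^\pm(x)=-(\rho x+\mu_2)\pm i\sqrt{|\text{disc}|}$, so the points of $\R$ are complex conjugate pairs. Substituting into $t$, I expect the curve $t(\R)$ to be the hyperbola branch $\{\cos(\beta+i\tau):\tau\in\mathbb{R}\}$, or with the opposite orientation $\{\cos(\pi-\beta \pm i\tau)\}$, chosen according to the sign conventions. The key computation is to check that $\Re t$ and $\Im t$ satisfy $\frac{(\Re t)^2}{\cos^2\beta}-\frac{(\Im t)^2}{\sin^2\beta}=1$. This uses $\rho=-\cos\beta$, since the asymptotic directions of $\R$ are governed by $\rho$, consistent with the points at infinity $[e^{\pm i\beta}:1:0]$. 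One then matches the branch containing the relevant real vertex.

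Once this is done, $\theta$ maps $t(\G)$ conformally onto a vertical half-strip or strip. Because $0\in\G$, and hence $t(0)$ lies in the region between the branch and the point $t=1$ (i.e.\ $y^-$), the image of $\G$ should be $\{\theta: |\Re\theta|<\beta\}$ minus the cut consideration. More precisely, I expect $\theta(t(\G))=\{0\le \Re\theta<\beta\}$ after using the $\theta\mapsto-\theta$ symmetry to fold, with $\R$ corresponding to $\Re\theta=\beta$ and conjugate points $y,\overline y$ corresponding to $\theta=\beta\pm i\tau$. This step is the main obstacle. One must check carefully that $y^+\notin\overline{\G}$, and that the segment $[t(0),1]$ correctly accounts for the cut of $\arccos$ so that $\G$ contains no branch point other than possibly the real segment toward $y^-$. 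The natural route is to observe that $\cos\theta$ is a conformal bijection from the half-strip $\{0<\Re\theta<\beta,\ \Im\theta>0\}$ onto a quadrant-like region bounded by $t(\R)$ and real segments, and then reflect across the real axis.

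With the strip picture the three properties follow quickly. For (iii), $w(\overline y)=\cos\!\big(\tfrac{\pi}{\beta}(\beta-i\tau)\big)=\cos(\pi-i\tfrac{\pi}{\beta}\tau)=\cos(\pi+i\tfrac{\pi}{\beta}\tau)=w(y)$, by evenness of $\cos$ and $2\pi$-periodicity; this is exactly why the exponent $\pi/\beta$ is used. For (ii), $\theta\mapsto \tfrac{\pi}{\beta}\theta$ maps the half-strip $\{0\le\Re\theta<\beta\}$ (modulo the identification $\theta\sim-\theta$ on $\Re\theta=0$ already built in) onto $\{0\le \Re<\pi\}$, on which $\cos$ is injective. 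For (i), analyticity on $\G$ holds because, although $T_a$ is only analytic off $(-\infty,-1]$, the point $t=-1$ (i.e.\ $y^+$) lies outside $\overline\G$, while near $t=1$ the function $\cos(a\arccos t)$ is an entire function of $\arccos(t)^2$, hence analytic in $t$. Continuity on $\overline\G$ is clear. Unboundedness follows since $|\cos(\tfrac{\pi}{\beta}(\sigma+i\tau))|\sim \tfrac12 e^{\pi|\tau|/\beta}\to\infty$ as $y\to\infty$ in $\G$.
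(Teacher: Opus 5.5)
Your route is the standard argument for this Chebyshev gluing function; the paper itself only refers to the literature for the proof. You normalise by the affine map $t$, identify $t(\R)$ with the branch $\{\cos(\beta+i\tau):\tau\in\mathbb{R}\}$ of the hyperbola with foci $\pm1$, then apply $\arccos$, the dilation by $\pi/\beta$, and $\cos$. Once one knows $t(\G)=\cos(\{|\Re\theta|<\beta\})$, your deductions of (i)--(iii) are correct.

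\textbf{The gap.} It sits exactly at the step you call the main obstacle, and the reason you give for it does not work. The closure of $\R$ meets the real line only at its vertex $Y(x^-)=-\rho x^--\mu_2$, and $y^-<Y(x^-)<y^+$. Hence $0$ lies on the same side of $\R$ as $y^-$ if and only if $Y(x^-)>0$; ``$0\in\G$'' alone does not place $t(0)$ on the side of $t=1$.
\begin{itemize}
\item The inequality $Y(x^-)>0$ holds, for instance, when $\mu_1,\mu_2<0$.
\item It fails in general. Take $\rho=0$ and $\mu_2>0$, which is compatible with positive recurrence (e.g.\ $\mu=(-0.8,0.6)$, $r_1=-1$, $r_2=0$). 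Then $\R$ is the line $\{\Re y=-\mu_2\}$ and $0$ lies on the side of $y^+=1-\mu_2$.
\item By continuity the same holds for small $\rho\neq0$. There $\pi/\beta\notin\mathbb{Z}$, so $T_{\pi/\beta}$ has a square-root branch point at $t=-1$, and $w$ is not even analytic on the component of $\mathbb{C}\setminus\R$ containing $0$.
\end{itemize}
What the proof needs, and what makes the lemma true, is that $\G$ be the component containing $y^-$, i.e.\ the one lying to the left of $\R$. Either take this as the definition, or add the hypothesis $Y(x^-)>0$. Then $t(\G)$ is the component of $\mathbb{C}\setminus\cos(\beta+i\mathbb{R})$ containing $1$, which is $\cos(\{|\Re\theta|<\beta\})$. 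Both $y^+\notin\overline{\G}$ and $t(\G)\cap(-\infty,-1]=\emptyset$ follow at once from $\beta<\pi$; note that (i) needs the whole ray, not only the point $-1$.

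\textbf{The other open step needs the same care.} Your hedge between $\cos(\beta+i\tau)$ and $\cos(\pi-\beta\pm i\tau)$ is not innocuous. On the second branch one gets $w(y)=\cos\bigl(\tfrac{\pi^2}{\beta}-\pi+i\tfrac{\pi}{\beta}\tau\bigr)$, and (iii) fails unless $\pi/\beta\in\mathbb{Z}$.
\begin{itemize}
\item You must fix the labelling $x^-<x^+$, $y^-<y^+$. The displayed formulas for $x^\pm(u)$ and $y^\pm(u)$ actually list them in the opposite order, because $\rho^2-1<0$.
\item The asymptotic directions of $\R$ only give the ratio of the semi-axes. The real content is that the centre is $\frac{y^++y^-}{2}$ and the foci are $y^\pm$.
\end{itemize}
All of this comes out in one line from the uniformization $s\mapsto(x(s),y(s))$ stated in the paper. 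One has $x(s)\in(-\infty,x^-)$ exactly when $s=-e^{\tau}$ with $\tau\neq0$, and then $y(s)=\frac{y^++y^-}{2}-\frac{y^+-y^-}{2}\cos(\beta+i\tau)$, i.e.\ $t(y(s))=\cos(\beta+i\tau)$. Moreover the involution $s\mapsto1/s$ sends $\tau$ to $-\tau$ and $y$ to $\overline{y}$.
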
 
The conformality property follows from~\ref{item:conformal_10} and~\ref{item:conformal_20}, and the gluing property follows from~\ref{item:conformal_30}. The proof can be found in  \citeS{franceschi_raschel_tutte_2017}.

\subsection{Integral formulas}


To solve these Carleman BVP, a standard technique is to map the contour $\mathcal{R}$ to a segment via a \emph{conformal gluing function} $w$ (CGF), i.e.\ a conformal map such that $w(y)=w(\overline{y})$ for $y\in\mathcal{R}$.
This reduces \eqref{eq:boundary_condition_general1} to a classical Riemann(-Hilbert) BVP on an interval, solvable by Sokhotski–Plemelj formulas. 
Solving this problem provides an explicit integral expression for $\widehat{\pi}_1$ and $\widehat g_1^{z_0}$, see \citeS{franceschi_raschel_integral_2019} and \citeS{franceschi_green_2021} for more details. In particular, all the various constants are determined there.

\begin{theorem}[Explicit formula for the Laplace transform] 
\label{thm:mainexplicit}
For all $y\in \G$, the Laplace transform $\widehat \pi_1$ is given by
\begin{equation}
\label{eq:main_formula_with_constants1}
\widehat \pi_1(y)
=C  \left( \frac{w(0)-w(p)}{w(y)-w(p)} \right)^{-\chi}
 \exp\left(\frac{1}{2i\pi} \int_{\R^-} \log G(t) \left[ \frac{w'(t)}{w(t)-w(y)}
\right]
\mathrm{d}t\right),
\end{equation}
where $C>0$ is an explicit constant,
$p$ is the possible pole of $\widehat \pi_1$,
$\chi$ equals $0$ or $-1$ depending on the parameters, $\R^-$ is the half with negative imaginary part of the hyperbola $\R$ defined in \eqref{eq:curve_definition1}, and $w$ is the conformal gluing function defined in~\eqref{eq:expression_CGF_BM1}.
\end{theorem}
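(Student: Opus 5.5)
The plan is to solve the Carleman problem of Lemma~\ref{prop:BVP_general0} by transporting it, through the conformal gluing function $w$, to a scalar Riemann--Hilbert problem on a real half-line. The solution then comes from the Sokhotski--Plemelj formulas.

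\textbf{Removing the pole.} If $\widehat\pi_1$ has a pole $p\in\G$, set
\[
\Phi(y):=\widehat\pi_1(y)\,\bigl(w(y)-w(p)\bigr)^{-\chi}, \qquad \chi\in\{0,-1\},
\]
with $\chi=-1$ exactly when the pole is present. Since $w$ is injective on $\G$, $w(y)-w(p)$ has a simple zero at $p$ and no other zero. Hence $\Phi$ is analytic on $\G$, continuous on $\overline{\G}$, and has at most polynomial growth in $w$ at infinity.

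By property~\ref{item:conformal_30}, $w(y)=w(\overline y)$ on $\R$, so the multiplicative factor is the same at $y$ and $\overline y$. Therefore $\Phi$ satisfies the same boundary condition~\eqref{eq:boundary_condition_general1}:
\[
\Phi(\overline y)=G(y)\Phi(y), \qquad y\in\R.
\]

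\textbf{Transfer to a half-line.} By the gluing and conformality properties, $w$ maps $\G$ bijectively onto $\mathbb{C}$ cut along the real half-line $w(\R)$. The points $y$ and $\overline y$ land on the two banks of this cut.

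Define $\psi:=\Phi\circ w^{-1}$, analytic in $\mathbb{C}\setminus w(\R)$. Its boundary values satisfy $\psi^+(t)=G\,\psi^-(t)$ on the cut, with $G$ read along $\R^-$ (the choice of $\R^-$ fixes the orientation).

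Next I compute the index of $G$ on $\R$. Because $|G(y)|=1$ on $\R$, I expect $\log G$ to admit a continuous branch vanishing at the real endpoint $y^-$ of the hyperbola and converging at infinity. The index should then be $0$ under the parameter normalization. Showing that $\log G(t)\,w'(t)/(w(t)-w(y))$ is integrable at infinity on $\R^-$ (using $w(t)\sim t^{\pi/\beta}$ and a limit of $G$) is the step I expect to be the main technical obstacle. It is also where the case distinction behind $\chi$ has to be checked carefully.

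\textbf{Solving and identifying the solution.} Take logarithms to get the additive jump problem $\log\psi^+-\log\psi^-=\log G$. Its Cauchy-integral solution is
\[
\frac{1}{2i\pi}\int_{w(\R^-)}\frac{\log G(w^{-1}(\tau))}{\tau-W}\,d\tau .
\]
Changing variables $\tau=w(t)$ gives the exponential factor in~\eqref{eq:main_formula_with_constants1}.

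Uniqueness follows by taking the ratio of $\Phi$ with this canonical solution $X$. The ratio $\Phi/X$ has no jump across the cut, so it is entire in $W$. Since $\widehat\pi_1$ is bounded at infinity and $X$ has controlled growth, Liouville's theorem makes the ratio a polynomial; the growth bounds force degree $0$, i.e. a constant.

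Finally, evaluating at $y=0$ and using the normalization $\widehat\pi_1(0)=\pi_1(\mathbb{R}_+^2)$ fixes $C$. This evaluation produces the ratio $(w(0)-w(p))/(w(y)-w(p))$ raised to $-\chi$, which gives the stated formula.
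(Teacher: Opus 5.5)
Your route is the one the paper sketches: transport the Carleman problem through $w$ to a Riemann--Hilbert problem on the half-line $w(\R)$, solve it with a Cauchy integral, and identify $\widehat\pi_1$ by Liouville. The gap is at the step you flag as the main obstacle. It does not merely need care; as stated, it is false.

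\textbf{The integral diverges.} Along $\R$ one has $x=X^-(y)\to-\infty$ and $y\sim|x|e^{\pm i(\pi-\beta)}$. Using $k_1=x+r_1y$, $k_2=r_2x+y$, $\arg(r_1+e^{i\beta})=\epsilon$ and $\arg(r_2+e^{i\beta})=\delta$, one finds $G(t)\to e^{-2i(\epsilon+\delta-\beta)}$ as $t\to\infty$ along $\R^-$. This limit is $\neq 1$ unless $\epsilon+\delta-\beta\in\pi\mathbb{Z}$.

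Already in the skew-symmetric case, $\widehat\pi_1(y)=c/(a-y)$, so $G(y)=(a-y)/(a-\overline y)\to e^{2i\beta}$. Hence the branch of $\log G$ vanishing at the vertex of $\R$ tends to some $i\Theta_\infty\neq0$; note that this vertex is not $y^-$, which is a branch point of $X$. Since $w'(t)/w(t)\sim(\pi/\beta)/t$, the integrand behaves like $i\Theta_\infty\pi/(\beta t)$. So the Cauchy integral in the displayed formula diverges logarithmically, for every continuous branch.

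What can be proved, and is the form of the result in Franceschi--Raschel, is the normalized version
\[
\widehat\pi_1(y)=\widehat\pi_1(0)\left(\frac{w(0)-w(p)}{w(y)-w(p)}\right)^{-\chi}\exp\left(\frac{1}{2i\pi}\int_{\R^-}\log G(t)\left[\frac{w'(t)}{w(t)-w(y)}-\frac{w'(t)}{w(t)-w(0)}\right]\mathrm{d}t\right),
\]
whose kernel is $O(|w'(t)|/|w(t)|^2)$ at infinity. The survey's display formally absorbs the divergent, $y$-independent piece into $C$. The constant $C=\widehat\pi_1(0)$ is then explicit: the BAR with $f(z)=z_1$ and $f(z)=z_2$ gives $R\,(\widehat\pi_1(0),\widehat\pi_2(0))^\top=-\mu$.

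\textbf{The index is not $0$ in general.} The same number $\Theta_\infty$ controls your Liouville step and the dichotomy for $\chi$. The normalized canonical function $X$ (the exponential above) satisfies $|X(y)|\asymp|w(y)|^{-\Theta_\infty/2\pi}$ as $y\to\infty$ in $\G$, a non-integer power. Without a pole, $\widehat\pi_1/X$ is entire in $W=w(y)$ and $O(|W|^{\Theta_\infty/2\pi})$. Liouville gives a constant if $\Theta_\infty<2\pi$, but that constant is $0$ if $\Theta_\infty<0$, so in that case a pole is forced. Then $(w-w(p))\widehat\pi_1/X=O(|W|^{1+\Theta_\infty/2\pi})$ is constant provided $\Theta_\infty\ge-2\pi$.

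So $\chi$ is really the index $\lfloor\Theta_\infty/2\pi\rfloor$ of the problem, equal to $-1$ exactly in the pole case. It must be obtained by computing the winding of $\arg G$ along $\R^-$, which depends on where $p$ sits relative to the vertex. In the skew-symmetric example, $\Theta_\infty=2\beta$ if $a\notin\G$ and $\Theta_\infty=2\beta-2\pi$ if $a\in\G$. Until this is done, your claim that ``the growth bounds force degree $0$'' is unsupported.

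A smaller point: do not take $\log\psi$, since $\psi$ may vanish. Dividing by $X$, as you do afterwards, is the right way.
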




The formula for the Green's function is more complicated because it involves solving a non-homogeneous boundary value problem.
\begin{theorem}[Integral formula for Green functions]
The Laplace transform $\widehat g_1^{z_0}$ can be written as follows:
$$
\widehat g_1^{z_0} (y)=
\frac{-F (w(y))}{2i\pi} \int_{\R^-} \frac{g(t)}{Y^+ (w(t))}
\left(
 \frac{w'(t) }{w(t)-w(y)}
 + \chi
  \frac{ w'(t) }{w(t)} 
  \right)
  \mathrm{d}t
$$
where $\chi=0$ or $1$, and
$$
F (w(y))=w(y)^{\chi} 
\exp  \left(
\frac{1}{2i\pi} \int_{\R^-} \log(G(t))\left(\frac{ w'(t)}{w(t)-w(y)}- \frac{ w'(t)}{w(t)} \right) \mathrm{d}t
\right),
$$
where $Y^+$ is the boundary value on $\R$ of the function $X$ defined on $\G$.
\end{theorem}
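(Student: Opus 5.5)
The strategy is to reduce the non-homogeneous Carleman problem satisfied by $\widehat g_1^{z_0}$ to a scalar Riemann--Hilbert problem on a segment by means of the gluing function $w$. We then factorize the homogeneous part, which is exactly the factor $F$, and solve the remaining additive jump with a Cauchy integral.

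\textbf{Step 1: transfer to the $w$-plane.} Since $w$ is conformal from $\G$ onto its image $\mathbb C\setminus w(\R)$, and $w(\R)$ is a half-line or segment, the gluing property $w(y)=w(\overline y)$ makes the function $\Phi(z):=\widehat g_1^{z_0}(w^{-1}(z))$ sectionally analytic off the cut $w(\R^-)$. Its two boundary values on the cut are $\widehat g_1^{z_0}(t)$ and $\widehat g_1^{z_0}(\overline t)$ for $t\in\R^-$. The boundary condition of the preceding theorem then becomes the jump relation
\[
\Phi^+(z)=G\,\Phi^-(z)+g \qquad \text{on the cut.}
\]

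\textbf{Step 2: homogeneous factorization.} We look for a nonvanishing sectionally analytic $F$ satisfying $F^+=G F^-$. The Sokhotski--Plemelj formula suggests taking the exponential of the Cauchy integral of $\log G$, that is, $\frac{1}{2i\pi}\int_{\R^-}\log G(t)\,\frac{w'(t)}{w(t)-w(y)}\,dt$ after the change of variable $z=w(t)$. The subtracted kernel $w'(t)/w(t)$ is a normalization: it makes the exponent vanish at $w=0$ and ensures convergence at infinity. The index $\chi\in\{0,1\}$ is the winding number of $G$ along $\R$, equivalently the variation of $\arg G$. The factor $w^\chi$ corrects for it, so that $F$ has the right behaviour (boundedness, no spurious zeros) at the endpoint and at infinity. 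I would compute $\chi$ using the continuity of $\log G$ on $\R$ and the explicit form of $G$.

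\textbf{Step 3: additive problem.} Dividing by $F^+=GF^-$ gives
\[
\frac{\Phi^+}{F^+}-\frac{\Phi^-}{F^-}=\frac{g}{F^+}.
\]
This is solved by the Cauchy integral $\frac{1}{2i\pi}\int \frac{g}{F^+}\,\frac{w'(t)\,dt}{w(t)-w(y)}$ plus an entire function. Liouville's theorem, combined with the growth bounds at infinity (boundedness of $\widehat g_1^{z_0}$ and the growth of $F$, which is like $w^\chi$), forces that entire function to be a polynomial of degree at most $\chi$. For $\chi=1$ the extra constant is fixed by the value at $w=0$, which produces the term $\chi\,w'(t)/w(t)$. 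The boundary value of $F$ on $\R$ must then be identified with the denominator $Y^+(w(t))$ of the statement.

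\textbf{Main obstacle.} The delicate part is the rigorous analysis at the endpoints and at infinity. One must show that $\log G$ and $g$ are Hölder continuous on the cut and decay suitably along the unbounded hyperbola, so that the Cauchy integrals converge. One must also check that $\widehat g_1^{z_0}$ has no pole in $\G$ in the transient regime, which is what guarantees uniqueness. I would first bound $G-1$ and $g$ along $\R$ for $|t|\to\infty$ using the asymptotic direction $e^{\pm i\beta}$ of the branches, and then invoke the classical Gakhov theory for the bounded-index solution class.
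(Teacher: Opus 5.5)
Your overall route matches the one the paper has in mind: transport the Carleman problem to the $w$-plane, factor the homogeneous jump with $F=w^\chi e^{\Gamma}$, and solve the additive jump by a Cauchy integral. The survey only sketches this, via Riemann--Hilbert/Sokhotski--Plemelj theory and the references. The gap is in Step~3, which is exactly where the specific shape of the formula is decided.

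\textbf{The growth of $F$ is not $w^\chi$.} On $\R$, $G$ has modulus one. Along the two ends of $\R$ one has $y\sim x e^{\pm i\beta}$ with $x=X^-(y)\to-\infty$, so $k_1\sim x(1+r_1e^{\pm i\beta})$ and $k_2\sim x(r_2+e^{\pm i\beta})$. By the definitions of $\epsilon$ and $\delta$, $G$ therefore tends to $e^{\pm 2i(\beta-\epsilon-\delta)}$, which is $\neq1$ in general. Hence $\log G$ has a nonzero limit $i\theta_\infty$ on the unbounded cut, $\Gamma(w)\approx-\frac{\theta_\infty}{2\pi}\log w$, and $|e^{\Gamma}|\asymp|w|^{-\theta_\infty/(2\pi)}$, a generally non-integer power. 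For the same reason, $\chi$ is not ``the winding number of $G$ along $\R$'', since the increment of $\arg G$ is not a multiple of $2\pi$. It is the integer correction that puts the total exponents of $F$, at infinity and at the finite end of the cut, into the window dictated by the solution class.

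\textbf{The Liouville count is reversed.} Even granting your growth assumption, the conclusion goes the wrong way. If $|F|\asymp|w|^\chi$ and $\Phi$ is bounded, the entire function $P=\Phi/F-\Psi$ (with $\Psi$ the Cauchy integral of $g/F^+$) satisfies $|P|\lesssim|w|^{-\chi}+o(1)$. So $P$ is a constant for $\chi=0$ and $P\equiv0$ for $\chi=1$, not ``a polynomial of degree at most $\chi$''. The statement, by contrast, reads $\Phi=-F\,(\Psi+\chi\Psi(0))$: no free term when $\chi=0$, and the constant $\chi\Psi(0)$ when $\chi=1$.

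\textbf{The constant is not pinned down.} You do not say what removes the constant when $\chi=0$. Nor can ``the value at $w=0$'' fix it when $\chi=1$. The normalization of $\Gamma$ gives $\Gamma(0)=0$, hence $F(0)=0$ for $\chi=1$, so any term $cF$ is invisible at $w=0$.

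\textbf{What is needed to close the argument.} First, compute the exact exponents of $F$ at infinity and at the endpoint of the cut, in terms of $\epsilon,\delta,\beta$. Second, specify the exact class of $\widehat g_1^{z_0}$: bounded, in fact tending to $0$ at infinity in $\G$, and continuous at the vertex of $\R$. Third, apply Liouville with those exponents, together with whatever extra information fixes the remaining constant. That is where the dichotomy $\chi\in\{0,1\}$ and the term $\chi\,w'(t)/w(t)$ actually come from.
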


\paragraph{Escape and absorption probabilities}
Similar integral formulas for $\widehat{h}_1^\infty$ and $\widehat{h}_1^\uparrow$ can be found in \cite{franceschi_ernst_huang_escape_2021} and \cite{franceschi_fomichov_ivanovs_2022}.

\paragraph{Degenerate cases: hyperbola $\to$ parabola}
In degenerate models (rank-one covariance), the same philosophy applies.
The kernel curve is parabolic, the analytic continuation and the uniformization is simpler in some respects. The hyperbola $\R$ becomes a parabola and similar integral formulas exist. That said, in this degenerate case, it is possible to solve the general case wihtout Carlemann BVP and directly with Tutte's invariant method or with the compensation approach using special functions of theta-type, see the next sections. One can refer to \cite{franceschi_degenerate_asym_2025} for a detailed treatment in the context of gap processes of a degenerate particle system.

\section{Explicit (integral-free) formulas via Tutte invariants}
\label{sec:explifor}

\subsection{Decoupling and invariants}
\label{sec:foncdecinv}

From the 1970s to the 1990s, Tutte developed an algebraic approach based on certain invariants to solve a functional equation arising in the study of the enumeration of well-colored triangulations \cite{Tutte-95}. 
This approach has recently been extended to other contexts, proving particularly fruitful. Tutte invariants were first used to determine the algebraic nature of solutions to functional equations. An analytic version later appeared, refining and deepening the analytic method of \citet{fayolle_random_2017}. Recent applications include planar maps \cite{albenque_menard_schaeffer,BeBM-11}, walks confined to the quarter plane \cite{bernardi_counting_2015}, and, for the first time in the continuous framework, reflected Brownian motion \citeS{franceschi_raschel_tutte_2017,franceschi_bousquet_melou_price_hardouin_raschel_stationary_2023}, as well as interacting particle systems \citeS{franceschi_ichiba_karatzas_raschel_degenerate_2024,franceschi_degenerate_asym_2025}.

Tutte's invariants method is based on the following steps.
We need to look for \emph{invariants}, that is, (meromorphic) functions $I$ such that $$I(Y^+)=I(Y^-).$$ 
On the Riemann surface, the invariants are the functions $I$ that are invariant under $\eta$ and $\zeta$.
Invariants can be divided into two main categories:
\begin{itemize}
\item an \emph{explicit canonical invariant} (e.g., the conformal gluing function),
\item an \emph{unknown invariant} depending on generating functions or Laplace transforms involved in the problem.
\end{itemize} 
To find unknown invariants it is possible to look for \emph{decoupling functions}. A decoupling function $D$ is simply a particular solution to the boundary condition or the difference equation that is rational (or sometimes analytic). These functions are additive or multiplicative:
\begin{itemize}
\item if $F$, the function we are looking for, satisfies $F(Y^+)=F(Y^-)+H(x,Y^+,Y^-),$ an {additive decoupling function} is of the form 
     $$H(x,Y^+,Y^-)=D(Y^+)-D(Y^-)$$ so that the function $F-D$ is an invariant;
\item if $F$, the function we are looking for, satisfies $F(Y^+)=G(x,Y^+,Y^-)F(Y^-),$ a {multiplicative decoupling function} is of the form 
     $$G(x,Y^+,Y^-)=D(Y^-)/D(Y^+)$$ so that the function $D\cdot F$ is an invariant.
\end{itemize}
Then, we use \emph{invariant lemmas}, typically stating that there are few invariants and that, if two are found, one is a rational function of the other (this rational function is determined by the study of poles and zeros of the invariants).

To be applicable, this method thus requires the existence of two types of functions, invariants and decoupling functions. 
There are cases where these functions do not exist and the method is inapplicable.
When it works, this method makes it possible to obtain explicit expressions (without integrals) for the Laplace transforms (which also allows to obtain information on the differential and algebraic nature of these functions).

\subsection{Integral-free formulas}

The article \citeS{franceschi_bousquet_melou_price_hardouin_raschel_stationary_2023} identifies the parameters for which the formula \eqref{eq:main_formula_with_constants1} simplifies dramatically. 
We have seen that the difficulty in solving the boundary problem \eqref{eq:boundary_condition_general1} lies in the fact that $G\neq1$. Using decoupling functions, the goal is therefore to reformulate the boundary problem by reducing to the case ``$G=1$''.
For a rational invariant $D$, the function $G$ defined in \eqref{intro:eq:G} can be written as the ratio
\begin{equation}
\label{eq:G->F}
     G(\thetadeux) = \frac{D(\thetadeux)}{D(\overline{\thetadeux})}.
\end{equation}
This would allow us to rewrite the boundary condition \eqref{eq:boundary_condition_general1} as
\begin{equation*}
     (D\cdot\widehat \pi_1)(\overline{\thetadeux})=(D\cdot\widehat \pi_1)({\thetadeux}),
     \quad 
     \forall y\in\R
\end{equation*}
where $D\cdot\widehat\pi_1$ is meromorphic and is an invariant. This reduces the problem to a boundary value problem solvable by a simple invariant method. In this case, $D\cdot\widehat\pi_1$ can be expressed as a rational fraction in the conformal gluing function $w$, which is the canonical invariant. This is the Tutte invariant method . Sometimes one only finds functions $D$ such that
\begin{equation}
\label{eq:doubledecoupl}
     G^2(\thetadeux) = \frac{D(\thetadeux)}{D(\overline{\thetadeux})}
\end{equation}
(we say that $D$ is a decoupling function of degree two) and then it is $D\cdot\widehat\pi_1^2$ which is an invariant.

This analysis leads to the following definition and lemma \citeS{franceschi_bousquet_melou_price_hardouin_raschel_stationary_2023}. Refer to Figure~\ref{fig:linear_transformation1} for a reminder of the angles $\beta,\delta,\epsilon$, and $\vartheta$.
\begin{lemm}[Decoupling functions]
\label{def:decoupling_function}
We say that $D$ is a decoupling function of order 1 if and only if $D$ is rational and satisfies \eqref{eq:G->F}. Such a function exists if and only if
\begin{equation}
\epsilon+\delta\in \beta \mathbb{Z}+ \pi\mathbb{Z}.
\label{cnsdecou}
\end{equation}
We say that $D$ is a decoupling function of order 2 if $D$ is rational and satisfies \eqref{eq:doubledecoupl}. Such a function exists if and only if
\begin{equation}
2\epsilon+\vartheta-\beta \text{ and } 2\delta-\vartheta \in \beta \mathbb{Z}+ \pi\mathbb{Z}.
\label{cns2decou}
\end{equation}
\end{lemm}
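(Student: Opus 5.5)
Work on the universal covering, where the statement becomes a question about $2\beta$-periodicity. A rational function $D(y)$ lifts to $\widetilde D(\omega)=D(y(e^{i\omega}))$. This lift is automatically invariant under $\widetilde\eta$, because $y(e^{2i\beta}/s)=y(s)$. The boundary relation \eqref{eq:G->F} on $\R$ is the trace of the identity $\widetilde G(\omega)=\widetilde D(\omega)/\widetilde D(\widetilde\zeta(\omega))$ on $\mathbb{C}$, since $\zeta$ fixes $x$ and exchanges $y,\overline y$ along $\R$. Using $\widetilde\eta$-invariance, this identity is equivalent to the difference relation
\[
\widetilde G(\omega)=\frac{\widetilde D(\omega)}{\widetilde D(\omega+2\beta)} .
\]
So a decoupling function of order one exists if and only if $\widetilde G$ is a multiplicative coboundary for the shift by $2\beta$ within the class of rational functions of $y(e^{i\omega})$, i.e.\ of $\widetilde\eta$-invariant rational functions of $s=e^{i\omega}$.

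First compute the divisor of $\widetilde G$ explicitly in the variable $s\in\mathbb{C}^*\cup\{0,\infty\}$. Each of $k_1$ and $k_2$ is linear in $(x,y)$. By the uniformization, $k_i(x(s),y(s))$ is a Laurent polynomial of degree one in $s$ and $1/s$, so it has exactly two zeros on $\mathbb{C}^*$ and simple poles at $0$ and $\infty$. Its zeros are the two points of $\mathcal S_0$ on the line $k_i=0$. One of them is $(0,0)$. The other is the second intersection point, and its argument in the $s$-plane is governed by the reflection angle ($\epsilon$ for $k_1$, $\delta$ for $k_2$, using $\tan\epsilon=\sin\beta/(r_1+\cos\beta)$). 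Because $s\mapsto e^{i\omega}$ turns rotations into translations, the zero $s$ of $k_1$ corresponds to $\omega\equiv 2\epsilon$ or similar, up to $\pi$, $\beta$, and the real-part shift produced by the drift angle $\vartheta$. The key bookkeeping step is to list the zeros and poles of $\widetilde G=\widetilde k_1\,\widetilde k_2\circ\widetilde\zeta/(\widetilde k_2\,\widetilde k_1\circ\widetilde\zeta)$ as four points of $\mathbb{C}^*$, modulo $2\pi$ in $\omega$. The common zero at $(0,0)$, at $s=1$ i.e.\ $\omega=0$ or a fixed point, cancels in the ratio or becomes a fixed-point contribution.

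Next comes the criterion. If $\widetilde D$ is a rational function of $s$, then $\widetilde D(\omega)/\widetilde D(\omega+2\beta)$ has a divisor of the form $\sum_j(a_j)-(a_j-2\beta)$. Since $\omega$ is defined modulo $2\pi$, two divisor points are identified if their difference lies in $2\beta\mathbb{Z}+2\pi\mathbb{Z}$. Hence $\widetilde G$ is such a ratio if and only if its zeros and poles can be paired so that each zero differs from its paired pole by an element of $2\beta\mathbb{Z}+2\pi\mathbb{Z}$. The leading constants must also match, which I expect follows from $\widetilde G(\text{fixed point})=1$. For necessity, I would use that $\widetilde D$ rational in $s$ has a finite divisor, and telescope the relation along the orbit $\omega+2\beta\mathbb{Z}$. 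When $\beta/\pi$ is irrational the orbits are infinite, which forces the pairing. When $\beta/\pi$ is rational the orbits are finite, and I would check that the product of $\widetilde G$ over an orbit equals $1$ exactly under the same condition. For sufficiency, build $\widetilde D$ explicitly as a finite product $\prod_k \widetilde G(\omega+2k\beta)^{\pm1}$ when the shift is $2n\beta$ with $n\geq 0$. Then symmetrize it with respect to $\widetilde\eta$, so that it descends to a rational function of $y$. Plugging in the computed divisor, the pairing condition reads $2(\epsilon+\delta)\in2\beta\mathbb{Z}+2\pi\mathbb{Z}$, which is \eqref{cnsdecou}. The order-two case is identical with $\widetilde G^2$: squaring permits pairing each zero with itself after a shift, and this yields the two separate conditions \eqref{cns2decou}, in which $\vartheta$ enters through the positions of the non-origin intersection points.

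The main obstacle is the exact location of the zeros of $\widetilde k_i$ and $\widetilde k_i\circ\widetilde\zeta$ in the $\omega$-plane in terms of $\epsilon,\delta,\vartheta,\beta$, together with the requirement that the constructed $\widetilde D$ be $\widetilde\eta$-invariant, hence a rational function of $y$ alone. I would handle the first by computing directly in the cone picture after the transformation $T$, where $k_1$ and $k_2$ become inner products with rotated reflection vectors and the uniformization becomes an explicit rotation. For the second, I would use that $\widetilde D\circ\widetilde\eta$ satisfies the same difference equation, and replace $\widetilde D$ by $\widetilde D\cdot(\widetilde D\circ\widetilde\eta)$. This changes $D$ into an order-two decoupling unless parity is controlled, so the order-one/order-two distinction has to be tracked at this step.
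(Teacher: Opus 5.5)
The survey gives no proof of this lemma; it is quoted from \cite{franceschi_bousquet_melou_price_hardouin_raschel_stationary_2023}. So I assess your plan on its own terms.

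\textbf{The reduction is correct.} With $q=e^{2i\beta}$, a rational $D$ is the same thing as an $\widehat\eta$-invariant $\widetilde D\in\mathbb C(s)$. Condition \eqref{eq:G->F} then becomes $\widetilde G(s)=\widetilde D(s)/\widetilde D(qs)$.

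\textbf{The gap is in how you separate order one from order two.} Do the bookkeeping you postponed. Let $s_1,s_2$ be the second intersection points of $\{k_1=0\}$ and $\{k_2=0\}$ with $\mathcal S_0$. The common zero $(0,0)$ sits at $e^{i\vartheta}$, not at a fixed point of $\widehat\zeta$, and it cancels. As a function of $s=e^{i\omega}$ one gets
\[
\widetilde G(s)=\frac{g_{s_1}(s)}{g_{s_2}(s)},\qquad g_a(s):=\frac{s-a}{1-as},\qquad s_1=e^{i(2\beta-2\epsilon-\vartheta)},\quad s_2=e^{i(2\delta-\vartheta)} .
\]
These values are computed in the cone, up to $s\mapsto -s$ depending on how $x^\pm$ are labelled; only ratios and squares matter below.

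The divisor $[s_1]+[1/s_2]-[s_2]-[1/s_1]$ can be balanced along $q$-orbits in two ways:
\begin{itemize}
\item[(A)] $s_1\leftrightarrow s_2$, possible iff $s_1/s_2\in q^{\mathbb Z}$, which is \eqref{cnsdecou};
\item[(B)] $s_1\leftrightarrow 1/s_1$ and $1/s_2\leftrightarrow s_2$, possible iff $s_1^2,s_2^2\in q^{\mathbb Z}$, which is exactly \eqref{cns2decou}.
\end{itemize}
So your pairing criterion, as stated, would give ``order one iff (A) or (B)'', which is false. Squaring also does not change which pairings are admissible, so ``squaring permits pairing each zero with itself'' is not the mechanism.

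\textbf{The missing ingredient is the leading constant.} It is not supplied by the fixed points: $\widetilde G(\pm1)=1$ and $\widetilde D(s)/\widetilde D(1/s)=1$ at $s=\pm1$ hold automatically. It comes from the points at infinity $s=0,\infty$. If $D(y)\sim c\,y^n$, then $\widetilde D(s)\sim c's^n$ as $s\to\infty$, so $\widetilde D(s)/\widetilde D(qs)\to q^{-n}$. This forces
\[
\widetilde G(\infty)=\frac{s_2}{s_1}=e^{2i(\epsilon+\delta-\beta)}\in e^{2i\beta\mathbb Z},
\]
which is \eqref{cnsdecou}. This one line proves necessity at order one for rational and irrational $\beta/\pi$ alike, and it is what rules out (B). For $\widetilde G^2$ the constant becomes $(s_2/s_1)^2$, which (B) does place in $q^{\mathbb Z}$.

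Note also that under \eqref{cnsdecou}, $D^2$ decouples $G^2$ for every $\vartheta$. So the order-two equivalence can only hold when no order-one decoupling exists; in general, $G^2$ decouples iff \eqref{cnsdecou} or \eqref{cns2decou}. Your orbit-counting argument (irrational case) and norm argument (rational case) give the necessity half of that corrected version.

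\textbf{Sufficiency.} Your symmetrization $\widetilde D\cdot(\widetilde D\circ\widetilde\eta)$ decouples $\widetilde G^2$, not $\widetilde G$, as you suspected, so it cannot produce the order-one function. Avoid it by using $\widehat\eta$-invariant building blocks from the start. The function $h_b(s):=y(s)-y(b)=\kappa\,(s-b)(s-q/b)/s$ satisfies
\[
\frac{h_b(s)}{h_b(qs)}=\frac{g_b(s)}{g_{b/q}(s)}.
\]
Hence if $s_1=s_2q^n$ with $n\ge1$, telescoping gives the polynomial decoupling function $D(y)=\prod_{k=1}^{n}\bigl(y-y(s_2q^k)\bigr)$, with the reciprocal analogue if $n<0$. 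For example, in the skew-symmetric case $\epsilon+\delta=\pi$ one has $n=1$ and $D(y)=y-y(s_1)$.

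Under (B), the identity $g_{1/a}=1/g_a$ gives $g_a^2=g_a/g_{1/a}$. So if $a^2=q^m$ with $m\ge1$, then $g_a^2$ is the coboundary of $\prod_{k=1}^m\bigl(y-y(q^k/a)\bigr)$. The quotient of these products for $a=s_1$ and $a=s_2$ decouples $\widetilde G^2$.

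With these blocks no $\widehat\eta$-symmetrization, and hence no parity issue, is needed. The only obstruction to order one is the constant $\widetilde G(\infty)$.
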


In comparison with the discrete case \cite{bernardi_counting_2015}, which is solved through case-by-case analysis, it is quite remarkable to obtain a unified necessary and sufficient condition in the continuous setting despite the many parameters (drift, covariance, reflection vectors). 
Note also that the decoupling functions are rational and may have very high degree, whereas in the case of random walks they are typically of degree $2$ or~$3$. 
\begin{theorem}[Integral-free formula for the invariant measure]
If \eqref{cnsdecou} (resp. \eqref{cns2decou}) is satisfied, then there exists an explicit rational function $A\in\mathbb C(X,Y)$ such that the Laplace transform $\widehat \pi_1$ is
\begin{equation}
\label{eq:main_result_sketched}
    \widehat \pi_1 (\thetadeux)=A(\thetadeux,w(\thetadeux))
    \quad (\text{resp.} \quad
     \widehat  \pi_1 (\thetadeux)=\sqrt{A}(\thetadeux,w(\thetadeux)))
.
\end{equation} 
\end{theorem}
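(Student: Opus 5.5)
The plan is to reduce the Carleman problem of Lemma~\ref{prop:BVP_general0} to an invariant problem and then apply an invariant lemma relative to the conformal gluing function $w$.

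\textbf{Step 1 (order one).} Assume \eqref{cnsdecou}. Lemma~\ref{def:decoupling_function} gives a rational $D$ with $G(y)=D(y)/D(\overline y)$ on $\R$. Put $I:=D\cdot\widehat\pi_1$. By \eqref{eq:boundary_condition_general1},
\[
I(\overline y)=D(\overline y)G(y)\widehat\pi_1(y)=I(y),\qquad y\in\R .
\]
The function $I$ is meromorphic in $\G$ and continuous up to $\R$, except at finitely many points. These are the pole $p$ and the poles of $D$ in $\overline{\G}$, which are finite in number because $D$ is rational. Growth at infinity is polynomial, since $\widehat\pi_1$ is bounded and $D$ is rational. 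So $I$ is an invariant in the sense of Section~\ref{sec:foncdecinv}.

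\textbf{Step 2.} I would state and prove an invariant lemma: a function meromorphic in $\G$, continuous on $\overline{\G}$ minus finitely many points, satisfying $I(y)=I(\overline y)$ on $\R$, and of polynomial growth, is a rational function of $w$. For the proof, use that $w$ is a conformal bijection from $\G$ onto $\mathbb C$ minus a slit, with the two halves of $\R$ glued onto the two banks of the slit. Then $I\circ w^{-1}$ takes equal limits on both sides of the slit. By Morera's theorem it extends meromorphically across the slit, except possibly at the finitely many exceptional points and the slit endpoint. Continuity and boundedness control these points as removable singularities or poles. Polynomial growth of $I$, combined with the fact that $w$ grows like $|y|^{\pi/\beta}$, shows that $I\circ w^{-1}$ has at most a pole at $\infty$. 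Hence $I\circ w^{-1}$ is meromorphic on the Riemann sphere, that is, rational.

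This gives $\widehat\pi_1(y)=R(w(y))/D(y)$ with $R$ rational, which is \eqref{eq:main_result_sketched} with $A(Y,W)=R(W)/D(Y)$. The poles and zeros of $R$ are read off from the poles of $D\widehat\pi_1$ in $\G$, and the constants are fixed by normalization, for instance the value $\widehat\pi_1(0)$.

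\textbf{Step 3 (order two).} Assume \eqref{cns2decou}. Square the boundary condition: $\widehat\pi_1^2(\overline y)=G^2(y)\widehat\pi_1^2(y)$. Then $D\widehat\pi_1^2$ is an invariant by the same computation, so $\widehat\pi_1^2=A(y,w(y))$. Taking the square root gives the stated form. The branch is determined by positivity of the Laplace transform on the real segment.

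\textbf{Main obstacle.} I expect Step 2 to be the hardest part. The delicate points are the behaviour at the endpoint of the slit (the image of $y^+$ or of infinity) and the precise location of the poles of $D$ inside $\overline{\G}$, which must be tracked so that $R$ is determined explicitly. I would first treat the case where $D$ has no poles or zeros on $\R$. I would then handle the remaining points by local expansions using the explicit Chebyshev form \eqref{eq:expression_CGF_BM1} of $w$.
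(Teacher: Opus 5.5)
Your proposal is correct and follows the paper's own argument: the rational decoupling function $D$ of Lemma~\ref{def:decoupling_function} makes $D\cdot\widehat\pi_1$ (resp.\ $D\cdot\widehat\pi_1^2$) an invariant on $\R$, and the invariant lemma for the canonical invariant $w$, which maps $\G$ conformally onto $\mathbb{C}\setminus(-\infty,-1]$, forces that invariant to be a rational function of $w$. Two minor corrections: the finite endpoint $-1$ of the slit is the image of the point $Y^\pm(x^-)$ where $\R$ meets the real axis, not of $y^+$, and fixing $R$ explicitly also uses that $D\cdot\widehat\pi_1$ vanishes at the zeros of $D$ in $\G\setminus\{p\}$, not only the poles of $D\cdot\widehat\pi_1$ and the normalization $\widehat\pi_1(0)$, since the zeros of $R$ cannot be read off from those poles.
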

The previous proposition can be found in~\citeS{franceschi_bousquet_melou_price_hardouin_raschel_stationary_2023} with more detail. In particular, the rational function $A$ is made explicit there.

\begin{remarque}[\textit{Skew} symmetry, sum of exponentials, and orthogonal reflections]
Some special cases of this theorem allow us to reprove classical results. For instance, when $\epsilon+\delta=\pi$, we recover the case of \textit{skew} symmetry, which characterizes the situations where the invariant measure has a product form and is exponential, see~\citet{harrison_multidimensional_1987}.
This also allows us to recover the result of \citet{dieker_reflected_2009}, which characterizes, via the condition $\alpha\in-\mathbb{N}$, the cases where the stationary density is a sum of exponentials. Finally, we also recover the case of orthogonal reflections \citeS{franceschi_raschel_tutte_2017}, where $\epsilon=\delta=\beta$.

\begin{figure}[hbtp]
\centering
\includegraphics[scale=1.2]{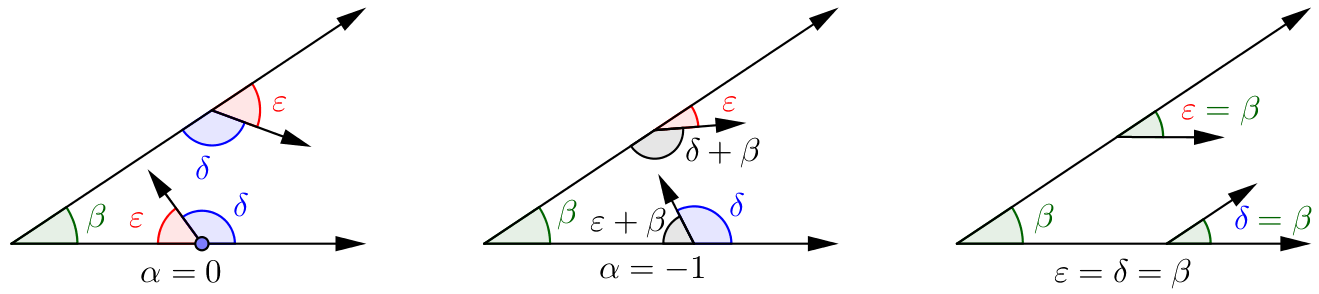}
\caption{Reflection angles in three well-known cases. From left to right: \textit{skew} symmetry, Dieker and Moriarty condition, and orthogonal reflections}
\label{fig:three_examples}
\end{figure}
\end{remarque}

\paragraph{Degenerate case}
Papers \cite{franceschi_ichiba_karatzas_raschel_degenerate_2024} and \cite{franceschi_degenerate_asym_2025} apply this method in the degenerate case.
These papers use a \emph{non rational decoupling} function to solve in all cases the difference equation.
To state the results we need to define
$$
s_1:=\frac{r_1 \mu_1-\mu_2}{1+r_1} \quad \text { and } \quad s_2:=\frac{\mu_1-r_2 \mu_2}{1+r_2}
$$
and the (meromorphic) decoupling function
$$
D(s):=\frac{\Gamma\left(s-s_1\right) \Gamma\left(s+s_2+\mu_2\right)}{\Gamma\left(s-s_2\right) \Gamma\left(s+s_1+\mu_2\right)} 
$$
where $\Gamma$ is the gamma function.
\begin{thm}[Explicit expression of $\widehat\pi_1$: degenerate case]
There exists a rational function $R \in \mathbb{C}(X)$ such that the Laplace transform $\phi_1$ satisfies
$$
\widehat\pi_1(y)=D\left(\frac{1}{2}\left(\mu_1+\sqrt{2 y+\mu_1^2}\right)\right) R\left(\tan \left(\frac{\pi}{2} \sqrt{2 y+\mu_1^2}\right)\right) .
$$
An explicit expression depending for $R$ is given in \cite{franceschi_degenerate_asym_2025}.
\end{thm}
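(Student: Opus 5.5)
We work on the uniformized parabolic curve and use the difference equation of the preceding proposition, $\widehat \pi_1(y(s+\mu_1+\mu_2))=\widehat G(s)\,\widehat \pi_1(y(s))$. Write $\phi(s):=\widehat\pi_1(y(s))$, where $y(s)=2s(s-\mu_1)$. Our aim is to show that $\phi(s)/D(s)$ is an invariant, that is, a function invariant under both $\widehat\eta$ and $\widehat\zeta$, and then to identify it with a rational function of an explicit canonical invariant.

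\emph{Decoupling.} The first step checks that $D$ solves the same difference equation. Set $h:=\mu_1+\mu_2$. If $h=1$, the identity $\Gamma(z+1)=z\Gamma(z)$ gives $D(s+1)/D(s)=\widehat G(s)$ directly. For general $h$, after the normalisation $\|\mu\|=1$, one should either rescale $s$ or replace the arguments by $(s-s_1)/h$, and so on. I will take care to match the normalisation used in \cite{franceschi_degenerate_asym_2025}, which I treat as a bookkeeping step. Next, $\phi$ is invariant under $\widehat\eta$, because $y(\widehat\eta s)=y(s)$. So I need $D$ to be $\widehat\eta$-invariant up to a harmless factor. The natural remedy is to evaluate $D$ at the branch $s=\tfrac12(\mu_1+\sqrt{2y+\mu_1^2})$, which is exactly the inverse of $y(s)$ and is how $D$ enters the statement. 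Then $\Psi(s):=\phi(s)/D(s)$ satisfies $\Psi(s+h)=\Psi(s)$, so $\Psi$ is $h$-periodic and meromorphic on $\mathbb{C}$.

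\emph{Second invariance and identification.} The meromorphic continuation in the preceding proposition keeps the invariances under $\widehat\eta$ and $\widehat\zeta$, and the functional equation holds on all of $\mathbb{C}$. So $\Psi$ is periodic, and we also need its parity-type symmetry about the fixed point $\mu_1/2$ of $\widehat\eta$. For this I will check, using the reflection formula $\Gamma(z)\Gamma(1-z)=\pi/\sin\pi z$, that $D(\widehat\eta s)/D(s)$ is a ratio of sines. This ratio is a rational function of $\tan(\tfrac{\pi}{2}(2s-\mu_1))$, which is $\tan(\tfrac{\pi}{2}\sqrt{2y+\mu_1^2})$. Absorbing that ratio into $\Psi$, we find that $\Psi$ is an even, periodic meromorphic function of $t:=2s-\mu_1=\sqrt{2y+\mu_1^2}$. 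The canonical invariant here is $W:=\tan(\tfrac{\pi}{2}t)$; with the normalised period it is the degenerate analogue of the conformal gluing function $w$. We then want an invariant lemma saying that a periodic meromorphic function with the required symmetry and of finite order is a rational function of $W$.

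\emph{Growth and poles, the main obstacle.} To apply the invariant lemma I must show that $\Psi$ has finitely many poles modulo the period and at most polynomial growth in $W$, meaning it is bounded as $\Im t\to\pm\infty$ in a period strip. There are two inputs.

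First, $\phi$ is analytic for $\Re y\le 0$ and has at most polynomial growth there, since it is a Laplace transform of a measure. The difference equation, whose coefficient $\widehat G$ is rational with $\widehat G\to1$ at infinity, then propagates these bounds to a full period strip.

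Second, Stirling's formula shows that $D(s)\to$ a nonzero constant in the relevant directions. This holds because the numerator and denominator Gamma arguments have equal total shift, so $D(s)\sim s^{-s_1+s_2+s_2-s_1}$ up to constants. There is a possible power of $s$ to control here, and I expect this to be the delicate point.

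The poles of $\Psi$ in a strip come from the zeros of $D$ and the possible pole of $\widehat\pi_1$, so there are finitely many. Liouville's theorem applied in the variable $W$ then makes $\Psi$ rational in $W$. Finally, the constants in $R$ are fixed by $\widehat\pi_1(0)$, using the normalisation from the BAR with $f\equiv$ linear.
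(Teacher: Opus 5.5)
Your route is the paper's: Tutte's invariant method with the Gamma-function decoupling. In the normalisation where $\widehat\eta\widehat\zeta$ is $s\mapsto s+1$, one has $D(s+1)/D(s)=\widehat G(s)$. This normalisation is the one implicit in the statement, since $\tan(\frac{\pi}{2}\sqrt{2y+\mu_1^2})=\tan(\pi s-\frac{\pi}{2}\mu_1)$ is $1$-periodic in $s$. So $\Psi(s)=\widehat\pi_1(y(s))/D(s)$ is $1$-periodic, and one identifies it with a rational function of $W=\tan(\pi s-\frac{\pi}{2}\mu_1)$. This works, but your treatment of the step you call the main obstacle is wrong as written, although the fix is short.

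\textbf{The growth step.} Stirling gives $D(s)\sim s^{2(s_2-s_1)}$ as $|\Im s|\to\infty$ with $\Re s$ bounded. This is \emph{not} a nonzero constant unless $s_1=s_2$, so when $s_1>s_2$ you cannot get boundedness of $\Psi$ directly. Moreover, ``polynomial growth in $W$'' is the wrong condition. The point $W=\infty$ is the ordinary real point $\pi s-\frac{\pi}{2}\mu_1=\frac{\pi}{2}$, where $\Psi$ is simply meromorphic. The ends $\Im s\to\pm\infty$ of a period strip correspond instead to $W\to\pm i$.

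\textbf{The repair.} In any strip $\{a\le\Re s\le a+1\}$ one has $\Re y(s)=2(\Re s)^2-2(\Im s)^2-2\mu_1\Re s\to-\infty$ as $|\Im s|\to\infty$. So for large $|\Im s|$, $\widehat\pi_1(y(s))$ is the Laplace transform itself and $|\widehat\pi_1(y(s))|\le\widehat\pi_1(0)$; no propagation through the difference equation is needed. The zeros and poles of $D$ are real. Hence $\Psi$ has finitely many poles per strip and $\Psi=O(|\Im s|^N)$ at its ends. Write $\Psi(s)=F(q)$ with $q=e^{2\pi i s}=e^{i\pi\mu_1}\frac{1+iW}{1-iW}$. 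Then $F$ is meromorphic on $\mathbb{C}^*$ with finitely many poles, and $F(q)=O(|\log|q||^N)$ as $q\to 0,\infty$. Polynomial growth in $\Im s$ is only logarithmic growth in $q$, so both singularities are removable, $F$ is rational, and therefore $\Psi=R(W)$. The power of $s$ in $D$ is thus harmless.

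\textbf{The $\widehat\eta$-symmetry step is superfluous.} $W$ is odd, not invariant, under both $\widehat\eta$ and $\widehat\zeta$; the genuine invariant is $W^2$. The sine ratio $D(\widehat\eta s)/D(s)$ only yields the compatibility relation $D(\widehat\eta s)R(-W)=D(s)R(W)$. The true $R$ satisfies this automatically, and it plays no role in the existence of $R$: $1$-periodicity plus the pole and growth control above already suffice.
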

These papers derive explicit boundary invariant measures in terms of theta-like functions acted on by polynomial differential operators.

\paragraph{Green's functions: towards a double decoupling}
For Green's functions in the transient regime, the presence of the inhomogeneous term (the exponential term) in the functional equation suggests combining additive and multiplicative decouplings (a ``double decoupling'') to obtain explicit representations and differential classifications.
This is an ongoing work.

\paragraph{Escape and absorption probabilities}
The Tutte's invariant method was applied in the paper \cite{franceschi_flin_sumofexponential_2024} to study the absorption probability at the apex of the cone, $h^0$, and to determine the cases in which it takes the form of a sum of exponentials.

\section{Differential and algebraic classification via difference Galois theory}
\label{sec:diffgalois}

\paragraph{Historical background}

In the 19th century, Liouville, Picard and Vessiot initiated a Galois theory for differential equations, later formalized by Kolchin. In parallel, the Galois theory of difference equations studies functional equations algebraically and connects relations between solutions and coefficients; see \cite{vdps}. Typical examples include finite difference, $q$-difference, and Mahler equations.

After early developments in the 20th century and a revival in the 1990s due to links with several areas (combinatorics, special functions, etc.), this theory has seen significant generalizations; see \cite{dreyfus_hdr}. It now enables, in particular, the study of differential algebraic properties of solutions and applications to random walks in the quarter plane \cite{DreyfHardtderiv,deryfus_hardouin_roques_singer,DHRS0}. To our knowledge, \citeS{franceschi_bousquet_melou_price_hardouin_raschel_stationary_2023} is the first to apply these ideas to a continuous process such as Brownian motion.


\subsection{An algebraic-differential hierarchy}
\label{sec:algdiffhier}

One of our objectives is to study the Laplace transform of the stationary distribution of reflected Brownian motion in a cone. Using the analytic method, we obtained an explicit (complicated) expression involving integrals and various trigonometric and algebraic functions, see Theorem~\ref{thm:mainexplicit}. 

When the parameter $\alpha$ is a non-positive integer (under a non-degeneracy condition), Dieker and Moriarty~\cite{dieker_reflected_2009} showed that the stationary density is a finite sum of exponentials of the form
$\sum_{i} c_i e^{-a_i x- b_i y}$.
This implies that the Laplace transform is a rational function. This significant simplification raises the following natural question:

\smallskip
{\itshape For which parameter values does the Laplace transform have a simple expression?}

\smallskip
\noindent Before answering this question, we must clarify what we mean by \emph{simple}. We can classify the complexity of a function using the following natural hierarchy:
\beq\label{hierarchy}
\text{rational} \subset \text{algebraic} \subset \text{D-finite} \subset \text{D-algebraic}.
\eeq
A function is said to be:
\begin{itemize}
\item \emph{rational}, if it is a ratio of polynomials.
\item \emph{algebraic}, if it satisfies a polynomial equation with coefficients in the field of rational functions over $\RR$.
\item \emph{differentially finite} (D-finite or \emph{holonomic}), if it satisfies a linear differential equation with rational function coefficients over $\RR$.
\item \emph{differentially algebraic} (D-algebraic), if it satisfies a polynomial differential equation with rational function coefficients over $\RR$.
\item \emph{differentially transcendental} (or D-transcendental), if it is not D-algebraic.
\end{itemize}

\begin{exe}[Nature of the conformal gluing function $w$]
\label{exemple:naturew}
If \( \frac{\pi}{\beta} \in \mathbb{Z} \), then the function \( w \) introduced in~\eqref{eq:expression_CGF_BM1} is a polynomial.  
If \( \frac{\pi}{\beta} \in \mathbb{Q} \backslash \mathbb{Z} \), then the function \( w \) is algebraic but not a rational function.  
If \( \frac{\pi}{\beta} \notin \mathbb{Q} \), then the function \( w \) is D-finite but not algebraic. If \( \frac{\pi}{\beta} \notin \mathbb{Q} \), then the function \( 1/w \) is differentially algebraic but not D-finite.
\end{exe}

A major advance of \cite{franceschi_bousquet_melou_price_hardouin_raschel_stationary_2023} is a sharp classification of the Laplace transform of the stationary distribution (non-degenerate wedge case) into this hierarchy
with explicit necessary and sufficient conditions given by linear relations between the \emph{angles} of the model (wedge opening, reflection angles, drift direction).

The algebraic and differential properties of the Laplace transform affect the stationary distribution in various ways. Let us illustrate this with two examples.

\begin{itemize}  
\item[]{Implications for \em{moments}:} 
if the Laplace transform is:
\begin{itemize}
\item {D-algebraic}, then the differential equation it satisfies translates into a recurrence relation for the moments $M_n$ of the stationary distribution. In general, this relation is of infinite order with polynomial coefficients in \( n \);
\item {D-finite}, the recurrence becomes linear and of finite order.
\end{itemize}
\item[]{Implications for the \em{density}:}
if the Laplace transform is:
\begin{itemize}
\item {D-finite}, then the density is also D-finite;
\item {Rational}, the density can be written as a linear combination of terms of the form \( x^k e^{-ax} \), with \( k \in \mathbb{N}_0 \).  
\end{itemize} 
\end{itemize}

This type of study, presented here through the example of a continuous stationary distribution, was until now mostly limited to discrete combinatorial problems. These results linking the nature of generating functions to properties of their coefficients are well known in analytic combinatorics, see the reference book by Flajolet and Sedgewick \cite{Flajolet_Sedgewick_2009} or the book by Mishna \cite{Mishna2020}.


\subsection{Galoisian criteria for differential transcendence}
\label{subsec:difftranscriteria}

The Galois theory of difference equations studies algebraic relations between the solutions $g_0,\ldots,g_n$ of linear difference equations of the form
\beq\label{eq:shift}
\sigma(g_i)=g_i +b_i,
\eeq
for $0\le i \le n$, where the coefficients $b_i$ lie in a field $K$ equipped with an automorphism $\sigma$. For example, we can take $K=\mathbb{C}(z)$ and $\sigma(g)(z)=g(z+h)$ or $g(qz)$ or $g(z^p)$ depending on whether we deal with a finite difference equation, a $q$-difference equation, or a Mahler equation.

In particular, a theorem by Ostrowski (in the context of differential rather than difference equations~\cite{Ostrowski}) gives necessary and sufficient conditions for the algebraic independence of $g_0,\ldots,g_n$ over $K$ in terms of algebraic relations satisfied by the coefficients $b_i$.

This framework also allows studying the differential algebraicity of a function $g$ satisfying
$$\sigma(g)=g+b,$$ provided the derivation $\partial$ commutes with $\sigma$. Indeed, the functions $g_i=\partial ^i g$ then satisfy a system of the form~\eqref{eq:shift}, with $b_i=\partial ^i b$, and are algebraically dependent if and only if $g$ satisfies a differential equation of order at most $n$. We then say that $g$ is $\partial$-algebraic.

Using the theorem proven by Dreyfus and Hardouin in~\cite{DreyfHardtderiv} (Thm.~C.8, case $\Delta=0$), we then obtain the following corollary stated in \citeS{franceschi_bousquet_melou_price_hardouin_raschel_stationary_2023} (Theorem 9.2).
\begin{corfr}[Application]
  \label{cor:abstractdiffalgcriterianonhomog}
 Let $K=\mathbb{C}(z)$ with the usual derivation and the field automorphism~$\sigma$ defined by $\sigma (f) (z)=f(z+1)$.
Then the fixed field of $\sigma$ is $K^\sigma=\mathbb{C}$ and one can take $L$ to be the set of meromorphic functions on $\mathbb{C}$.
 Let $f\in L$ be a non-zero meromorphic function satisfying 
 $$f(z+1)= a(z)f(z) $$ 
 for some $a \in \mathbb{C}(z)$. 
 If $f$ is differentially algebraic over $\mathbb{C}(z)$, then there exist $N \in \N_0$,
 constants $c_0,\dots,c_N \in \mathbb{C}$, not all zero, and $h \in \mathbb{C}(z)$ such that
 \begin{equation}\label{eq:equationDA}
   c_0 \frac{\partial a }{a}(z) +c_1 \partial \left(\frac{\partial a }{a}\right) (z) +\dots +c_N \partial^N\left( \frac{\partial a}{a}\right)(z)= h(z+1)-h(z).
 \end{equation} 
\end{corfr}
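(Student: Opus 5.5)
The plan is to reduce the multiplicative equation to an additive one and then apply the Dreyfus--Hardouin criterion (Thm.~C.8 of \cite{DreyfHardtderiv}, case $\Delta=0$) to the logarithmic derivative.

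\textbf{Setup and reduction.} $L$ is the field of meromorphic functions on $\mathbb{C}$, with $\sigma$ and $\partial=d/dz$. These commute, since $\partial(f(z+1))=f'(z+1)$. The fixed field $K^\sigma$ is $\mathbb{C}$, because a $1$-periodic rational function is constant. Set $g:=\partial f/f$, which is meromorphic since $f\neq 0$. Taking logarithmic derivatives of $f(z+1)=a(z)f(z)$ gives
\[
\sigma(g)=g+b,\qquad b:=\frac{\partial a}{a}\in\mathbb{C}(z).
\]
This equation has exactly the form \eqref{eq:shift}.

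\textbf{Transfer of differential algebraicity.} Assume $f$ is D-algebraic over $\mathbb{C}(z)$. The field $\mathbb{C}(z)\langle f\rangle$ generated by $f$ and its derivatives then has finite transcendence degree and is stable under $\partial$. It contains $g=f'/f$ and all the derivatives $\partial^i g$. Hence $g$ is also D-algebraic, so for some $n$ the functions $g,\partial g,\dots,\partial^n g$ are algebraically dependent over $K$. Differentiating $\sigma(g)=g+b$ gives $\sigma(\partial^i g)=\partial^i g+\partial^i b$, so each $g_i=\partial^i g$ satisfies \eqref{eq:shift} with $b_i=\partial^i b$.

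\textbf{Main step.} Ostrowski's theorem, as generalized to difference equations by Dreyfus--Hardouin, applies to solutions of such a system lying in a $\sigma$-$\partial$ extension whose $\sigma$-constants are $K^\sigma=\mathbb{C}$. It states that $g_0,\dots,g_n$ are algebraically dependent over $K$ if and only if there exist constants $c_i\in\mathbb{C}$, not all zero, and $h\in K$ with
\[
\sum_i c_i b_i=\sigma(h)-h.
\]
Substituting $b_i=\partial^i(\partial a/a)$ yields exactly \eqref{eq:equationDA}.

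\textbf{Expected obstacle.} The delicate point is checking the hypotheses of the cited theorem: the $\sigma$-constants of the relevant extension of $K$ inside $L$ must be exactly $\mathbb{C}$. To verify this, I would restrict to the field $K\langle g\rangle_\partial\subset L$. Its $\sigma$-constants are $1$-periodic meromorphic functions, and periodic meromorphic functions algebraic over $\mathbb{C}(z)$ are constant, as growth considerations show. With that in place, the corollary is a direct specialization of the theorem.
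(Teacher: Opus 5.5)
Your route is the paper's: the corollary is a direct specialization of the Dreyfus--Hardouin criterion (Thm.~C.8, $\Delta=0$). The surrounding text describes exactly your mechanism: pass to $g=\partial f/f$, so that $\sigma(g)=g+\partial a/a$, then apply the Ostrowski-type criterion to $g_i=\partial^i g$ with $b_i=\partial^i b$. Your setup, the transfer of D-algebraicity from $f$ to $g$, and the final substitution are fine. Note that $a\neq 0$ because $f\neq 0$.

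The error is in your last paragraph. The $\sigma$-constants of $K\langle g\rangle_\partial$ are \emph{not} $\mathbb{C}$ in general, and your growth argument does not show it. It is true that a $1$-periodic meromorphic function algebraic over $\mathbb{C}(z)$ is constant, but elements of $K\langle g\rangle_\partial$ need not be algebraic over $\mathbb{C}(z)$. For example, $f(z)=\sin(\pi z)$ is D-algebraic with $a=-1$. Then $g=\pi\cot(\pi z)$ is a non-constant $1$-periodic element of $K\langle g\rangle_\partial=\mathbb{C}(z,\cot(\pi z))$. So the hypothesis you set out to verify can genuinely fail, and an argument that depended on it would be stuck.

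The argument does not depend on it. The requirement that the $\sigma$-constants of the extension lie in $K$ is needed only for the converse (telescoping relation $\Rightarrow$ dependence), where $\sum_i c_ig_i-h$ is a $\sigma$-constant one wants to lie in $K$. This is why the paper can take $L$ to be all meromorphic functions, whose $\sigma$-constants are all the $1$-periodic meromorphic functions. The direction you need holds in any $\sigma$-extension:
\begin{enumerate}
\item Take $P\in K[X_0,\dots,X_n]$ nonzero with $P(g_0,\dots,g_n)=0$. Choose it of minimal leading monomial for a fixed graded monomial order, with leading coefficient $1$.
\item The polynomial $Q(X):=P^\sigma(X_0+b_0,\dots,X_n+b_n)\in K[X]$ also annihilates $(g_i)$ and has the same leading term. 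Minimality therefore forces $Q=P$.
\item Comparing homogeneous parts of degrees $d$ and $d-1$ gives
\[
P_d^\sigma=P_d,\qquad \sum_{i=0}^n b_i\,\frac{\partial P_d}{\partial X_i}=P_{d-1}-P_{d-1}^\sigma .
\]
\item Hence $P_d$ has coefficients in $K^\sigma=\mathbb{C}$. Read off the coefficient of a monomial occurring in some $\partial P_d/\partial X_i$; this is possible since $d\geq 1$.
\item This yields $\sum_i c_ib_i=\sigma(h)-h$ with $c_i\in\mathbb{C}$ not all zero and $h\in K$.
\end{enumerate}
With this in place of your last paragraph, the proof is complete.
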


We now introduce the notion of divisor.
\begin{deffr}[Divisors]
A divisor on $\mathbf{P}^1(\mathbb{C}) = \mathbb{C} \cup \{\infty\}$ is a finite formal expression of the form $\sum n_a[a]$ where each $n_a \in \mathbb{Z}$ and $a$ ranges over elements of $\mathbf{P}^1(\mathbb{C})$. The support of a divisor is the finite set of all $a$ such that $n_a \neq 0$. The divisor $\operatorname{div}(f)$ of a rational function $f \in \mathbb{C}(z)^*$ is defined as
\[
\operatorname{div}(f) := \sum_a \operatorname{ord}_a(f)[a]
\]
where $\operatorname{ord}_a(f)$ denotes the order of $f$ at the point $a$, and the sum is taken over all $a \in \mathbf{P}^1(\mathbb{C})$ and is in fact finite.
\end{deffr}

Consider the automorphism $$\sigma (f) (z)=f(z+1)$$ and define its action on divisors by
\[
\sigma\left(\sum_a n_a\left[a\right]\right) := \sum_a n_a\left[a -1\right].
\]
It is then clear that 
\begin{equation}
\label{eq:sdiv}
\sigma(\operatorname{div}(f)) = \operatorname{div}(\sigma(f)).
\end{equation}
An analogous notion of elliptic divisor also exists relative to $\sigma_q f(z) = f(qz)$. The following two lemmas are useful for the study of the decoupling functions introduced in~\ref{sec:foncdecinv}.

\begin{lem}[Lemma 2.1 \cite{vdps}]
Let $g \in \mathbb{C}(z)^*$, then $g$ can be written as 
$$g(z)=\sigma(f)(z) f^{-1} (z)=\frac{f(z+1)}{f(z)}$$ 
for some $f \in \mathbb{C}(z)^*$ if and only if the following three conditions hold:
\begin{enumerate}
    \item $\infty$ is not in the support of $\operatorname{div}(g)$;
    \item $g(\infty) = 1$;
    \item For every $\mathbb{Z}$-invariant orbit $E$ in $\mathbb{C}$, that is, a subset of $\mathbb{C}$ of the form $e + \mathbb{Z}$, we have
    \[
    \sum_{a \in E} \operatorname{ord}_a(g) = 0.
    \]
\end{enumerate}
\end{lem}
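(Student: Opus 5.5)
The plan is to prove the two directions separately, working with divisors and using the identity \eqref{eq:sdiv}. Since $\sigma(f)(z)=f(z+1)$, the action on divisors is $\sigma([a])=[a-1]$. For any $f\in\mathbb{C}(z)^*$ this gives
\[
\operatorname{div}\bigl(\sigma(f)/f\bigr)=\sum_a \operatorname{ord}_a(f)\bigl([a-1]-[a]\bigr).
\]

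\textbf{Necessity.} Assume $g=\sigma(f)/f$.
\begin{enumerate}
\item At infinity, write $f(z)=cz^m(1+O(1/z))$. Then $f(z+1)/f(z)=(1+1/z)^m(1+O(1/z^2))\to 1$ as $z\to\infty$. Hence $g(\infty)=1$, and in particular $\infty$ is not in the support of $\operatorname{div}(g)$. This gives conditions 1 and 2.
\item On finite points, the displayed formula shows that $\operatorname{div}(g)$ is a sum of elementary pieces $[a-1]-[a]$. Each piece lies inside a single orbit $a+\mathbb{Z}$ and has total degree $0$ there. Summing over $a$ gives condition 3.
\end{enumerate}

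\textbf{Sufficiency.} Assume conditions 1--3 and build $f$ orbit by orbit. Condition 1 means $g$ has neither zero nor pole at $\infty$, so $g$ factors as $g=c\prod_a (z-a)^{n_a}$ with $n_a=\operatorname{ord}_a(g)$ and $\sum_a n_a=0$. Condition 2 then forces $c=1$.
\begin{enumerate}
\item Group the finitely many points $a$ with $n_a\neq 0$ into orbits $E=e+\mathbb{Z}$. Write them as $e+k$ with $k\in\mathbb{Z}$, and set $g_E=\prod_{k}(z-e-k)^{n_{e+k}}$. Then $g=\prod_E g_E$, and by condition 3, $\sum_k n_{e+k}=0$ for each $E$.
\item Solve each orbit separately. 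Since $\sum_k n_{e+k}=0$, rewrite $g_E$ as a product of elementary factors of the form $(z-e-k)/(z-e-k-1)$ (or their inverses). Concretely, choose integers $m_k$ with $m_k-m_{k+1}=n_{e+k}$ (with the appropriate sign convention) and $m_k=0$ for $|k|$ large. This is possible exactly because the total sum vanishes: it is a discrete antiderivative with compact support.
\item Telescope each elementary factor. Using $\sigma(z-b)=z-(b-1)$, one has
\[
\frac{z-e-k}{z-e-k-1}=\frac{\sigma(z-e-k-1)}{z-e-k-1},
\]
so each factor is a coboundary. Take $f_E=\prod_k (z-e-k)^{m_k}$ with suitable shifts, so that $\sigma(f_E)/f_E=g_E$. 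One should check the normalization $c=1$ here; the factor $(1+1/z)^m$ tends to $1$, so it is consistent.
\item Finally set $f=\prod_E f_E$. Both $\sigma(f)/f$ and $g$ have leading constant $1$ and the same divisor, so they are equal.
\end{enumerate}

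\textbf{Main obstacle.} I expect the only delicate step to be the sign and shift bookkeeping in the telescoping of steps 2--3. Specifically, one must verify that $\operatorname{div}(\sigma(f_E)/f_E)=\sum_k m_k([e+k-1]-[e+k])$ matches $\sum_k n_{e+k}[e+k]$. This reduces to the discrete relation $n_{e+k}=m_{k+1}-m_k$. Its finitely supported solution exists if and only if $\sum_k n_{e+k}=0$, by taking $m_k=\sum_{j<k}n_{e+j}$. Everything else is routine.
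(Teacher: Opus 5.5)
Your proof is correct and is essentially the constructive divisor argument that the paper defers to in Section~2.1 of \cite{vdps}. Necessity comes from $\operatorname{div}(\sigma(f)/f)=\sum_a \operatorname{ord}_a(f)\bigl([a-1]-[a]\bigr)$ together with $f(z+1)/f(z)\to 1$. Sufficiency comes from solving a finitely supported discrete antiderivative orbit by orbit and then telescoping.

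The only slip is the sign in your step~2. With $f_E=\prod_k (z-e-k)^{m_k}$ one gets $\sigma(f_E)/f_E=\prod_k (z-e-k)^{m_{k+1}-m_k}$. So the correct relation is the one in your final paragraph, $m_{k+1}-m_k=n_{e+k}$ with $m_k=\sum_{j<k}n_{e+j}$, and no further shifts are needed.
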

The converse is also true and its proof is constructive, yielding an explicit $f$ given $g$ (see Section 2.1 of \cite{vdps}). This construction often inspires how decoupling functions are built.

Here is another lemma that may prove useful when applying this method (and which, to the best of our knowledge, does not appear in this form in the existing literature).
\begin{lemm}
For $g\in\mathbb{C}(z)^*$ and $h\in\mathbb{C}(z)$, the equation~\eqref{eq:equationDA} of Corollary~\ref{cor:abstractdiffalgcriterianonhomog} 
\[
c_0 \frac{\partial g }{g}(z) +c_1 \partial \left(\frac{\partial g}{g}\right) (z) +\dots +c_N \partial^N\left( \frac{\partial g}{g}\right)(z)= h(z+1)-h(z)
\]
with at least one $c_k \neq 0$, implies that for any $\mathbb{Z}$-invariant orbit $E$ in $\mathbb{C}$ we have
\[
\sum_{a \in E} \operatorname{ord}_a(g) = 0.
\]
\end{lemm}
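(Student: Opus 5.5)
The plan is to compare residues along each $\mathbb{Z}$-orbit. Write $\phi:=\partial g/g$. Then $\phi$ has only simple poles, located at the zeros and poles of $g$, with $\operatorname{Res}_a \phi = \operatorname{ord}_a(g)$. Every derivative $\partial^k\phi$ with $k\ge 1$ has poles of order at least two and \emph{zero residue} everywhere, since it is an exact derivative of a rational function. So the residue of the left-hand side at any $a\in\mathbb{C}$ equals $c_0\operatorname{ord}_a(g)$.

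Fix an orbit $E=e+\mathbb{Z}$. On the right-hand side, $\sum_{a\in E}\operatorname{Res}_a\bigl(h(z+1)-h(z)\bigr)=\sum_{a\in E}\operatorname{Res}_{a+1}h-\sum_{a\in E}\operatorname{Res}_a h=0$. This holds because $E$ is stable under $a\mapsto a+1$, and the sums are finite since $h$ has finitely many poles. Summing the residue identity over $E$ then gives
\[
c_0\sum_{a\in E}\operatorname{ord}_a(g)=0 .
\]
If $c_0\neq0$, this is the claim.

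The main obstacle is the case $c_0=0$, where the residue argument gives nothing. I would handle it by looking at the highest-order pole inside the orbit. Let $k_0$ be the smallest index with $c_{k_0}\neq0$. Set $P:=\sum_k c_k\partial^k\phi$ and assume $E$ contains at least one point of $\operatorname{div}(g)$, since otherwise the statement is trivial.

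At each $a\in E$ with $n_a:=\operatorname{ord}_a(g)\neq0$, the coefficient of $(z-a)^{-(m+1)}$ in $\partial^m\phi$ is $(-1)^m m!\,n_a$, and the other derivatives contribute only poles of lower order. Hence the polar part of $P$ at $a$ has top term $c_N(-1)^N N!\,n_a (z-a)^{-(N+1)}$, where $N$ is now the largest index with $c_N\ne0$.

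Next I would compare with the polar parts of $h(z+1)-h(z)$. Consider the Laurent coefficients of order $-(N+1)$ of $h$ along the orbit, say $\gamma_j$ at $e+j$. The right-hand side has coefficient $\gamma_{j+1}-\gamma_j$ at $e+j$. This is a telescoping difference of a finitely supported sequence, so its sum over the orbit is $0$. Matching coefficients gives $c_N(-1)^N N!\sum_{a\in E}n_a=\sum_j(\gamma_{j+1}-\gamma_j)=0$, and therefore $\sum_{a\in E}\operatorname{ord}_a(g)=0$.

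This single argument with the top index $N$ works uniformly and makes the separate residue case unnecessary. The points to check are that lower derivatives never contribute to the order $-(N+1)$ coefficient, and that all sums are finite. Both follow because $\phi$ has only simple poles and rational functions have finitely many poles.
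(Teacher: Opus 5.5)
Your proof is correct and follows the paper's argument: expand both sides in partial fractions, match the coefficient of one fixed pole order at each point of the orbit, and observe that on the right-hand side these coefficients telescope to zero because $E+1=E$. The only difference is that the paper matches the order-$(k+1)$ coefficient for an arbitrary $c_k\neq 0$, using that each $\partial^k(\partial g/g)=\sum_a(-1)^k k!\,\operatorname{ord}_a(g)\,(z-a)^{-(k+1)}$ occupies a single pole order, whereas you use the top index $N$; this also makes your separate residue step and the unused index $k_0$ unnecessary.
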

\begin{proof}
This follows from a partial fraction decomposition. Let $\operatorname{div}(g)=\sum m_a [a]$, i.e., $m_a=\operatorname{ord}_a(g)$. The left-hand side of~\eqref{eq:equationDA} becomes
\[
\sum_{k=0}^N c_k \partial^k \left( \frac{\partial g}{g} \right)(z)=\sum_{k=0}^N c_k (-1)^k k!\sum_a \frac{m_a}{(x-a)^k} .
\]
Decompose the right-hand side of~\eqref{eq:equationDA} into partial fractions. Let $d_k^a$ be the coefficients (possibly zero), then
\[
h(z+1)-h(z)=\sum_a \sum_k \frac{d_k^a}{(z+1-a)^k}  - \sum_a \sum_k \frac{d_k^a}{(z-a)^k}=\sum_a\sum_k \frac{d_k^{a-1}-d_k^a}{(z-a)^k}.
\]
Choose a non-zero $c_k$, and by uniqueness of the partial fraction decomposition, for every $a$ we have
\[
c_k (-1)^k k! m_a = d_k^{a-1}-d_k^a,
\]
thus
\[
c_k (-1)^k k! \sum_{a\in E} m_a 
=\sum_{a\in E} (d_k^{a-1}-d_k^a)=\sum_{a\in E-1} d_k^a -\sum_{a\in E} d_k^a =0
\]
since $E-1=E$.
\end{proof}
Lemma 3.8 of \cite{Hardouin2008} provides an analogous result by replacing $\sigma$ with $\sigma_q$ defined by $\sigma_q f(z)=f(qz)$ and replacing the notion of divisor with that of elliptic divisor.

\subsection{Differential and algebraic classification}

Tutte’s invariants method, presented in the previous section, provides sufficient conditions for a functional equation's solution to be differentially algebraic.
To find the necessary conditions, we use Galois theory, which provides the criteria for differential transcendence presented above. 
Here is a result obtained in \citeS{franceschi_bousquet_melou_price_hardouin_raschel_stationary_2023}.
\begin{theorem}[Algebraic nature of the Laplace transform]
\label{thm:main_diffalg}
If $\frac{\beta}{\pi}\notin \mathbb Q$, then $\widehat\pi_1$ is differentially algebraic over $\mathbb C(\thetadeux)$ if and only if:
\begin{equation}
\label{eq:CNS}
   \epsilon+\delta\in \beta \mathbb{Z}+ \pi\mathbb{Z}
   \quad \text{or}
   \quad
   (2\epsilon+\vartheta-\beta
\text{ and }   
   2\delta-\vartheta \in \beta \mathbb{Z}+ \pi\mathbb{Z}).
\end{equation}
If $\frac{\beta}{\pi}\in \mathbb Q$, then $\phi_1$ is algebraic over $\C(\thetadeux)$ if and only if condition \eqref{eq:CNS} is satisfied.
\end{theorem}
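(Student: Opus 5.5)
The plan splits into two directions. For sufficiency, I invoke the Tutte-invariant results of Section~\ref{sec:explifor}. If \eqref{cnsdecou} holds, Lemma~\ref{def:decoupling_function} supplies a rational decoupling function $D$ of order~1. Then $D\cdot\widehat\pi_1$ is an invariant, and the integral-free theorem writes $\widehat\pi_1(y)=A(y,w(y))$ with $A$ rational. If \eqref{cns2decou} holds, the same theorem gives $\widehat\pi_1=\sqrt{A(y,w(y))}$.

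Example~\ref{exemple:naturew} then settles the nature of $w$. When $\pi/\beta\notin\mathbb Q$, $w$ is D-finite, so $A(y,w)$ and its square root are D-algebraic, since D-algebraic functions form a field closed under algebraic extensions. When $\beta/\pi\in\mathbb Q$, $w$ is algebraic, so $\widehat\pi_1$ is algebraic.

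For necessity in the case $\beta/\pi\notin\mathbb Q$, I work on the universal covering. By the meromorphic continuation proposition, $\widetilde\pi_1(\omega)$ is meromorphic on $\mathbb C$ and satisfies $\widetilde\pi_1(\omega+2\beta)=\widetilde G(\omega)\widetilde\pi_1(\omega)$. I change variables so that this becomes a shift $\sigma f(z)=f(z+1)$ with rational coefficient. The natural variable is $s=e^{i\omega}$, in which $x(s)$ and $y(s)$ are rational, so $\widetilde G$ is rational in $s$ and the equation becomes a $q$-difference equation with $q=e^{2i\beta}$, not a root of unity.

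Since $y(s)$ is rational in $s$, D-algebraicity of $\widehat\pi_1$ over $\mathbb C(y)$ transfers to D-algebraicity of $s\mapsto\widehat\pi_1(y(s))$ over $\mathbb C(s)$. I then apply the $q$-analogue of Corollary~\ref{cor:abstractdiffalgcriterianonhomog}, i.e.\ the Dreyfus--Hardouin criterion with $\sigma_q$. This yields a telescoping relation for $\partial a/a$, where $a=G$ written in $s$ and $\partial=s\,d/ds$.

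By the $q$-version of the divisor lemma (Lemma~3.8 of \cite{Hardouin2008}), for every $q$-orbit $e\,q^{\mathbb Z}$ the orders of $a$ sum to zero. This is the step where the angle conditions appear. I compute the zeros and poles of $a(s)=\frac{k_1(s)k_2(1/s)}{k_2(s)k_1(1/s)}$ explicitly. The zeros of $k_1(x(s),y(s))$ lie at $s$ with arguments determined by $\epsilon$ (roughly $e^{\pm 2i\epsilon}$ times factors tied to $\vartheta$), and those of $k_2$ by $\delta$. Their moduli come from the drift.

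The key step is the case analysis: asking which pairings of these finitely many points, up to multiplication by $q^{\mathbb Z}=e^{2i\beta\mathbb Z}$, make every orbit sum vanish. Comparing arguments modulo $2\beta\mathbb Z+2\pi\mathbb Z$ should give exactly $\epsilon+\delta\in\beta\mathbb Z+\pi\mathbb Z$, or the pair of conditions $2\epsilon+\vartheta-\beta,\ 2\delta-\vartheta\in\beta\mathbb Z+\pi\mathbb Z$, the latter arising when the cancellation is of order two. I expect this bookkeeping, together with checking that the orbit condition is not only necessary but forces the existence of a decoupling function, to be the main obstacle. This is where \eqref{cnsdecou} and \eqref{cns2decou} of Lemma~\ref{def:decoupling_function} are matched.

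For necessity in the rational case $\beta/\pi\in\mathbb Q$, $q$ is a root of unity, so the difference Galois criterion is unavailable. Instead I argue directly. If $\widehat\pi_1$ is algebraic, then $\widetilde\pi_1$ composed with $s$ is algebraic in $s$. Iterating the difference equation $n$ times, with $q^n=1$, gives $\prod_{k<n}a(q^ks)=1$. Translating this product identity through the same divisor computation should again recover \eqref{cnsdecou} or \eqref{cns2decou}.
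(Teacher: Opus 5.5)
Your proposal follows the paper's route. Sufficiency comes from Lemma~\ref{def:decoupling_function}, the integral-free Tutte-invariant formula and the nature of $w$. Necessity comes from the Dreyfus--Hardouin criterion plus the orbit-sum lemma applied to $\widetilde G$, correctly taken in its $q$-difference form ($q=e^{2i\beta}$, $\partial=s\,\frac{d}{ds}$, solutions meromorphic in $\omega$), since $\widetilde G$ is rational in $s=e^{i\omega}$ rather than in $\omega$.

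For the bookkeeping you left open, note that the origin is a common zero of $k_1,k_2$ and cancels in $a$. The other zeros $s_1$ of $k_1$ and $t_2$ of $k_2$ are real points of the ellipse, hence have modulus~$1$; their moduli do not depend on the drift. So $\operatorname{div}(a)=[s_1]+[1/t_2]-[t_2]-[1/s_1]$, and vanishing orbit sums leave exactly $s_1\in t_2q^{\mathbb Z}$, i.e.\ \eqref{cnsdecou}, or $s_1^2,t_2^2\in q^{\mathbb Z}$, i.e.\ \eqref{cns2decou}. In the root-of-unity case, finiteness of the branches only forces $\prod_{k<n}a(q^ks)$ to be a constant root of unity, in fact $\pm1$ because $a(1/s)=1/a(s)$, rather than $1$. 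That is exactly the same orbit condition on the finite orbits, so your argument goes through.
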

The article \citeS{franceschi_bousquet_melou_price_hardouin_raschel_stationary_2023} also established necessary and sufficient conditions for $\widehat\pi_1$ to belong to each category in the function hierarchy introduced earlier, summarized in Table~\ref{table:characterisation}. 
Let
\[
\alpha = \frac{\varepsilon+\delta-\pi}{\beta}, 
\qquad 
\alpha_1 = \frac{2\varepsilon+\theta-\beta-\pi}{\beta}, 
\qquad 
\alpha_2 = \frac{2\delta-\theta-\pi}{\beta}.
\]
Note how remarkably compact and geometric they are.
\renewcommand{\arraystretch}{1.5}
\begin{table}[h!]
\begin{center}
\begin{tabular}{|c|c|c|c|c|}
\hline 
  & D-algebraic & D-finite & Algebraic & Rational  \\ 
\hline 
  $\beta/\pi \notin \Q$ &
 $\alpha \in \Z +\frac\pi \beta\Z$, or 
 & $\alpha \in -\N +\frac\pi \beta\Z$, or
 &  $\alpha \in -\N$, or  & $\alpha \in -\N$ 
  \\
  &{ $\{\alpha_1, \alpha_2 \} \subset \Z +\frac\pi \beta\Z$ }
 &{ $\{\alpha_1, \alpha_2 \} \subset   \Z \cup \left(-\N+ \frac \pi\beta \Z\right)$}
  &{ $\{\alpha_1, \alpha_2 \} \subset \Z $ } &\\
  \hline
  $\beta/\pi \in \Q$  & 
 always & $\alpha \in \Z +\frac\pi \beta\Z$, or 
     & $\alpha \in \Z +\frac\pi \beta\Z$, or  
    & $\alpha  \in  -\N$
  \\
  &&{ $\{\alpha_1, \alpha_2 \} \subset \Z +\frac\pi \beta\Z$ }
   &{ $\{\alpha_1, \alpha_2\}\subset \Z +\frac\pi \beta\Z$ }&\\
 \hline      
\end{tabular}
\end{center}
\caption{Nature of the Laplace transform $\widehat \pi_1$: non degenerate case}
\label{table:characterisation}
\end{table}
In a sense, this article
completes the search for “simple” cases by identifying and listing them all, and by providing unified and simple explicit expressions for the Laplace transform in these cases.

\paragraph{Degenerate case}

In the degenerate case, the classification is obtained in \cite{franceschi_degenerate_asym_2025} and expressed in terms of the geometric parameters of the model. Let
$$\gamma=\frac{r_1r_2-1}{(1+r_1)(1+r_2)},
\quad
\gamma_1 = \mu_1 - \frac{2}{1+r_1},
\quad
\text{and}
\quad
\gamma_2 = \mu_2 - \frac{2}{1+r_2}.
$$
These are parameters similar to $\alpha$, $\alpha_1$ and $\alpha_2$.
The Table~\ref{table:characterisation2} shows the classification of the Laplace transform.
\renewcommand{\arraystretch}{1.5}
\begin{table}[h!]
\begin{center}
\begin{tabular}{|c|c|c|c|c|}
\hline 
 & D-algebraic & D-finite &  Algebraic & Rational  \\ 
\hline 
  $r_1$ and   $r_2 \neq 1$ &
 $\gamma\in\Z$, or $\{\gamma_1 ,\gamma_2 \}\subset \Z$
 & \multicolumn{3}{c|}{$\gamma \in -\N$}
  \\
  \hline
    $r_1$ or $r_2=1$  & 
 $\gamma_2$ or $\gamma_1\in \mathbb{N}$ & \multicolumn{3}{c|}{never }
 \\
 \hline      
\end{tabular}
\end{center}
\caption{Nature of the Laplace transform $\widehat \pi_1$: degenerate case}
\label{table:characterisation2}
\end{table}

\section{Asymptotics via saddle point method and singularity analysis}
\label{sec:asymptmartin}

\subsection{Tauberian-type transfer Lemmas}

Asymptotic studies are most often based on analytic continuations of generating functions or Laplace transforms and on the study of their singularities on the Riemann surface. This makes it possible to apply Tauberian-type transfer lemmas that deduce the asymptotic behavior of a function based on the type of singularity (pole or branch point).

The following propositions give an overview of the many possible versions of such lemmas. See, for example, the book by \citet[Theorems 35.1 and 37.1]{doetsch_introduction_1974} or the article by \citet[Lemmas C.1 and C.2]{dai_reflecting_2011} for more precise statements and proofs of the two lemmas below.

Let $f$ be a positive, continuous, and integrable function on $[0, \infty)$, and define its Laplace transform $g$ as follows:
$$
g(z)=\int_0^{\infty} e^{z x} f(x) d x, \quad \Re z<\alpha_0
$$
where $\alpha_0=c_p(g)(\equiv \sup \{\theta \geq 0 : g(\theta)<\infty\})$. The point $\alpha_0$ is the smallest singularity of $g(z)$, and $g(z)$ is analytic for $\Re z<\alpha_0$. We still denote by $g$ its analytic continuation when it exists.

\begin{lemme}[Transfer lemma for a pole of order $k$] 
\label{pr}
Let $g$ be an analytic continuation of the Laplace transform of $f$ such that
$$g(z) - \frac{c_0}{(\alpha_0 -z)^k}$$
is analytic for $\Re z < \alpha_1$ with $\alpha_1 > \alpha_0$. Suppose furthermore that for some constants $a, b, \delta>0$,
we have 
$$\quad|g(z)|<\frac{a}{|z|^{1+\delta}}, \quad \Re z \in\left[0, \alpha_1\right],|\Im z|>b$$
(this condition can be weakened, see \cite[Appendix C]{dai_reflecting_2011}).
Then, denoting $\Gamma$ as the gamma function,
$$ f(x)  \underset{x \to \infty}{\sim} \frac{c_0}{\Gamma (k)} x^{k-1} e^{- \alpha_0 x}.  $$
\end{lemme}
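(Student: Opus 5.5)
The plan is a direct inversion of the Laplace transform by a contour integral, followed by a contour shift past the pole. Since $f$ is positive, continuous and integrable, $g(z)=\int_0^\infty e^{zx}f(x)\,dx$ restricted to a vertical line $\Re z=c<\alpha_0$ is the Fourier transform of $e^{cx}f(x)$. By the decay hypothesis $|g(z)|<a/|z|^{1+\delta}$, this is integrable on the line, and Fourier inversion gives
\[
f(x)=\frac{1}{2i\pi}\int_{c-i\infty}^{c+i\infty} e^{-zx} g(z)\,dz .
\]
We will take $c\in[0,\alpha_0)$, so that the decay bound holds on the relevant vertical lines whenever $|\Im z|>b$. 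Continuity of $f$ ensures the inversion holds pointwise and not merely almost everywhere.

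Next, split $g(z)=\frac{c_0}{(\alpha_0-z)^k}+r(z)$, where $r$ is analytic on $\Re z<\alpha_1$. The principal part is inverted explicitly, using the same orientation and the Gamma-function identity (the standard Hankel/residue computation, closing the contour to the right, where $e^{-zx}$ decays):
\[
\frac{1}{2i\pi}\int_{c-i\infty}^{c+i\infty}\frac{c_0\,e^{-zx}}{(\alpha_0-z)^k}\,dz=\frac{c_0}{\Gamma(k)}x^{k-1}e^{-\alpha_0 x}.
\]
For $k=1$ this integral is only conditionally convergent. We handle it symmetrically, or subtract a slightly modified principal part such as $c_0/((\alpha_0-z)^k(1+(\alpha_0-z)))$ so that both pieces decay like $|z|^{-1-\delta'}$. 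The correction is analytic beyond $\alpha_0$ except at a point that can be placed to the right of $\alpha_1$, since $\alpha_0+1$ may be replaced by any point $>\alpha_1$.

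For the remainder $r$, shift the line of integration from $\Re z=c$ to $\Re z=\alpha_1'$ with $\alpha_0<\alpha_1'<\alpha_1$. Applying Cauchy's theorem on tall rectangles requires the horizontal segments at $|\Im z|=T$ to vanish as $T\to\infty$. This follows from the bound on $g$ for $|\Im z|>b$ together with the explicit decay of the principal part, so $r$ is $O(|z|^{-1-\delta})$ on the whole strip away from $|\Im z|\le b$. After the shift,
\[
\Bigl|\frac{1}{2i\pi}\int_{\alpha_1'-i\infty}^{\alpha_1'+i\infty} e^{-zx}r(z)\,dz\Bigr|\le C e^{-\alpha_1' x}=o\bigl(x^{k-1}e^{-\alpha_0 x}\bigr),
\]
because the integral of $|r|$ along that line is finite. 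Adding the two contributions yields $f(x)\sim \frac{c_0}{\Gamma(k)}x^{k-1}e^{-\alpha_0x}$.

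I expect the main obstacle to be the justification of the inversion and shift with only the stated decay. The bound is required for $\Re z\in[0,\alpha_1]$, which covers the strip we move through, but the bounded piece $|\Im z|\le b$ near the pole must be handled by analyticity of $r$ there. The $k=1$ integrability issue is the other delicate point. Rather than grinding these out, I would follow the cited Doetsch theorems or Dai–Miyazawa's Lemma C.1, which carry out exactly this contour argument.
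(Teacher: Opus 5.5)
Your proposal is correct and follows essentially the proof the paper relies on. The survey gives no proof of its own but defers to Doetsch (Thm.~35.1) and Dai--Miyazawa (Lemma~C.1), which argue as you do: Laplace inversion on a vertical line $\Re z=c\in[0,\alpha_0)$, extraction of the residue at $\alpha_0$, and a shift of the remaining integral to $\Re z=\alpha_1'\in(\alpha_0,\alpha_1)$, giving an $O(e^{-\alpha_1' x})$ error. The one thing to tighten is $k=1$: there the unmodified remainder is only $O(|z|^{-1})$, so the modified principal part (or symmetric truncation applied to the remainder as well) is what makes the shifted integral converge. When you move its auxiliary pole to a point $\alpha_2>\alpha_1$, normalize it as $c_0(\alpha_2-\alpha_0)/((\alpha_0-z)(\alpha_2-z))$ so that the residue at $\alpha_0$ is preserved.
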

Define the set
$$\mathcal{G}_\delta\left(\alpha_0\right) =\left\{z \in \mathbb{C}: z \neq \alpha_0, \mid \arg (z-\left.\alpha_0\right) \mid>\delta\right\},$$ 
where $\arg z \in(-\pi, \pi)$ is the principal value of the argument of the complex number $z$.
\begin{lemme}[Transfer lemma for a branch point of order $\lambda$] 
\label{prbr}
Let $g$ be an analytic continuation of the Laplace transform of $f$ such that there exists a point $\alpha_0$ and an angle $\delta \in\left[0, \frac{1}{2} \pi\right)$ such that $g(z)$ is analytic on the set $\mathcal{G}_\delta$.
Suppose that $g(z) \rightarrow 0$ as $|z| \rightarrow \infty$ for $z \in \mathcal{G}_\delta(\alpha)$,
and that there exist $a \in \mathbb{R}, \lambda \in \mathbb{R}$ and $c_0 \in \mathbb{R}$ such that
$$
\lim _{\substack{z \rightarrow a \\ z \in \mathcal{G}_\delta(\alpha)}}(\alpha_0-z)^\lambda(g(z)-a)=c_0
$$
Then:
$$ f(x)  \underset{x \to \infty}{\sim} \frac{c_0}{\Gamma (\lambda)} x^{\lambda-1} e^{- \alpha_0 x}.  $$
\end{lemme}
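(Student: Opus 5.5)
The plan is to recover $f$ from $g$ by Laplace inversion and then deform the inversion contour into a Hankel-type contour that wraps the excluded sector at $\alpha_0$. This is the classical Doetsch argument \cite[Theorem~37.1]{doetsch_introduction_1974}. I read the hypothesis as $\lim_{z\to\alpha_0,\,z\in\mathcal G_\delta(\alpha_0)}(\alpha_0-z)^\lambda\,(g(z)-a)=c_0$, where $a$ is a constant.

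\textbf{Step 1: inversion.} Since $g(z)=\int_0^\infty e^{zx}f(x)\,dx$ for $\Re z<\alpha_0$, the inversion formula reads, for $c<\alpha_0$,
\[
f(x)=\frac{1}{2i\pi}\int_{c-i\infty}^{c+i\infty} e^{-zx}\,g(z)\,dz .
\]
The constant $a$ contributes only a Dirac mass at $0$, so it is irrelevant for $x>0$. I would therefore work with $g-a$ and understand the integral as a symmetric principal value. Pointwise validity of this inversion uses the continuity of $f$; if needed, one can add a mild regularity assumption on $f$, as in Doetsch.

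\textbf{Step 2: contour deformation.} The factor $e^{-zx}$ decays as $\Re z\to+\infty$. Since $g$ is analytic on $\mathcal G_\delta(\alpha_0)$ and tends to $0$ at infinity there, I would push the vertical line to the right. It is replaced by the contour $\mathcal H$ formed by the two rays $\alpha_0+re^{\pm i\delta'}$, $r\geq \varepsilon$, with $\delta<\delta'<\pi/2$, joined by a small arc of radius $\varepsilon$ around $\alpha_0$ passing through the left.

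On the connecting arcs at large modulus, $|e^{-zx}|\le e^{-x\Re z}$ and $g\to0$. A Jordan-lemma type estimate then shows that these pieces vanish in the limit. This is where the hypothesis $\delta'<\pi/2$ is used, because it guarantees that $\Re z\to+\infty$ along the rays.

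\textbf{Step 3: extraction of the main term.} I substitute $z=\alpha_0+t/x$, so that $e^{-zx}=e^{-\alpha_0x}e^{-t}$ and $dz=dt/x$. I then write $g(z)-a=c_0(\alpha_0-z)^{-\lambda}+\varepsilon(z)(\alpha_0-z)^{-\lambda}$, with $\varepsilon(z)\to0$ as $z\to\alpha_0$. The main term becomes
\[
c_0\,x^{\lambda-1}e^{-\alpha_0x}\,\frac{1}{2i\pi}\int_{\mathcal H'}e^{-t}(-t)^{-\lambda}\,dt
=\frac{c_0}{\Gamma(\lambda)}\,x^{\lambda-1}e^{-\alpha_0x},
\]
by Hankel's representation of $1/\Gamma(\lambda)$.

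\textbf{Step 4: the error term (main obstacle).} I must show that the remainder is $o(x^{\lambda-1}e^{-\alpha_0x})$. I split $\mathcal H$ into the part with $|z-\alpha_0|<\eta$ and the part with $|z-\alpha_0|\ge\eta$.

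On the near part, after rescaling, the contribution is bounded by $\sup_{|z-\alpha_0|<\eta}|\varepsilon(z)|$ times $x^{\lambda-1}e^{-\alpha_0x}\int|e^{-t}||t|^{-\lambda}|dt|$. This integral is finite provided the small arc is taken of radius $1/x$, which handles $\lambda\ge1$.

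On the far part, the factor $e^{-x\Re z}\le e^{-(\alpha_0+\eta\cos\delta')x}$ gives exponentially smaller terms. This still requires integrability of $g$ along the rays, which is not supplied by the bare assumption $g\to0$. I would handle it first by integrating by parts once in $z$, or by replacing $g$ with $g(z)/(z-z_1)$ and working with a primitive of $f$. Finally, the degenerate cases $\lambda\in-\mathbb N_0$, where $1/\Gamma(\lambda)=0$, should be read as asserting $f(x)=o(x^{\lambda-1}e^{-\alpha_0x})$, and the same bound covers them.
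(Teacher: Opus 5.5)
Your route (Bromwich inversion, deformation onto the rays $\alpha_0+re^{\pm i\delta'}$, Hankel's formula for the main term) is the classical argument behind \cite[Theorem~37.1]{doetsch_introduction_1974} and \cite[Lemma~C.2]{dai_reflecting_2011}. The paper only cites these references and gives no proof of its own. Your Step~3 and your near-part bound with an arc of radius $1/x$ are correct.

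\textbf{Step 1 is wrong as written.} Continuity of $f$ alone does not make the principal-value Bromwich integral converge pointwise. You also may not pass to $g-a$ on the vertical line: there $g-a\to-a$, and
$\frac{1}{2i\pi}\int_{c-iT}^{c+iT}a\,e^{-zx}\,dz=\frac{a\,e^{-cx}\sin(Tx)}{\pi x}$
has no limit as $T\to\infty$. The steps should go in this order.
\begin{enumerate}
\item Keep $g$ itself on the line.
\item The truncated line integral differs from the integral over the truncated contour $\mathcal H$ only by the two horizontal segments $\Im z=\pm T$, $\Re z\geq c$. Their contribution is at most $\sup_{|z|\geq T}|g(z)|\,e^{-cx}/(\pi x)$, which tends to $0$. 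So the principal value exists and equals the absolutely convergent integral over $\mathcal H$.
\item Identify this value with $f(x)$ by Fejér summability. The function $u\mapsto e^{cu}f(u)\mathbf{1}_{u\geq0}$ is integrable and continuous at $x>0$, and its Fourier transform is $y\mapsto g(c+iy)$. Its Fejér means at $x$ therefore converge to $e^{cx}f(x)$, and a convergent principal value must have the same limit.
\end{enumerate}
No extra regularity on $f$ is needed. Only once you are on $\mathcal H$ may you replace $g$ by $g-a$, because $\int_{\mathcal H}e^{-zx}\,dz=0$.

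\textbf{In Step 4, the far-part obstacle does not exist, and your remedy would create a real one.} On the rays, $|e^{-zx}|=e^{-\alpha_0x}e^{-xr\cos\delta'}$, and $g$ is bounded on $\mathcal H$ by continuity and $g\to0$. Hence the portion $|z-\alpha_0|\geq\eta$ contributes $O\bigl(e^{-\alpha_0x}e^{-\eta x\cos\delta'}/x\bigr)$ for the term $g-a$. The term $c_0(\alpha_0-z)^{-\lambda}$ behaves the same way, up to a polynomial factor $r^{-\lambda}$ when $\lambda<0$. Both contributions are exponentially smaller than $x^{\lambda-1}e^{-\alpha_0x}$. The point is to keep $e^{-x\Re z}$ inside the integral rather than bounding it by its supremum, so integrability of $g$ along the rays is never needed.

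By contrast, passing to $g(z)/(z-z_1)$ or to a primitive of $f$ would only give asymptotics for a convolution or primitive of $f$. Transferring those back to $f$ requires a Tauberian hypothesis, such as monotonicity, which the lemma does not supply. With these two corrections, and your $o(\cdot)$ reading for $\lambda\in-\mathbb{N}_0$, the argument proves the lemma as stated.
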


\subsection{Uniform method of steepest descent}

The method of steepest descent is a procedure for determining the asymptotic behavior of integrals of the form
$$
I(r)=\int_C g(z) e^{r f(z)} d z
$$
where $f(z)$ and $g(z)$ are analytic functions, $r$ is a positive parameter tending to infinity, and $C$ is a contour in the complex plane. It was introduced by Debye in the early 20th century in an article on Bessel functions of large order. See the classic book by \citet{fedoryuk_asymptotic_1989}. The general idea is to deform the contour $C$ into a new integration path $C^{\prime}$ such that $C^{\prime}$ passes through one (or more) saddle point(s), i.e., points $z$ where $f^{\prime}(z)=0$, and follows the path of steepest descent, i.e., a curve where the imaginary part of $f(z)$ is constant.

To understand this, set $z=x+i y$ with $x$ and $y\in\mathbb{R}$ and
$$
f(z)=u(x, y)+i v(x, y)
$$
with $u$ and $v\in\mathbb{R}$,
and suppose that $z_0=x_0+i y_0$ is a simple zero of $f^{\prime}(z)$, which implies that $f^{\prime \prime}\left(z_0\right) \neq 0$. One can then classically show that $(x_0, y_0)$ is a saddle point of the function $u(x, y)$. The level curve given by $v(x, y)=v\left(x_0, y_0\right)$ thus determines the path of steepest descent of the surface defined by $z=u(x, y)$ for $(x, y, z)\in\mathbb{R}^3$. 

By the complex Morse lemma, there exists a bijective and holomorphic function on a neighborhood of $0$ such that $\omega\mapsto z(\omega)\in\mathbb{C}$, with $z(0)=z_0$ and
$$
f(z(\omega))=f\left(z_0\right)- \omega^2.
$$
It then follows easily that
$$
z'(0)=\sqrt{\frac{2}{-f^{\prime \prime}\left(z_0\right)}}.
$$
Thus, on the path of steepest descent, for $t\in\mathbb{R}$ we have
$f(z(\omega))=f\left(z_0\right)-t^2.$ 
Suppose that this formula is valid from $-\infty$ to $+\infty$ (otherwise the integral should be split around the saddle point and the remainder shown to be negligible). Making a change of variables from $z$ to $t$, we then obtain as $r$ tends to infinity,
\begin{align*}
I(r)&=e^{r f\left(z_0\right)} \int_{-\infty}^{\infty} g(z) \frac{d z}{d t} e^{-r t^2} d t
\\ &\sim
 g\left(z_0\right) e^{r f\left(z_0\right)} \sqrt{\frac{2}{-f^{\prime \prime}\left(z_0\right)}} 
\underbrace{\int_{-\infty}^{\infty} e^{-r t^2} d t}_{\sqrt{\pi/r}}.
\end{align*}

\paragraph{Saddle point depending on a parameter}
In the work we are interested in on bivariate asymptotics, the function $f$ and thus the saddle point depend on a parameter. We then use a uniform method of steepest descent with respect to this parameter. This is possible thanks to an extension of the Morse lemma, which we could not find in the classical literature and which is proved in Appendix A of the article \citeS{franceschi_kourkova_petit_asymptotics_2024}.

When there are several different saddle points, for example $z_+(\alpha)$ and $z_-(\alpha)$, which coincide at a particular value $\alpha_0$, there exist specific methods to determine the uniform asymptotic expansion near $\alpha_0$, see Wong’s chapter in the book \cite{Wong2001}.

\subsection{Stationary distribution asymptotics (non-degenerate)}

This section presents the asymptotic results obtained in the article \citeS{franceschi_kourkova_asymptotic_2017}.
There, we solve a difficult problem raised in the article by \citet[\S 8]{dai_reflecting_2011}, namely the computation of the asymptotics of $\mathbb{P}[Z_\infty \in  x c+B]$ as $x\to \infty$, 
where $ c \in {\mathbb{R}}_+^2$
is a directional vector
and $B \subset {\mathbb{R}}_+^2$ is an arbitrary compact set. More precisely, we determine the full asymptotic expansion of the density
$\pi(x_1,x_2)$ of $\Pi$  as $x_1,x_2 \to \infty$  and $x_2/x_1 \to  {\rm \tan } (\alpha)$, for any given angle $\alpha \in (0,\pi/2)$.
To obtain this finer result, we analytically continue the Laplace transforms on the kernel Riemann surface and then apply transfer theorems and the saddle point method with parameter. This method was developed in the discrete case by Malyshev~\cite{malyshev_asymptotic_1973}.

To state this result, we introduce some notation via Figure \ref{intro:fig:ellipse}.
Recall that the automorphisms $\zeta$ and $ \eta$ leave one of the two coordinates invariant.
The points $\eta \theta^*$ and $\zeta \theta^{**}$ are poles of the Laplace transforms.
For a given angle $\alpha \in [0,\pi/2]$, define the point $\theta(\alpha)$ on the ellipse ${\mathscr E}:=\{ \theta\in\mathbb{R}^2:\gamma(\theta)=0\}$, see Figure~\ref{intro:fig:ellipse}, by
\begin{equation}
\label{thetaalpha11}
\theta(\alpha)={\rm argmax}_{\theta \in {\mathcal E}}\langle \theta \vert e_\alpha  \rangle, \qquad \text{where }e_\alpha=(\cos \alpha, \sin \alpha).
\end{equation} 
This point is the saddle point used in the proof of the theorem below.

\begin{figure}[hbtp]
\centering 
\includegraphics[scale=0.6]{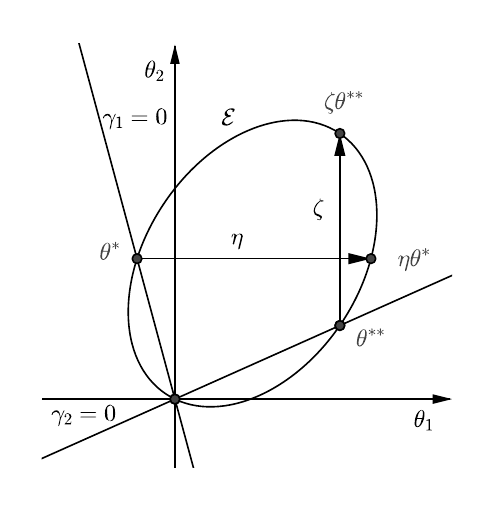} 
\includegraphics[scale=0.6]{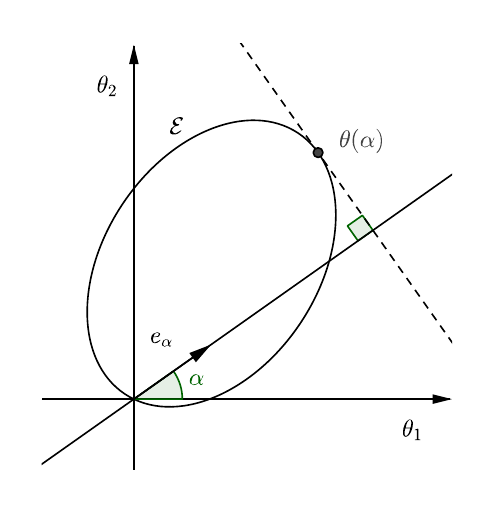} 
\caption{Left: depiction of the ellipse $\mathscr{E}:=\{ \theta\in\mathbb{R}^2:\gamma(\theta)=0\}$, the lines $\{k_1(\theta)=0\}$, $\{k_2(\theta)= 0\}$, and the points 
$\theta^{*}$, $\theta^{**}$, $\eta \theta^*$ and $\zeta \theta ^{**}$. Right: geometric interpretation of the saddle point $\theta(\alpha)$ on $\mathscr{E}$}
 \label{intro:fig:ellipse}
\end{figure}

The following result provides the leading term in the asymptotic expansion of $\pi(r\cos \alpha, r \sin \alpha)$ as $r\to\infty$ and $\alpha\to\alpha_0$, see Figure \ref{intro:fig:asympt}.
\begin{figure}[hbtp]
\centering
 \includegraphics[scale=0.4]{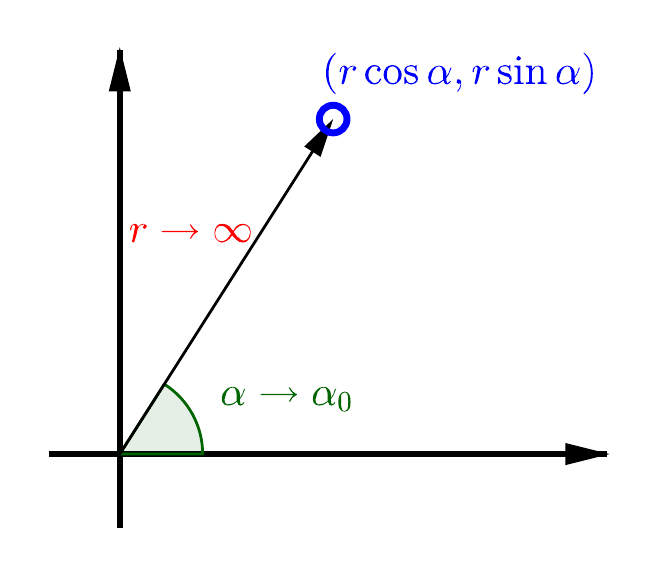}
 \vspace{-0.5cm}
\caption{Illustration of the asymptotic direction in Theorem \ref{intro:thm:asympt}}
\label{intro:fig:asympt}
\end{figure}

\begin{theorem}[Asymptotics of the invariant measure]
\label{intro:thm:asympt}
   Let $e_\alpha=( \cos \alpha, \sin \alpha)$ with $\alpha_0 \in (0, \pi/2)$. 
Then as $r \to \infty$ and $\alpha\to\alpha_0$ we have
\begin{equation}
\pi (r e_\alpha) = 
  (1+o(1))\cdot  \left\{
\begin{array}{ll}
\frac{C_0}{\sqrt{r}}e^{-r \langle e_\alpha \vert\theta(\alpha) \rangle}  & \text{ in } \mathscr{Q}_{--},\\
C_1e^{-r \langle e_\alpha \vert\eta\theta^{*}\rangle} & \text{ in } \mathscr{Q}_{+-},\\
C_2 e^{-r \langle e_\alpha \vert\zeta\theta^{**}\rangle}& \text{ in } \mathscr{Q}_{-+},\\
C_1e^{-r \langle e_\alpha \vert\eta\theta^{*}\rangle}  +
 C_2e^{-r \langle e_\alpha \vert\zeta\theta^{**}\rangle} & \text{ in } \mathscr{Q}_{++},
\end{array}
\right.
\end{equation}
  where $C_0$, $C_1$ and $C_2$ are constants and the sets $\mathscr{Q}_{\pm\pm}$ are explicit parameter-dependent regions.
\end{theorem}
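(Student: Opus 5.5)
We recover the density $\pi$ from its Laplace transform by a two-dimensional inverse Laplace transform, then run a saddle-point analysis that is uniform in the direction angle. The starting point is the functional equation \eqref{eq:EJP-25}, which gives
\[
\widehat\pi(x,y)=-\frac{k_1(x,y)\widehat\pi_1(y)+k_2(x,y)\widehat\pi_2(x)}{K(x,y)} .
\]
The Laplace inversion formula expresses $\pi(re_\alpha)$ as a double integral of $\widehat\pi(x,y)\,e^{-r(x\cos\alpha+y\sin\alpha)}$ over $\{\Re x=\Re y=\text{const}<0\}$.

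\textbf{Step 1: reduce to single integrals.} For fixed $x$, we integrate first in $y$ by residues at the zero $y=Y^{\pm}(x)$ of the kernel. This is where the algebraic curve $\{K=0\}$ enters. The result is a sum of two single complex integrals, schematically
\[
\int \frac{k_2\,\widehat\pi_2(x)}{\partial_y K}(x,Y^+(x))\, e^{-r(x\cos\alpha+Y^+(x)\sin\alpha)}\,dx
\]
and the symmetric one in $y$. In the uniformization, both are integrals along a path on the real-part section of the surface, i.e.\ near the ellipse $\mathscr E$. The phase is $-r\langle e_\alpha,\theta\rangle$ with $\theta$ restricted to $\mathscr E$ (complexified).

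\textbf{Step 2: locate the saddle.} The critical point of $\theta\mapsto\langle e_\alpha,\theta\rangle$ on $\mathscr E$ is exactly $\theta(\alpha)$ from \eqref{thetaalpha11}.

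\textbf{Step 3: deform the contours.} Using the meromorphic continuation of $\widehat\pi_1,\widehat\pi_2$ (Proposition on meromorphic continuation and the difference equation \eqref{eq:diffrel}), we shift each integration contour to the steepest descent path through $\theta(\alpha)$. While doing so we may cross the poles of the continued transforms, namely $\eta\theta^*$ for $\widehat\pi_2$ and $\zeta\theta^{**}$ for $\widehat\pi_1$. These points are zeros of $k_2$, respectively $k_1$, after applying the automorphisms.
\begin{itemize}
\item If a pole is crossed, its residue gives a contribution $C_1e^{-r\langle e_\alpha,\eta\theta^*\rangle}$ or $C_2e^{-r\langle e_\alpha,\zeta\theta^{**}\rangle}$. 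This pure exponential dominates the saddle contribution, because the pole lies strictly inside the region where $\langle e_\alpha,\cdot\rangle$ is smaller than at the saddle.
\item Whether the pole is crossed depends on the relative position of $\theta(\alpha)$ and the pole on $\mathscr E$. This defines the regions $\mathscr Q_{\pm\pm}$: no pole crossed, one crossed, or both crossed.
\end{itemize}

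\textbf{Step 4: evaluate the saddle integral.} The remaining integral through the saddle is handled by the uniform Morse-lemma steepest descent (Appendix A of \cite{franceschi_kourkova_petit_asymptotics_2024} type). It gives
\[
g(\theta(\alpha))\,e^{-r\langle e_\alpha,\theta(\alpha)\rangle}\sqrt{2\pi/(r f'')}\,,
\]
which is the $C_0/\sqrt r$ term in $\mathscr Q_{--}$. In the other regions this term is negligible. The tails of the contours away from the saddle are negligible by the decay of $\widehat\pi_i$ at infinity, which follows from the boundedness in Lemma \ref{prop:BVP_general0} together with $1/\partial_yK$.

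\textbf{Main obstacle.} The hardest part is uniformity as $\alpha\to\alpha_0$, especially when $\alpha_0$ is on a boundary of some $\mathscr Q$, where the pole approaches the saddle. There I would first check that, for $\alpha_0$ in the open regions, the pole stays at positive distance from the saddle, so the uniform Morse lemma suffices. At region boundaries, where saddle and pole coalesce, I would instead use Wong-type uniform expansions involving the complementary error function. A secondary technical check is confirming that no other singularities of the continued $\widehat\pi_i$, such as branch points $x^\pm,y^\pm$ or further iterated poles, are crossed during the deformation. This follows from the difference equation, which locates the poles through the zeros of $\widetilde G$.
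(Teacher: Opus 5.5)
Your proposal follows the paper's own route: Laplace inversion of the functional equation, reduction to single integrals over the kernel curve by residues at $Y^+(x)$ and $X^+(y)$, the saddle point $\theta(\alpha)$, and the competition between the saddle and the poles crossed during contour deformation, which is what defines the regions $\mathscr{Q}_{\pm\pm}$. Two small slips should be fixed. First, $\zeta$ fixes $x$ and $\eta$ fixes $y$, so the pole $\eta\theta^*$ (coming from a zero of $k_1$) belongs to the $\widehat\pi_1$-integral, and $\zeta\theta^{**}$ (from a zero of $k_2$) belongs to the $\widehat\pi_2$-integral, not the reverse. Second, the contour tails are negligible because of the exponential factor (for instance, $\Re Y^+(x)$ grows linearly along vertical lines), not because $\widehat\pi_i$ is bounded; after multiplication by $k_i/\partial_y K$, boundedness alone gives only an $O(1)$ amplitude.
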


The exponents in the integrals stem either from the poles $\eta\theta^*$ and $\zeta\theta^{**}$ or from the saddle point $\theta (\alpha)$. The proof consists of inverting the two-variable Laplace transform and uses standard techniques from complex analysis such as the residue theorem to reduce to the case of a simple integral. This leads to typical integrals for applying the saddle point method on the Riemann surface. There is thus a competition between the poles and the saddle point.

\begin{proof}[Key steps of the proof]
The first step is to meromorphically continue the Laplace transforms.
Using the functional equation and the inversion formula for Laplace transforms,
the density $\pi(x_1,x_2)$ can then be expressed as a double integral.
We then reduce to a single integral using the residue theorem.
\begin{align*}
\pi(z_1,z_2)
&=\frac{-1}{(2\pi i)^2} \int_{-i\infty}^{i\infty} \int_{-i\infty}^{i\infty}
e^{-z_1 x - z_2 y}
\frac{k_1(x,y)\widehat \pi_1(y)+k_2(x,y)\widehat \pi_2(x)}{\gamma(x,y)}
\,\mathrm{d} x \,\mathrm{d} y 
\\
&= \frac{1}{2\pi i} \int_{-i\infty}^{i\infty}
\widehat \pi_2(x)\,k_2\!\bigl(x, Y^+(x)\bigr)\, e^{-z_1 x - z_2 Y^+(x)}
\frac{\mathrm{d} x}{\sqrt{ d(x) }} 
\\
&\quad+\frac{1}{2\pi i} \int_{-i\infty}^{i\infty}
\widehat \pi_1(y)\,k_1\!\bigl(X^+(y), y\bigr)\, e^{-z_1 X^+(y) - z_2 y}
\frac{\mathrm{d} y}{\sqrt{ \widetilde d (y) }}.
\end{align*}
These integrals are typical for applying the saddle point method. 
The coordinates of the saddle point are the critical points of the functions
\begin{equation*}
\cos(\alpha)x+\sin(\alpha)Y^+(x)\quad \text{ and }\quad \cos(\alpha)X^+(y)+\sin(\alpha)y.
\end{equation*}
The saddle point is then the point $\theta(\alpha)$ defined in equation 
\eqref{thetaalpha11}. 
When deforming the integration contour, the poles of the above integrands, which turn out to be the points $\eta\theta^*$ and $\zeta\theta^{**}$, and their residues must be taken into account. 
The leading term in the asymptotics is then determined by the pole if the contour crosses a pole when deformed, and by the saddle point otherwise.
\end{proof}



\subsection{Green's functions asymptotics (non-degenerate)}

The article \citeS{franceschi_kourkova_petit_asymptotics_2024} determines the asymptotic behaviour of the Green’s functions of the reflected Brownian motion in a cone and derives Martin’s boundary from this.

\begin{figure}[hbtp]
\centering
\includegraphics[scale=0.7]{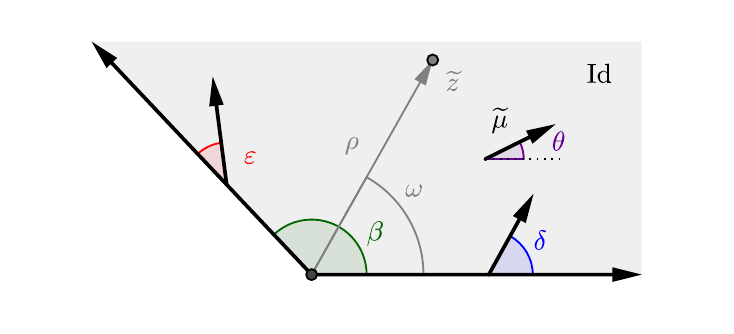}
\vspace{-0.5cm}
\caption{The cone with angle $\beta$, reflection angles $\delta$ and $\varepsilon$, and drift $\widetilde{\mu}$ with direction $\theta$. In gray, the point $\widetilde z$ with polar coordinates $\rho$ and $\omega$.}
\label{fig:cone}
\end{figure}


We use the symbol~$\sim$ to denote an asymptotic expansion of a function. If for some functions $f$ and $g_k$ we write $f(x)\sim \sum_{k=1}^n g_k(x)$ as $x\to x_0$, this means $g_k(x)=o(g_{k-1}(x))$ and $f(x)- \sum_{k=1}^n g_k(x)=o(g_n(x))$ as $x\to x_0$.

We now define the angles
\begin{equation*}
\omega^*:=\theta-2\delta\quad\text{and}\quad 
\omega^{**}:=\theta+2\epsilon .
\label{eq:omega***}
\end{equation*}
We can observe that $$\omega^{*}<\theta<\omega^{**}.$$
The following results are obtained using the analytic approach and the saddle-point method. The points $\omega^*$ and $\omega^{**}$ correspond, in a certain sense, to poles of the Laplace transforms of the Green functions, and the direction $\omega$ corresponds to the saddle point when we invert the Laplace transform.

\begin{theorem}[Asymptotics in the general case]
The Green function ${g}(\rho \cos \omega,\rho \sin \omega)$ has the following asymptotics as $\omega\to\omega_0\in(0,\beta)$ and $\rho\to\infty$, for any $n\in\mathbb{N}$:
\begin{itemize}
\item If $\omega^{*}<\omega_0<\omega^{**}$ then
\begin{equation}
{g}
(\rho \cos \omega,\rho \sin \omega)
\underset{\rho\to\infty \atop\omega\to\omega_0}{\sim}
e^{-2\rho|\widetilde{\mu}| \sin^2 \left( \frac{\omega-\theta}{2} \right)} \frac{1}{\sqrt{\rho}}
  \sum_{k=0}^n \frac{\widetilde{c_k}(\omega)}{ \rho^{k}}
  \label{eq:asymptsaddlepoint}
  \end{equation}
\item If $\omega_0<\omega^*$ then
\begin{equation}
{g}
(\rho \cos \omega,\rho \sin \omega)
\underset{\rho\to\infty \atop\omega\to\omega_0}{\sim}
c^{*} e^{-2\rho|\widetilde{\mu}| \sin^2 \left( {\omega+\delta-\theta} \right)}
+
e^{-2\rho|\widetilde{\mu}| \sin^2 \left( \frac{\omega-\theta}{2} \right)} \frac{1}{\sqrt{\rho}}
  \sum_{k=0}^n \frac{\widetilde{c_k}(\omega)}{ \rho^{k}}
  \label{eq:asymptpole1}
  \end{equation}
\item If $\omega^{**}<\omega_0$ then
\begin{equation}
{g}
(\rho \cos \omega,\rho \sin \omega)
\underset{\rho\to\infty \atop\omega\to\omega_0}{\sim}
c^{**}e^{-2\rho|\widetilde{\mu}| \sin^2 \left( {\omega-\epsilon-\theta} \right)}
+
e^{-2\rho|\widetilde{\mu}| \sin^2 \left( \frac{\omega-\theta}{2} \right)} \frac{1}{\sqrt{\rho}}
  \sum_{k=0}^n \frac{\widetilde{c_k}(\omega)}{ \rho^{k}}
  \label{eq:asymptpole2}
  \end{equation}
\end{itemize}
where $c^*$ and $c^{**}$ are {positive} constants and $c_k(\omega)$ are $\omega$-dependent coefficients such that $\widetilde{c_k}(\omega)\underset{\omega\to\omega_0}{\longrightarrow} \widetilde{c_k}(\omega_0)$.
\label{thm1}
\end{theorem}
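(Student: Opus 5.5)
The plan follows the scheme already used for the invariant measure in Theorem~\ref{intro:thm:asympt}: continue the Laplace transform, invert it, and compare a saddle-point contribution with residue contributions.

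\textbf{Step 1: continuation.} Starting from the transient functional equation \eqref{eq:Petit-24}, we use the meromorphic continuation of $\widehat g_1$ and $\widehat g_2$ given by the difference equation \eqref{eq:diffrel_green}. We then locate their poles in the relevant domain. These come from zeros of $k_2(\widetilde\zeta(\omega))$ and $k_1(\widetilde\eta(\omega))$, that is, from the images of $\theta^*$ and $\theta^{**}$ under the automorphisms, as in Figure~\ref{intro:fig:ellipse}. After the linear change of variables $T$ to the cone of angle $\beta$, these points are naturally parametrised by angles on the circle $\{K=0\}$. A direct trigonometric computation should show that they sit at the angles $\omega^*=\theta-2\delta$ and $\omega^{**}=\theta+2\epsilon$.

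\textbf{Step 2: inversion to one-dimensional integrals.} We invert the two-variable Laplace transform of $g(z_0,\cdot)$ and apply the residue theorem in one variable, exactly as in the displayed proof above. This writes $g(\rho\cos\omega,\rho\sin\omega)$ as a sum of integrals of the form
\[
\frac{1}{2\pi i}\int \widehat g_2(x)\,k_2(x,Y^+(x))\,e^{-\rho(\cos\omega\, x+\sin\omega\, Y^+(x))}\frac{dx}{\sqrt{d(x)}} .
\]
There is a symmetric integral in $y$, plus the contribution of the source term $e^{\langle (x,y),z_0\rangle}$. The source term simply gives the free Green function of a drifted Brownian motion, whose asymptotics are already of the saddle-point form.

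\textbf{Step 3: steepest descent.} In the uniformising variable the phase is $-\rho\langle e_\omega,(x(s),y(s))\rangle$. After the cone normalisation, the critical point lies on the circle $\{K=0\}$ in the direction $\omega$. Evaluating the phase there gives the exponent $-2\rho|\widetilde\mu|\sin^2\frac{\omega-\theta}{2}$. We deform the integration contour onto the steepest descent path through this saddle point. The uniform Morse lemma of \cite{franceschi_kourkova_petit_asymptotics_2024} then lets us expand in powers of $\rho^{-1}$, uniformly for $\omega$ near $\omega_0$. This yields the full series $\rho^{-1/2}\sum_k \widetilde c_k(\omega)\rho^{-k}$.

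\textbf{Step 4: poles crossed by the deformation.} The deformation crosses the pole at angle $\omega^*$ exactly when $\omega_0<\omega^*$, and the pole at $\omega^{**}$ exactly when $\omega_0>\omega^{**}$. Each crossing adds a residue term whose exponent is the phase evaluated at the pole. This produces the terms $c^*e^{-2\rho|\widetilde\mu|\sin^2(\omega+\delta-\theta)}$ and $c^{**}e^{-2\rho|\widetilde\mu|\sin^2(\omega-\epsilon-\theta)}$. In each case the pole term dominates, and the saddle-point series is kept as the subsequent terms of the expansion. Positivity of $c^*$ and $c^{**}$ would follow from the sign of the residue, that is, from positivity of $\widehat g_1$ and $\widehat g_2$ on the real segment together with the signs of the $k_i$.

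\textbf{Main obstacle.} The hardest step is the global control of the contour deformation. We must show that $\widehat g_i$ has no other singularities between the original contour and the steepest descent path, and that the tails of the integrals are exponentially negligible uniformly in $\omega$. The non-homogeneous term $\widetilde H$ in \eqref{eq:diffrel_green} also introduces no new poles in that region, and this has to be checked. I would establish these points through explicit bounds on $\widehat g_i$ along vertical lines, derived from the functional equation. A secondary difficulty is uniformity when $\omega_0$ is close to $\omega^*$ or $\omega^{**}$, where the pole and the saddle point coalesce. There one would need the methods of \cite{Wong2001}, or else restrict to $\omega_0\neq\omega^*,\omega^{**}$, which is what the statement does.
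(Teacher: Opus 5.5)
Your outline follows the route the paper indicates, and that \cite{franceschi_kourkova_petit_asymptotics_2024} carries out. You continue through the functional equation, reduce the Laplace inversion to single integrals by one residue step, apply uniform steepest descent through the point of the circle $\{K=0\}$ at angle $\omega$, and pick up residues when the contour deformation crosses a pole. Your placement of the poles at circle angles $\theta-2\delta$ and $\theta+2\epsilon$ is correct: these are the $\zeta$-image of the zero of $k_2$ and the $\eta$-image of the zero of $k_1$. The crossing conditions $\omega_0<\omega^*$ and $\omega_0>\omega^{**}$ are also correct.

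One claim in Step 4 is wrong as written. Write $v^*=|\widetilde\mu|(e^{i\omega^*}-e^{i\theta})$ for the pole in cone coordinates and $e_\omega=(\cos\omega,\sin\omega)$. The residue there contributes $e^{-\rho\langle e_\omega,v^*\rangle}$, and its exponent is linear in $(\cos\omega,\sin\omega)$:
\[
\rho\langle e_\omega,v^*\rangle=\rho|\widetilde\mu|\bigl(\cos(\omega-\omega^*)-\cos(\omega-\theta)\bigr)=2\rho|\widetilde\mu|\sin\delta\,\sin(\theta-\delta-\omega).
\]
Likewise the pole at $\omega^{**}$ gives $2\rho|\widetilde\mu|\sin\epsilon\,\sin(\omega-\epsilon-\theta)$. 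This is not $2\rho|\widetilde\mu|\sin^2(\omega+\delta-\theta)$.

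The two expressions agree only at $\omega=\omega^*$. There the linear exponent is tangent to the saddle exponent $|\widetilde\mu|(1-\cos(\omega-\theta))$, which is the support function of the disk $\{K<0\}$. That tangency is exactly why the pole term strictly dominates for $\omega\neq\omega^*$, as you assert.

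The displayed $\sin^2$ exponent has twice the slope of the saddle exponent at $\omega^*$, so it need not dominate. For example, take $\theta=\pi/2$, $\delta=\pi/6$ and $\omega=\pi/12<\omega^*$. The displayed exponent gives $|\widetilde\mu|$, while the saddle gives about $0.74|\widetilde\mu|$ and the correct pole exponent about $0.71|\widetilde\mu|$.

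So the displayed exponents appear to be a misprint of the linear ones. Step 4 should state what the residue computation actually yields, rather than assert that it reproduces the display.
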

There are four possible cases, illustrated in Figure~\ref{fig:coneasympt}.
\begin{figure}[hbtp]
\centering
\includegraphics[scale=0.85]{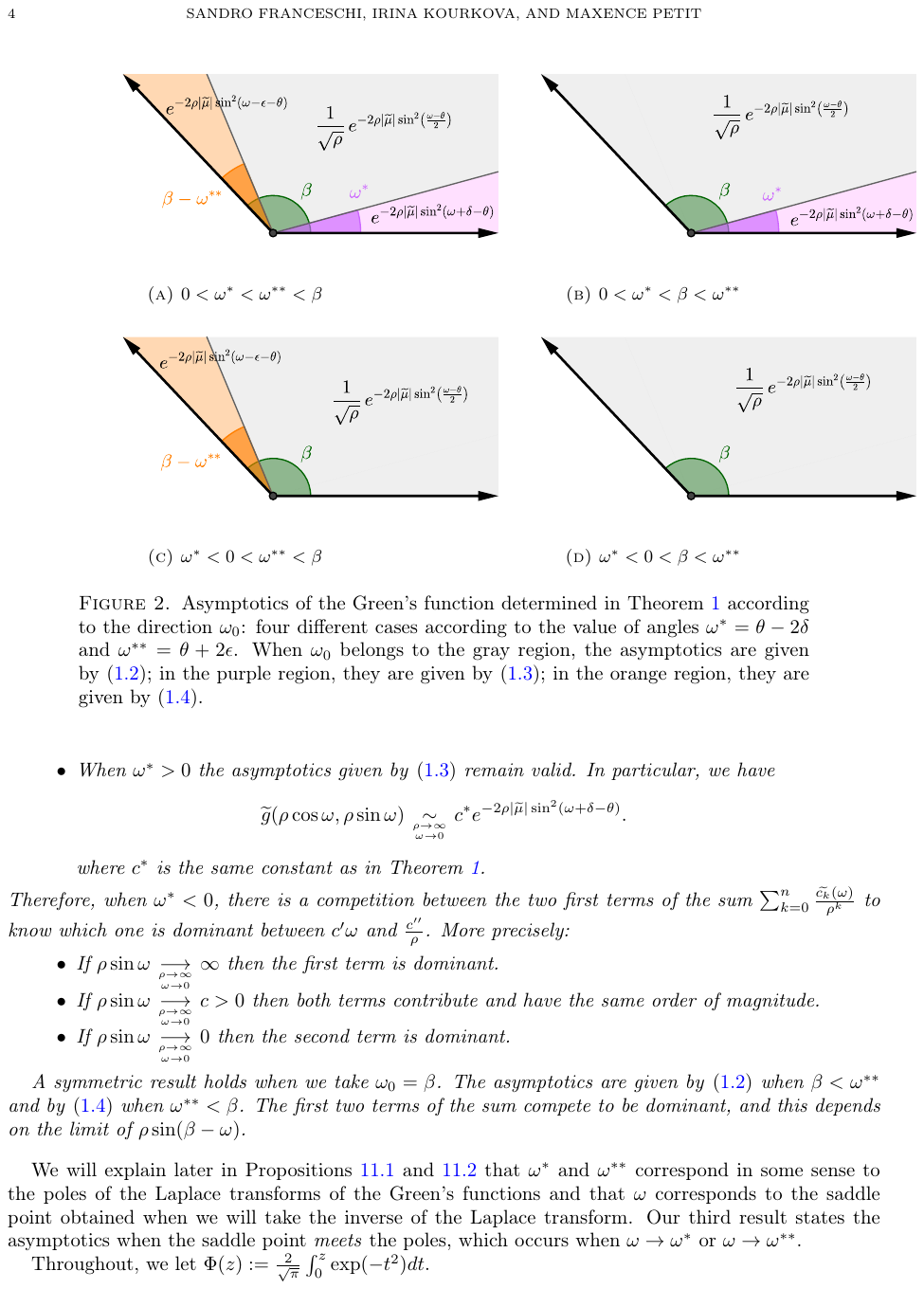}
\caption{Asymptotics of the Green function determined in Theorem~\ref{thm1} depending on the direction $\omega_0$: four different cases depending on the values of the angles $\omega^{*}=\theta-2\delta$ and $\omega^{**}=\theta+2\epsilon$. When $\omega_0$ lies in the gray region, the asymptotics are given by \eqref{eq:asymptsaddlepoint}, in the purple region by \eqref{eq:asymptpole1}, and in the orange region by \eqref{eq:asymptpole2}.}
\label{fig:coneasympt}
\end{figure}
The article \citeS{franceschi_kourkova_petit_asymptotics_2024} also contains theorems giving the precise asymptotic behaviour along the boundary, as well as in cases where the saddle point meets a pole.
The Martin boundary associated with this process and the corresponding harmonic functions can be computed from all these asymptotics.

\paragraph{Reflected Brownian motion in the Half-plane}

The article \citeS{franceschi_ernst_asymptotic_2021} determines the asymptotic behavior of the Green function of obliquely reflected Brownian motion in a half-plane. This article resolves an open question posed by Harrison in 2013 \cite{Web}. 

\paragraph{Killed space-time Brownian motion}

The article \cite{franceschi_spacetime_2024} studies a space-time Brownian motion with drift $\gamma\in (0,1)$,
\begin{equation}
B(t):=(t_0+t,y_0+W(t)+\gamma t),
\label{eq:B}
\end{equation}
where $W(t)$ is a standard Brownian motion, and $B$ is
killed (not reflected, this time!) at the boundary of the cone 
\begin{equation}
C:= \{ (t',y): 0<y<t' \}
\label{eq:C}
\end{equation}
which defines a moving boundary with two sides. We also define $T$ as the first exit time from the cone
\begin{equation}
T:=\inf \{t\geqslant 0 : B(t)\notin C \}
\label{eq:T}
\end{equation}
and the exit times on each edge of the cone
\begin{equation}
T_1:=\inf \{ t\geqslant 0 : B(t)=(t_0+t,t_0+t) , \ t>0 \}
\quad\text{and}\quad
T_2:=\inf \{ t\geqslant 0 : B(t)=(t_0+t,0) , \ t>0 \} .
\label{eq:T1T2}
\end{equation}
This article determines the parabolic Martin boundary and all the harmonic functions associated with this process. 
This model, with boundary conditions, cannot be handled using Girsanov's formula alone. The space-time Brownian motion conditioned to remain in a cone related to the root system of an affine Lie algebra has been studied, among others, by Defosseux \cite{Defosseux2016}.

\section{Infinite series of product forms via compensation approach}
\label{sec:compa}

The compensation approach originates from the works of \cite{adan91} and \cite{Adan_Wessels_Zijm_1993}, where it was developed for queueing problems and two-dimensional Markov processes, and has since proved particularly effective for the analysis of singular random walks~\cite{hoang_23}.

In our context, this method, which applies only in the degenerate case, constructs the stationary distribution or the Green's functions as an infinite series of product-form terms by iteratively compensating boundary errors. See \citeS{petit2024}, which focuses on Green's functions, and \citeS{franceschi_ichiba_karatzas_raschel_degenerate_2024}, which focuses on the invariant measure.

Here is an illustration of the method.
The invariant measure $\pi$ satisfies the following partial differential equation \cite{HaRe-81}
\begin{equation}
\begin{cases} {\mathcal{G}^*} \pi (u,v) =0  & (H_0),
\\
\partial_{{R^*_1}}\pi (0,v) -2\mu_1 \pi (0,v) =0, & (H_1),
\\
\partial_{{R^*_2}}\pi (u,0) -2\mu_2 \pi (u,0) =0 & (H_2).
\end{cases}
\label{eq:pdedeg}
\end{equation}
Solving this equation using the compensation approach leads to an explicit formula for the bivariate density of the invariant measure.
\begin{theorem}[Bivariate density] We have
$$\pi(u,v) =  C \sum_{n\geq 0} c_n e^{-a_n u-b_n v}+ C' \sum_{n\geq 0} c_n' e^{-a_n' u-b_n' v} . $$
All constants are made explicit in \citeS{franceschi_ichiba_karatzas_raschel_degenerate_2024}:
\begin{itemize}
   \item There is a recursive computation of the constants;
   \item $a_n,b_n,a_n',b_n'$ are quadratic polynomials in $n$, see Figure~\ref{fig:parabolecomp};
   \item $c_n=P_8(n)+(-1)^nQ_8(n)$
   and $c_n'=P_8'(n)+(-1)^nQ_8'(n),$
   \\ where $P_8$, $Q_8$, $P_8'$ and $Q_8'$ are polynomials of degree 8 in the symmetric case, and in the general case $$
c_n\underset{n\to\infty}{\sim} n^{2({-\gamma}+1)} .
$$
\end{itemize}
\end{theorem}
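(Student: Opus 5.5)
The plan is to build $\pi$ directly as a solution of the PDE \eqref{eq:pdedeg}, in the spirit of Adan--Wessels--Zijm. I will then identify it with the invariant density through the uniqueness property of the BAR (Proposition~\ref{prop:bar}): a positive integrable density whose boundary traces satisfy \eqref{d} is the invariant measure.

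\textbf{Step 1: the exponential building blocks.} Every exponential $e^{-au-bv}$ with $(-a,-b)$ on the adjoint kernel curve $K^*(-a,-b)=0$ satisfies $(H_0)$. In the degenerate case $\rho=\pm1$ this curve is the parabola $\mathcal S$, with the polynomial uniformization $x(s)=2s(s+\mu_2)$, $y(s)=2s(s-\mu_1)$ (up to the sign conventions of the adjoint). Hence $a=a(s)$ and $b=b(s)$ are quadratic in $s$.

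\textbf{Step 2: the compensation recursion.} I start from a term $c_0e^{-a_0u-b_0v}$ that satisfies $(H_1)$; equivalently, $s_0$ is chosen so that the linear boundary operator vanishes there. Such an $s_0$ is essentially the zero $s_1$ of $k_1$, which is the pole $\eta\theta^*$ seen on the curve. This term violates $(H_2)$. To compensate, I add a term with the same $a$, obtained by applying the involution $\widehat\zeta$, i.e. the other point of the curve with equal $x$-coordinate. Its coefficient is fixed by requiring that $(H_2)$ holds for the sum. That new term in turn violates $(H_1)$, and I compensate with $\widehat\eta$, which keeps $b$ fixed.

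Since $\widehat\eta\widehat\zeta s=s+\mu_1+\mu_2$, the parameters are $s_n=s_0+n(\mu_1+\mu_2)$ (alternating between the two families). This gives $a_n,b_n$ quadratic in $n$, which is the parabola picture. The coefficients satisfy
\[c_{n+1}/c_n=\text{ratio of the boundary operators } k_i^*,\]
evaluated at consecutive points. This ratio is exactly $\widehat G$ or its adjoint, a rational function of $s$ with factors of the form $(s-s_1)(s+s_2+\mu_2)/\bigl((s-s_2)(s+s_1+\mu_2)\bigr)$.

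\textbf{Step 3: the coefficients.} The product of these ratios is a ratio of Gamma functions, matching the decoupling function $D$. Stirling's formula then gives $c_n\sim n^{2(1-\gamma)}$, with $\gamma$ as defined in the degenerate classification. In the symmetric case the Gamma ratios telescope into rational functions of $n$, and the alternating sign produced by the two involutions gives the form $P_8(n)+(-1)^nQ_8(n)$.

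\textbf{Step 4: the second series and normalisation.} The series starting from $\zeta\theta^{**}$ (the zero $s_2$ of $k_2$) produces the second sum $C'\sum_n c'_ne^{-a'_nu-b'_nv}$. The constants $C$ and $C'$ are then fixed by total mass $1$ together with the vertex/compatibility condition coming from the functional equation \eqref{eq:EJP-25}. One checks that this condition is the same as matching the residues of $\widehat\pi_1$ and $\widehat\pi_2$ at their first poles.

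\textbf{Main obstacle: convergence and positivity.} The exponents grow quadratically in $n$ while $|c_n|$ grows only polynomially, so the series converges absolutely and uniformly on $\{u,v\ge\varepsilon\}$ and can be differentiated term by term. The delicate part is the corner. The estimate must hold uniformly up to $u=0$ or $v=0$ separately, using that at least one of $a_n,b_n$ tends to $+\infty$ quadratically while the other stays nonnegative. It also requires checking that no $a_n$ or $b_n$ is nonpositive, which should follow from the positive recurrence conditions \eqref{v11pos}. Integrability follows from the same estimates.

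Rather than proving positivity directly, I would compare Laplace transforms. Summing the series termwise gives a meromorphic function with poles at $y=-b_n$. It satisfies the same difference equation as $\widehat\pi_1$ and has the same first pole and residue. Therefore it coincides with the Tutte-invariant expression $D\cdot R(\tan(\cdot))$ from the previous section, because the ratio of the two is an invariant without poles and is hence constant. Positivity and the fact that this is the invariant density then come for free from the identification.
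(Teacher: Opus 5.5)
There is a genuine gap where you pass from the compensation series to the invariant density. As the paper stresses, $Cp+C'p'$ satisfies $(H_0)$--$(H_2)$ of \eqref{eq:pdedeg} for \emph{every} pair $(C,C')$. Total mass only fixes the overall scale, so everything hinges on the ratio $C'/C$.

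The ``vertex/compatibility condition coming from \eqref{eq:EJP-25}'' cannot be supplied from the compensation side. Equation \eqref{eq:EJP-25} is the Laplace transform of the BAR. A classical solution of \eqref{eq:pdedeg} satisfies the BAR only if the flux through small arcs around the corner vanishes in the integration by parts. For a solution continuous at the corner, this flux is a term proportional to $\bigl(\tfrac12(r_1+r_2)-\rho\bigr)\pi(0,0)$, the same vertex term as in \eqref{eq:FE_h_right}. That flux is exactly the unknown selection condition. It cannot be evaluated from $(a_n,b_n,c_n)$, because your series do not converge at the corner in general.

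Saying the condition ``is the same as matching the residues of $\widehat\pi_1$ and $\widehat\pi_2$ at their first poles'' simply imports those residues from the explicit Tutte formula. That is what the paper does: it fixes $C,C'$ with the boundary density obtained independently by Tutte's invariants. It only conjectures that nonnegativity would single out the right combination.

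Your analytic identification also fails exactly where it is needed.
\begin{itemize}
\item \emph{Integrability is not automatic.} Since $c_n$ grows polynomially while $a_n,b_n$ grow quadratically, the absolute bounds give only $\sum_n|c_n|e^{-a_nu-b_nv}\lesssim|z|^{-(d+1)/2}$ near the corner, with $d$ the growth exponent of $c_n$. This is not integrable for large $d$. A single series is in fact non-integrable: in the symmetric case the trace on $\{v=0\}$ has coefficients of degree $7$ in $n$ (the paper's univariate formula). So the trace of one series, or of a wrong combination, behaves like a nonzero multiple of $u^{-4}$ as $u\to0$.
\item \emph{Integrability selects the ratio.} It comes only from a cancellation between $p$ and $p'$ for the correct $C'/C$. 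The full sum over $\mathbb{Z}$ of an odd polynomial in $n+\mu_1/2$ against $e^{-(n+\mu_1/2)^2u}$ is flat at $u=0$ by Poisson summation, whereas half of it is not.
\item \emph{The termwise transform diverges.} $\sum_n c_n/(b_n-y)$ diverges as soon as $\sum_n|c_n|/b_n=\infty$, e.g.\ in the symmetric case. So there is no meromorphic function to compare with.
\item \emph{The difference equation is not automatic.} Even after a regularization, poles and residues determine a meromorphic function only up to an entire summand. The difference equation for your function is therefore not a consequence of the residue recursion; it is the Laplace-transformed BAR, i.e.\ the selection condition itself.
\item \emph{The invariant step needs growth bounds.} A pole-free invariant need not be constant: $\cos\bigl(2\pi(s+\mu_2/2)/(\mu_1+\mu_2)\bigr)$ is entire, non-constant and invariant under $\widehat\zeta$ and $\widehat\eta$. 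Moreover, the ratio may have poles at zeros of $\widehat\pi_1$, and a single residue cannot fix two constants.
\end{itemize}

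The workable direction is the reverse one. Start from the explicit $\widehat\pi_1,\widehat\pi_2$, push the Bromwich contour to the right, and sum the residues at the orbit points $y(s_n)$; this needs decay of the transforms between poles. By the difference equation these residues obey your recursion, which identifies the boundary traces with those of $Cp+C'p'$ and fixes $C$ and $C'$.

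Three smaller slips. The first term sits at $\widehat\eta s_1$, not at $s_1$, since $(H_1)$ for $e^{-au-bv}$ on the curve reads $k_1(\eta(a,b))=0$. The poles are at $y=b_n$, not $-b_n$. The coefficient ratio over one step $s\mapsto s+\mu_1+\mu_2$ is $\widehat G(s)\,k_1(s+\mu_1+\mu_2)/k_1(s)$, not $\widehat G(s)$; the telescoping factor supplies the extra $n^2$ in $n^{2(1-\gamma)}$.
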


\begin{figure}[H]
\begin{center}
\includegraphics[width=0.43\textwidth]{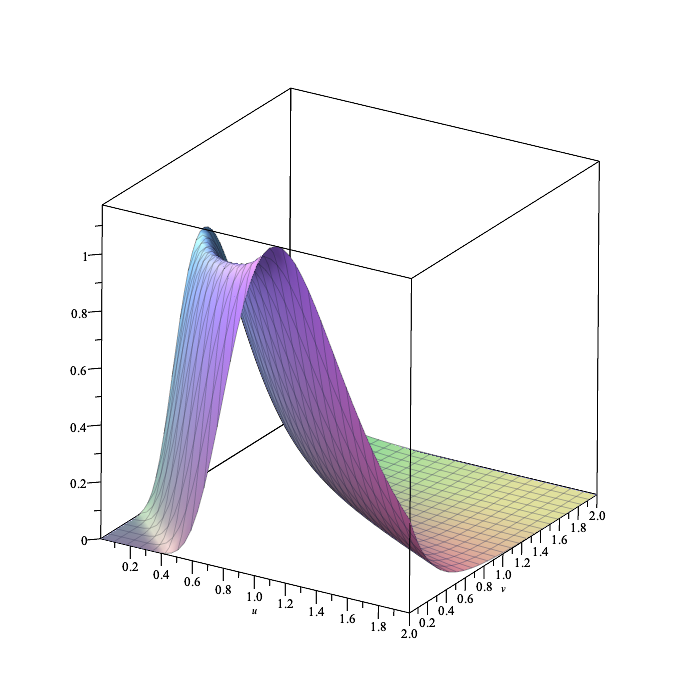}
\caption{Graph of the bivariate density}
\end{center}
\end{figure}

Here is a univariate illustration of this result.
For example, when $\gamma$ is a positive integer $m$, the density on the boundary equals to
$$\pi(u,0) = P_m\left(\frac{d}{du}\right) {\theta_{\mu_1}}(e^{-u})$$
where 
$${\theta_{\mu_1}}(q) := \sum_{n\in\mathbb Z}(n+\tfrac{\mu_1}{2}) q^{n(n+\mu_1)}$$ is a {{Jacobi theta-type function}} and $P_m$ an explicit polynomial of degree $m$.
In particular, in the case of symmetric collision $r_1=r_2=-1/2$, $\gamma=3$ we show that
 $$\pi(u,0)=\sum_{n\in\mathbb Z}(n-1)n(n+1)(n-1+\mu_1)(n+\mu_1)(n+1+\mu_1)(n+\tfrac{\mu_1}{2}) e^{-n(n+\mu_1)u}.$$

\begin{center}
\begin{figure}[h]
\includegraphics[width=0.9\textwidth]{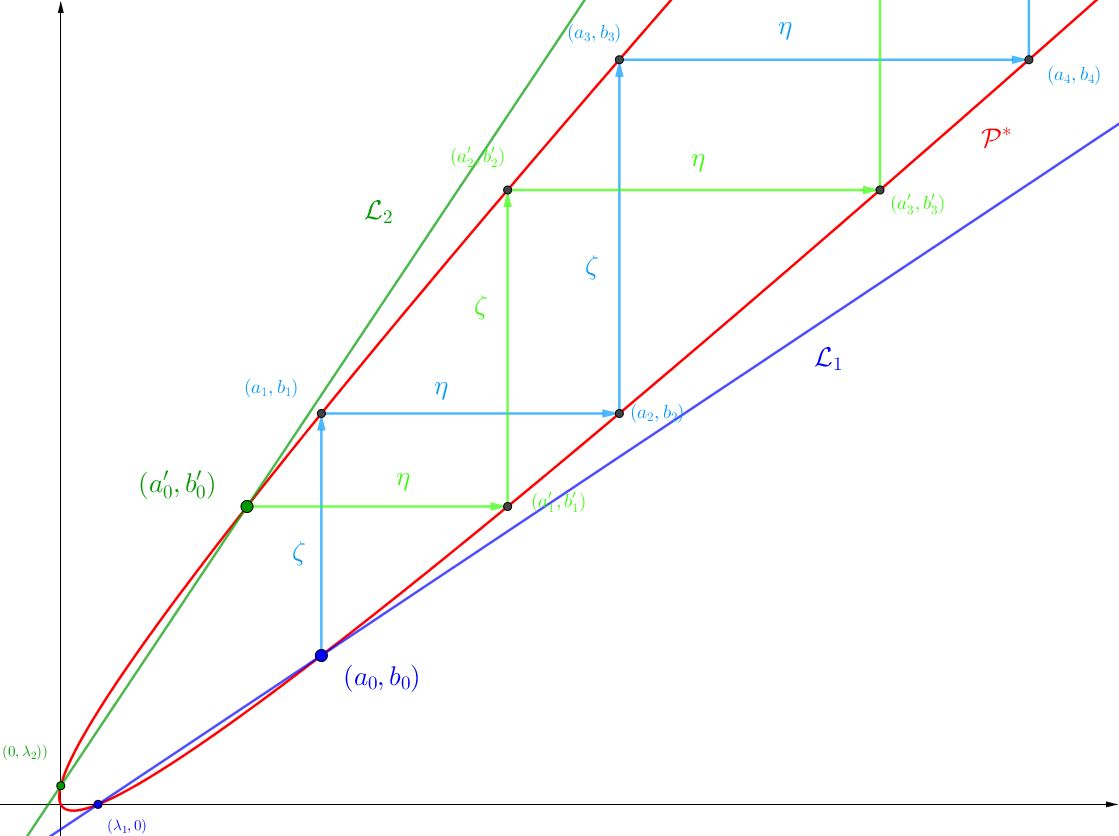}
\caption{Construction procedure of the constants $a_n,b_n,a_n',b_n'$}
\label{fig:parabolecomp}
\end{figure}
\end{center}

\paragraph{Heuristics of the compensation method}
Here are the main steps to obtain the previous result. We look for exponential functions satisfying $(H_0)$, $(H_1)$, and $(H_2)$. We have
\begin{align*}
e^{-au-bv}\text{ satisfies } (H_0)
\Longleftrightarrow
(a,b)\in \mathcal{P}^*:= \{(x,y)\in\mathbb{R}^2: (x-y)^2-2\mu_1x-2\mu_2y=0 \}
\end{align*}
and similarly
\begin{align*}
\nonumber
e^{-au-bv}\text{ satisfies } (H_1)
\Longleftrightarrow
(a,b)\in \mathcal{L}_1, \text{ where } 
\mathcal{L}_1:=\{(x,y)\in\mathbb{R}^2: 2x-3y-2\mu_1=0 \},
\end{align*}
\begin{align*}
\nonumber
e^{-a'u-b'v}\text{ satisfies } (H_2)
\Longleftrightarrow
(a',b')\in\mathcal{L}_2, \text{ with } 
\mathcal{L}_2:=\{(x,y)\in\mathbb{R}^2: 2y-3x-2\mu_2=0 \}.
\label{eq:L2}
\end{align*}
The parabola $\mathcal{P}^*$ and the lines $\mathcal{L}_1$ and $\mathcal{L}_2$ defined above can be visualized in Figure~\ref{fig:parabolecomp}, as well as the points $(a_0,b_0)\in \mathcal{P}^* \cap \mathcal{L}_1$ and $(a'_0,b'_0)\in \mathcal{P}^* \cap \mathcal{L}_2$.
We choose the constants $c_n$ and $c_n'$ to obtain the two unilateral compensations starting from $(a_0,b_0)$ and $(a_0',b_0')$:

\begin{equation*}
\label{eq:value_p(u,v)}
   p(u,v):=\overunderbraces{&\br{1}{\in H_1}& &\br{3}{\in H_1}& &\br{3}{\in H_1}}%
  {&c_0e^{-a_0u-b_0v} &+& c_1e^{-a_1u-b_1v}  &+&c_2e^{-a_2u-b_2v} &+&c_3e^{-a_3u-b_3v} &+ & c_4e^{-a_4u-b_4v} &+ \cdots}%
  {& \br{3}{\in H_2} & &\br{3}{\in H_2}} \in (H_0),
\end{equation*}
\begin{equation*}
\label{eq:value_pp(u,v)}
   p'(u,v):=\overunderbraces{&\br{3}{\in H_1}& &\br{3}{\in H_1}}%
  {&c'_0e^{-a'_0u-b'_0v} &+& c'_1e^{-a'_1u-b'_1v}  &+&c'_2e^{-a'_2u-b'_2v} &+&c'_3e^{-a'_3u-b'_3v} &+ & c'_4e^{-a'_4u-b'_4v} &+ \cdots}%
  {& \br{1}{\in H_2} & &\br{3}{\in H_2} & &\br{3}{\in H_2}} \in (H_0).
\end{equation*}
This approach raises several questions.
For all values of $C$ and $C'$, the linear combination
    \begin{equation*}
\label{eq:convex_combin_ppp}
   Cp(u,v)+C'p'(u,v)
\end{equation*}
is a solution to the partial differential equation \eqref{eq:pdedeg}. In this case, the compensation approach yields an infinite number of solutions. And we must determine which one is the correct one to adjust the constants $C$ and $C'$. This can be done using the previous results giving the boundary density. We can note that the functions $p$ and $p'$ may be negative, and we conjecture that there actually exists a unique choice of $C$ and $C'$ (up to a global scaling) such that $Cp+C'p'$ is non-negative.

In general, degeneracy is required to ensure convergence. If we had considered a non-degenerate Brownian motion, we would have obtained an ellipse instead of a parabola, and the sequence obtained by compensation would not necessarily have converged (since the $(a_n, b_n)$ belonging to the ellipse, which is bounded, would not tend to minus infinity).

In three or more dimensions, this method can be applied in cases of advanced degeneracy. Compensation is then performed by summing over a binary tree (or a tree of higher degree) rather than over $\mathbb{Z}$, as is the case in two dimensions. See, for example, the article by \citet{adan_wessels_zijm_92}.

\newpage
\appendix

\section{Scaling and normalization of the reflected Brownian motion}
\label{rescaling}

We describe a canonical normalization of the reflected Brownian motion in the quadrant obtained by diagonal scaling and time rescaling. This change of variables normalizes the covariance matrix, the drift, and the reflection matrix.

\begin{prop}[Canonical normalization of the reflected Brownian motion]
Let $(Z_t)_{t\ge 0}$ be a reflected Brownian motion in $\mathbb{R}_+^2$ satisfying
\[
Z_t = Z_0 + \mu t + B_t + R L_t,
\]
where the covariance matrix, the reflecting matrix and the drift are
\[
\Sigma =
\begin{pmatrix}
\sigma_{11} & \sigma_{12}\\
\sigma_{12} & \sigma_{22}
\end{pmatrix},
\qquad
R =
\begin{pmatrix}
r_{11} & r_{12}\\
r_{21} & r_{22}
\end{pmatrix},
\qquad
\mu= \begin{pmatrix}
\mu_1 \\
\mu_2 
\end{pmatrix}.
\]
Define
\begin{equation}
A =
\begin{pmatrix}
\sigma_{11}^{-1/2} & 0\\
0 & \sigma_{22}^{-1/2}
\end{pmatrix},
\qquad
c = |A\mu|= \sqrt{
\frac{\mu_1^2}{\sigma_{11}} + \frac{\mu_2^2}{\sigma_{22}}
},
\label{eq:Ac}
\end{equation}
and introduce the process
\[
\bar Z_t = c\, A\, Z_{t/c^2}.
\]
Then $(\bar Z_t)_{t\ge 0}$ is a reflected Brownian motion in $\mathbb{R}_+^2$ satisfying
\[
\bar Z_t = \bar Z_0 + \bar\mu t + \bar B_t + \bar R\, \bar L_t,
\]
where the covariance matrix, the reflecting matrix and the drift are
\[
\bar\Sigma =
\begin{pmatrix}
1 & \rho\\
\rho & 1
\end{pmatrix}
\text{ with }
\rho = \frac{\sigma_{12}}{\sqrt{\sigma_{11}\sigma_{22}}},
\quad
\bar R =
\begin{pmatrix}
1 &
\sqrt{\frac{\sigma_{22}}{\sigma_{11}}} \frac{ r_{12}}{r_{22}}\\[1em]
\sqrt{\frac{\sigma_{11}}{\sigma_{22}}} \frac{ r_{21}}{r_{11}} &
1
\end{pmatrix},
\quad
\bar\mu = \frac{A\mu}{|A\mu|}
\text{ with }
|\bar\mu|=1,
\]
and
\[
\bar L_t^i = \frac{cr_{ii} }{\sqrt{\sigma_{ii}}}\; L_{t/c^2}^i,
\qquad i=1,2,
\]
is the local time of the process on the axes.
\end{prop}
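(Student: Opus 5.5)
The plan is a direct pathwise change of variables. We substitute the decomposition of $Z$ into the definition of $\bar Z$ and check, term by term, that the result is again a solution of the Skorokhod problem of Definition~\ref{intro:def:MBsemimartingale}, now with data $(\bar\Sigma,\bar\mu,\bar R)$.

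Evaluating the decomposition at time $t/c^2$ and multiplying by $cA$ gives
\[
\bar Z_t = cA Z_0 + \frac{A\mu}{c}\, t + cA B_{t/c^2} + cA R\, L_{t/c^2}.
\]
We then treat the terms in order.

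\emph{Drift.} By the definition~\eqref{eq:Ac} of $c$, we have $A\mu/c = A\mu/|A\mu| = \bar\mu$, which has unit norm. This uses $\mu\neq 0$, which we assume so that $c>0$.

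\emph{Brownian part.} Set $\bar B_t := cA B_{t/c^2}$. Brownian scaling shows that $\bar B$ is a continuous centered Gaussian process with independent increments. It is adapted to the time-changed filtration $\bar{\mathcal F}_t=\mathcal F_{t/c^2}$. Its covariance is
\[
c^2 A\Sigma A\,(t/c^2) = A\Sigma A\, t,
\]
and $A\Sigma A$ is exactly the correlation matrix $\bar\Sigma$ with $\rho=\sigma_{12}/\sqrt{\sigma_{11}\sigma_{22}}$. Hence $\bar B$ is an $(\bar{\mathcal F}_t)$-Brownian motion with covariance $\bar\Sigma$.

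\emph{Reflection part.} Let $D=\mathrm{diag}\bigl(r_{11}/\sqrt{\sigma_{11}},\,r_{22}/\sqrt{\sigma_{22}}\bigr)$ and write
\[
cAR\,L_{t/c^2} = (ARD^{-1})\,\bigl(cD\,L_{t/c^2}\bigr) = \bar R\,\bar L_t.
\]
A direct entrywise computation gives the following. The $j$-th column of $AR$ is $\bigl(r_{1j}/\sqrt{\sigma_{11}},\, r_{2j}/\sqrt{\sigma_{22}}\bigr)^\top$. Dividing it by $r_{jj}/\sqrt{\sigma_{jj}}$ produces a matrix with unit diagonal. Its off-diagonal entries are $\sqrt{\sigma_{22}/\sigma_{11}}\,r_{12}/r_{22}$ and $\sqrt{\sigma_{11}/\sigma_{22}}\,r_{21}/r_{11}$, which is the claimed $\bar R$.

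\emph{Regulator properties.} We check that $\bar L$ is an admissible regulator process. It is continuous and starts at $0$. Its components are nondecreasing because $c>0$ and $r_{ii}>0$; the sign of $r_{ii}$ is the standard convention, since it is needed for the reflection to push into the quadrant. Because $A$ is diagonal with positive entries, $\bar Z^{(i)}_t=0$ if and only if $Z^{(i)}_{t/c^2}=0$. Therefore $\bar L^i$ increases only when $\bar Z^{(i)}=0$. Finally $\bar Z_t\in\mathbb R_+^2$ for all $t$, so $(\bar Z,\bar L)$ solves the Skorokhod problem for $\bar B_t+\bar\mu t$ with reflection matrix $\bar R$.

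\emph{Local time identification.} It remains to justify the last claim, that $\bar L^i$ is the boundary local time. With $\bar r_{ii}=1$ and $\bar\sigma_{ii}=1$, apply Tanaka's formula to $\bar Z^{(i)}$, which is a semimartingale with martingale part of quadratic variation $t$. This identifies $\bar L^i$ with one half of the symmetric semimartingale local time of $\bar Z^{(i)}$ at $0$, which is the normalization used in the body of the paper.

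I expect no real obstacle here. The only delicate points are bookkeeping: making sure the time change is applied consistently to the filtration, so that $\bar B$ is a Brownian motion for the same filtration that $\bar L$ is adapted to; and keeping the $\sqrt{\sigma_{ii}}$ factors in the correct places in $\bar R$. Uniqueness in law of the original SRBM then transfers to $\bar Z$ through the same bijective transformation.
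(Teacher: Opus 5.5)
Your proof is correct and is essentially the paper's argument. You evaluate the decomposition at $t/c^2$, apply Brownian scaling to $cAB_{t/c^2}$, and factor $cAR\,L_{t/c^2}=\bar R\,\bar L_t$; your $D$ is the paper's $DA$, so your $ARD^{-1}$ coincides with its $ARA^{-1}D^{-1}$. Your checks of $\mu\neq0$, $r_{ii}>0$ and the Skorokhod-problem conditions supply details the paper leaves implicit.

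One small correction concerns your optional Tanaka remark. The regulator $\bar L^i$ equals the symmetric local time of $\bar Z^{(i)}$ at $0$, which is half the right-continuous local time, not half of the symmetric one. The argument also needs the cross term $\bar r_{ij}\int_0^t \mathbf{1}_{\{\bar Z^{(i)}_s=0\}}\,d\bar L^j_s$ to vanish, which holds because the regulators do not charge the corner \cite{reiman_boundary_1988}.
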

\begin{proof}
Set
\[
A=\begin{pmatrix}
\sigma_{11}^{-1/2} & 0\\
0 & \sigma_{22}^{-1/2}
\end{pmatrix},
\qquad
c=|A\mu|,
\qquad
D=\begin{pmatrix}
r_{11} & 0\\
0 & r_{22}
\end{pmatrix}.
\]
By definition,
\[
\bar Z_t
=cA Z_{t/c^2}=
cAZ_0+\frac{A\mu}{c}\,t+cA B_{t/c^2}+cA R L_{t/c^2}.
\]
By Brownian scaling, \(\bar B_t := cA B_{t/c^2}\) is a Brownian motion with covariance matrix
\[
A\Sigma A^\top
=
\begin{pmatrix}
1 & \rho\\
\rho & 1
\end{pmatrix},
\quad\text{where}\quad
\rho=\frac{\sigma_{12}}{\sqrt{\sigma_{11}\sigma_{22}}}.
\]
The drift becomes
\[
\bar\mu=\frac{A\mu}{|A\mu|}, \qquad |\bar\mu|=1.
\]
To conclude, it suffices to observe that
\[
cA R L_{t/c^2}
=
\underbrace{A R A^{-1}D^{-1}}_{=\bar R}
\underbrace{c D A L_{t/c^2}}_{=\bar L_t}.
\]
The rescaled local time vector is
\[
\bar L_t
=
cD A\, L_{t/c^2},
\]
that is,
\[
\bar L_t^i=\frac{c\, r_{ii}}{\sqrt{\sigma_{ii}}}\,L_{t/c^2}^i,
\qquad i=1,2.
\]
The new reflection matrix is
\[
\bar R=AR A^{-1}D^{-1}.
\]
A direct computation yields the expression of $\bar R$ given in the proposition.
This proves that \((\bar Z_t)_{t\ge0}\) is a reflected Brownian motion in \(\mathbb{R}_+^2\) with parameters \(\bar\Sigma\), \(\bar R\), \(\bar\mu\), and local time \(\bar L_t\).
\end{proof}

\begin{prop}[Transformation of the transition density]
Let $p_t(z_0,z)$ and $\bar p_t(\bar z_0,\bar z)$ denote the transition densities of $Z_t$ and $\bar Z_t$ with respect to the Lebesgue measure on $\mathbb{R}_+^2$. Then, for $A$ and $c$ defined in~\eqref{eq:Ac},
\[
p_t(z_0,z)
=
c^2 \det A\;
\bar p_{c^2 t}\!\left(cA z_0,\,cA z\right).
\]
Equivalently, for $z_0=(u,v)$ and $z=(z_1,z_2)\in\mathbb{R}_+^2$,
\[
p_t(z_0,z)
=
\frac{c^2}{\sqrt{\sigma_{11}\sigma_{22}}}\;
\bar p_{c^2 t}\!\left(
\left(\tfrac{c}{\sqrt{\sigma_{11}}} u,\, \tfrac{c}{\sqrt{\sigma_{22}}} v\right),
\left(\tfrac{c}{\sqrt{\sigma_{11}}} z_1,\, \tfrac{c}{\sqrt{\sigma_{22}}} z_2\right)
\right).
\]
\end{prop}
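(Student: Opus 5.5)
The identity follows from a change of variables in the probability $\mathbb{P}_{z_0}(Z_t\in B)$ for a Borel set $B\subset\mathbb{R}_+^2$, using the pathwise relation $\bar Z_s = cA Z_{s/c^2}$ established in the previous proposition. Two facts make this work. First, $cA$ is a diagonal matrix with positive entries, so it maps $\mathbb{R}_+^2$ bijectively onto itself. Second, the starting points correspond via $\bar z_0 = cAz_0$.

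Set $s=c^2t$. Then $Z_t = (cA)^{-1}\bar Z_{c^2 t}$ almost surely, and under $\mathbb{P}_{z_0}$ the process $\bar Z$ starts from $\bar z_0=cAz_0$. Hence
\[
\int_B p_t(z_0,z)\,dz=\mathbb{P}_{z_0}(Z_t\in B)=\mathbb{P}_{cAz_0}\bigl(\bar Z_{c^2t}\in cAB\bigr)=\int_{cAB}\bar p_{c^2t}(cAz_0,\bar z)\,d\bar z .
\]
In the last integral, substitute $\bar z = cAz$. The Jacobian is $\det(cA)=c^2\det A$, because $A$ is $2\times 2$ and the factor $c$ contributes $c^2$. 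This turns the right-hand side into
\[
\int_B c^2\det A\;\bar p_{c^2t}(cAz_0,cAz)\,dz .
\]
Both sides are now integrals over $B$, and $B$ is arbitrary. Therefore the densities agree for Lebesgue-a.e.\ $z$, which is exactly the stated formula. If both densities are known to be continuous, the equality holds everywhere.

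The explicit version follows by substituting $\det A = (\sigma_{11}\sigma_{22})^{-1/2}$ and $cA z=(cz_1/\sqrt{\sigma_{11}},\,cz_2/\sqrt{\sigma_{22}})$.

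The only delicate point is the law identity in the middle step. Uniqueness in distribution from the existence and uniqueness proposition is what guarantees that the rescaled process $\bar Z$, started at $cAz_0$, has the law of the normalized SRBM with that starting point. It then has a well-defined transition density $\bar p$, so $\mathbb{P}_{cAz_0}(\bar Z_{c^2t}\in\cdot)$ is indeed given by $\bar p_{c^2t}$. Beyond this, everything is the routine Jacobian computation.
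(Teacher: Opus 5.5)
Your proof is correct and follows the paper's route: you rewrite $\mathbb{P}_{z_0}(Z_t\in B)$ as $\mathbb{P}_{cAz_0}(\bar Z_{c^2t}\in cAB)$ using $Z_t=(cA)^{-1}\bar Z_{c^2t}$, then change variables $\bar z=cAz$ with Jacobian $c^2\det A$. Your remarks on uniqueness in law, and on a.e.\ versus pointwise equality of the densities, are sensible refinements that the paper leaves implicit.
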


\begin{proof}
Recall that
\[
Z_t = (cA)^{-1} \bar Z_{c^2 t}.
\]
For any Borel set $B \subset \mathbb{R}_+^2$, we have
\[
\mathbb{P}\big(Z_t \in B \mid Z_0=z_0\big)
=
\mathbb{P}\Big(\bar Z_{c^2 t} \in cA B \mid \bar Z_0=cA z_0\Big).
\]
Using the transition density $\bar p_t(\bar z_0,\bar z)$, this yields
\[
\mathbb{P}(Z_t \in B \mid Z_0=z_0)
=
\int_{cA B} \bar p_{c^2 t}(cA z_0,\bar z)\, d\bar z.
\]
Performing the change of variables $\bar z = cA z$, we obtain
\[
d\bar z = c^2 \det A\, dz,
\]
and therefore
\[
\mathbb{P}(Z_t \in B \mid Z_0=z_0)
=
\int_B c^2 \det A\;
\bar p_{c^2 t}(cA z_0, cA z)\, dz.
\]
This proves the desired expression for $p_t(z_0,z)$. The second formula follows from the explicit form of $A$ and $\det A$.
\end{proof}

\paragraph{Acknowledgement}
This survey is a compact synthesis based on the cited works. I would like to express my deep gratitude to all my co-authors, without whom this work would not have been possible.

\paragraph{Funding}
This project has received funding from Agence Nationale de la Recherche, ANR JCJC programme under the Grant Agreement ANR-22-CE40-0002 (ANR RESYST).

\end{document}